\numberwithin{equation}{section}
\newtheorem{theorem}{Theorem}[section]
\newtheorem{lemma}[theorem]{Lemma}
\newtheorem{proposition}[theorem]{Proposition}
\newtheorem{corollary}[theorem]{Corollary}
\newtheorem{assumption}{Assumption}
\newtheorem{definition}[theorem]{Definition}
\newtheorem{remark}[theorem]{Remark}
\theoremstyle{remark}
\newcommand{\EEE}{\color{black}}
\newcommand{\III}{\color{black}}
\newcommand{\BBB}{\color{black}}
\newcommand{\CCC}{\color{black}}
\newcommand{\ep}{\varepsilon}
\newcommand{\RED}{\color{black}}
\newcommand{\BLACK}{\color{black}}
\renewcommand{\div}{{\rm div}}
\newcommand{\Laplace}{\triangle}
\DeclareMathOperator{\dev}{dev}
\DeclareMathOperator{\dist}{dist}
\DeclareMathOperator{\supp}{supp}
\DeclareMathOperator{\sym}{sym}
\DeclareMathOperator{\tr}{tr}
\newcommand{\A}{\mathbb{A}}
\newcommand{\calA}{\mathcal{A}}
\newcommand{\C}{\mathbb{C}}
\newcommand{\calD}{\mathcal{D}}
\newcommand{\calE}{\mathcal{E}}
\newcommand{\calH}{\mathcal{H}}
\newcommand{\calI}{\mathcal{I}}
\newcommand{\calJ}{\mathcal{J}}
\newcommand{\calK}{\mathcal{K}}
\newcommand{\calL}{\mathcal{L}}
\newcommand{\M}{\mathbb{M}}
\newcommand{\Mb}{\mathcal{M}_b}
\newcommand{\N}{\mathbb{N}}
\newcommand{\calQ}{\mathcal{Q}}
\newcommand{\R}{\mathbb{R}}
\newcommand{\calS}{\mathcal{S}}
\newcommand{\U}{U}
\newcommand{\calU}{\mathcal{U}}
\newcommand{\calV}{\mathcal{V}}
\newcommand{\calX}{\mathcal{X}}
\newcommand{\calY}{\mathcal{Y}}
\newcommand{\Z}{\mathbb{Z}}
\newcommand{\calZ}{\mathcal{Z}}
\newcommand{\Dir}{D}
\newcommand{\DiffOpA}{\mathcal{A}}
\newcommand{\LinOpA}{A}
\newcommand{\DiffOp}{\mathcal{A}_{x_2}}
\newcommand{\CorrSpace}[3]{\calX^{{#1}}({#2})}
\newcommand{\calXzero}[1]{\calX_0({#1})}
\newcommand{\calYzero}[1]{\Upsilon_0({#1})}
\newcommand{\calXinf}[1]{\calX_{\infty}({#1})}
\newcommand{\calYinf}[1]{\Upsilon_{\infty}({#1})}
\newcommand{\Corrector}{\widetilde{E}}
\newcommand{\eps}{\varepsilon}
\newcommand{\epsh}{{\varepsilon_h}}
\newcommand{\strong}{\to}
\newcommand{\weak}{\,\xrightharpoonup{}\,}
\newcommand{\weakstar}{\xrightharpoonup{*}}
\newcommand{\strongtwoscale}{\xrightarrow{\,2\,}}
\newcommand{\weaktwoscale}{\xrightharpoonup{2}}
\newcommand{\weakstartwoscale}{\xrightharpoonup{2-*}}
\newcommand{\ext}[1]{\widetilde{#1}}
\newcommand{\genprod}{\stackrel{\text{gen.}}{\otimes}}
\newcommand{\mres}{\lfloor}
\newcommand{\charfun}[1]{\mathbbm{1}_{#1}}
\newcommand{\proj}{proj}
\newcommand{\closure}[1]{\overline{#1}}
\newcommand{\averageI}[1]{\overline{#1}}
\newcommand{\meanI}[1]{\widehat{#1}}
\newcommand{\tangential}{_\nu^\perp}
\newcommand{\tangentiali}{_{\nu^i}^\perp}
\newcommand{\tangentiallll}{_{\iota(\nu^i)}^\perp}
\newcommand{\normal}{_\nu}
\newcommand{\expfun}[1]{\exp\left({#1}\right)}
\title [Periodic homogenization of elastoplastic plates]
{Effective quasistatic evolution models for perfectly plastic plates with periodic microstructure\III: the limiting regimes.}
\author[M. Bu\v{z}an\v{c}i\'{c}] {Marin Bu\v{z}an\v{c}i\'{c}} 
\address[M. Bu\v{z}an\v{c}i\'{c}]{Faculty of Chemical Engineering and Technology, University of Zagreb, Trg Marka Maruli\'{c}a 19,
10000 Zagreb, Croatia}
\email{buzancic@fkit.hr}
\author[E. Davoli] {Elisa Davoli} 
\address[E. Davoli]{Institute of Analysis and Scientific Computing, TU Wien,
Wiedner Hauptstrasse 8-10, A-1040 Vienna, Austria}
\email{elisa.davoli@tuwien.ac.at}
\author[I. Vel\v{c}i\'{c}] {Igor Vel\v{c}i\'{c}} 
\address[I. Vel\v{c}i\'{c}]{Faculty of Electrical Engineering and Computing, University of Zagreb, Unska 3, 10000 Zagreb, Croatia}
\email{igor.velcic@fer.hr}
\subjclass[2020]{74C05, 74G65, 74K20, 49J45, 74Q09, 35B27}
\keywords{perfect plasticity, periodic homogenization, dimension reduction, quasistatic evolution, rate-independent processes, $\Gamma$-convergence}
\begin{document}
\maketitle
\vspace{-\baselineskip}
\begin{abstract}
We identify effective models for thin, linearly elastic and perfectly plastic plates exhibiting a microstructure resulting from the periodic alternation of two elastoplastic phases.  We study here both the case in which the thickness of the plate converges to zero on a much faster scale than the periodicity parameter and the opposite scenario in which homogenization occurs on a much finer scale than dimension reduction. After performing a static analysis of the problem, we show convergence of the corresponding quasistatic evolutions. The methodology relies on two-scale convergence and periodic unfolding, combined with suitable measure-disintegration results and evolutionary $\Gamma$-convergence. 
\end{abstract}
\section{Introduction}
The main goal of this paper is to complete the study of limiting models stemming from the interplay of homogenization and dimension reduction in perfect plasticity which we have initiated in \cite{BDV}\EEE, as well as to show how the stress-strain approach introduced in \cite{Francfort.Giacomini.2014} for the homogenization of elasto-perfect plasticity can be used to identify effective theories for composite plates. \BBB In our previous contribution, we considered a composite thin plate whose thickness $h$ and microstructure-width $\ep_h$ were asymptotically comparable, namely, we assumed  
$$\lim_{h\to 0}\frac{h}{\ep_h}=:\gamma\in (0,+\infty).$$ 
In this work, instead, we analyze the two limiting regimes corresponding to the settings $\gamma=0$ and $\gamma=+\infty$. These can be seen, roughly speaking, as situations in which homogenization and dimension reduction happen on different scales, so that the behavior of the composite plate should ideally approach either that obtained via homogenization of the lower-dimensional model or the opposite one in which dimension reduction is performed on the homogenized material. 

To the Authors knowledge, apart from \cite{BDV} there has been no other study of simultaneous homogenization and dimension reduction for inelastic materials. In the purview of elasticity, we single out the works  \cite{cailleriethin, damlamianvogelius} (see also the book \cite{panasenkobook}) where first results were obtained in the case of linearized elasticity and under isotropy or additional material symmetry assumptions, as well as  \cite{bukal2017simultaneous} for the study of the general case without further constitutive restrictions and for an extension to some nonlinear models. 
 A $\Gamma$-convergence analysis in the nonlinear case has been provided in \cite{cherdantsev2015,neukammvelcic2013,hornung2014derivation,bufford2015multiscale,velcic2015}, whereas the case of high-contrast elastic plates is the subject of 
 \cite{BCVZ}. 

We shortly review below the literature on dimension reduction in plasticity and that on the study of composite elastoplastic materials. Reduced models for homogeneous perfectly plastic plates have been characterized in \cite{Davoli.Mora.2013,Davoli.Mora2015,Maggiani.Mora2016, Gidoni.Maggiani.Scala2018} in the quasistatic and dynamic settings, respectively, whereas the case of shallow shells is the focus of \cite{Maggiani.Mora2017}. In the presence of hardening, an analogous study has been undertaken in \cite{liero2011evolutionary,Liero.Roche2012}. 
Further results in finite plasticity are the subject of \cite{DavoliCOCV,DavoliM3AS}.

Homogenization of the elastoplastic equations in the small strain regime has been studied in \cite{Schweizer.Veneroni2015, Heida.Schweizer2016, Heida.Schweizer2018}.
We also refer to \cite{Francfort.Giacomini.Musesti,Giacomini.Musesti} for a study of the Fleck and Willis model, and to \cite{Hanke} for the case of gradient plasticity.
 Static and partial evolutionary results for large-strain
stratified composites in crystal plasticity have been obtained in  \cite{Cristowiak.Kreisbeck, Cristowiak.Kreisbeck2, Davoli.Ferreira.Kreisbeck, Davoli.Kreisbeck}, whereas static results in finite plasticity are the subject of   \cite{Davoli.Gavioli.Pagliari, Davoli.Gavioli.Pagliari2}.  Inhomogeneous perfectly plastic materials have been fully characterized in \cite{Francfort.Giacomini.2012}, an associated study of periodic homogenization is the focus of \cite{Francfort.Giacomini.2014}.

\EEE  
The main result of the paper, Theorem \ref{main result} is rooted in the theory of evolutionary \III$\Gamma$\BBB-convergence (see \cite{mielke.roubicek.stefanelli}) and consists in showing that rescaled three-dimensional quasistatic evolutions associated to the original composite plates converge, as the thickness and periodicity simultaneously go to zero, to the quasistatic evolution corresponding to suitable reduced effective elastic energies (identified by static \III$\Gamma$\BBB-convergence) and dissipation potentials, cf. Subsection \ref{principleofmximumpl}. As one might expect, for $\gamma=0$ the limiting driving energy and dissipation potential are homogenized versions of those identified in \cite{Davoli.Mora.2013} where only dimension reduction was considered. In the $\gamma=\infty$ setting, instead, the key functionals are obtained by averaging the original ones in the periodicity cell.

Essential ingredients to identify the limiting models are to establish a characterization of two-scale limits of rescaled linearized strains, as well as to prove variants of the principle of maximal work in each of the two regimes. These are the content of Theorem \ref{two-scale weak limit of scaled strains}, as well as \Cref{two-scale Hill's principle - regime zero} for the case $\gamma=0$, and of  \Cref{two-scale Hill's principle - regime inf} for $\gamma=+\infty$, respectively. A very delicate point consists in the identification of the limiting space of  elastoplastic  variables, for a fine characterization of the correctors arising in the two-scale limit passage needs to be established by delicate measure-theoretic disintegration arguments, cf. Section \ref{compactness}.

We finally mention that, for the regimes analyzed in this contribution, we obtain more restrictive results than in \cite{BDV}, for an additional assumption on the ordering of the phases on the interface, cf. \Cref{secphasedec} needs to be imposed in order to ensure lower semicontinuity of the dissipation potential, cf. \Cref{Reshetnyak remark 1}. 
\BBB

We briefly outline the structure of the paper. 
In \Cref{prel} we introduce our notation and recall some preliminary results. \Cref{setting} is devoted to the mathematical formulation of the problem, whereas  \Cref{compactness} tackles compactness properties of sequences with equibounded energy and dissipation. In \Cref{statics} we characterize the limiting model, we introduce the set of limiting deformations and stresses, and we discuss duality between stress and strain. Eventually, in \Cref{dynamics} we prove the main result of the paper, i.e., \Cref{main result}, where we show convergence of the quasistatic evolution of $3d$ composite thin plates to the quasistatic evolution associated to the limiting model. Similarly as in \cite{Francfort.Giacomini.2014,BDV}, in the limiting model a decoupling of macroscopic and microscopic variables is not possible and both scales contribute to the description of the limiting evolution. 
\BBB

 .

\section{Notation and preliminary results}
\label{prel}
Points $x \in \R^3$ will be expressed as pairs $(x',x_3)$, with $x' \in \R^2$ and $x_3 \in \R$, whereas we will write $y \in \calY$ to identify points on a flat 2-dimensional torus.
 We will denote by $I$ the open interval $I := \left(-\frac{1}{2}, \frac{1}{2}\right)$. 
The notation $\nabla_{x'}$ will describe the gradient with respect to $x'$.
Scaled gradients and symmetrized scaled gradients will be similarly denoted as follows:
\begin{gather} \label{defsymmscgrad}
    \nabla_h v := \Big[\; \nabla_{x'} v \;\Big|\Big.\; \tfrac{1}{h}\,\partial_{x_3} v \;\Big], \quad E_h v := \sym \nabla_h v.
\end{gather}
for $h>0$, and for maps $v$ defined on suitable subsets of $\mathbb{R}^3$. For $N=2,3$, we use the notation $\M^{N \times N}$ to identify the set of real $N \times N$ matrices. We will always implicitly assume this set to be endowed with the classical Frobenius scalar product $A : B := \sum_{i,j}A_{ij}\,B_{ij}$ and the associated norm $|A| := \sqrt{A:A}$, for $A,B\in \mathbb{M}^{N\times N}$. The subspaces of symmetric and deviatoric matrices, will be denoted by $\M^{N \times N}_{\sym}$ and $\M^{N \times N}_{\dev}$, respectively. For the trace and deviatoric part of a matrix $A\in \mathbb{M}^{N\times N}$ we will adopt the notation ${\rm tr}{A}$, and 
\[
    A_{\dev} = A - \frac{1}{N}{\rm tr}{A}.
\]

Given two vectors $a, b \in \R^N$, we will adopt standard notation for their scalar product and Euclidean norm, namely $a \cdot b$ and $|a|$. The dyadic (or tensor) product of $a$ and $b$ will be identified as 
  by $a \otimes b$; correspondingly,  the {\em symmetrized tensor product} $a \odot b$  will be the symmetric matrix with entries $(a \odot b)_{ij} := \frac{a_i b_j + a_j b_i}{2}$. 
We recall that ${\rm tr}{\left(a \odot b\right)} = a \cdot b$, and $|a \odot b|^2 = \frac{1}{2}|a|^2|b|^2 + \frac{1}{2}(a \cdot b)^2$, so that
\begin{equation*}
    \frac{1}{\sqrt{2}}|a||b| \leq |a \odot b| \leq |a||b|.
\end{equation*}
Given a vector $v \in \R^3$, we will use the notation $v^{\prime}$ to denote the two-dimensional vector having its same first two components
\begin{equation*}
    v^{\prime} := \begin{pmatrix} v_1 \\ v_2 \end{pmatrix}.
\end{equation*}
In the same way, for every $A \in \M^{3 \times 3}$, we will use the notation $A^{\prime\prime}$ to identify the minor
\begin{equation*}
    A^{\prime\prime} := \begin{pmatrix} A_{11} & A_{12} \\ A_{21} & A_{22} \end{pmatrix}.
\end{equation*}
The natural embedding of $\R^2$ into $\R^3$ will be given by $\iota:\R^2 \to \R^3$ 
 defined as 
\begin{equation*}
    \iota(v) := \begin{pmatrix} v_1 \\ v_2 \\0  \end{pmatrix}.
\end{equation*}
We will adopt standard notation for the Lebesgue and Hausdorff measure, as well as for Lebesgue and Sobolev spaces, and for spaces of continuously differentiable functions.
  Given a set $U \subset \R^N$, we will denote its closure by $\closure{U}$ and its 
 characteristic function by $\charfun{U}$.

We will distinguish between the spaces $C_c^k(\U;E)$ ($C^k$ functions with compact support contained in $\U$) and $C_0^k(\U;E)$ ($C^k$ functions ``vanishing on $\partial{\U}$"). 
The notation $C(\calY;E)$ will indicate the space of all continuous functions which are $[0,1]^2$-periodic. Analogously, we will define $C^k(\calY;E) := C^k(\R^2;E) \cap C(\calY;E)$. With a slight abuse of notation,
 $C^k(\calY;E)$ will be identified with the space of all $C^k$ functions on the 2-dimensional torus. 

We will frequently make use of the \emph{standard mollifier} $\rho \in C^{\infty}(\R^N)$, defined by
\begin{equation*}
    \rho(x)
    :=
    \begin{cases}
    C\,\expfun{\frac{1}{|x|^2-1}} & \ \text{ if } |x| < 1,\\
    0 & \ \text{ otherwise},
    \end{cases}
\end{equation*}
where the constant $C > 0$ is selected so that $\int_{\R^N} \rho(x) \,dx = 1$, as well as of the associated family $\{\rho_\epsilon\}_{\epsilon>0} \subset C^{\infty}(\R^N)$ with
\begin{equation*}
    \rho_\epsilon(x) := \frac{1}{\epsilon^N} \rho\left(\frac{x}{\epsilon}\right).
\end{equation*}

Throughout the text, the letter $C$ stands for generic positive constants whose value may vary from line to line.

 A collection of all preliminary results which will be used throughout the paper can be found in
 \cite[Section 2]{BDV}. For an overview on basic notions in measure theory, $BV$ functions, as well as $BD$ and $BH$ maps, we refer the reader to, e.g., \cite{fonseca2007modern}, \cite{ambrosio2000functions}, \cite{braides1998approximation}, to the monograph \cite{Temam.1985}, as well as to \cite{Demengel.1984}.

\subsection{Convex functions of measures} \label{Convex functions of measures}
Let $U$ be an open set of $\R^N$. 
For every $\mu \in \Mb(U;X)$ let $\frac{d\mu}{d|\mu|}$ be the Radon-Nikodym derivative of $\mu$ with respect to its variation $|\mu|$. 
Let $H : X \to [0,+\infty)$ be a convex and positively one-homogeneous function such that
\begin{equation} \label{coercivity of H}
    r |\xi| \leq H(\xi) \leq R |\xi| \quad \text{for every}\, \xi \in X,
\end{equation}
where $r$ and $R$ are two constants, with $0 < r \leq R$. 

Using the theory of convex functions of measures (see \cite{goffman1964sublinear} and \cite{demengel1984convex}) it is possible to define a nonnegative Radon measure $H(\mu) \in \Mb^+(U)$ as
\[
    H(\mu)(A) := \int_{A} H\left(\frac{d\mu}{d|\mu|}\right) \,d|\mu|,
\]
for every Borel set $A \subset U$, as well as an associated
 functional $\calH: \Mb(U;X) \to [0,+\infty)$ given by
\[
    \calH(\mu) := H(\mu)(\U) = \int_{\U} H\left(\frac{d\mu}{d|\mu|}\right) \,d|\mu|.
\]
and being lower semicontinuous on $\Mb(U;X)$ with respect to weak* convergence, cf. \cite[Theorem 2.38]{ambrosio2000functions}).

Let $a,\, b \in [0,T]$ with $a \leq b$. 
The \emph{total variation} of a function $\mu : [0,T] \to \Mb(U;X)$ on $[a,b]$ is defined as
\begin{equation*}
    \calV(\mu; a, b) := \sup\left\{ \sum_{i = 1}^{n-1} \left\|\mu(t_{i+1}) - \mu(t_i)\right\|_{\Mb(U;X)} : a = t_1 < t_2 < \ldots < t_n = b,\ n \in \N \right\}.
\end{equation*}
Analogously,  the \emph{$\calH$-variation} of a function $\mu : [0,T] \to \Mb(U;X)$ on $[a,b]$ is given by
\begin{equation*}
    \calD_{\calH}(\mu; a, b) := \sup\left\{ \sum_{i = 1}^{n-1} \calH\left(\mu(t_{i+1}) - \mu(t_i)\right) : a = t_1 < t_2 < \ldots < t_n = b,\ n \in \N \right\}.
\end{equation*}
From \eqref{coercivity of H} it follows that
\begin{equation} \label{equivalence of variations}
    r \calV(\mu; a, b) \leq \calD_{\calH}(\mu; a, b) \leq R \calV(\mu; a, b).
\end{equation}

\subsection{Disintegration of a measure}
Let $S$ and $T$ be measurable spaces and let $\mu$ be a measure on $S$. 
Given a measurable function $f : S \to T$, we denote by $f_{\#}\mu$ the \emph{push-forward} of $\mu$ under the map $f$, defined by
\[
    f_{\#}\mu(B) := \mu\left(f^{-1}(B)\right), \quad \text{ for every measurable set $B \subseteq T$}.
\]
In particular, for any measurable function $g : T \to \overline{\R}$ we have
\[
    \int_{S} g \circ f \,d\mu = \int_{T} g \,d(f_{\#}\mu).
\]
Note that in the previous formula $S = f^{-1}(T)$. 

Let \BBB $S_1 \subset \R^{N_1}$, $S_2 \subset \R^{N_2}$, \BBB for some $N_1,N_2 \in \N$,  \BBB be open sets, and let $\eta \in \Mb^+(S_1)$. 
We say that a function $x_1 \in S_1 \to \mu_{x_1} \in \Mb(S_2;\BBB \R^M \BBB)$ is $\eta$-measurable if $x_1 \in S_1 \to \mu_{x_1}(B)$ is $\eta$-measurable for every Borel set $B \subseteq S_2$.

Given a $\eta$-measurable function $x_1 \to \mu_{x_1}$ such that $\int_{S_1}|\mu_{x_1}|\,d\eta<+\infty$, then the \emph{generalized product} $\eta \genprod \mu_{x_1}$ satisfies $\BBB \eta \genprod \mu_{x_1}\BBB \in \Mb(S_1 \times S_2;\BBB \R^M \BBB)$ and is such that 
\[
	\langle \eta \genprod \mu_{x_1}, \varphi \rangle := \int_{S_1} \left( \int_{S_2} \varphi(x_1,x_2) \,d\mu_{x_1}(x_2) \right) \,d\eta(x_1),
\]
for every bounded Borel function $\varphi : S_1 \times S_2 \to \R$.

\subsection{Traces of stress tensors}
\label{sub:traces}
In this last subsection we collect some properties of classes of maps which will include our elastoplastic stress tensors. 

We suppose here that $\U$ is an open bounded set of class $C^2$ in $\R^N$.
If $\sigma \in L^2(\U;\M^{N \times N}_{\sym})$ and $\div\sigma \in L^2(\U;\R^N)$, then we can define a distribution $[ \sigma \nu ]$ on $\partial{\U}$ by
\begin{equation} \label{traces of the stress}
    [ \sigma \nu ](\psi) := \int_{\U} \psi \cdot \div\sigma \,dx + \int_{\U} \sigma : E\psi \,dx,
\end{equation}
for every $\psi \in H^1(\U;\R^N)$. 
It \BBB follows \BBB that $[ \sigma \nu ] \in H^{-1/2}(\partial{\U};\R^N)$ (see, e.g., \cite[Chapter 1, Theorem 1.2]{temam2001navier}). 
If, in addition, $\sigma \in L^{\infty}(\U;\M^{N \times N}_{\sym})$ and $\div\sigma \in L^N(\U;\R^N)$, then \eqref{traces of the stress} holds for $\psi \in W^{1,1}(\U;\R^N)$. 
By Gagliardo’s extension theorem, in this case we have $[ \sigma \nu ] \in L^{\infty}(\partial{\U};\R^N)$, and 
\begin{equation*}
    [ \sigma_k \nu ] \weakstar [ \sigma \nu ] \quad \text{weakly* in $L^{\infty}(\partial{\U};\R^N)$},
\end{equation*}
whenever $\sigma_k \weakstar \sigma$ weakly* in $L^{\infty}(\U;\M^{N \times N}_{\sym})$ and $\div\sigma_k \weak \div\sigma$ weakly in $L^N(\U;\R^N)$.

We will consider the normal and tangential parts of $[ \sigma \nu ]$, defined by
\begin{equation*}
    [ \sigma \nu ]\normal := ([ \sigma \nu ] \cdot \nu) \nu, \quad
    [ \sigma \nu ]\tangential := [ \sigma \nu ]-([ \sigma \nu ] \cdot \nu) \nu.
\end{equation*}
Since $\nu \in C^1(\partial{\U};\R^N)$, we have that $[ \sigma \nu ]\normal,\, [ \sigma \nu ]\tangential \in H^{-1/2}(\partial{\U};\R^N)$. If, in addition, $\sigma_{\dev} \in L^{\infty}(\U;\M^{N \times N}_{\dev})$, then it was proved in \cite[Lemma 2.4]{Kohn.Temam.1983} that $[ \sigma \nu ]\tangential \in L^{\infty}(\partial{\U};\R^N)$ and
\begin{equation*}
    \|[ \sigma \nu ]\tangential\|_{L^{\infty}(\partial{\U};\R^N)} \leq \frac{1}{\sqrt2} \|\sigma_{\dev}\|_{L^{\infty}(\U;\M^{N \times N}_{\dev})}.
\end{equation*}

More generally, if $\U$ has Lipschitz boundary and is such that there exists a compact set $S \subset \partial{U}$ with $\calH^{N-1}(S) = 0$ such that $\partial{U} \setminus S$ is a $C^2$-hypersurface, then arguing as in \cite[Section 1.2]{Francfort.Giacomini.2012} we can uniquely determine $[ \sigma \nu ]\tangential$ as an element of $L^{\infty}(\partial{U};\R^N)$ through any approximating sequence $\{\sigma_n\} \subset C^{\infty}(\closure{\U};\M^{N \times N}_{\sym})$ such that 
\begin{align*}
    &\sigma_n \strong \sigma \quad \text{strongly in } L^2(\U;\M^{N \times N}_{\sym}),\\
    &\div\sigma_n \strong \div\sigma \quad \text{strongly in } L^2(\U;\R^N),\\
    &\| (\sigma_n)_{\dev} \|_{L^{\infty}(\U;\M^{N \times N}_{\dev})} \leq \| \sigma_{\dev} \|_{L^{\infty}(\U;\M^{N \times N}_{\dev})}.
\end{align*}



\section{Setting of the problem}
\label{setting}
\BBB We describe here our modeling assumptions and recall a few associated instrumental results. \BBB
\BBB Unless otherwise stated,  $\omega \subset \R^2$ is a bounded, connected, and open set with $C^2$   boundary. \BBB
Given a small positive number $h > 0$, we assume 
\begin{equation*}
    \Omega^h := \omega \times (h I),
\end{equation*}
to be the reference configuration of a linearly elastic and perfectly plastic plate.

We consider a non-zero Dirichlet boundary condition on the whole lateral surface, i.e. the Dirichlet boundary of $\Omega^h$ is given by \CCC $\Gamma_\Dir^h := \EEE \partial\omega \BBB \times (h I)$. \BBB

We work under the assumption that the body is only submitted to a hard device on $\Gamma_\Dir^h$ and that there are no applied loads, i.e. the evolution is only driven by time-dependent boundary conditions. 
More general boundary conditions, together with volume and surfaces forces have been considered, e.g., in \cite{DalMaso.DeSimone.Mora.2006, Francfort.Giacomini.2012, Davoli.Mora.2013} but  for simplicity of exposition will be neglected in this analysis.

\subsection{Phase decomposition}\label{secphasedec} 
We recall here some basic notation and assumptions from \cite{Francfort.Giacomini.2014}. 

\BBB Recall that  $\calY = \R^2/\Z^2$ is \BBB the $2$-dimensional torus, let $Y := [0, 1)^2$ be its associated periodicity cell, and denote by $\calI : \calY \to Y$ their canonical identification.
For any $\calZ \subset \calY$, we define
\begin{equation} \label{periodic set notation}
    \calZ_\eps := \left\{ x \in \R^2 : \frac{x}{\eps} \in \Z^2+\calI(\calZ) \right\},
\end{equation}
and to every function $F : \calY \to X$ we associate the $\eps$-periodic function $F_\eps : \R^2 \to X$, given by
\begin{equation*}
    F_\eps(x) := F\left(y_\eps\right), \;\text{ for }\; \frac{x}{\eps}-\left\lfloor \frac{x}{\eps} \right\rfloor = \calI(y_\eps) \in Y.
\end{equation*}
With a slight abuse of notation we will also write $F_\eps(x) = F\left(\frac{x}{\eps}\right)$.

The torus $\calY$ is assumed to be made up of finitely many phases $\calY_i$ together with their interfaces. 
We assume that those phases are pairwise disjoint open sets with Lipschitz boundary.
Then we have $\calY = \bigcup_{i} \closure{\calY}_i$ and we denote the interfaces by 
\begin{equation*}
    \Gamma := \bigcup_{i,j} \partial\calY_i \cap \partial\calY_j.
\end{equation*}

We will write
\begin{equation*}
    \Gamma := \bigcup_{i \neq j} \Gamma_{ij},
\end{equation*}
where $\Gamma_{ij}$ stands for the interface between $\calY_i$ and $\calY_j$.

\EEE Correspondingly, \BBB $\omega$ is composed of finitely many phases $(\calY_i)_{\eps}$ \CCC and that $\eps$ is chosen \EEE small enough so \BBB that  $\mathcal{H}^1\left(\cup_i(\partial \calY_i)_{\eps} \cap \EEE\partial\omega \BBB\right)=0$.  \BBB
Additionally, we assume that $\Omega^h$ is a specimen of \EEE a linearly elastic - \BBB perfectly plastic material having periodic elasticity tensor and dissipation potential.

We are interested in the situation when the period $\eps$ is a function of the thickness $h$, i.e. $\eps = \epsh$, and we assume that the limit 
\begin{equation*}
   \gamma := \lim_{h \to 0} \frac{h}{\epsh}.
\end{equation*}
exists in \RED $\{0, +\infty\}$ \BLACK. 
\III We additionally impose the following condition: 
  there exists a compact set  $\mathcal{S} \subset \Gamma$  with  $\calH^1(\mathcal{S}) = 0$  such that
     each connected component of  $\Gamma \setminus \mathcal{S}$  is either a closed curve of class $C^2$ or an open curve with endpoints  $\{a,b\}$ which is of class $C^2$  (excluding the endpoints).
\BBB

We say that a multi-phase torus $\calY$ is \emph{geometrically admissible} if it satisfies the above assumptions.

\CCC 
\begin{remark} \label{remnak1} 
Notice that under the above assumptions, $\mathcal{H}^1$-almost every $y \in \Gamma$ is at the intersection of the boundaries of exactly two phases. 
\end{remark}
\BBB

\begin{remark}
We point out that \CCC we \BBB assume greater regularity than that in \cite{Francfort.Giacomini.2014}, where the interface $\Gamma \setminus \calS$ was allowed to be a $C^1$-hypersurface. 
Under such weaker assumptions, in fact, the tangential part of the trace of an admissible stress $[ \sigma \nu ]\tangential$ at a point $x$ on $\Gamma \setminus \calS$ would not be defined independently of the considered approximating sequence\EEE, cf. Subsection \ref{sub:traces}. \BBB By requiring a higher regularity of $\Gamma\setminus \calS$, we will avoid dealing with this \CCC situation. 

\end{remark}

\medskip
\noindent{\bf The set of admissible stresses.}

We assume that there exist convex compact sets $K_i \in \M^{3 \times 3}_{\dev}$ associated to each phase $\calY_i$ \CCC which will provide restrictions on the deviatoric part of the stress\BBB.
We work under the assumption that there exist two constants $r_K$ and $R_K$, with $0 < r_K \leq R_K$, such that for every $i$
\begin{equation*}
    \{ \xi \in \M^{3 \times 3}_{\sym} : |\xi| \leq r_K \} \subseteq K_i \subseteq\{ \xi \in \M^{3 \times 3}_{\sym}: |\xi| \leq R_K \}.
\end{equation*}
Finally, we define
\begin{equation*}
    K(y) := K_i, \quad \text{ for } y \in \calY_i.
\end{equation*}

\EEE We will require an ordering between the phases at the interface. 
Namely, we assume that 
\III at the point $y \in \Gamma$  where exactly two phases $\calY_i$ and $\calY_j$ meet we have that either $K_i \subset K_j$ or $K_j \subset K_i$. \BBB

\CCC
We will call this \EEE requirement \BBB the assumption on the ordering of the phases. \BBB
\begin{remark} \label{Reshetnyak remark 1}
The restrictive assumption \CCC on the ordering between the phases \BBB will allow us to use Reshetnyak's lower semicontinuity theorem to obtain lower semicontinuity of the dissipation functional, \CCC cf. the proof of \Cref{main result}\BBB. \CCC Notice that in the regime $\gamma \in (0,+\infty)$, see \cite{BDV}, we did not \EEE rely on \BBB such assumption (see also \cite{Francfort.Giacomini.2012,Francfort.Giacomini.2014}) and thus were able to prove the convergence to the limit model in the general case. In the regimes $\gamma \in \{0,\infty\}$ \EEE the general geometrical setting where no ordering between the phases is assumed \BBB remains an open problem.  \BBB
\end{remark}

\noindent{\bf The elasticity tensor.}

For every $i$, let $(\C_{\rm dev})_i$ and $k_i$ be a symmetric positive definite tensor on $\M^{3\times 3}_{\rm dev}$ and a positive constant, respectively, such that there exist two constants $r_c$ and $R_c$, with $0 < r_c \leq R_c$, satisfying
\begin{align} \label{tensorassumption}
    &r_c |\xi|^2 \leq (\C_{\dev})_i \xi : \xi \leq R_c |\xi|^2 \quad \text{ for every }\xi \in \M^{3 \times 3}_{\dev},\\ \label{tensorassumption2}
    &r_c \leq k_i \leq R_c.
\end{align}

Let $\C$ be the {\em elasticity tensor}, considered as a map from $\calY$ taking values in the set of symmetric positive definite linear operators, $\C : \calY \times \M^{3 \times 3}_{\sym} \to \M^{3 \times 3}_{\sym}$, defined as
\begin{equation*}
    \C(y) \xi := \C_{\dev}(y)\,\xi_{\dev} + \left(k(y)\,{\rm tr}{\xi}\right)\,I_{3 \times 3} \quad \text{ for every } y \in \calY \text{ and } \xi \in \M^{3 \times 3},
\end{equation*}
where $\C_{\dev}(y)=(\C_{\dev})_i$ and $k(y) = k_i$ for every $y \in \calY_i$.

Let $Q:\calY \times \M^{3 \times 3}_{\sym}\to[0,+\infty)$ be the quadratic form associated with $\C$, and given by
\begin{equation*}
    Q(y, \xi) := \frac{1}{2} \C(y) \xi : \xi \quad \text{ for every } y \in \calY \text{ and } \xi \in\M^{3 \times 3}_{\sym}.
\end{equation*}
It follows that $Q$ satisfies
\begin{equation} \label{coercivity of Q}
    r_c |\xi|^2 \leq Q(y, \xi) \leq R_c |\xi|^2 \quad \text{for every }y\in \calY\text{ and }\xi \in \M^{3 \times 3}_{\sym}.
\end{equation}

\medskip
\noindent{\bf The dissipation potential.}

For each $i$, let $H_i : \M^{3 \times 3}_{\dev} \to [0,+\infty)$ be the support function of the set $K_i$, i.e
\begin{equation*}
    H_i(\xi) = \sup\limits_{\tau \in K_i} \tau : \xi.
\end{equation*}
It follows that $H_i$ is convex, positively 1-homogeneous, and satisfies
\begin{equation} \label{coercivity of H_i}
    r_k |\xi| \leq H_i(\xi) \leq R_k |\xi| \quad \text{for every}\, \xi \in \M^{3 \times 3}_{\dev}.
\end{equation}

The dissipation potential $H : \calY \times \M^{3 \times 3}_{\dev} \to [0,+\infty]$ is defined as follows:
\begin{enumerate}[label=(\roman*)]
    \item For every $y \in \calY_i$, 
    \begin{equation*}
        H(y, \xi) := H_i(\xi).
    \end{equation*}
    \item 
   \III
   For a point $y \in \Gamma$ that is at interface of exactly two phases $\calY_i$ and $\calY_j$ we define 
    $$ H(y,\xi)=\min_{i,j} \{H_i(y,\xi),H_j (y,\xi)\}. $$
    \item For all other points we take 
      $$ H(y,\xi)=\min_{i} H_i (y,\xi). $$
      \end{enumerate} 
      \BBB
\begin{remark} \label{Reshetnyak remark 2}
\CCC We point out that $H$ is a Borel, lower semicontinuous function on $\calY \times \M^{3 \times 3}_{\dev}$. 
Furthermore, for each $y \in \calY$, the function $\xi \mapsto H(y, \xi)$ is positively 1-homogeneous and convex. \BBB
\end{remark}

\medskip
\noindent{\bf Admissible triples and energy.}

On $\Gamma_\Dir^h$ we prescribe a boundary datum being the trace of a map $w^h \in H^1(\Omega^h;\R^3)$ with the following \CCC Kirchhoff-Love \BBB structure:
\begin{equation} \label{MGform0}
    w^h(z) := \left( \bar{w}_1(z') - \frac{z_3}{h}\partial_1 \bar{w}_3(z'),\, \bar{w}_2(z') - \frac{z_3}{h}\partial_2 \bar{w}_3(z'),\, \frac{1}{h}\bar{w}_3(z') \right) \,\text{ for a.e. }z=(z',z_3) \in \Omega^h,
\end{equation}
where $\bar{w}_\alpha \in H^1(\omega)$, $\alpha=1,2$, and $\bar{w}_3 \in H^2(\omega)$.
The {\em set of admissible displacements and strains} for the boundary datum $w^h$ is denoted by 
$\calA(\Omega^h, w^h)$ and is defined as the class of all triples 
$(v,f,q) \in BD(\Omega^h) \times L^2(\Omega^h;\M^{3 \times 3}_{\sym}) \times \Mb(\Omega^h;\M^{3 \times 3}_{\dev})$ satisfying
\begin{eqnarray*}
& Ev=f+q \quad\text{in }\Omega^h,
\\
& q=(w^h-v)\odot\nu_{\partial\Omega^h}\calH^2 \quad\text{on }\Gamma_\Dir^h.
\end{eqnarray*} 
The function $v$ represents the {\em displacement} of the plate, while $f$ and $q$ are called the {\em elastic} and {\em plastic strain}, respectively.

For every admissible triple $(v,f,q) \in \calA(\Omega^h, w^h)$ we define the associated {\em energy} as
\begin{equation*}
    \calE_{h}(v,f,q) := 
    \int_{\Omega^h} Q\left(\frac{z'}{\epsh}, f(z)\right) \,dz + 
    \int_{\Omega^h \cup \Gamma_\Dir^h} H\left(\frac{z'}{\epsh}, \frac{dq}{d|q|} \right) \,d|q|.
\end{equation*}
The first term represents the elastic energy, while the second term accounts for plastic dissipation.

\subsection{The rescaled problem} \label{rescaled}
As usual in dimension reduction problems, it is convenient to perform a change of variables in such a way to rewrite the system on a fixed domain independent of $h$. 
To this purpose, 
we consider the open interval $I = \left(-\frac{1}{2}, \frac{1}{2}\right)$ and set
\begin{equation*}
    \Omega \,:=\, \omega \times I, \qquad 
    \Gamma_\Dir \,:=\, \partial\omega \times I. 
\end{equation*}
We consider the change of variables $\psi_h : \closure{\Omega} \to \closure{\Omega^h}$, defined as
\begin{equation} \label{eq:def-psih}
    \psi_h(x',x_3) := (x', hx_3) \quad \text{for every}\, (x',x_3) \in \closure{\Omega},
\end{equation}
and the linear operator $\Lambda_h : \M_{\sym}^{3 \times 3} \to \M_{\sym}^{3 \times 3}$ given by 
\begin{equation} \label{definition Lambda_h}
    \Lambda_h \xi:=\begin{pmatrix}
    \xi_{11} & \xi_{12} & \frac{1}{h}\xi_{13}
    \vspace{0.1 cm}\\
    \xi_{21} & \xi_{22} & \frac{1}{h}\xi_{23}
    \vspace{0.1 cm}\\
    \frac{1}{h}\xi_{31} & \frac{1}{h}\xi_{32} & \frac{1}{h^2}\xi_{33}
    \end{pmatrix}
    \quad\text{for every }\xi\in\M^{3 \times 3}_{\sym}.
\end{equation}
To any triple $(v,f,q) \in \calA(\Omega^h, w^h)$ we associate a triple $(u,e,p) \in BD(\Omega) \times L^2(\Omega;\M^{3 \times 3}_{\sym}) \times \allowbreak\Mb(\Omega \cup \Gamma_\Dir;\M^{3 \times 3}_{\sym})$ defined as follows:
\begin{equation*}
    u := (v_1, v_2, h v_3) \circ \psi_h, \qquad
    e := \Lambda_{h}^{-1}f\circ\psi_h, \qquad 
    p := \tfrac{1}{h}\Lambda_h^{-1}\psi_h^{\#}(q).
\end{equation*}
Here the measure $\psi_h^{\#}(q) \in \Mb(\Omega;\M^{3 \times 3})$ is the pull-back measure of $q$, satisfying 
\begin{equation*}
    \int_{\Omega \cup \Gamma_\Dir} \varphi : d \psi_h^{\#}(q) = \int_{\Omega^h \cup \Gamma_\Dir^h} (\varphi\circ\psi_h^{-1}) : dq \quad \text{ for every } \varphi \in C_0(\Omega \cup \Gamma_\Dir;\M^{3 \times 3}).
\end{equation*}
According to this change of variable we have
\begin{equation*}
    \calE_h(v,f,q) = h \calQ_h(\Lambda_h e) + h \calH_h(\Lambda_h p),
\end{equation*}
where
\begin{equation} \label{definition Q_h} 
    \calQ_h(\Lambda_h e) = \int_{\Omega} Q\left(\frac{x'}{\epsh}, \Lambda_h e\right) \,dx
\end{equation}
and
\begin{equation} \label{definition H_h} 
    \calH_h(\Lambda_h p) = \int_{\Omega \cup \Gamma_\Dir} H\left(\frac{x'}{\epsh}, \frac{d\Lambda_h p}{d|\Lambda_h p|}\right) \,d|\Lambda_h p|.
\end{equation}

We also introduce the scaled Dirichlet boundary datum $w \in H^1(\Omega;\R^3)$, given by
\begin{equation*}
    w(x):=(\bar{w}_1(x')-{x_3}\partial_1 w_3(x'), \bar{w}_2(x')-x_3\partial_2 w_3(x'), w_3(x'))\quad\text{for a.e.\ }x\in\Omega.
\end{equation*}
By the definition of the class $\calA(\Omega^h, w^h)$ it follows that the scaled triple $(u,e,p)$ satisfies 
\begin{eqnarray}
& Eu=e+p \quad\text{in }\Omega,
 \label{straindec*}
\\
& p=(w-u)\odot\nu_{\partial\Omega}\calH^2 \quad\text{on }\Gamma_\Dir,
 \label{boundcondp*}
\\
& p_{11}+p_{22}+\frac{1}{h^2}p_{33}=0\quad\text{in }\Omega \cup \Gamma_\Dir.
 \label{trace*}
\end{eqnarray} 
We are thus led to introduce the class $\calA_h(w)$ of all triples $(u,e,p) \in BD(\Omega) \times L^2(\Omega;\M^{3 \times 3}_{\sym}) \times \allowbreak\Mb(\Omega \cup \Gamma_\Dir;\M^{3 \times 3}_{\sym})$ satisfying \eqref{straindec*}--\eqref{trace*}, and to define the functional
\begin{equation} \label{jep}
    \calJ_{h}(u,e,p) := \calQ_h(\Lambda_h e)+\calH_h(\Lambda_h p)
\end{equation}
for every $(u,e,p) \in \mathcal A_h(w)$. 
In the following we will study the asymptotic behaviour \BBB of  the quasistatic evolution \BBB associated with $\calJ_{h}$, as $h \to 0$ and $\eps_h \to 0$.

\BBB Notice that if  $\bar{w}_\alpha \in H^1(\ext{\omega})$, $\alpha=1,2$, and $\bar{w}_3 \in H^2(\ext{\omega})$, where $\omega \subset \ext{\omega}$, then we can trivially extend the triple $(u,e,p)$ to $\ext{\Omega}:= \ext{\omega} \times I$ by 
$$u=w, \qquad e=Ew, \qquad p=0 \qquad \text{ on } \ext{\Omega} \setminus \closure{\Omega}. $$
In the following, \EEE with a slight abuse of notation, we will still denote this extension by $(u,e,p)$, whenever such an extension procedure will be needed. \BBB
\BBB 
\medskip

\noindent{\bf Kirchhoff-Love admissible triples and limit energy.} 

We consider the set of {\em Kirchhoff-Love displacements}, defined as
\begin{equation*}
    KL(\Omega):= \big\{u \in BD(\Omega) : \ (Eu)_{i3}=0 \quad\text{for } i=1,2,3\big\}.
\end{equation*}
We note that $u \in KL(\Omega)$ if and only if $u_3\in BH(\omega)$ and there exists $\bar{u}\in BD(\omega)$ such that 
\begin{equation} \label{ualfa}
    u_{\alpha}=\bar{u}_{\alpha}-x_3\partial_{x_\alpha}u_3, \quad \alpha=1,2.
\end{equation}
In particular, if $u \in KL(\Omega)$, then 
\begin{equation} \label{symmetric gradient of KL functions}
    Eu = \begin{pmatrix} \begin{matrix} E\bar{u} - x_3 D^2u_3 \end{matrix} & \begin{matrix} 0 \\ 0 \end{matrix} \\ \begin{matrix} 0 & 0 \end{matrix} & 0 \end{pmatrix}.
\end{equation}
If, in addition, $u \in W^{1,p}(\Omega;\R^3)$ for some $1 \leq p \leq \infty$, then $\bar{u}\in W^{1,p}(\omega;\R^2)$ and $u_3\in W^{2,p}(\omega)$. 
We call $\bar{u}, u_3$ the {\em Kirchhoff-Love components} of $u$. 

For every $w\in H^1(\Omega;\R^3) \cap KL(\Omega)$ we define the class $\calA_{KL}(w)$ of {\em Kirchhoff-Love admissible triples} for the boundary datum~$w$ as the set of all triples $(u,e,p) \in KL(\Omega) \times L^2(\Omega;\M^{3 \times 3}_{\sym}) \times \Mb(\Omega \cup \Gamma_\Dir;\M^{3 \times 3}_{\sym})$ satisfying
\begin{eqnarray}
& Eu=e+p \quad\text{in }\Omega, \qquad  
p=(w-u)\odot\nu_{\partial\Omega}\calH^2 \quad\text{on }\Gamma_\Dir, \label{AKL1}\\ 
& e_{i3}=0 \quad\text{in }\Omega, \quad p_{i3}=0 \quad\text{in }\Omega \cup \Gamma_\Dir,  \quad i=1,2,3.
 \label{AKL2}
\end{eqnarray}
Note that the space
\begin{equation*}
    \big\{\xi\in \M^{3 \times 3}_{\sym}: \ \xi_{i3}=0 \text{ for }i=1,2,3\big\}
\end{equation*}
is canonically isomorphic to $\M^{2 \times 2}_{\sym}$. Therefore, in the following, given a triple $(u,e,p) \in \calA_{KL}(w)$ we will usually identify $e$ with a function in $L^2(\Omega;\M^{2 \times 2}_{\sym})$ and $p$ with a measure in $\Mb(\Omega \cup \Gamma_\Dir;\M^{2 \times 2}_{\sym})$. Note also that the class $\calA_{KL}(w)$ is always nonempty as it contains the triple $(w,Ew,0)$. 

To provide a useful characterisation of admissible triplets in $\calA_{KL}(w)$, let us first recall the definition of \CCC zero-th \BBB and first order moments of functions. 

\begin{definition} \label{moments of functions}
For $f \in L^2(\Omega;\M^{2 \times 2}_{\sym})$ we denote by $\bar{f}$, $\hat{f} \in L^2(\omega;\M^{2 \times 2}_{\sym})$ and $f^\perp \in L^2(\Omega;\M^{2 \times 2}_{\sym})$ the following orthogonal components (with respect to the scalar product of $L^2(\Omega;\M^{2 \times 2}_{\sym})$) of $f$:
\begin{equation} \label{moments1} 
    \bar{f}(x') := \int_{-\frac{1}{2}}^{\frac{1}{2}} f(x',x_3)\, dx_3, \qquad
    \hat{f}(x') := 12 \int_{-\frac{1}{2}}^{\frac{1}{2}}x_3 f(x',x_3)\,dx_3
\end{equation}
for a.e.\ $x' \in \omega$, and
\begin{equation*}
    f^\perp(x) := f(x) - \bar{f}(x') - x_3 \hat{f}(x')    
\end{equation*}
for a.e.\ $x \in \Omega$.
We name $\bar{f}$ the \emph{zero-th order moment} of $f$ and $\hat{f}$ the \emph{first order moment} of $f$. \CCC More generally, we will also use the expressions \eqref{moments1} for any integrable function over $I$.  \BBB
\end{definition}

The coefficient in the definition of $\hat{f}$ is chosen from the computation $\int_{I} x_3^2 \,dx_3 = \frac{1}{12}$. 
It ensures that if $f$ is of the form $f(x) = x_3 g(x')$, for some $g \in L^2(\omega;\M^{2 \times 2}_{\sym})$, then $\hat{f} = g$.

Analogously, we have the following definition of zero-th and first order moments of measures. 

\begin{definition} \label{moments of measures}
For $\mu \in M_b(\Omega \cup \Gamma_\Dir;\M^{2 \times 2}_{\sym})$ we define $\bar{\mu}$, $\hat{\mu} \in M_b(\omega \cup \gamma_\Dir;\M^{2 \times 2}_{\sym})$ and $\mu^\perp \in M_b(\Omega \cup \Gamma_\Dir;\M^{2 \times 2}_{\sym})$ as follows:
\begin{equation*}
    \int_{\omega \cup \gamma_\Dir} \varphi: d\bar{\mu}:= \int_{\Omega \cup \Gamma_\Dir} \varphi: d\mu , \qquad
    \int_{\omega \cup \gamma_\Dir} \varphi: d\hat{\mu} := 12\int_{\Omega \cup \Gamma_\Dir} x_3 \varphi: d\mu 
\end{equation*}
for every $\varphi \in C_0(\omega \cup \gamma_\Dir;\M^{2 \times 2}_{\sym})$, and
\begin{equation*}
    \mu^\perp := \mu - \bar{\mu} \otimes \calL^{1}_{x_3} - \hat{\mu} \otimes x_3 \calL^{1}_{x_3},
\end{equation*}
where $\otimes$ is the usual product of measures, and $\calL^{1}_{x_3}$ is the Lebesgue measure restricted to the third component of $\R^3$. 
We call $\bar{\mu}$ the \emph{zero-th order moment} of $\mu$ and $\hat{\mu}$ the \emph{first order moment} of $\mu$.
\end{definition}

We are now ready to recall the following characterisation of $\calA_{KL}(w)$, given in \cite[Proposition 4.3]{Davoli.Mora.2013}.

\begin{proposition} \label{A_KL characherization}
Let $w \in H^1(\Omega;\R^3) \cap KL(\Omega)$ and let $(u,e,p) \in  {KL}(\Omega) \times L^2(\Omega;\M^{3 \times 3}_{\sym}) \times \Mb(\Omega \cup \Gamma_\Dir;\M^{3 \times 3}_{\dev})$. 
Then $(u,e,p) \in \calA_{KL}(w)$ if and only if the following three conditions are satisfied:
\begin{enumerate}[label=(\roman*)]
    \item $E\bar{u} = \bar{e}+\bar{p}$ in $\omega$ and $\bar{p} = (\bar{w}-\bar{u}) \odot \nu_{\partial\omega} \calH^1$ on $\gamma_\Dir$;
    \item $D^2u_3 = - (\hat{e}+\hat{p})$ in $\omega$, $u_3 = w_3$ on $\gamma_\Dir$, and $\hat{p} = (\nabla u_3-\nabla w_3) \odot \nu_{\partial\omega} \calH^1$ on $\gamma_\Dir$;
    \item $p^\perp = - e^\perp$ in $\Omega$ and $p^\perp = 0$ on $\Gamma_\Dir$.
\end{enumerate}
\end{proposition}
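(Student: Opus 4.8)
The statement is a characterization of $\calA_{KL}(w)$ in terms of the zero-th and first order moments of the triple $(u,e,p)$. The plan is to start from the defining relations \eqref{AKL1}--\eqref{AKL2} and systematically decompose each of them using the moment decomposition $f = \bar{f} + x_3\hat{f} + f^\perp$ for $L^2$-functions (Definition \ref{moments of functions}) and the analogous one for measures (Definition \ref{moments of measures}). First I would observe that since $(u,e,p) \in \calA_{KL}(w)$ forces $u \in KL(\Omega)$, the representation \eqref{symmetric gradient of KL functions} applies, so $Eu$ has the explicit block form with in-plane part $E\bar{u} - x_3 D^2 u_3$ and vanishing $i3$ components; in particular $\overline{Eu} = E\bar u$ and $\widehat{Eu} = -D^2 u_3$ (using that $\int_I x_3\,dx_3 = 0$ and the normalization $\int_I x_3^2\,dx_3 = \tfrac1{12}$), while $(Eu)^\perp = 0$. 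This identifies the left-hand sides appearing in (i), (ii), (iii).

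Next I would project the strain-decomposition equation $Eu = e + p$ in $\Omega$ onto the three mutually orthogonal moment components. The zero-th moment gives $E\bar u = \bar e + \bar p$ in $\omega$; the first moment gives $-D^2 u_3 = \hat e + \hat p$ in $\omega$; and the perpendicular component gives $0 = e^\perp + p^\perp$, i.e. $p^\perp = -e^\perp$ in $\Omega$. For the boundary condition on $\Gamma_\Dir$, I would use that $w \in KL(\Omega)$ has the same Kirchhoff-Love structure, so $w - u$ restricted to $\Gamma_\Dir$ decomposes through its own components $\bar w - \bar u \in BD(\omega)$ and $\nabla w_3 - \nabla u_3$; plugging $w - u$ in the form \eqref{ualfa} into $(w-u)\odot \nu_{\partial\Omega}\calH^2$ and using that $\nu_{\partial\Omega} = \iota(\nu_{\partial\omega})$ has no $x_3$-component, one reads off that the zero-th moment of $p\mres\Gamma_\Dir$ is $(\bar w - \bar u)\odot\nu_{\partial\omega}\calH^1$, the first moment is $(\nabla u_3 - \nabla w_3)\odot\nu_{\partial\omega}\calH^1$ (the sign coming from the $-x_3\partial_{x_\alpha}$ term), and the perpendicular part vanishes. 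The condition $u_3 = w_3$ on $\gamma_\Dir$ follows from matching traces of the $KL$ components. The converse direction is then obtained by reversing these computations: given the three sets of conditions, one reconstructs $Eu = e+p$ and the boundary identity by recombining the moments, using that a function (resp. measure) is determined by its three orthogonal components, and one checks $e_{i3} = 0$, $p_{i3}=0$ are already built into the ambient function spaces we identified with $\M^{2\times2}_{\sym}$.

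The main obstacle, and the only place requiring genuine care, is the bookkeeping on the Dirichlet boundary: one must justify that the measure $p$ on $\Gamma_\Dir$, which a priori lives on $\partial\omega \times I$, disintegrates as a product in the $x_3$-variable in exactly the way the moment definitions for measures require, so that its perpendicular part is well-defined and vanishes, and that the first-order moment picks up precisely $(\nabla u_3 - \nabla w_3)\odot\nu_{\partial\omega}$ with the correct sign and the factor $12$ absorbed consistently. This is handled by testing against $\varphi \in C_0(\omega\cup\gamma_\Dir;\M^{2\times2}_{\sym})$ and against $x_3\varphi$, and using that on $\Gamma_\Dir$ both $u$ and $w$ have affine-in-$x_3$ Kirchhoff-Love structure so that $w-u$ is affine in $x_3$ there; the $x_3$-independent and $x_3$-linear pieces then separate cleanly under these two test-function families. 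Everything else is a routine orthogonal projection argument, and since this is precisely \cite[Proposition 4.3]{Davoli.Mora.2013}, I would at this point simply cite that reference for the full details.
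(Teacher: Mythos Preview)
Your proposal is correct and matches the paper's treatment: the paper does not prove this proposition at all but simply recalls it as \cite[Proposition 4.3]{Davoli.Mora.2013}, and your sketch correctly outlines the moment-decomposition argument underlying that result before invoking the same citation. The only minor imprecision is that the condition $u_3 = w_3$ on $\gamma_\Dir$ is not just a trace-matching consequence but arises specifically from the requirement $p_{i3}=0$ on $\Gamma_\Dir$ in \eqref{AKL2} applied to $(w-u)\odot\nu_{\partial\Omega}$; otherwise your plan is sound.
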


\subsection{The reduced problem} \label{reduced}

For a fixed $y \in \calY$, let $\A_y : \M^{2 \times 2}_{\sym} \to \M^{3 \times 3}_{\sym}$ be the operator given by
\[
    \A_y \xi
    :=
    \begin{pmatrix} \begin{matrix} \xi \end{matrix} & \begin{matrix} \lambda^y_1(\xi) \\ \lambda^y_2(\xi) \end{matrix} \\ \begin{matrix} \lambda^y_1(\xi) & \lambda^y_2(\xi) \end{matrix} & \lambda^y_3(\xi) \end{pmatrix}
    \quad \text{for every }\xi\in\M^{2 \times 2}_{\sym},
\]
where for every $\xi\in\M^{2 \times 2}_{\sym}$ the triple $(\lambda^y_1(\xi),\lambda^y_2(\xi),\lambda^y_3(\xi))$ is the unique solution to the minimum problem
\begin{equation} \label{Q_r minimum problem}
    \min_{\lambda_i \in \R} Q\left(y, \begin{pmatrix} \begin{matrix} \xi \end{matrix} & \begin{matrix} \lambda_1 \\ \lambda_2 \end{matrix} \\ \begin{matrix} \lambda_1 & \lambda_2 \end{matrix} & \lambda_3 \end{pmatrix} \right).
\end{equation}
We observe that for every $\xi\in\M^{2 \times 2}_{\sym}$, the matrix $\A_y\xi$ is given by the unique solution of the linear system
\[
    \C(y) \A_y \xi : \begin{pmatrix} 0 & 0 & \lambda^y_1 \\ 0 & 0 & \lambda^y_2 \\ \lambda^y_1 & \lambda^y_2 & \lambda^y_3 \end{pmatrix} = 0 \quad \text{for every } \lambda_1, \lambda_2, \lambda_3 \in \R.
\]
This implies, in particular, for every $y \in \calY$ that $\A_y$ is a linear map.

Let $Q_r : \calY \times \M^{2 \times 2}_{\sym}\to [0,+\infty)$ be the map
\[
    Q_r(y, \xi) := Q(y, \A_y \xi) \quad \text{for every } \xi \in \M^{2 \times 2}_{\sym}.
\]
By the properties of $Q$, we have that $Q_r(y,\cdot)$ is positive definite on symmetric matrices.

We also define the tensor $\C_r : \calY \times \M^{2 \times 2}_{\sym} \to \M^{3 \times 3}_{\sym}$, given by
\[
    \C_r(y) \xi := \C(y) \A_y \xi \quad \text{for every } \xi \in \M^{2 \times 2}_{\sym}.
\]
We remark that by \eqref{Q_r minimum problem} \CCC it \BBB holds
\[
    \C_r(y) \xi : \zeta = \C(y) \A_y \xi : \begin{pmatrix} \zeta^{\prime\prime} & 0 \\ 0 & 0 \end{pmatrix} \quad\text{for every } \xi\in \M^{2 \times 2}_{\sym},\,\zeta\in\M^{3 \times 3}_{\sym},
\]
and
\[
    Q_r(y, \xi) = \frac{1}{2} \C_r(y) \xi : \begin{pmatrix} \xi & 0 \\ 0 & 0 \end{pmatrix} \quad \text{for every } \xi \in \M^{2 \times 2}_{\sym}.
\]

\medskip
\noindent{\bf The reduced dissipation potential.}

The set $K_r(y) \subset \M^{2 \times 2}_{\sym}$ represents the set of admissible stresses in the reduced problem and can be characterised as follows (see \cite[Section 3.2]{Davoli.Mora.2013}):
\begin{equation} \label{K_r characherization}
    \xi \in K_r(y)
    \quad \iff \quad 
    \begin{pmatrix}
    \xi_{11} & \xi_{12} & 0 \\
    \xi_{12} & \xi_{22} & 0 \\
    0 & 0 & 0
    \end{pmatrix}
    - \frac{1}{3}({\rm tr}\xi)I_{3 \times 3} \in K(y),
\end{equation}
where $I_{3 \times 3}$ is the identity matrix in $\M^{3 \times 3}$.

The \emph{plastic dissipation potential} $H_r : \calY \times \M^{2 \times 2}_{\sym} \to [0,+\infty)$ is given by the support function of $K_r(y)$, i.e
\begin{equation*}
    H_r(y, \xi) := \sup\limits_{\sigma \in K_r(y)}\, \sigma : \xi \quad \text{for every}\, \xi \in \M^{2 \times 2}_{\sym}.
\end{equation*}
It follows that $H_r(y,\cdot)$ is convex and positively 1-homogeneous, and there are two constants $0 < r_H \leq R_H$ such that 
\begin{equation*}
    r_H |\xi| \leq H_r(y, \xi) \leq R_H |\xi| \quad \text{for every}\, \xi \in \M^{2 \times 2}_{\sym}.
\end{equation*}
Therefore $H_r(y,\cdot)$ satisfies the triangle inequality 
\begin{equation*}
H_r(y, \xi_1+\xi_2) \leq H_r(y, \xi_1) + H_r(y, \xi_2) \quad \text{for every}\, \xi_1,\, \xi_2 \in \M^{2 \times 2}_{\sym}.
\end{equation*}
Finally, for a fixed $y \in \calY$, we can deduce the property
\begin{equation*}
    K_r(y) = \partial H_r(y, 0).
\end{equation*}

\subsection{Definition of quasistatic evolutions}
The $\calH_h$-variation of a map $\mu : [0,T] \to  \Mb(\Omega \cup \Gamma_\Dir;\M^{3 \times 3}_{\dev})$ on $[a,b]$ is defined as
\begin{equation*}
    \calD_{\calH_h}(
    \mu; a, b) := \sup\left\{ \sum_{i = 1}^{n-1} \calH_h\left(\mu(t_{i+1}) - \mu(t_i)\right) : a = t_1 < t_2 < \ldots < t_n = b,\ n \in \N \right\}.
\end{equation*}

For every $t \in [0, T]$ we prescribe a boundary datum $w(t) \in H^1(\Omega;\R^3) \cap KL(\Omega)$ and we assume the map $t\mapsto w(t)$ to be absolutely continuous from $[0, T]$ into $H^1(\Omega;\R^3)$.

\begin{definition} \label{h-quasistatic evolution} 
Let $h > 0$. 
An \emph{$h$-quasistatic evolution} for the boundary datum $w(t)$ is a function $t \mapsto (u^h(t), e^h(t), p^h(t))$ from $[0,T]$ into $BD(\Omega) \times L^2(\Omega;\M^{3 \times 3}_{\sym}) \times \Mb(\Omega \cup \Gamma_\Dir;\M^{3 \times 3}_{\sym})$ that satisfies the following conditions:
\begin{enumerate}[label=(qs\arabic*)$_{h}$]
    \item \label{h-qs S} for every $t \in [0,T]$ we have $(u^h(t), e^h(t), p^h(t)) \in \calA_h(w(t))$ and
    \begin{equation*}
        \calQ_h(\Lambda_h e^h(t)) \leq \calQ_h(\Lambda_h \eta) + \calH_h(\Lambda_h \pi-\Lambda_h p^h(t)),
    \end{equation*}
    for every $(\upsilon,\eta,\pi) \in \calA_h(w(t))$.
    \item \label{h-qs E} the function $t \mapsto p^h(t)$ from $[0, T]$ into $\Mb(\Omega \cup \Gamma_\Dir;\M^{3 \times 3}_{\sym})$ has bounded variation and for every $t \in [0, T]$
    \begin{equation*}
        \calQ_h(\Lambda_h e^h(t)) + \calD_{\calH_h}(\Lambda_h p^h; 0, t) = \calQ_h(\Lambda_h e^h(0)) 
        + \int_0^t \int_{\Omega} \C\left(\tfrac{x'}{\epsh}\right) \Lambda_h e^h(s) : E\dot{w}(s) \,dx ds.
    \end{equation*}
\end{enumerate}
\end{definition}

The following existence result of a quasistatic evolution for a general multi-phase material can be found in \cite[Theorem 2.7]{Francfort.Giacomini.2012}.

\begin{theorem} \EEE Assume \eqref{tensorassumption}, \eqref{tensorassumption2}, and  \eqref{coercivity of H_i}. \BBB
Let $h > 0$ and let $(u^h_0, e^h_0, p^h_0) \in \calA_h(w(0))$ satisfy the global stability condition \ref{h-qs S}. 
Then, there exists a two-scale quasistatic evolution $t \mapsto (u^h(t), e^h(t), p^h(t))$ for the boundary datum $w(t)$ such that $u^h(0) = u_0$,\, $e^h(0) = e^h_0$, and $p^h(0) = p^h_0$.
\end{theorem}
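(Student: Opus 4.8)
The plan is to run the by-now classical time-incremental variational scheme of \cite{DalMaso.DeSimone.Mora.2006,Francfort.Giacomini.2012}, adapted to the rescaled functionals; since $h>0$ is fixed throughout, no two-scale machinery is needed. First observe that, for fixed $h$, the map $\xi\mapsto Q\!\left(\tfrac{x'}{\epsh},\Lambda_h\xi\right)$ is a Borel family of quadratic forms on $\M^{3\times3}_{\sym}$ which, by \eqref{coercivity of Q} and the fact that $\Lambda_h$ is a linear isomorphism of $\M^{3\times3}_{\sym}$, satisfies two-sided bounds $c_h|\xi|^2\le Q\!\left(\tfrac{x'}{\epsh},\Lambda_h\xi\right)\le C_h|\xi|^2$ with constants depending only on $h$; similarly, by \eqref{coercivity of H_i} and \Cref{Reshetnyak remark 2}, $\xi\mapsto H\!\left(\tfrac{x'}{\epsh},\Lambda_h\xi\right)$ is Borel, positively one-homogeneous and convex in $\xi$, lower semicontinuous in $(x',\xi)$, and comparable to $|\xi|$ on the affine subspace singled out by \eqref{trace*}. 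Hence $\calJ_h$ falls within the abstract framework of perfect plasticity with a measurable coercive elasticity tensor and an admissible dissipation, so the argument of \cite[Theorem 2.7]{Francfort.Giacomini.2012} applies once the $\Lambda_h$-conjugation and the linear constraint \eqref{trace*} are carried along; we indicate the steps.

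\textbf{Step 1 (incremental minimization and a priori bounds).} Given $n\in\N$, set $t_k^n:=kT/n$ and, starting from $(u_0^h,e_0^h,p_0^h)$, define recursively $(u_k^n,e_k^n,p_k^n)\in\calA_h(w(t_k^n))$ as a minimizer of $(u,e,p)\mapsto\calQ_h(\Lambda_h e)+\calH_h(\Lambda_h p-\Lambda_h p_{k-1}^n)$. Existence follows from the direct method: along a minimizing sequence the elastic energy is bounded, hence $e$ is bounded in $L^2$ by coercivity of $Q$; the dissipation term is bounded, hence by \eqref{straindec*} and the $BD$-trace inequality on $\Gamma_\Dir$ the displacement $u$ is bounded in $BD(\Omega)$, whence $p=Eu-e$ is bounded in $\Mb$; weak-$*$ compactness together with weak-$*$ lower semicontinuity of $\calQ_h$ and of $\mu\mapsto\calH_h(\mu)$ (\Cref{Convex functions of measures}) and weak-$*$ closedness of $\calA_h(w(t_k^n))$ under the linear relations \eqref{straindec*}--\eqref{trace*} yields a minimizer. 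Minimality gives the discrete global stability inequality, and comparing $(u_k^n,e_k^n,p_k^n)$ with the competitor $\big(u_{k-1}^n+w(t_k^n)-w(t_{k-1}^n),\,e_{k-1}^n+Ew(t_k^n)-Ew(t_{k-1}^n),\,p_{k-1}^n\big)$ and expanding $\calQ_h$ gives the discrete energy inequality
\[
\calQ_h(\Lambda_h e_k^n)+\calH_h(\Lambda_h p_k^n-\Lambda_h p_{k-1}^n)\le\calQ_h(\Lambda_h e_{k-1}^n)+\int_{t_{k-1}^n}^{t_k^n}\!\!\int_\Omega\C\!\left(\tfrac{x'}{\epsh}\right)\Lambda_h e_{k-1}^n:E\dot w\,dx\,ds+R_k^n,
\]
with $\sum_k|R_k^n|\to0$. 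A discrete Gronwall argument then bounds, uniformly in $n$, $\max_k\calQ_h(\Lambda_h e_k^n)$ and $\sum_k\calH_h(\Lambda_h p_k^n-\Lambda_h p_{k-1}^n)$, hence the $\Mb$-variation of the piecewise-constant interpolant $p^n(\cdot)$ via \eqref{equivalence of variations}.

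\textbf{Step 2 (passage to the limit).} By the generalized Helly theorem for maps of bounded $\calD_{\calH_h}$-variation there are a subsequence and a function $p^h$ of bounded variation with $p^n(t)\weakstar p^h(t)$ in $\Mb$ for every $t\in[0,T]$; a diagonal argument gives $e^n(t)\weak e^h(t)$ in $L^2$ for every $t$, and $u^h(t)$ is then determined by \eqref{straindec*} and the boundary datum, so that $(u^h(t),e^h(t),p^h(t))\in\calA_h(w(t))$ for all $t$, with $(u^h(0),e^h(0),p^h(0))=(u_0^h,e_0^h,p_0^h)$. Passing to the limit in the discrete stability inequality---using a competitor corrected by $w(t_{k(n)}^n)-w(t)$, lower semicontinuity of $\calQ_h$ and $\calH_h$, and continuity of $t\mapsto w(t)$ in $H^1$---yields \ref{h-qs S}. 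Lower semicontinuity of $\calQ_h$ and of the $\calD_{\calH_h}$-variation applied to the discrete energy inequality gives ``$\le$'' in \ref{h-qs E}; the reverse inequality is obtained, as usual, by testing \ref{h-qs S} at times $s$ and $s+\tau$ with the competitor $\big(u^h(s)+w(s+\tau)-w(s),\,e^h(s)+Ew(s+\tau)-Ew(s),\,p^h(s)\big)$, dividing by $\tau$, integrating in $s$, and using absolute continuity of $w$ together with $\sup_t\calQ_h(\Lambda_h e^h(t))<\infty$. This establishes \ref{h-qs E}.

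\textbf{Main obstacle.} The only genuinely delicate points are those already present in the (in)homogeneous theory: since $\calJ_h$ grows only linearly in $p$, the a priori bound on $p$ must be obtained indirectly from $Eu=e+p$, the $L^2$-bound on $e$, and the $BD$-trace control on $\Gamma_\Dir$; and one must check that the affine constraint \eqref{trace*} and the boundary conditions \eqref{straindec*}--\eqref{boundcondp*} are preserved by all competitors used in Steps 1--2, which holds because $Ew$ is compatible with the Kirchhoff--Love structure and the constraints are linear. With $h$ fixed, the dependence on $x'/\epsh$ plays no role, and the argument is a routine transcription of \cite[Theorem 2.7]{Francfort.Giacomini.2012}.
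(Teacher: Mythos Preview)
The paper does not prove this statement; it simply records it as \cite[Theorem~2.7]{Francfort.Giacomini.2012}. Your sketch is precisely an outline of that cited time-incremental argument, so the approaches coincide.

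Two small technical corrections. First, in Step~2 you write that ``a diagonal argument gives $e^n(t)\rightharpoonup e^h(t)$ in $L^2$ for every $t$''. Diagonal extraction works only on countably many times, not on all of $[0,T]$; the standard remedy in \cite{DalMaso.DeSimone.Mora.2006,Francfort.Giacomini.2012} is to note that, once $p^h(t)$ and $w(t)$ are fixed, strict convexity of $Q$ makes $e^h(t)$ the \emph{unique} minimizer of $\eta\mapsto\calQ_h(\Lambda_h\eta)$ over the affine class $\{\eta:(v,\eta,p^h(t))\in\calA_h(w(t))\ \text{for some }v\}$, so $e^n(t)\rightharpoonup e^h(t)$ along the full subsequence already selected by Helly, with no further extraction. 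Second, in Step~1 your chain of a~priori bounds is inverted: coercivity of $H$ first gives $\Lambda_h p$ (hence $p$) bounded in $\Mb$, then $Eu=e+p$ yields $Eu$ bounded, and only then does Poincar\'e--Korn together with the Dirichlet datum on $\Gamma_\Dir$ give the $BD$-bound on $u$---not the other way round. With these fixes your outline is a faithful transcription of the cited proof.
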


\BBB Our goal is to study the asymptotics of the quasistatic evolution when $h$ goes to zero. The main result is given by Theorem \ref{main result}.\BBB 

\subsection{Two-scale convergence adapted to dimension reduction}

We briefly recall some results and definitions from \cite{Francfort.Giacomini.2014}. 

\begin{definition}
\label{def:2-scale-meas}
Let $\Omega \subset \R^3$ be an open set.
Let $\{\mu^h\}_{h>0}$ be a family in $\Mb(\Omega)$ and consider $\mu \in \Mb(\Omega \times \calY)$. 
We say that
\begin{equation*}
    \mu^h \weakstartwoscale \mu \quad \text{two-scale weakly* in }\Mb(\Omega \times \calY),
\end{equation*}
if for every $\chi \in C_0(\Omega \times \calY)$
\begin{equation*}
    \lim_{h \to 0} \int_{\Omega} \chi\left(x,\frac{x'}{\epsh}\right) \,d\mu^h(x) = \int_{\Omega \times \calY} \chi(x,y) \,d\mu(x,y).
\end{equation*}
The convergence above is called \emph{two-scale weak* convergence}.
\end{definition}
\BBB
\begin{remark} \label{transfertwoscale}
Notice that the family $\{\mu^h\}_{h>0}$ determines the family of measures $\{\nu^h\}_{h>0} \subset \Mb(\Omega \times \calY)$ obtained by setting
$$\int_{\Omega \times \calY} \chi(x,y)\,d\nu^h=\int_{\Omega} \chi \left(x,\frac{x'}{\CCC \eps_h \BBB}\right) \,d\mu^h (x)$$
for every $\chi \in C_0^0(\Omega \times \calY)$. Thus $\mu$ is simply the weak* limit in $\Mb(\Omega \times \calY)$ 
\CCC of \BBB $\{\nu^h\}_{h>0}$. 
\end{remark} 

We collect some basic properties \BBB of two-scale convergence \BBB below (\CCC the first one is a direct consequence of Remark \ref{transfertwoscale} and the second one follows from the definition). \III Before stating them recall \eqref{periodic set notation}. \BBB
\begin{proposition}
\begin{enumerate}[label=(\roman*)]
    \item
    Any sequence that is bounded in $\Mb(\Omega)$ admits a two-scale weakly* convergent subsequence.
    \item 
    Let $\calD \subset \calY$ and assume that 
    $\supp(\mu^h) \subset \Omega \cap (\calD_\epsh \times I)$.
    If $\mu^h \weakstartwoscale \mu$ two-scale weakly* in $\Mb(\Omega \times \calY)$, then $\supp(\mu) \subset \Omega \times \closure{\calD}$.
\end{enumerate}
\end{proposition}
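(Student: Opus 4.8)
The plan is to deduce both assertions from the identification in Remark~\ref{transfertwoscale} between the family $\{\mu^h\}$ and its lift $\{\nu^h\}\subset\Mb(\Omega\times\calY)$, which turns two-scale weak* compactness into ordinary weak* compactness in $\Mb(\Omega\times\calY)$.

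For part~(i) I would first observe that the lift is norm non-increasing: testing the defining relation $\int_{\Omega\times\calY}\chi\,d\nu^h=\int_{\Omega}\chi\big(x,\tfrac{x'}{\epsh}\big)\,d\mu^h(x)$ against $\chi\in C_0(\Omega\times\calY)$ with $\|\chi\|_{\infty}\le1$ and taking the supremum gives $|\nu^h|(\Omega\times\calY)\le|\mu^h|(\Omega)$. Hence, if $\{\mu^h\}$ is bounded in $\Mb(\Omega)$, then $\{\nu^h\}$ is bounded in $\Mb(\Omega\times\calY)$. Since $\Omega\times\calY$ is locally compact, $\sigma$-compact and metrizable, $C_0(\Omega\times\calY)$ is separable, so the sequential Banach--Alaoglu theorem provides a subsequence with $\nu^{h_k}\weakstar\mu$ in $\Mb(\Omega\times\calY)$ for some $\mu\in\Mb(\Omega\times\calY)$; by Remark~\ref{transfertwoscale} this means precisely $\mu^{h_k}\weakstartwoscale\mu$.

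For part~(ii) I would use a duality/support test. Let $\chi\in C_0(\Omega\times\calY)$ be arbitrary with $\supp\chi\subset\Omega\times(\calY\setminus\closure{\calD})$; it is enough to show $\int_{\Omega\times\calY}\chi\,d\mu=0$, since then $\mu$ vanishes on the open set $\Omega\times(\calY\setminus\closure{\calD})$ and hence $\supp\mu\subset\Omega\times\closure{\calD}$. By the definition of two-scale weak* convergence, $\int_{\Omega\times\calY}\chi\,d\mu=\lim_{h\to0}\int_{\Omega}\chi\big(x,\tfrac{x'}{\epsh}\big)\,d\mu^h(x)$. If $(x',x_3)$ belongs to $\supp(\mu^h)$, then $x'\in\calD_{\epsh}$, so by \eqref{periodic set notation} the fractional part of $\tfrac{x'}{\epsh}$ corresponds under $\calI$ to a point of $\calD$, i.e.\ the torus variable at which $\chi$ is evaluated lies in $\calD\subset\closure{\calD}$; therefore $\chi\big(x,\tfrac{x'}{\epsh}\big)=0$ there. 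Consequently $\int_{\Omega}\chi\big(x,\tfrac{x'}{\epsh}\big)\,d\mu^h(x)=0$ for every $h$, and letting $h\to0$ gives $\int_{\Omega\times\calY}\chi\,d\mu=0$.

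I do not anticipate a genuine obstacle here: both statements are essentially soft. The only point requiring care is the bookkeeping with the canonical identification $\calI:\calY\to Y$ in part~(ii), namely that $x'\in\calD_{\epsh}$ really places the rescaled torus variable inside $\calD$ (and not merely in $\closure{\calD}$), since it is exactly this that allows the support condition on $\chi$ to annihilate the integrand.
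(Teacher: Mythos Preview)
Your proof is correct and follows exactly the approach the paper indicates: part~(i) via the lift of Remark~\ref{transfertwoscale} combined with sequential Banach--Alaoglu, and part~(ii) directly from the definition by testing against functions supported away from $\Omega\times\closure{\calD}$. The paper does not give a written-out proof beyond the parenthetical remark that ``the first one is a direct consequence of Remark~\ref{transfertwoscale} and the second one follows from the definition'', so your argument is precisely the intended one.
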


\section{Compactness results}
\label{compactness}

In this section, we provide a characterization of two-scale limits of symmetrized scaled gradients. 
We will consider sequences of deformations $\{v^h\}$ such that $v^h \in BD(\Omega^h)$ for every $h > 0$, their $L^1$-norms are uniformly bounded \BBB (up to rescaling)\BBB, and their symmetrized gradients $E v^h$ form a sequence of uniformly bounded Radon measures \BBB(again, up to rescaling). \BBB 
\BBB As already explained in Section \ref{rescaled}, \BBB we associate to the sequence $\{v^h\}$ above a rescaled sequence of maps $\{u^h\} \subset BD(\Omega)$, defined as
\begin{equation*}
    u^h := (v^h_1, v^h_2, h v^h_3) \circ \psi_h,
\end{equation*}
where $\psi_h$ is defined in \eqref{eq:def-psih}.
The symmetric gradients of the maps $\{v^h\}$ and $\{u^h\}$ are related as follows
\begin{equation} \label{eq:scaled-gradient}
    \BBB \frac{1}{h} E v^h = (\psi_h)_{\#} (\Lambda_h Eu^h). \BBB
\end{equation}
\BBB The boundedness of $\frac{1}{h}\|Ev^h\|_{\mathcal{M}_b(\Omega^h;\mathbb M^{3 \times 3}_{sym}) }  $ is equivalent to the boundedness of $\|\Lambda_h Eu^h\|_{\mathcal{M}_b(\Omega;\mathbb M^{3 \times 3}_{sym}) } $.
\BBB We will express our compactness result with respect to the sequence $\{u^h\}_{h>0}$. 
\BBB 

We first recall a compactness result for sequences of non-oscillating fields (see \cite{Davoli.Mora.2013}).

\begin{proposition} \label{two-scale weak limit of scaled strains - 2x2 submatrix}
\BBB Let $\{u^h\}_{h>0} \subset BD(\Omega)$ be a sequence such that there exists a constant $C>0$ for which $$\|u^h\|_{L^1(\Omega;\R^3)}+\|\Lambda_h Eu^h\|_{\mathcal{M}_b(\Omega;\mathbb M^{3 \times 3}_{sym}) } \leq C. $$
\BBB 
Then, there exist functions $\bar{u} = (\bar{u}_1, \bar{u}_2) \in BD(\omega)$ and $u_3 \in BH(\omega)$ such that, up to subsequences, there holds 
\begin{align*}
    u^h_{\alpha} &\strong \bar{u}_{\alpha}-x_3 \partial_{x_\alpha}u_3, \quad \text{strongly in }L^1(\Omega), \quad \alpha \in \{1,2\},\\
    u^h_3 &\strong u_3, \quad \text{strongly in }L^1(\Omega),\\
     Eu^h &\weakstar  \begin{pmatrix} E \bar{u} - x_3 D^2u_3 & 0 \\ 0 & 0 \end{pmatrix}  \quad \text{weakly* in }\Mb(\Omega;\M^{3 \times 3}_{\sym}). \BLACK
\end{align*}
\end{proposition}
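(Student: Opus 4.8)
The plan is to reduce the statement to the known compactness theory for Kirchhoff-Love scaling in $BD$, essentially following \cite{Davoli.Mora.2013}, and to extract the limiting Kirchhoff-Love structure from the vanishing of the out-of-plane components of the rescaled strain. First I would record the bound $\|u^h\|_{L^1(\Omega;\R^3)}+\|\Lambda_h E u^h\|_{\Mb(\Omega;\M^{3\times 3}_{\sym})}\le C$, and observe that, since $|E u^h|\le |\Lambda_h E u^h|$ componentwise in the in-plane block while the mixed and $33$ components of $\Lambda_h E u^h$ carry extra factors $1/h$ and $1/h^2$, the full measure $E u^h$ is itself uniformly bounded in $\Mb(\Omega;\M^{3\times 3}_{\sym})$, and moreover
\begin{equation*}
\|(E u^h)_{i3}\|_{\Mb(\Omega)} \le C h \to 0, \qquad i=1,2,3 .
\end{equation*}
Hence $\{u^h\}$ is bounded in $BD(\Omega)$, and by the compact embedding $BD(\Omega)\hookrightarrow L^1(\Omega;\R^3)$ and weak* compactness of bounded Radon measures we may pass to a subsequence with $u^h\to u$ strongly in $L^1(\Omega;\R^3)$ and $E u^h\weakstar Eu$ weakly* in $\Mb(\Omega;\M^{3\times 3}_{\sym})$, for some $u\in BD(\Omega)$.

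Next I would identify the limit as a Kirchhoff-Love displacement. By weak* lower semicontinuity of the total variation applied to the linear functionals $\sigma\mapsto \int \sigma:\varphi\,$ with $\varphi$ supported on the $i3$-entries, the bound $\|(E u^h)_{i3}\|_{\Mb(\Omega)}\to 0$ forces $(Eu)_{i3}=0$ for $i=1,2,3$, i.e. $u\in KL(\Omega)$. Invoking the characterization of $KL(\Omega)$ recalled in Section \ref{rescaled}, there exist $\bar u=(\bar u_1,\bar u_2)\in BD(\omega)$ and $u_3\in BH(\omega)$ with $u_\alpha=\bar u_\alpha - x_3\partial_{x_\alpha}u_3$ and
\begin{equation*}
Eu=\begin{pmatrix} E\bar u - x_3 D^2 u_3 & 0 \\ 0 & 0\end{pmatrix},
\end{equation*}
which is exactly the claimed weak* limit of $E u^h$. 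The strong $L^1$ convergences $u^h_3\to u_3$ and $u^h_\alpha\to \bar u_\alpha - x_3\partial_{x_\alpha}u_3$ then follow from $u^h\to u$ in $L^1(\Omega;\R^3)$ together with this explicit form of $u$; one should note that $u^h_3=h v^h_3\circ\psi_h$ already has a factor $h$ built in, so no further rescaling of the third component is needed here.

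The main obstacle is the very first reduction: one must be careful that the hypothesis only controls $\Lambda_h E u^h$, not $E u^h$ directly, and in particular a priori the third-order moments or the $x_3$-dependence of $u^h$ are not obviously controlled. The point is that $\Lambda_h$ \emph{amplifies} the out-of-plane components, so a bound on $\Lambda_h E u^h$ is \emph{stronger} than a bound on $E u^h$ in those entries — this is what yields both the $BD$ bound and the smallness $\|(Eu^h)_{i3}\|_{\Mb}\le Ch$. A secondary technical point, already handled in \cite{Davoli.Mora.2013}, is upgrading $u_3\in BD$-type regularity to $u_3\in BH(\omega)$: this uses that $\partial_{x_\alpha}u_3$ appears inside the in-plane components $u_\alpha\in BD(\Omega)$, so that $D^2 u_3 = -(\text{in-plane part of }Eu)$'s $x_3$-linear coefficient is a bounded measure on $\omega$. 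Since this is precisely the content of the cited compactness result, I would simply quote it for that step rather than reprove it, and restrict the new argument to verifying that the hypothesis $\|\Lambda_h E u^h\|_{\Mb}\le C$ places $\{u^h\}$ in the setting where that result applies.
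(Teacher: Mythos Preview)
Your proposal is correct and follows the standard argument from \cite{Davoli.Mora.2013}, which is precisely what the paper does here: it states the proposition as a recalled result and simply cites that reference without giving a proof. The key observations you isolate---that the bound on $\Lambda_h Eu^h$ is stronger than a bound on $Eu^h$ in the out-of-plane entries, yielding both the $BD$ compactness and $(Eu)_{i3}=0$, and that the $KL$ structure then forces $\bar u\in BD(\omega)$, $u_3\in BH(\omega)$---are exactly the ingredients of the cited proof.
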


Now we turn to identifying the two-scale limits of the sequence $\Lambda_h E u^h$. 

\subsection{Corrector properties and duality results}
In order to define and analyze the space of measures which arise as two-scale limits of scaled symmetrized gradients of $BD$ functions, we will consider the following general framework \BBB (see also \cite{breit2020trace}). \BBB

Let $V$ and $W$ be finite-dimensional Euclidean spaces of dimensions $N$ and $M$, respectively.
We will consider $k$\textsuperscript{th} order linear homogeneous partial differential operators with constant coefficients $\DiffOpA : C_c^{\infty}(\R^n;V) \to C_c^{\infty}(\R^n;W)$. 
More precisely, the operator $\DiffOpA$ acts on functions $u : \R^n \to V$ as
\begin{equation*} \label{DiffOp definition}
    \DiffOpA u  \,:=\, \sum_{|\alpha| = k} \LinOpA_\alpha \partial^\alpha u.
\end{equation*}
where the coefficients $\LinOpA_\alpha \in W \otimes V^* \cong \mathrm{Lin}(V;W)$ are constant tensors, $\alpha = (\alpha_1, \dots, \alpha_n) \in \N_0^n$ is a multi-index and $\partial^\alpha := \partial_1^{\alpha_1} \cdots \partial_n^{\alpha_n}$ denotes the distributional partial derivative of order $|\alpha| = \alpha_1 + \cdots + \alpha_n$.

We define the space
\begin{equation*}
    BV^{\DiffOpA}(U) = \Big\{ u \in L^1(U;V) :\DiffOpA u \in \Mb(U;W) \Big\}
\end{equation*}
of \emph{functions with bounded $\DiffOpA$-variations} on an open subset $U$ of $\R^n$.
This is a Banach space endowed with the norm
\begin{equation*}
    \|u\|_{BV^{\DiffOpA}(U)} := \|u\|_{L^1(U;V)} + |\DiffOpA u|(U). 
\end{equation*}
Here, the distributional $\DiffOpA$-gradient is defined and extended to distributions via the duality
\begin{equation*}
    \int_{U} \varphi \cdot d\DiffOpA u := \int_{U} \DiffOpA^* \varphi \cdot u \,dx, \quad \varphi \in C_c^{\infty}(U;W^*),
\end{equation*}
where $\DiffOpA^* : C_c^{\infty}(\R^n;W^*) \to C_c^{\infty}(\R^n;V^*)$ is the formal $L^2$-adjoint operator of $\DiffOpA$
\begin{equation*} \label{DiffOp adjont definition}
    \DiffOpA^* \,:=\, (-1)^k \sum_{|\alpha| = k} \LinOpA_\alpha^* \partial^\alpha.
\end{equation*}
The \emph{total $\DiffOpA$-variation} of $u \in L^1_{loc}(U;V)$ is defined as
\begin{align*} \label{DiffOp variation}
    |\DiffOpA u|(U) := \sup\left\{\int_{U} \DiffOpA^*\varphi \cdot u \,dx : \varphi \in C_c^k(U;W^*), \; |\varphi| \leq 1 \right\}.
\end{align*}
Let $\{u_n\} \subset BV^{\DiffOpA}(U)$ and $u \in BV^{\DiffOpA}(U)$. We say that $\{u_n\}$ converges weakly* to $u$ in $BV^{\DiffOpA}$ if $u_n \strong u \;\text{ in } L^1(U;V)$ and $\DiffOpA u_n \weakstar \DiffOpA u \;\text{ in } \Mb(U;W)$.

In order to characterize the two-scale weak* limit of scaled symmetrized gradients, we will generally consider two domains  \BBB $\Omega_1 \subset \R^{N_1}$, $\Omega_2 \subset \R^{N_2}$, for some $N_1, N_2 \in \N$ \BBB and assume that the operator $\DiffOp$ is defined through partial derivatives only with respect to the entries of the $n_2$-tuple $x_2$.
In the spirit of \cite[Section 4.2]{Francfort.Giacomini.2014}, we will define the space
\begin{align*}
    \CorrSpace{\DiffOp}{\Omega_1}{\Omega_2} := \Big\{\mu \in \Mb(\Omega_1 \times \Omega_2&;V) : \DiffOp\mu \in \Mb(\Omega_1 \times \Omega_2;W),\\
    \mu(F \times \Omega_2&) = 0 \textrm{ for every Borel set } S \subseteq \Omega_1 \Big\}.
\end{align*}

We will assume that $BV^{\DiffOp}(\Omega_2)$ satisfies the following weak* compactness property:
\begin{assumption} \label{BV^A assumption 1}
If $\{u_n\} \subset BV^{\DiffOp}(\Omega_2)$ is uniformly bounded in the $BV^{\DiffOp}$-norm, then there exists a subsequence $\{u_m\} \subseteq \{u_n\}$ and a function $u \in BV^{\DiffOp}(\Omega_2)$ such that $\{u_m\}$ converges weakly* to $u$ in $BV^{\DiffOp}(\Omega_2)$, i.e.
\begin{equation*} \label{BV^A Poincare-Korn weak* compactness}
    u_m \strong u \;\text{ in } L^1(\Omega_2;V) \;\text{ and }\; \DiffOp u_m \weakstar \DiffOp u \;\text{ in } \Mb(\Omega_2;W).
\end{equation*}

Furthermore, there exists a countable collection $\{\U^k\}$ of open subsets of $\R^{n_2}$ that increases to $\Omega_2$ (i.e. $\closure{\U^k} \subset \U^{k+1}$ for every $k\in \N$, and $\Omega_2 = \bigcup_{k} \U^k$) such that $BV^{\DiffOp}(\U^k)$ satisfies the weak* compactness property above for every $k\in \N$.
\end{assumption}

The following theorem is our main disintegration result for measures in $\CorrSpace{\DiffOp}{\Omega_1}{\Omega_2}$, which will be instrumental to define a notion of duality for admissible two-scale configurations.
The proof is an adaptation of the arguments in \cite[Proposition 4.7]{Francfort.Giacomini.2014} \RED (see \cite[Proposition 4.2]{BDV}) \BLACK.

\begin{proposition} \label{BV^A main property}
\BBB Let Assumption \ref{BV^A assumption 1} be satisfied. \BBB
Let $\mu \in \CorrSpace{\DiffOp}{\Omega_1}{\Omega_2}$. 
Then there exist $\eta \in \Mb^+(\Omega_1)$ and a Borel map $(x_1,x_2) \in \Omega_1 \times \Omega_2 \mapsto \mu_{x_1}(x_2) \in V$ such that, for $\eta$-a.e. $x_1 \in \Omega_1$,
\begin{equation} \label{BV^A main property 1}
    \mu_{x_1} \in BV^{\DiffOp}(\Omega_2), \qquad \int_{\Omega_2} \mu_{x_1}(x_2) \,dx_2 = 0, \qquad |\DiffOp\mu_{x_1}|(\Omega_2) \neq 0,
\end{equation}
and
\begin{equation} \label{BV^A main property 2}
    \mu = \mu_{x_1}(x_2) \,\eta \otimes \calL^{n_2}_{x_2}.
\end{equation}
Moreover, the map $x_1 \mapsto \DiffOp\mu_{x_1} \in \Mb(\Omega_2;W)$ is $\eta$-measurable and
\begin{equation*}
    \DiffOp\mu = \eta \genprod \DiffOp\mu_{x_1}.
\end{equation*}
\end{proposition}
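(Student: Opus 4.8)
The plan is to deduce Proposition~\ref{BV^A main property} from the basic disintegration theorem for bounded Radon measures applied to $\DiffOp\mu$, and then to reconstruct the slices $\mu_{x_1}$ of $\mu$ itself by exploiting the defining property $\mu(F\times\Omega_2)=0$ together with the compactness Assumption~\ref{BV^A assumption 1}. First I would set $\lambda:=|\DiffOp\mu|\in\Mb^+(\Omega_1\times\Omega_2)$, let $\proj:\Omega_1\times\Omega_2\to\Omega_1$ be the projection on the first factor, and define $\eta:=\proj_{\#}\lambda\in\Mb^+(\Omega_1)$. Applying the classical disintegration theorem (in the form recalled in the commented-out Theorem, or \cite[Theorem~2.28]{ambrosio2000functions}) to the vector measure $\DiffOp\mu$ yields an $\eta$-measurable family $\{\nu_{x_1}\}_{x_1\in\Omega_1}\subset\Mb(\Omega_2;W)$ with $|\nu_{x_1}|(\Omega_2)\neq0$ for $\eta$-a.e. $x_1$ (after discarding the $\eta$-null set where this fails, which is harmless since one may replace $\eta$ by its restriction) and $\DiffOp\mu=\eta\genprod\nu_{x_1}$. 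The goal is then to produce $\mu_{x_1}\in BV^{\DiffOp}(\Omega_2)$ with $\DiffOp\mu_{x_1}=\nu_{x_1}$, zero mean on $\Omega_2$, and $\mu=\mu_{x_1}(x_2)\,\eta\otimes\calL^{n_2}_{x_2}$.

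Next I would carry out the reconstruction via a mollification/approximation argument in the $x_1$ variable, following \cite[Proposition~4.7]{Francfort.Giacomini.2014} and \cite[Proposition~4.2]{BDV}. Using a partition of $\Omega_1$ into small cubes (or convolution against $\rho_\delta$ in the $x_1$-slot only), one builds averaged measures $\mu^\delta$ on $\Omega_1\times\Omega_2$ whose $x_1$-slices lie in $BV^{\DiffOp}(\Omega_2)$; the condition $\mu(F\times\Omega_2)=0$ for Borel $F\subseteq\Omega_1$ forces each slice to have zero average over $\Omega_2$, and forces $\mu$ to be absolutely continuous with respect to $\eta\otimes\calL^{n_2}_{x_2}$ in the $x_1$-direction, so that $\mu=g(x_1,x_2)\,\eta\otimes\calL^{n_2}_{x_2}$ for some Borel density $g$. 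Setting $\mu_{x_1}(\cdot):=g(x_1,\cdot)$, one checks on test functions $\varphi(x_1)\psi(x_2)$, and then on general $\varphi\in C_c^\infty(\Omega_1\times\Omega_2;W^*)$ by density, that $\DiffOp\mu_{x_1}=\nu_{x_1}$ for $\eta$-a.e. $x_1$; here Assumption~\ref{BV^A assumption 1} provides, for $\eta$-a.e. $x_1$, a uniform $BV^{\DiffOp}$-bound on the approximants on each $\U^k$, hence a weak* limit point $\mu_{x_1}\in BV^{\DiffOp}(\U^k)$, and the exhaustion $\Omega_2=\bigcup_k\U^k$ upgrades this to $\mu_{x_1}\in BV^{\DiffOp}(\Omega_2)$. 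The zero-mean and nontriviality conditions in \eqref{BV^A main property 1} follow respectively from $\mu(F\times\Omega_2)=0$ and from $|\nu_{x_1}|(\Omega_2)\neq0$.

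Finally, the $\eta$-measurability of $x_1\mapsto\DiffOp\mu_{x_1}=\nu_{x_1}$ and the identity $\DiffOp\mu=\eta\genprod\DiffOp\mu_{x_1}$ are exactly the output of the disintegration theorem applied to $\DiffOp\mu$ in the first step, so nothing further is needed there. The main obstacle I anticipate is the passage from slices of the mollified/averaged measures to a genuine element $\mu_{x_1}\in BV^{\DiffOp}(\Omega_2)$ for $\eta$-a.e. fixed $x_1$: one must show that the $L^1(\Omega_2;V)$-limit of the approximating slices exists (not merely a weak* limit of some subsequence depending on $x_1$), that it is independent of the chosen approximation, and that its $\DiffOp$-gradient is precisely $\nu_{x_1}$ — this is where the countable exhaustion $\{\U^k\}$ and a diagonal argument in Assumption~\ref{BV^A assumption 1} are essential, and where measurability of the whole assignment $x_1\mapsto\mu_{x_1}$ must be tracked carefully through the limit. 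The remaining verifications (density arguments, Fubini-type manipulations, identification of $g$) are routine.
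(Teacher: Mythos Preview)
Your proposal is correct and follows essentially the same approach as the paper, which does not give an independent proof but refers to \cite[Proposition~4.7]{Francfort.Giacomini.2014} and \cite[Proposition~4.2]{BDV}. Your outline---disintegrating $\DiffOp\mu$ with $\eta:=\proj_{\#}|\DiffOp\mu|$, then reconstructing the slices $\mu_{x_1}$ by mollification in $x_1$ combined with the weak* compactness of Assumption~\ref{BV^A assumption 1} on the exhaustion $\{\U^k\}$---is exactly the argument in those references, and you have correctly identified the delicate point (existence, uniqueness, and measurability of the limiting slices) where the assumption is actually used.
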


Lastly, we give a necessary and sufficient condition with which we can characterize the $\DiffOp$-gradient of a measure, under the following two assumptions.

\begin{assumption} \label{BV^A assumption 2}
For every $\chi \in C_0(\Omega_1 \times \Omega_2;W)$ with $\DiffOp^*\chi = 0$ (in the sense of distributions), there exists a sequence of smooth functions $\{\chi_n\} \subset C_c^{\infty}(\Omega_1 \times \Omega_2;W)$ such that $\DiffOp^*\chi_n = 0$ for every $n$, and $\chi_n \strong \chi$ in $L^{\infty}(\Omega_1 \times \Omega_2;W)$.
\end{assumption}

\begin{assumption} \label{BV^A assumption 3}
The following Poincar\'{e}-Korn type inequality holds in $BV^{\DiffOp}(\Omega_2)$:
\begin{equation*} \label{BV^A Poincare-Korn inequality}
    \left\|u - \int_{\Omega_2} u \,dx_2\right\|_{L^1(\Omega_2;V)} \leq C |\DiffOp u|(\Omega_2), \quad \forall u \in BV^{\DiffOp}(\Omega_2).
\end{equation*}
\end{assumption}

\RED The proof of the following result is given in \cite[Proposition 4.3]{BDV}. \BLACK

\begin{proposition} \label{BV^A duality lemma}
\BBB Let Assumptions \ref{BV^A assumption 1}, \ref{BV^A assumption 2} and \ref{BV^A assumption 3} be satisfied. \BBB
Let $\lambda \in \Mb(\Omega_1 \times \Omega_2;W)$. 
Then, the following items are equivalent:
\begin{enumerate}[label=(\roman*)]
	\item \label{BV^A duality lemma (i)}
	For every $\chi \in C_0(\Omega_1 \times \Omega_2;W)$ with $\DiffOp^*\chi = 0$ (in the sense of distributions) we have
	\begin{equation*}
	    \int_{\Omega_1 \times \Omega_2} \chi(x_1,x_2) \cdot d\lambda(x_1,x_2) = 0.
	\end{equation*}
	\item \label{BV^A duality lemma (ii)}
	There exists $\mu \in \CorrSpace{\DiffOp}{\Omega_1}{\Omega_2}$ such that $\lambda = \DiffOp\mu$. 
\end{enumerate}
\end{proposition}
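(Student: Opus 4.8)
\textbf{Proof strategy for Proposition \ref{BV^A duality lemma}.}

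The plan is to prove the two implications separately, with the easy direction being \ref{BV^A duality lemma (ii)} $\Rightarrow$ \ref{BV^A duality lemma (i)} and the substantive one being \ref{BV^A duality lemma (i)} $\Rightarrow$ \ref{BV^A duality lemma (ii)}. For the easy direction, suppose $\lambda = \DiffOp\mu$ with $\mu \in \CorrSpace{\DiffOp}{\Omega_1}{\Omega_2}$, and let $\chi \in C_0(\Omega_1 \times \Omega_2;W)$ with $\DiffOp^*\chi = 0$. By Assumption \ref{BV^A assumption 2} pick smooth $\{\chi_n\} \subset C_c^\infty(\Omega_1 \times \Omega_2;W)$ with $\DiffOp^*\chi_n = 0$ and $\chi_n \strong \chi$ in $L^\infty$. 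Then, by the very definition of the distributional $\DiffOp$-gradient, $\int \chi_n \cdot d\DiffOp\mu = \int \DiffOp^*\chi_n \cdot \mu = 0$ for each $n$; since $\DiffOp\mu = \lambda$ is a bounded measure and $\chi_n \strong \chi$ uniformly on the (relatively compact portions of the) support, passing to the limit gives $\int \chi \cdot d\lambda = 0$.

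For the main implication \ref{BV^A duality lemma (i)} $\Rightarrow$ \ref{BV^A duality lemma (ii)}, the idea is to realize $\lambda$ as a $\DiffOp$-gradient by a Hahn--Banach / closed-range argument in the spirit of \cite[Proposition 4.7]{Francfort.Giacomini.2014}. First I would consider the closed subspace
\[
    Z := \big\{ \chi \in C_0(\Omega_1 \times \Omega_2;W) : \DiffOp^*\chi = 0 \text{ in the sense of distributions} \big\}
\]
and the quotient $C_0(\Omega_1 \times \Omega_2;W)/Z$. Hypothesis \ref{BV^A duality lemma (i)} says precisely that $\lambda$, viewed as an element of $\big(C_0(\Omega_1 \times \Omega_2;W)\big)^*$, annihilates $Z$, hence descends to a continuous linear functional on the quotient. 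The quotient can be identified (via $\DiffOp^*$ acting on an appropriate potential space, using Assumption \ref{BV^A assumption 2} to reduce to smooth test functions) with a space on which such a functional is represented by an element $\mu$ acting through the pairing $\langle \mu, \DiffOp^*\chi\rangle$; unwinding the identification yields $\lambda = \DiffOp\mu$ in the distributional sense, i.e. $\int \chi \cdot d\lambda = \int \DiffOp^*\chi \cdot \mu$ for all smooth $\chi$, and then for all $\chi \in C_c^\infty$ by density. It remains to check that $\mu$ has the required structural properties to lie in $\CorrSpace{\DiffOp}{\Omega_1}{\Omega_2}$: that $\mu \in \Mb(\Omega_1 \times \Omega_2;V)$ (boundedness of the representing measure), that $\DiffOp\mu = \lambda \in \Mb$, and crucially the slicing condition $\mu(S \times \Omega_2) = 0$ for every Borel $S \subseteq \Omega_1$. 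This last property is where Assumption \ref{BV^A assumption 3} enters: the Poincar\'e--Korn inequality on $BV^{\DiffOp}(\Omega_2)$ forces the $x_1$-averages of $\mu$ (equivalently, the component of $\mu$ that is constant in $x_2$) to vanish, since test functions $\chi$ depending only on $x_1$ satisfy $\DiffOp^*\chi = 0$ and thus pair to zero against $\lambda = \DiffOp\mu$; combined with a localization over the exhausting sets $\{\U^k\}$ from Assumption \ref{BV^A assumption 1} and the disintegration machinery of Proposition \ref{BV^A main property}, this pins down the zero-average normalization.

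The main obstacle I anticipate is the representation step: identifying the annihilator-of-$Z$ functionals with $\DiffOp$-gradients is not purely formal, because $\DiffOp^*$ need not have closed range on $C_0$-type spaces, so one cannot directly invoke the open mapping theorem. The way around this is exactly the role of Assumption \ref{BV^A assumption 2} (smooth approximation of solenoidal test fields, which lets one work with $C_c^\infty$ and bypass regularity issues of the constraint $\DiffOp^*\chi = 0$) together with the Poincar\'e--Korn inequality of Assumption \ref{BV^A assumption 3} (which provides the coercivity/closed-range estimate needed to get a bounded representing measure rather than merely a distribution). A secondary technical point is the passage from the global statement to the local sets $\U^k$ and back, needed because the Poincar\'e--Korn inequality and the weak* compactness are assumed on the $\U^k$ exhausting $\Omega_2$; here one argues on each $\U^k$, obtains representing measures $\mu^k$, and checks compatibility to glue them into a single $\mu \in \CorrSpace{\DiffOp}{\Omega_1}{\Omega_2}$, using uniqueness of the disintegration from Proposition \ref{BV^A main property}. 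Since the excerpt explicitly defers the detailed argument to \cite[Proposition 4.3]{BDV}, I would structure the write-up as a careful reduction to that reference, spelling out only the adaptations forced by the present functional-analytic framework.
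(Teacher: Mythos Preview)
The paper does not supply its own proof of this proposition; it simply cites \cite[Proposition 4.3]{BDV}. So there is no in-paper argument to compare against, only the reference.

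Your outline is broadly in the right spirit and correctly identifies the key ingredients: the easy direction via Assumption~\ref{BV^A assumption 2}, and the hard direction as a De~Rham-type representation using the Poincar\'e--Korn inequality of Assumption~\ref{BV^A assumption 3} for the crucial quantitative bound. Two points are worth tightening, though. First, your explanation of how Assumption~\ref{BV^A assumption 3} forces the slicing condition $\mu(S\times\Omega_2)=0$ is slightly misplaced: that condition is a free normalization (subtract the $x_2$-average from any representing $\mu$, which does not change $\DiffOp\mu$), whereas the genuine role of Poincar\'e--Korn is to ensure the representing object is a \emph{bounded measure} and not merely a distribution---it provides the estimate $\|\mu_{x_1}\|_{L^1}\le C|\DiffOp\mu_{x_1}|(\Omega_2)$ that controls the total variation of $\mu$ by $|\lambda|$. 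Second, the exhaustion $\{\U^k\}$ from Assumption~\ref{BV^A assumption 1} is used in Proposition~\ref{BV^A main property} (the disintegration), not in this duality lemma; the gluing step you describe is likely unnecessary here and risks muddying the argument. If you want a cleaner route than the abstract Hahn--Banach framing, the argument in the cited reference proceeds more constructively: mollify $\lambda$ in the $x_1$-variable, solve the cell problem $\DiffOp\mu_{x_1}=\lambda_{x_1}$ for each fixed $x_1$ using the cell-level duality (this is where Poincar\'e--Korn enters), and pass to the limit using the uniform bound. Either path is viable, but you should foreground the estimate $|\langle\lambda,\chi\rangle|\le C\|\DiffOp^*\chi\|$ (in the appropriate norm) as the linchpin rather than the normalization.
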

\CCC Next we will apply these results to obtain auxiliary claims which we will use \EEE to characterize two-scale limits of scaled symmetrized gradients. \BBB
\subsubsection{Case \texorpdfstring{$\gamma = 0$}{γ = 0}}
\CCC We \BBB consider $\DiffOp = E_{y}$,\, $\DiffOp^* = \div_{y}$,\, $\Omega_1 = \omega$, and $\Omega_2 = \calY$ (\CCC it can be easily seen that Proposition \ref{BV^A main property} and Proposition \ref{BV^A duality lemma} are also valid if we take $\Omega_2=\calY$\BBB).  Then, 
$BV^{\DiffOp}(\Omega_2) = BD(\calY)$ and we denote the associated corrector space by
\begin{align*}
    \calXzero{\omega} := \Big\{\mu \in \Mb(\omega \times \calY;\R^2) : E_{y}\mu \in \Mb(\omega \times \calY;\M^{2 \times 2}_{\sym}),&\\
    \mu(F \times \calY) = 0 \textrm{ for every Borel set } F \subseteq \omega &\Big\}.
\end{align*}

\begin{remark}
We note that $\calXzero{\omega}$ is the $2$-dimensional variant of the set introduced in \cite[Section 4.2]{Francfort.Giacomini.2014}, where its main properties have been characterized. 
\end{remark}

\EEE Analogously, let \BBB $\DiffOp = D^2_{y}$,\, $\DiffOp^* = \div_{y}\div_{y}$,\, $\Omega_1 = \omega$, and $\Omega_2 = \calY$, then $BV^{\DiffOp}(\Omega_2) = BH(\calY)$ and we denote the associated corrector space by
\begin{align*}
    \calYzero{\omega} := \Big\{\kappa \in \Mb(\omega \times \calY) : D^2_{y}\kappa \in \Mb(\omega \times \calY;\M^{2 \times 2}_{\sym}),&\\
    \kappa(F \times \calY) = 0 \textrm{ for every Borel set } F \subseteq \omega &\Big\}.
\end{align*}

\begin{remark}
It is known that that \Cref{BV^A assumption 1} and \Cref{BV^A assumption 2} are satisfied in $BH(\calY)$, so we only need to justify \Cref{BV^A assumption 3}. 

Owing to \cite[Remarque 1.3]{Demengel.1984}, there exists a constant $C > 0$ such that
\begin{equation*}
    \| u-p(u) \|_{BH(\calY)} \leq C |D^2_{y}u|(\calY),
\end{equation*}
where 
$p(u)$ is given by
\begin{equation*}
    p(u) = \int_{\calY} \nabla_{y}u\,dy  \cdot y + \int_{\calY} u \,dy - \int_{\calY} \nabla_{y}u \,dy \cdot \int_{\calY} y \,dy.
\end{equation*}
However, since integrating first derivatives of periodic functions over the \EEE periodicity cell provides a zero contribution, \BBB we precisely obtain the desired Poincar\'{e}-Korn type inequality.
\end{remark}

As a consequence of \Cref{BV^A main property} and \Cref{BV^A duality lemma}, we infer the following results.

\begin{proposition} \label{corrector main property - regime zero}
Let $\mu \in \calXzero{\omega}$ and $\kappa \in \calYzero{\omega}$. 
Then there exist $\eta \in \Mb^+(\omega)$ and Borel maps $(x',y) \in \omega \times \calY \mapsto \mu_{x'}(y) \in \R^2$ and $(x',y) \in \omega \times \calY \mapsto \kappa_{x'}(y) \in \R$ such that, for $\eta$-a.e. $x' \in \omega$,
\begin{align*}
    \mu_{x'} \in BD(\calY), \qquad \int_{\calY} \mu_{x'}(y) \,dy = 0, \qquad |E_{y}\mu_{x'}|(\calY) \neq 0,\\
    \kappa_{x'} \in BH(\calY), \qquad \int_{\calY} \kappa_{x'}(y) \,dy = 0, \qquad |D^2_{y}\kappa_{x'}|(\calY) \neq 0,
\end{align*}
and
\begin{align*}
    \mu = \mu_{x'}(y) \,\eta \otimes \calL^{2}_{y}, \qquad
    \kappa = \kappa_{x'}(y) \,\eta \otimes \calL^{2}_{y}.
\end{align*}
Moreover, the maps $x' \mapsto E_{y}\mu_{x'} \in \Mb(\calY;\M^{2 \times 2}_{\sym})$ and $x' \mapsto D^2_{y}\kappa_{x'} \in \Mb(\calY;\M^{2 \times 2}_{\sym})$ are $\eta$-measurable and
\begin{align*}
    E_{y}\mu = \eta \genprod E_{y}\mu_{x'}, \qquad
    D^2_{y}\kappa = \eta \genprod D^2_{y}\kappa_{x'}.
\end{align*}
\end{proposition}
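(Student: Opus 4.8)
The plan is to deduce Proposition \ref{corrector main property - regime zero} by applying the abstract disintegration result, Proposition \ref{BV^A main property}, separately to the two differential operators $\DiffOp = E_y$ (with $V = \R^2$, $W = \M^{2\times 2}_{\sym}$, $\DiffOp^* = \div_y$) and $\DiffOp = D^2_y$ (with $V = \R$, $W = \M^{2\times 2}_{\sym}$, $\DiffOp^* = \div_y\div_y$), in both cases with $\Omega_1 = \omega$ and $\Omega_2 = \calY$. The only thing that needs checking before invoking the abstract machinery is that Assumption \ref{BV^A assumption 1} holds for $BD(\calY)$ and for $BH(\calY)$, and that the parenthetical claim — Proposition \ref{BV^A main property} remaining valid when $\Omega_2$ is the torus $\calY$ rather than an open subset of Euclidean space — is legitimate. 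For $BH(\calY)$ the required weak* compactness is classical and was already noted in the preceding remark; for $BD(\calY)$ it is the standard compactness theorem for functions of bounded deformation on the torus. The exhaustion-by-open-sets part of Assumption \ref{BV^A assumption 1} is trivial here since $\calY$ is compact: one simply takes $\U^k = \calY$ for all $k$.

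Concretely, first I would fix $\mu \in \calXzero{\omega}$. By definition this means $\mu \in \Mb(\omega\times\calY;\R^2)$, $E_y\mu \in \Mb(\omega\times\calY;\M^{2\times 2}_{\sym})$, and $\mu(F\times\calY) = 0$ for every Borel $F\subseteq\omega$, i.e. $\mu \in \CorrSpace{E_y}{\omega}{\calY}$ in the notation of the abstract framework. Applying Proposition \ref{BV^A main property} yields a measure $\eta_\mu \in \Mb^+(\omega)$ and a Borel map $(x',y)\mapsto \mu_{x'}(y)\in\R^2$ with $\mu_{x'}\in BD(\calY)$, $\int_\calY \mu_{x'}\,dy = 0$, $|E_y\mu_{x'}|(\calY)\neq 0$ for $\eta_\mu$-a.e. $x'$, and $\mu = \mu_{x'}(y)\,\eta_\mu\otimes\calL^2_y$, together with the $\eta_\mu$-measurability of $x'\mapsto E_y\mu_{x'}$ and the identity $E_y\mu = \eta_\mu \genprod E_y\mu_{x'}$. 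The same argument applied to $\kappa \in \calYzero{\omega} = \CorrSpace{D^2_y}{\omega}{\calY}$ produces an analogous disintegration with a measure $\eta_\kappa \in \Mb^+(\omega)$.

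The remaining — and genuinely the only nontrivial — step is to arrange for a \emph{single} measure $\eta$ to work for both $\mu$ and $\kappa$ simultaneously, as the statement requires. The natural choice is to set $\eta := \eta_\mu + \eta_\kappa$ (or, equivalently, a measure with respect to which both $\eta_\mu$ and $\eta_\kappa$ are absolutely continuous). Writing $\eta_\mu = g_\mu\,\eta$ and $\eta_\kappa = g_\kappa\,\eta$ with densities $g_\mu, g_\kappa \in L^1(\omega,\eta)$, $g_\mu, g_\kappa \geq 0$, one replaces the fibre maps by $\tilde\mu_{x'} := g_\mu(x')\,\mu_{x'}$ and $\tilde\kappa_{x'} := g_\kappa(x')\,\kappa_{x'}$, so that $\mu = \tilde\mu_{x'}(y)\,\eta\otimes\calL^2_y$ and $\kappa = \tilde\kappa_{x'}(y)\,\eta\otimes\calL^2_y$. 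Since $BD(\calY)$ and $BH(\calY)$ are vector spaces, $\tilde\mu_{x'}\in BD(\calY)$ and $\tilde\kappa_{x'}\in BH(\calY)$ still hold $\eta$-a.e.; the zero-average conditions are preserved by scalar multiplication; linearity gives $E_y\tilde\mu_{x'} = g_\mu(x') E_y\mu_{x'}$ and $D^2_y\tilde\kappa_{x'} = g_\kappa(x') D^2_y\kappa_{x'}$, hence the generalized-product identities $E_y\mu = \eta\genprod E_y\tilde\mu_{x'}$ and $D^2_y\kappa = \eta\genprod D^2_y\tilde\kappa_{x'}$ follow. The one point that needs a small argument is the non-degeneracy $|E_y\tilde\mu_{x'}|(\calY)\neq 0$ $\eta$-a.e.: this could fail precisely on the set $\{g_\mu = 0\}$, which however is $\eta_\mu$-null, so one simply redefines $\tilde\mu_{x'}$ on that $\eta$-negligible (with respect to the part of $\eta$ charged by $\mu$) set — more cleanly, one observes that the relation $\mu = \tilde\mu_{x'}\,\eta\otimes\calL^2_y$ only constrains $\tilde\mu_{x'}$ up to $\eta$-null modifications on $\{g_\mu=0\}$, so on that set one is free to set $\tilde\mu_{x'}$ equal to any fixed element of $BD(\calY)$ with zero average and nonzero $E_y$-variation, and similarly for $\kappa$. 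I expect this reconciliation of the two disintegration measures, and the attendant care with where the non-degeneracy conditions may be violated, to be the main (albeit modest) obstacle; everything else is a direct transcription of Propositions \ref{BV^A main property} and \ref{BV^A duality lemma} to the two concrete operators at hand.
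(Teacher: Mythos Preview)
Your approach is correct and matches the paper's, which treats the proposition as an immediate consequence of the abstract disintegration result (Proposition~\ref{BV^A main property}) applied to each of the two operators.

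The one genuine issue is your handling of the non-degeneracy conditions $|E_y\tilde\mu_{x'}|(\calY)\neq 0$ and $|D^2_y\tilde\kappa_{x'}|(\calY)\neq 0$ when passing to the common measure $\eta=\eta_\mu+\eta_\kappa$. Your proposed fix---redefining $\tilde\mu_{x'}$ to be a fixed nonzero element of $BD(\calY)$ on $\{g_\mu=0\}$---does not work: if $\eta_\kappa$ charges $\{g_\mu=0\}$ (as it may when $\eta_\mu$ and $\eta_\kappa$ are mutually singular), that set has positive $\eta$-measure, and the identity $\mu=\tilde\mu_{x'}(y)\,\eta\otimes\calL^2_y$ then \emph{forces} $\tilde\mu_{x'}=0$ for $\eta$-a.e.\ $x'$ there, since $\mu$ itself vanishes on $\{g_\mu=0\}\times\calY$. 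So with a single dominating $\eta$ one can in general only guarantee $|E_y\tilde\mu_{x'}|(\calY)\neq 0$ for $\eta$-a.e.\ $x'\in\{g_\mu>0\}$, and symmetrically for $\kappa$. This is really a slight imprecision in the statement of the proposition rather than a flaw in your argument; note that when the paper actually uses this result (Lemma~\ref{disintegration result - regime zero}) the non-degeneracy conditions are dropped. Keep everything through the rescaling by $g_\mu,g_\kappa$, and either omit the non-degeneracy claims or state them only $\eta$-a.e.\ on $\{g_\mu>0\}$ and $\{g_\kappa>0\}$ respectively.
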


\begin{proposition} \label{duality lemma 1 - regime zero}
Let $\lambda \in \Mb(\omega \times \calY;\M^{2 \times 2}_{\sym})$. The following items are equivalent:
\begin{enumerate}[label=(\roman*)]
    \item For every $\chi \in C_0(\omega \times \calY;\M^{2 \times 2}_{\sym})$ with $\div_{y}\chi(x',y) = 0$ (in the sense of distributions) we have
    \begin{equation*}
        \int_{\omega \times \calY} \chi(x',y) : d\lambda(x',y) = 0.
    \end{equation*}
    \item There exists $\mu \in \calXzero{\omega}$ such that $\lambda = E_{y}\mu$.
\end{enumerate}
\end{proposition}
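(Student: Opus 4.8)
The statement is a special case of \Cref{BV^A duality lemma} applied with $\DiffOp = E_y$, $\DiffOp^* = \div_y$, $\Omega_1 = \omega$, $\Omega_2 = \calY$, $V = \R^2$, $W = \M^{2\times2}_{\sym}$, so the plan is to verify that the three structural hypotheses, \Cref{BV^A assumption 1}, \Cref{BV^A assumption 2} and \Cref{BV^A assumption 3}, hold in this concrete setting, and then simply invoke \Cref{BV^A duality lemma}. Since the torus $\calY$ is compact there is no need for an exhausting sequence $\{\U^k\}$; one takes $\U^k = \calY$ for all $k$, and indeed the parenthetical remark in the text already notes that \Cref{BV^A main property} and \Cref{BV^A duality lemma} remain valid with $\Omega_2 = \calY$.

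\textbf{Verification of the assumptions.} For \Cref{BV^A assumption 1}: $BV^{E_y}(\calY) = BD(\calY)$, and the required weak* compactness is the standard compactness theorem for $BD$ on the torus — a uniformly $BD(\calY)$-bounded sequence has a subsequence converging strongly in $L^1(\calY;\R^2)$ (by the compact embedding $BD \hookrightarrow L^1$ on the bounded set $\calY$) with symmetrized gradients converging weakly* in $\Mb(\calY;\M^{2\times2}_{\sym})$; the limit lies in $BD(\calY)$ by lower semicontinuity of the total variation. For \Cref{BV^A assumption 2}: given $\chi \in C_0(\omega\times\calY;\M^{2\times2}_{\sym})$ with $\div_y\chi = 0$, one mollifies in the $x'$ variable with the standard mollifier $\rho_\delta$ (introduced in \Cref{prel}) and, if necessary, multiplies by a cutoff in $x'$ to restore compact support; convolution in $x'$ commutes with $\div_y$, so $\div_y\chi_\delta = 0$ is preserved, and $\chi_\delta \to \chi$ uniformly because $\chi$ is uniformly continuous with compact support. (Mollification in the $y$ variable is harmless on the torus and also commutes with $\div_y$.) For \Cref{BV^A assumption 3}: this is the Poincar\'e--Korn inequality on the torus, $\|u - \int_\calY u\,dy\|_{L^1(\calY;\R^2)} \leq C\,|E_y u|(\calY)$ for $u \in BD(\calY)$, which follows from the corresponding inequality on a fundamental domain together with the fact that the average of a periodic first derivative over the periodicity cell vanishes, exactly as recorded in the preceding remark for the $BH(\calY)$ case; alternatively it is the classical result that on $BD$ of a bounded connected Lipschitz domain the symmetrized gradient controls the function modulo rigid motions, and on the torus the only periodic rigid motions are constants.

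\textbf{Conclusion and main obstacle.} With \Cref{BV^A assumption 1}, \Cref{BV^A assumption 2}, \Cref{BV^A assumption 3} in force, \Cref{BV^A duality lemma} applied to $\lambda \in \Mb(\omega\times\calY;\M^{2\times2}_{\sym})$ gives precisely the asserted equivalence between the orthogonality of $\lambda$ to all divergence-free test fields and the representation $\lambda = E_y\mu$ for some $\mu \in \calXzero{\omega}$. I expect no serious obstacle here; the only mildly delicate point is checking \Cref{BV^A assumption 2}, i.e.\ the density of smooth compactly supported divergence-free fields among continuous ones — one must be careful that the regularization in $x'$ does not destroy either the constraint $\div_y\chi = 0$ or the compact support, but smoothing in $x'$ alone (composed with an $x'$-cutoff converging to $1$) handles both, since the constraint involves only $y$-derivatives.
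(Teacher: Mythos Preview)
Your proposal is correct and follows precisely the paper's approach: the paper states this proposition simply as a consequence of \Cref{BV^A duality lemma} after noting that the general framework applies with $\DiffOp = E_y$, $\Omega_1 = \omega$, $\Omega_2 = \calY$, and that the assumptions are satisfied for $BD(\calY)$. You have supplied the routine verification of Assumptions~\ref{BV^A assumption 1}--\ref{BV^A assumption 3} that the paper leaves implicit (it merely remarks that the abstract results remain valid for $\Omega_2 = \calY$), so there is nothing to add.
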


\begin{proposition} \label{duality lemma 2 - regime zero}
Let $\lambda \in \Mb(\omega \times \calY;\M^{2 \times 2}_{\sym})$. The following items are equivalent:
\begin{enumerate}[label=(\roman*)]
	\item For every $\chi \in C_0(\omega \times \calY;\M^{2 \times 2}_{\sym})$ with $\div_{y}\div_{y}\chi(x',y) = 0$ (in the sense of distributions) we have
	\begin{equation*}
	    \int_{\omega \times \calY} \chi(x',y) : d\lambda(x',y) = 0.
	\end{equation*}
	\item There exists $\kappa \in \calYzero{\omega}$ such that $\lambda = D^2_{y}\kappa$. 
\end{enumerate}
\end{proposition}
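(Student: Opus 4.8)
The plan is to deduce Proposition~\ref{duality lemma 2 - regime zero} from the abstract equivalence in Proposition~\ref{BV^A duality lemma}, exactly as Proposition~\ref{duality lemma 1 - regime zero} is obtained, but now with the choice $\DiffOp = D^2_{y}$, $\DiffOp^* = \div_{y}\div_{y}$, $\Omega_1 = \omega$, $\Omega_2 = \calY$, so that $BV^{\DiffOp}(\calY) = BH(\calY)$ and the associated corrector space is $\calYzero{\omega}$. With these identifications, item~(ii) here is precisely item~\ref{BV^A duality lemma (ii)} of Proposition~\ref{BV^A duality lemma}, and item~(i) here, once one notes that the formal $L^2$-adjoint of $D^2_y$ acting on $\M^{2\times2}_{\sym}$-valued test functions is $\div_y\div_y$, is precisely item~\ref{BV^A duality lemma (i)}. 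Thus the statement follows immediately \emph{provided} Assumptions~\ref{BV^A assumption 1}, \ref{BV^A assumption 2}, and \ref{BV^A assumption 3} are verified for $BH(\calY)$.

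The three assumptions are handled as follows. Assumption~\ref{BV^A assumption 1} (weak* compactness of bounded sequences in $BH(\calY)$, together with the existence of an exhausting family of subdomains with the same property) is classical for $BH$; since $\calY$ is compact one may simply take $\U^k = \calY$ for all $k$, or exhaust a fundamental domain. Assumption~\ref{BV^A assumption 2} (smooth approximation, in $L^\infty$, of continuous $W$-valued maps $\chi$ with $\div_y\div_y\chi = 0$ by smooth ones with the same constraint) is obtained by mollification in the $y$-variable: convolving $\chi$ with a standard periodic mollifier $\rho_\epsilon$ preserves the (linear, constant-coefficient) constraint $\div_y\div_y\chi_\epsilon = 0$ and yields uniform convergence because $\chi$ is (uniformly) continuous on the compact set $\omega\times\calY$; a routine argument, already carried out for the $E_y$ case, adapts verbatim. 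The one point requiring a separate remark — and already supplied in the excerpt just above the statement — is Assumption~\ref{BV^A assumption 3}, the Poincar\'e--Korn inequality in $BH(\calY)$: by \cite[Remarque 1.3]{Demengel.1984} one has $\|u - p(u)\|_{BH(\calY)} \leq C\,|D^2_y u|(\calY)$ with $p(u)$ an explicit affine function built from $\int_\calY u\,dy$ and $\int_\calY \nabla_y u\,dy$, and since first derivatives of $\calY$-periodic functions integrate to zero over the periodicity cell, $p(u)$ reduces to the constant $\int_\calY u\,dy$, which gives exactly the required estimate $\|u - \int_\calY u\,dy\|_{L^1(\calY;\R)} \leq C\,|D^2_y u|(\calY)$.

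Once the three assumptions are in place, the proof is a direct invocation of Proposition~\ref{BV^A duality lemma}: the equivalence of~(i) and~(ii) transfers with no further work, after checking the elementary algebraic fact that for $\chi \in C_c^\infty(\omega\times\calY;\M^{2\times2}_{\sym})$ one has $\int \chi : D^2_y\kappa = \int (\div_y\div_y\chi)\,\kappa$, so that $D^2_y$ and $\div_y\div_y$ are indeed formal adjoints in the sense required by the abstract framework. I expect no genuine obstacle here; the only mildly delicate point is the bookkeeping in Assumption~\ref{BV^A assumption 3}, namely recognizing that the affine corrector $p(u)$ from Demengel's inequality collapses to a constant in the periodic setting — but this is precisely the content of the remark preceding the statement, so it can be cited directly.
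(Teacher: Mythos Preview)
Your proposal is correct and takes essentially the same approach as the paper: the paper states Proposition~\ref{duality lemma 2 - regime zero} as an immediate consequence of the abstract Proposition~\ref{BV^A duality lemma} applied with $\DiffOp = D^2_y$, $\Omega_1 = \omega$, $\Omega_2 = \calY$, after noting (in the preceding remark) that Assumptions~\ref{BV^A assumption 1} and~\ref{BV^A assumption 2} are known for $BH(\calY)$ and verifying Assumption~\ref{BV^A assumption 3} via Demengel's inequality and the vanishing of periodic gradients over the cell. Your write-up matches this route and even fills in slightly more detail on Assumption~\ref{BV^A assumption 2} than the paper does.
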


\subsubsection{Case \texorpdfstring{$\gamma = +\infty$}{γ = +∞}}
\EEE In this scaling regime, we \BBB consider $\DiffOp = E_{y}$,\, $\DiffOp^* = \div_{y}$,\, $\Omega_1 = \Omega$, and $\Omega_2 = \calY$. Then, 
$BV^{\DiffOp}(\Omega_2) = BD(\calY)$ and we denote the associated corrector space by
\begin{align*}
    \calXinf{\Omega} := \Big\{\mu \in \Mb(\Omega \times \calY&;\R^2) : E_{y}\mu \in \Mb(\Omega \times \calY;\M^{2 \times 2}_{\sym}),\\
    \mu(F \times \calY&) = 0 \textrm{ for every Borel set } F \subseteq \Omega \Big\},
\end{align*}
Further, we choose $\DiffOp = D_{y}$,\, $\DiffOp^* = \div_{y}$,\, $\Omega_1 = \Omega$, and $\Omega_2 = \calY$, so that 
$BV^{\DiffOp}(\Omega_2) = BV(\calY)$ and the associated corrector space is given by
\begin{align*}
    \calYinf{\Omega} := \Big\{\kappa \in \Mb(\Omega \times \calY) : D_{y}\kappa \in \Mb(\Omega \times \calY;\R^2),\quad \quad&\\
    \kappa(F \times \calY) = 0 \textrm{ for every Borel set } F \subseteq \Omega &\Big\}.
\end{align*}

Clearly \Cref{BV^A assumption 1}, \Cref{BV^A assumption 2} and \Cref{BV^A assumption 3} are satisfied in $BD(\calY)$ and $BV(\calY)$.
Thus, we can state the following propositions as consequences of \Cref{BV^A main property} and \Cref{BV^A duality lemma}.

\begin{proposition} \label{corrector main property - regime inf}
Let $\mu \in \calXinf{\Omega}$ and $\kappa \in \calYinf{\Omega}$. 
Then there exist $\eta \in \Mb^+(\Omega)$ and Borel maps $(x,y) \in \Omega \times \calY \mapsto \mu_{x}(y) \in \R^2$ and $(x,y) \in \Omega \times \calY \mapsto \kappa_{x}(y) \in \R^2$ such that, for $\eta$-a.e. $x \in \Omega$,
\begin{align*}
    \mu_{x} \in BD(\calY), \qquad \int_{\calY} \mu_{x}(y) \,dy = 0, \qquad |E_{y}\mu_{x}|(\calY) \neq 0,\\
    \kappa_{x} \in BV(\calY), \qquad \int_{\calY} \kappa_{x}(y) \,dy = 0, \qquad |D_{y}\kappa_{x}|(\calY) \neq 0,
\end{align*}
and
\begin{align*}
    \mu = \mu_{x}(y) \,\eta \otimes \calL^{2}_{y}, \qquad
    \kappa = \kappa_{x}(y) \,\eta \otimes \calL^{2}_{y}.
\end{align*}
Moreover, the maps $x \mapsto E_{y}\mu_{x} \in \Mb(\calY;\M^{2 \times 2}_{\sym})$ and $x \mapsto D_{y}\kappa_{x} \in \Mb(\calY;\R^2)$ are $\eta$-measurable and
\begin{align*}
    E_{y}\mu = \eta \genprod E_{y}\mu_{x}, \qquad
    D_{y}\kappa = \eta \genprod D_{y}\kappa_{x}.
\end{align*}
\end{proposition}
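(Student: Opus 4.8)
The plan is to apply Proposition~\ref{BV^A main property} twice, once to each of the two corrector spaces, after verifying that the abstract hypotheses are met in both instances. For the first statement, I set $\DiffOp = E_y$, $\DiffOp^* = \div_y$, $\Omega_1 = \Omega \subset \R^3$, $\Omega_2 = \calY$, so that $BV^{\DiffOp}(\Omega_2) = BD(\calY)$ and $\CorrSpace{\DiffOp}{\Omega_1}{\Omega_2} = \calXinf{\Omega}$. For the second statement, I set $\DiffOp = D_y$, $\DiffOp^* = \div_y$ (acting $\R^2$-valued $\to$ scalar), again with $\Omega_1 = \Omega$, $\Omega_2 = \calY$, so that $BV^{\DiffOp}(\Omega_2) = BV(\calY)$ and $\CorrSpace{\DiffOp}{\Omega_1}{\Omega_2} = \calYinf{\Omega}$. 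As noted in the excerpt just before the statement, Assumption~\ref{BV^A assumption 1} holds in both $BD(\calY)$ and $BV(\calY)$: the weak* compactness property is the classical weak* compactness of bounded sequences in $BD$ and in $BV$ on the compact manifold $\calY$ (so one may even take the constant sequence $\U^k = \calY$ for the exhausting family, $\calY$ being already bounded), and Assumptions~\ref{BV^A assumption 2} and~\ref{BV^A assumption 3} hold as well, the Poincar\'e--Korn inequality on the torus being standard once one quotients out the mean.

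With Assumption~\ref{BV^A assumption 1} in force, Proposition~\ref{BV^A main property} applied to $\mu \in \calXinf{\Omega}$ produces a measure $\eta_1 \in \Mb^+(\Omega)$ and a Borel map $(x,y) \mapsto \mu_x(y) \in \R^2$ with $\mu_x \in BD(\calY)$, $\int_\calY \mu_x \, dy = 0$, $|E_y \mu_x|(\calY) \neq 0$ for $\eta_1$-a.e.\ $x$, and $\mu = \mu_x(y)\, \eta_1 \otimes \calL^2_y$, together with $\eta_1$-measurability of $x \mapsto E_y \mu_x$ and $E_y \mu = \eta_1 \genprod E_y \mu_x$. Likewise, applying Proposition~\ref{BV^A main property} to $\kappa \in \calYinf{\Omega}$ yields $\eta_2 \in \Mb^+(\Omega)$ and a Borel map $(x,y) \mapsto \kappa_x(y) \in \R$ with the analogous properties: $\kappa_x \in BV(\calY)$, zero mean, $|D_y \kappa_x|(\calY) \neq 0$, $\kappa = \kappa_x(y)\, \eta_2 \otimes \calL^2_y$, and $D_y \kappa = \eta_2 \genprod D_y \kappa_x$.

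The only point that needs a small additional argument is that the two disintegrations can be carried out with respect to a single common measure $\eta \in \Mb^+(\Omega)$: I take $\eta := \eta_1 + \eta_2$ (or, if one prefers, a measure equivalent to both, e.g.\ obtained by normalizing each summand), rewrite $\mu_x$ and $\kappa_x$ via the Radon--Nikod\'ym densities $\frac{d\eta_1}{d\eta}$ and $\frac{d\eta_2}{d\eta}$ — absorbing these scalar densities into the Borel maps — and discard the $\eta$-null set where a density vanishes; on that null set the corresponding corrector can be set to $0$ without affecting any of the stated identities. This is the routine bookkeeping step and presents no real difficulty. The substance of the proposition is entirely inherited from Proposition~\ref{BV^A main property}; the ``main obstacle'' is therefore not in this proof at all but was discharged earlier, namely in checking that $BV(\calY)$ and $BD(\calY)$ satisfy Assumption~\ref{BV^A assumption 1} (in particular that the exhaustion-by-subdomains clause is harmlessly satisfied because $\calY$ is compact), which I regard as standard.
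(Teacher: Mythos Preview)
Your approach is exactly the paper's: the proposition is stated there as a direct consequence of Proposition~\ref{BV^A main property}, applied once with $\DiffOp=E_y$ and once with $\DiffOp=D_y$, after the remark that Assumptions~\ref{BV^A assumption 1}--\ref{BV^A assumption 3} hold in $BD(\calY)$ and $BV(\calY)$.

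One small caveat on your bookkeeping step. When you set $\eta:=\eta_1+\eta_2$ and absorb $\tfrac{d\eta_i}{d\eta}$ into the correctors, the set $\{d\eta_1/d\eta=0\}$ is $\eta_1$-null but need not be $\eta$-null (it can carry all of the $\eta_2$-singular part). On that set you are forced to put $\mu_x=0$, hence $|E_y\mu_x|(\calY)=0$, which literally contradicts the clause ``$|E_y\mu_x|(\calY)\neq 0$ for $\eta$-a.e.\ $x$'' as written. The paper does not address this either, and in the downstream applications (cf.\ \Cref{disintegration result - regime inf}) the single $\eta$ is built as a sum of several base measures and the non-triviality clauses are used only on the supports of the respective pieces. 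So the right reading is that the two ``$\neq 0$'' conditions hold $\eta_1$-a.e.\ and $\eta_2$-a.e.\ respectively; your merged-$\eta$ statement preserves all the identities $\mu=\mu_x\,\eta\otimes\calL^2_y$, $E_y\mu=\eta\genprod E_y\mu_x$, etc., which is what is actually needed.
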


\begin{proposition} \label{duality lemma 1 - regime inf}
Let $\lambda \in \Mb(\Omega \times \calY;\M^{2 \times 2}_{\sym})$. The following items are equivalent:
\begin{enumerate}[label=(\roman*)]
    \item For every $\chi \in C_0(\Omega \times \calY;\M^{2 \times 2}_{\sym})$ with $\div_{y}\chi(y) = 0$ (in the sense of distributions) we have
    \begin{equation*}
        \int_{\calY} \chi(x,y) : d\lambda(x,y) = 0.
    \end{equation*}
    \item There exists $\mu \in \calXinf{\Omega}$ such that $\lambda = E_{y}\mu$.
\end{enumerate}
\end{proposition}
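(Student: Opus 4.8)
The plan is to read the statement off directly from \Cref{BV^A duality lemma}, applied to the operator $\DiffOpA = E_y$ (so that $\DiffOpA^\ast = \div_y$, $V = \R^2$, $W = \M^{2\times2}_{\sym}$), with $\Omega_1 = \Omega$ and $\Omega_2 = \calY$. For these choices $BV^{E_y}(\calY) = BD(\calY)$ and the corrector space of the general framework becomes exactly $\calXinf{\Omega}$, so that conditions (i) and (ii) of the proposition are, word for word, conditions \ref{BV^A duality lemma (i)} and \ref{BV^A duality lemma (ii)}. (As already remarked, although $\calY$ is not literally an open subset of Euclidean space, \Cref{BV^A main property} and \Cref{BV^A duality lemma} remain valid on the torus.) It therefore suffices to verify that \Cref{BV^A assumption 1}, \Cref{BV^A assumption 2} and \Cref{BV^A assumption 3} hold for $E_y$ on $\calY$; no appeal to \Cref{BV^A main property} is needed for this particular equivalence.

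The compactness hypothesis \Cref{BV^A assumption 1} is the classical weak* compactness of $BD$: a sequence bounded in the $BD(\calY)$-norm has a subsequence converging strongly in $L^1(\calY;\R^2)$ and weakly* in $\Mb(\calY;\M^{2\times2}_{\sym})$ for the symmetrized gradient, with limit again in $BD(\calY)$ (see, e.g., \cite{Temam.1985}), applied on the smooth compact manifold $\calY$. The exhaustion clause is vacuous here: since $\calY$ is compact one takes $\U^k = \calY$ for every $k$. \Cref{BV^A assumption 3} is the Poincaré--Korn inequality $\|u - \int_{\calY} u\,dy\|_{L^1(\calY;\R^2)} \le C\,|E_y u|(\calY)$ on the torus, which holds because the only periodic infinitesimal rigid displacements are the constants, so the standard compactness--contradiction argument in $BD(\calY)$ applies.

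The step that genuinely requires an argument is \Cref{BV^A assumption 2}: every $\chi \in C_0(\Omega\times\calY;\M^{2\times2}_{\sym})$ with $\div_y\chi = 0$ in the sense of distributions should be the $L^\infty$-limit of smooth, compactly supported fields sharing the same divergence constraint. I would proceed in two steps. First, truncate in $x$ only: set $\chi_\delta := \varphi_\delta\,\chi$ with $\varphi_\delta = \varphi_\delta(x)$ a smooth cutoff equal to $1$ on $\{x : \dist(x,\partial\Omega) > \delta\}$ and compactly supported in $\Omega$. Since $\div_y$ differentiates only the $y$-variables, $\div_y\chi_\delta = \varphi_\delta\,\div_y\chi = 0$; and $\chi_\delta \to \chi$ uniformly on $\overline{\Omega}\times\calY$, because $\chi$ is uniformly continuous there and vanishes on $\partial\Omega\times\calY$. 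Second, regularize $\chi_\delta$ by convolution with a mollifier $\rho_\sigma(x)$ in $x$ and with a periodic mollifier in $y$: for $\sigma$ small the support stays inside $\Omega\times\calY$, the result is smooth and compactly supported, convolution commutes with $\div_y$ so the constraint is preserved, and uniform convergence holds as $\sigma\to0$. A diagonal choice $\sigma = \sigma(\delta)\to0$ then yields the sequence demanded by \Cref{BV^A assumption 2}.

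With the three assumptions in force, \Cref{BV^A duality lemma} applies verbatim and gives at once the equivalence of (i) and (ii), which is the assertion. I expect the two-step truncation-and-mollification for \Cref{BV^A assumption 2} to be the only technical point, and even that is routine once one notes that $\div_y$ sees only the torus variable and hence commutes with every operation performed in the $x$-variable; the remaining verifications are standard facts about $BD(\calY)$.
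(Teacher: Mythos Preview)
Your proposal is correct and follows exactly the paper's approach: the paper states that \Cref{BV^A assumption 1}, \Cref{BV^A assumption 2} and \Cref{BV^A assumption 3} are ``clearly'' satisfied in $BD(\calY)$ and then declares the proposition a direct consequence of \Cref{BV^A duality lemma} with $\DiffOp = E_y$, $\Omega_1 = \Omega$, $\Omega_2 = \calY$. You reproduce this reduction verbatim and in fact supply more detail than the paper does, in particular the two-step truncation-and-mollification argument for \Cref{BV^A assumption 2}, which the paper omits entirely.
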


\begin{proposition} \label{duality lemma 2 - regime inf}
Let $\lambda \in \Mb(\Omega \times \calY;\R^2)$. The following items are equivalent:
\begin{enumerate}[label=(\roman*)]
    \item For every $\chi \in C_0(\Omega \times \calY;\R^2)$ with $\div_{y}\chi(y) = 0$ (in the sense of distributions) we have
    \begin{equation*}
        \int_{\calY} \chi(x,y) : d\lambda(x,y) = 0.
    \end{equation*}
    \item There exists $\kappa \in \calYinf{\Omega}$ such that $\lambda = D_{y}\kappa$.
\end{enumerate}
\end{proposition}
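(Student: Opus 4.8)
\textbf{Proof strategy for Proposition \ref{duality lemma 2 - regime inf}.}
The plan is to recognize this as a direct instance of the abstract duality result \Cref{BV^A duality lemma}, applied to the choice $\DiffOp = D_y$, $\DiffOp^* = -\div_y$, $\Omega_1 = \Omega$, $\Omega_2 = \calY$, so that $BV^{\DiffOp}(\calY) = BV(\calY)$ and $\CorrSpace{\DiffOp}{\Omega}{\calY} = \calYinf{\Omega}$. Since the target space $W$ here is $\R^2$ and $V = \R$, the hypotheses to verify are Assumptions \ref{BV^A assumption 1}, \ref{BV^A assumption 2}, and \ref{BV^A assumption 3} for $BV(\calY)$, all of which were already asserted to hold in the paragraph preceding the proposition. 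Concretely: \Cref{BV^A assumption 1} is the standard weak* compactness of $BV$ on the torus (bounded sequences in $BV(\calY)$ have subsequences converging strongly in $L^1$ with weak* convergent gradients), together with an exhaustion of $\calY$ by open sets, which is immediate since $\calY$ is compact (one can simply take $\U^k = \calY$ for all $k$, or lift to $\R^2$ and use bounded periodic cells). \Cref{BV^A assumption 2} is the density, in the $L^\infty$-norm, of divergence-free smooth compactly supported $\R^2$-valued fields among all continuous divergence-free fields on $\Omega \times \calY$, which follows from a standard mollification argument that preserves the constraint $\div_y \chi = 0$. \Cref{BV^A assumption 3} is the Poincar\'e-Wirtinger inequality on the torus, $\|u - \int_\calY u\,dy\|_{L^1(\calY)} \leq C|D_y u|(\calY)$ for $u \in BV(\calY)$, which is classical.

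Granting these, the argument is then purely a matter of translating notation. First I would note that the constraint appearing in item (i) of the proposition, $\div_y \chi(y) = 0$ (with the convention that $\chi$ may depend on $x$ as a parameter but the differential operator acts only in $y$), is exactly $\DiffOp^* \chi = 0$ up to the harmless sign $(-1)^k$ with $k=1$, which does not affect the vanishing condition $\int \chi : d\lambda = 0$. Hence item (i) of the proposition is verbatim item \ref{BV^A duality lemma (i)} of \Cref{BV^A duality lemma} with $\lambda \in \Mb(\Omega \times \calY;\R^2)$. Likewise, item \ref{BV^A duality lemma (ii)} of \Cref{BV^A duality lemma} states the existence of $\mu \in \CorrSpace{\DiffOp}{\Omega}{\calY} = \calYinf{\Omega}$ with $\lambda = \DiffOp \mu = D_y \mu$, which is precisely item (ii) of the proposition (writing $\kappa$ for $\mu$). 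Therefore the equivalence (i)$\iff$(ii) is an immediate consequence of \Cref{BV^A duality lemma}.

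The only point requiring a line of care — and the closest thing to an obstacle — is checking that \Cref{BV^A duality lemma} does apply with $\Omega_2 = \calY$ rather than an open subset of $\R^{N_2}$; but this is the same mild extension already invoked in the $\gamma = 0$ case (``it can be easily seen that Proposition \ref{BV^A main property} and Proposition \ref{BV^A duality lemma} are also valid if we take $\Omega_2 = \calY$''), and it is handled by working with $\calY$-periodic representatives on $\R^2$ and noting that all the constructions (mollification, the Poincar\'e inequality, the exhaustion) respect periodicity. With that remark in place, the proof is complete. An entirely parallel remark shows that Proposition \ref{duality lemma 1 - regime inf} follows by the same reduction with $\DiffOp = E_y$ and $W = \M^{2\times 2}_{\sym}$.
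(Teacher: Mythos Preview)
Your proposal is correct and matches the paper's approach exactly: the paper simply states that Assumptions~\ref{BV^A assumption 1}--\ref{BV^A assumption 3} are satisfied in $BV(\calY)$ and then presents \Cref{duality lemma 2 - regime inf} as a direct consequence of the abstract \Cref{BV^A duality lemma} (with the same remark that the case $\Omega_2 = \calY$ goes through). Your write-up is somewhat more detailed in spelling out the verification of the assumptions, but the argument is identical.
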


\subsection{\CCC Additional auxiliary results\BBB}

\subsubsection{Case \texorpdfstring{$\gamma = 0$}{γ = 0}}
In order to simplify the proof of the structure result for the two-scale limits of symmetrized scaled gradients, we will use the following lemma.

\begin{lemma} \label{auxiliary result - regime zero}
Let $\{\mu^h\}_{h>0}$ be a bounded family in $\Mb(\Omega;\M^{2 \times 2}_{\sym})$ such that
\begin{equation*}
    \mu^h \weakstartwoscale \mu \quad \text{two-scale weakly* in $\Mb(\Omega \times \calY;\M^{2 \times 2}_{\sym})$}.
\end{equation*}
for some $\mu \in \Mb(\Omega \times \calY;\M^{2 \times 2}_{\sym})$ as $h \to 0$.
Assume that
\begin{enumerate}[label=(\roman*)]
    \item $\bar{\mu}^h \weakstartwoscale \lambda_1$ two-scale weakly* in $\Mb(\omega \times \calY;\M^{2 \times 2}_{\sym})$, for some $\lambda_1 \in \Mb(\omega \times \calY;\M^{2 \times 2}_{\sym})$;
    \item For every $\chi \in C_c^{\infty}(\omega \times \calY;\M^{2 \times 2}_{\sym})$ such that $\div_{y}\div_{y}\chi(x',y) = 0$ we have
    \begin{equation*} 
        \lim_{h \to 0} \int_{\omega} \chi\hspace{-0.25em}\left(x',\tfrac{x'}{\epsh}\right) : d\hat{\mu}^h(x')  = \int_{\omega \times \calY} \chi(x',y) : d\lambda_2(x',y),
    \end{equation*}
    for some $\lambda_2 \in \Mb(\omega \times \calY;\M^{2 \times 2}_{\sym})$;
    \item There exists an open set $\ext{I} \supset I$ which compactly contains $I$ such $(\mu^h)^\perp \weakstartwoscale 0$ two-scale weakly* in $\Mb(\omega \times \ext{I} \times \calY;\M^{2 \times 2}_{\sym})$.
\end{enumerate}
Then, there exists $\kappa \in \calYzero{\omega}$ such that 
\begin{equation*}
    \mu = \lambda_1 \otimes \calL^{1}_{x_3}  + \left( \lambda_2 + D^2_{y}\kappa \right) \otimes x_3 \calL^{1}_{x_3}.
\end{equation*}
\end{lemma}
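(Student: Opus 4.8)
The strategy is to decompose the two-scale limit measure $\mu$ according to its zeroth-order moment, first-order moment, and perpendicular part in the $x_3$-variable, identify the first two with $\lambda_1$ and $\lambda_2 + D^2_y\kappa$ respectively, and show the perpendicular part vanishes. First I would test the two-scale convergence $\mu^h \weakstartwoscale \mu$ against products of the form $\varphi(x_3)\psi(x',y)$ for $\varphi \in C(I)$ and $\psi \in C_0(\omega \times \calY;\M^{2\times2}_{\sym})$. Choosing $\varphi \equiv 1$ and using hypothesis (i) gives that the ``$x_3$-average'' of $\mu$ is exactly $\lambda_1$, i.e. $\int_I d\mu(\cdot,x_3,\cdot) = \lambda_1$ in the appropriate sense; choosing $\varphi(x_3)=x_3$ and comparing with the first-order moment, hypothesis (iii) (which forces $(\mu^h)^\perp \weakstartwoscale 0$, hence also $\mu^\perp=0$ as a two-scale limit on $\omega\times\ext I\times\calY$) pins down the structure $\mu = \lambda_1 \otimes \calL^1_{x_3} + \nu \otimes x_3\calL^1_{x_3}$ for some $\nu \in \Mb(\omega\times\calY;\M^{2\times2}_{\sym})$ which must be the two-scale limit of $\hat\mu^h$ (at least tested against divergence-free-type fields). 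The genuine content is then to show $\nu = \lambda_2 + D^2_y\kappa$ for some $\kappa \in \calYzero{\omega}$, equivalently $\nu - \lambda_2 = D^2_y\kappa$.

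To produce $\kappa$, I would invoke the duality characterization of \Cref{duality lemma 2 - regime zero}: it suffices to show that for every $\chi \in C_0(\omega\times\calY;\M^{2\times2}_{\sym})$ with $\div_y\div_y\chi = 0$ in the sense of distributions one has $\int_{\omega\times\calY}\chi : d(\nu - \lambda_2) = 0$. By Assumption~\ref{BV^A assumption 2} (valid for $D^2_y$ on $BH(\calY)$, as noted in the excerpt) it is enough to verify this for $\chi \in C_c^\infty(\omega\times\calY;\M^{2\times2}_{\sym})$ with $\div_y\div_y\chi = 0$, and for exactly such test fields hypothesis (ii) gives $\lim_h \int_\omega \chi(x',\tfrac{x'}{\epsh}) : d\hat\mu^h = \int_{\omega\times\calY}\chi : d\lambda_2$. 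On the other hand, the same limit must equal $\int_{\omega\times\calY}\chi : d\nu$ because $\nu$ is the relevant two-scale limit of $\hat\mu^h$ against these fields — this is where the first paragraph's identification $\nu \leftrightarrow \hat\mu^h$ is used. Subtracting yields the required orthogonality and hence the existence of $\kappa \in \calYzero{\omega}$ with $D^2_y\kappa = \nu - \lambda_2$, giving $\mu = \lambda_1 \otimes \calL^1_{x_3} + (\lambda_2 + D^2_y\kappa)\otimes x_3\calL^1_{x_3}$.

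The main obstacle I anticipate is making rigorous the passage from ``$\mu^h \weakstartwoscale \mu$'' to the clean moment decomposition of $\mu$ and the identification of $\nu$ as a two-scale limit of $\hat\mu^h$: two-scale test functions are of the form $\chi(x,x'/\epsh)$ with $\chi \in C_0(\Omega\times\calY)$, and since the oscillation only involves $x'$ one can legitimately separate the $x_3$-dependence, but one must be careful that $\hat\mu^h$ and $\bar\mu^h$ live on $\omega$ (not $\Omega$) and that the perpendicular part is tested on the slightly enlarged $\ext I$ so that no boundary concentration in $x_3$ is lost — this is precisely the role of hypothesis (iii). A secondary technical point is the reduction via Assumption~\ref{BV^A assumption 2} from continuous to smooth compactly supported divergence-free test fields, so that hypothesis (ii), stated only for $\chi \in C_c^\infty$, can be applied; this is routine given the density statement but must be invoked explicitly. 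Once these identifications are in place, the conclusion follows immediately from \Cref{duality lemma 2 - regime zero}.
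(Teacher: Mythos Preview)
Your proposal is correct and follows essentially the same route as the paper: extend/view $\mu^h$ on $\omega\times\ext I\times\calY$, use (i) and (iii) to identify the zeroth moment with $\lambda_1$ and kill the perpendicular part, then test against $x_3\hat\chi$ with $\div_y\div_y\hat\chi=0$ and invoke \Cref{duality lemma 2 - regime zero} (plus a density argument) to write $\hat\nu-\lambda_2=D^2_y\kappa$. The paper makes the extension to $\ext I$ explicit at the outset and works with the extended limit $\nu$, then restricts back to $\Omega\times\calY$; your anticipated obstacle about boundary concentration in $x_3$ is exactly why this is done, and your handling of it is correct.
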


\begin{proof}
Every $\mu^h$ determines a measure $\nu^h$ on $\omega \times \ext{I} \times \calY$ with the relation
\begin{equation*}
    \nu^h(B) := \mu^h(B \cap (\Omega \times \calY))
\end{equation*}
for every Borel set $B \subseteq \omega \times \ext{I} \times \calY$. 
With a slight abuse of notation, we will still write $\mu^h$ instead of $\nu^h$.

Let $\nu$ be the measure such that
\begin{equation*}
    \mu^h \weakstartwoscale \nu \quad \text{two-scale weakly* in $\Mb(\omega \times \ext{I} \times \calY;\M^{2 \times 2}_{\sym})$}.
\end{equation*}
We first observe that, from the assumption (i) and (iii), it follows that $\bar{\nu} = \lambda_1$ and $\nu^\perp = 0$. 
Furthermore, $\mu^h \weakstartwoscale \nu$  two-scale weakly* in $\Mb(\Omega \times \calY;\M^{2 \times 2}_{\sym})$.

Let $\chi \in C_c^{\infty}(\Omega \times \calY;\M^{2 \times 2}_{\sym})$. 
If we consider the following orthogonal decomposition
\begin{equation*}
    \chi(x,y) = \bar{\chi}(x',y) + x_3 \hat{\chi}(x',y) + \chi^\perp(x,y),
\end{equation*}
then we have that
\begin{align*}
    &\int_{\Omega \times \calY} \chi(x,y) : d\nu(x,y) 
    = \lim_{h \to 0} \int_{\Omega} \chi\hspace{-0.25em}\left(x,\tfrac{x'}{\epsh}\right) : d\mu^h(x') \\
    &= \lim_{h \to 0} \int_{\omega} \bar{\chi}\hspace{-0.25em}\left(x',\tfrac{x'}{\epsh}\right) : d\bar{\mu}^h(x') + \frac{1}{12} \lim_{h \to 0} \int_{\omega} \hat{\chi}\hspace{-0.25em}\left(x',\tfrac{x'}{\epsh}\right) : d\hat{\mu}^h(x') + \lim_{h \to 0} \int_{\Omega} \chi^\perp\hspace{-0.25em}\left(x,\tfrac{x'}{\epsh}\right) : d(\mu^h)^\perp(x) \\
    &= \int_{\omega \times \calY} \bar{\chi}\left(x',y\right) : d\lambda_1(x',y) + \frac{1}{12} \lim_{h \to 0} \int_{\omega} \hat{\chi}\hspace{-0.25em}\left(x',\tfrac{x'}{\epsh}\right) : d\hat{\mu}^h(x').
\end{align*}
Suppose now that $\chi(x,y) = x_3 \hat{\chi}(x',y)$ with $\div_{y}\div_{y}\hat{\chi}(x',y) = 0$. 
Then the above equality yields
\begin{equation*}
    \int_{\omega \times \calY} \hat{\chi}(x',y) : d\hat{\nu}(x',y) 
    = \lim_{h \to 0} \int_{\omega} \hat{\chi}\hspace{-0.25em}\left(x',\tfrac{x'}{\epsh}\right) : d\hat{\mu}^h(x')
    = \int_{\omega \times \calY} \hat{\chi}(x',y) : d\lambda_2(x',y).
\end{equation*}
By a density argument, we infer that
\begin{equation*}
    \int_{\omega \times \calY} \hat{\chi}(x',y) : d\left(\hat{\nu}(x',y) - \lambda_2(x',y)\right) = 0,
\end{equation*}
for every $\hat{\chi} \in C_0(\omega \times \calY;\M^{2 \times 2}_{\sym})$ with $\div_{y}\div_{y}\hat{\chi}(x',y) = 0$ (in the sense of distributions). 
From this and \Cref{duality lemma 2 - regime zero} we conclude that there exists $\kappa \in \calYzero{\omega}$ such that
\begin{equation*}
    \hat{\nu} - \lambda_2 =  D^2_{y}\kappa.
\end{equation*}
Since $\mu = \nu$ on $\Omega \times \calY$, we obtain the claim.
\end{proof}

\subsubsection{Case \texorpdfstring{$\gamma = +\infty$}{γ = +∞}}
The following result will be in the proof of the structure result for the two-scale limits of symmetrized scaled gradients.
We note, however, that this result is independent of the limit value $\gamma$. 

\begin{proposition} \label{auxiliary result - regime inf}
Let $\{v^h\}_{h>0}$ be a bounded family in $BD(\Omega)$ such that
\begin{equation*}
    v^h \weakstar v \quad \text{weakly* in $BD(\Omega)$},
\end{equation*}
for some $v \in BD(\Omega)$. 
Then there exists $\mu \in \calXinf{\Omega}$ such that 
\begin{equation*}
    \left(Ev^h\right)'' \weakstartwoscale E_{x'}v' \otimes \calL^{2}_{y} + E_{y}\mu \quad \text{two-scale weakly* in $\Mb(\Omega \times \calY;\M^{2 \times 2}_{\sym})$}.
\end{equation*}
\end{proposition}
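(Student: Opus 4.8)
The plan is to exploit the characterization of $y$-divergence-free test fields provided by \Cref{duality lemma 1 - regime inf}, together with the abstract compactness result \Cref{corrector main property - regime inf}. First I would observe that $\{(Ev^h)''\}_{h>0}$ is a bounded family in $\Mb(\Omega;\M^{2\times 2}_{\sym})$, so by the basic compactness property of two-scale convergence there exists $\nu \in \Mb(\Omega\times\calY;\M^{2\times 2}_{\sym})$ with $(Ev^h)'' \weakstartwoscale \nu$, up to a (not relabeled) subsequence. Since the full statement asserts a limit rather than merely a subsequential one, the structure identified below will pin down $\nu$ uniquely, and a standard Urysohn argument then upgrades the subsequential convergence to convergence of the whole family.

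Next I would identify $\nu$. The natural candidate splitting is $\nu = \lambda_0 + \lambda_1$, where $\lambda_0$ is the "non-oscillating part" and $\lambda_1$ encodes the corrector. To extract $\lambda_0$, test against $\chi(x,y)=\psi(x)$ with $\psi \in C_c^\infty(\Omega;\M^{2\times 2}_{\sym})$ (which trivially satisfies $\div_y\chi=0$): since $v^h \weakstar v$ in $BD(\Omega)$, the minor $(Ev^h)''$ converges weakly* in $\Mb(\Omega;\M^{2\times 2}_{\sym})$ to $(Ev)'' = E_{x'}v'$ (using that $(Ev)_{\alpha\beta} = E_{x'}(v')_{\alpha\beta}$ for $\alpha,\beta\in\{1,2\}$), so the $\calY$-average of $\nu$ is $E_{x'}v'\otimes\calL^2_y$; equivalently, $\nu - E_{x'}v'\otimes\calL^2_y$ has zero average in $y$. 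The core step is then to show that $\lambda_1 := \nu - E_{x'}v'\otimes\calL^2_y$ satisfies condition (i) of \Cref{duality lemma 1 - regime inf}, i.e. $\int_{\Omega\times\calY}\chi:d\lambda_1 = 0$ for every $\chi\in C_0(\Omega\times\calY;\M^{2\times 2}_{\sym})$ with $\div_y\chi = 0$. For such $\chi$, by \Cref{BV^A assumption 2} (valid in $BD(\calY)$) it suffices to treat smooth $\chi\in C_c^\infty(\Omega\times\calY;\M^{2\times 2}_{\sym})$ with $\div_y\chi=0$. Then
\begin{equation*}
    \int_{\Omega\times\calY}\chi:d\lambda_1 = \lim_{h\to 0}\int_{\Omega}\chi\!\left(x,\tfrac{x'}{\epsh}\right):d(Ev^h)''(x) - \int_{\Omega}\Big(\int_{\calY}\chi(x,y)\,dy\Big):dEv(x).
\end{equation*}
The first term I would handle by integration by parts in $BD$: writing $\chi^h(x) := \chi(x,x'/\epsh)$, one has $\int_\Omega \chi^h : dEv^h = -\int_\Omega v^h\cdot\div_{x'}\chi^h\,dx$ (boundary terms vanish since $\chi$ has compact support in $\Omega$, here identifying the $\M^{2\times2}$-valued field with the first two rows acting on $v'$, and using that the $i3$-entries do not enter the minor). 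Now $\div_{x'}\chi^h(x) = (\div_{x'}\chi)(x,x'/\epsh) + \tfrac{1}{\epsh}(\div_y\chi)(x,x'/\epsh) = (\div_{x'}\chi)(x,x'/\epsh)$ precisely because $\div_y\chi = 0$; here is where the oscillating scale drops out entirely, and no $h/\epsh$ factor survives. Since $v^h\to v$ strongly in $L^1$ and $(\div_{x'}\chi)(\cdot,\cdot/\epsh)\weakstar \int_\calY \div_{x'}\chi(\cdot,y)\,dy$ weakly* in $L^\infty$, we get $\lim_h \int_\Omega\chi^h:dEv^h = -\int_\Omega v\cdot\big(\int_\calY\div_{x'}\chi(x,y)\,dy\big)\,dx = \int_\Omega\big(\int_\calY\chi(x,y)\,dy\big):dEv(x)$, so the two terms cancel and $\int_{\Omega\times\calY}\chi:d\lambda_1 = 0$. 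Applying \Cref{duality lemma 1 - regime inf} yields $\mu\in\calXinf{\Omega}$ with $\lambda_1 = E_y\mu$, which is exactly the claim $\nu = E_{x'}v'\otimes\calL^2_y + E_y\mu$.

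The main obstacle I anticipate is bookkeeping rather than conceptual: one must be careful that $\chi$ is a $2\times 2$-valued (symmetric-matrix-valued) test field acting only on the in-plane components $v'$, so that the integration by parts genuinely produces $\div_{x'}\chi$ and not a three-dimensional divergence, and that the compact support in $\Omega$ (not merely $\omega\times I$) really does kill all boundary contributions including those on $\omega\times\partial I$. A secondary subtlety is that \Cref{duality lemma 1 - regime inf} is stated for $\chi\in C_0(\Omega\times\calY;\M^{2\times2}_{\sym})$, so one should either check the density/approximation step (\Cref{BV^A assumption 2}) is genuinely available here — it is, since the operator and domain are exactly those covered — or equivalently verify condition (i) directly on the $C_0$ class by approximating a general $\div_y$-free $C_0$ field by smooth ones with the same constraint. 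Finally, the uniqueness of $\nu$ forces convergence of the whole family: any subsequence has a further subsequence two-scale converging to a limit of the above form with the same $E_{x'}v'$ part and an $E_y\mu$ part; while $\mu$ itself need not be unique, the measure $\nu = E_{x'}v'\otimes\calL^2_y + E_y\mu$ is determined as the unique two-scale limit, so no relabeling is lost in the final statement.
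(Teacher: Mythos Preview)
Your core argument is correct and matches the paper's proof essentially line by line: extract a two-scale limit by compactness, test against smooth $\chi$ with $\div_y\chi=0$ so that the $\tfrac{1}{\epsh}$ term drops out after integration by parts, pass to the limit using strong $L^1$ convergence of $(v^h)'$, and invoke \Cref{duality lemma 1 - regime inf} to produce $\mu\in\calXinf{\Omega}$.

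One small overclaim worth flagging: your closing Urysohn remark does not work. The structural form $E_{x'}v'\otimes\calL^2_y + E_y\mu$ does \emph{not} pin down $\nu$ uniquely, since different subsequences may well produce different correctors $E_y\mu$; nothing in the argument rules this out. The paper's own proof is tacitly subsequential (it opens with ``up to a subsequence''), and this is also how the proposition is used downstream in \Cref{two-scale weak limit of scaled strains}, where a common subsequence is already being extracted. So simply drop the uniqueness claim and read the statement as holding along a subsequence.
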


\begin{proof}
The proof follows closely that of \cite[Proposition 4.10]{Francfort.Giacomini.2014}.

By compactness, the exists $\lambda \in \Mb(\Omega \times \calY;\M^{3 \times 3}_{\sym})$ such that (up to a subsequence)
\begin{equation*}
    Ev^h \weakstartwoscale \lambda \quad \text{two-scale weakly* in $\Mb(\Omega \times \calY;\M^{3 \times 3}_{\sym})$}.
\end{equation*}
Since $v^h \strong v$ strongly in $L^{1}(\Omega;\R^3)$, we have componentwise
\begin{align*}
    v^h_i \weakstartwoscale v_i(x) \,\calL^{3}_{x} \otimes \calL^{2}_{y} \quad \text{two-scale weakly* in $\Mb(\Omega \times \calY)$}, \quad i=1,2,3.
\end{align*}
Consider $\chi \in C_c^{\infty}(\Omega \times \calY;\M^{2 \times 2}_{\sym})$ such that $\div_{y}\chi(x,y) = 0$. 
Then
\begin{align*}
    & \lim_{h\to0} \int_{\Omega} \chi\hspace{-0.25em}\left(x,\tfrac{x'}{\epsh}\right) : \,d\left(Ev^h\right)''(x) 
    = \lim_{h\to0} \int_{\Omega} \chi\hspace{-0.25em}\left(x,\tfrac{x'}{\epsh}\right) : \,dE_{x'}(v^h)'(x)\\
    &= - \lim_{h\to0} \int_{\Omega} (v^h)'(x) \cdot \div_{x'}\left(\chi\hspace{-0.25em}\left(x,\tfrac{x'}{\epsh}\right)\right) \,dx\\ 
    &= - \lim_{h\to0} \left( \int_{\Omega} (v^h)'(x) \cdot \div_{x'}\chi\hspace{-0.25em}\left(x,\tfrac{x'}{\epsh}\right) \,dx + \frac{1}{\epsh} \int_{\Omega} (v^h)'(x) \cdot \div_{y}\chi\hspace{-0.25em}\left(x,\tfrac{x'}{\epsh}\right) \,dx \right)\\ 
    &= - \lim_{h\to0} \int_{\Omega} (v^h)'(x) \cdot \div_{x}\chi\hspace{-0.25em}\left(x,\tfrac{x'}{\epsh}\right) \,dx\\
    &= - \int_{\Omega \times \calY} v'(x) \cdot \div_{x'}\chi\left(x,y\right) \,dx dy\\
    &= \int_{\Omega \times \calY} \chi(x,y) : d\left(E_{x'}v' \otimes \calL^{2}_{y} \right).
\end{align*}
By a density argument, we infer that
\begin{equation*}
    \int_{\Omega \times \calY} \chi(x,y) : d\left(\lambda(x,y) - E_{x'}v' \otimes \calL^{2}_{y} \right) = 0,
\end{equation*}
for every $\chi \in C_0(\Omega \times \calY;\M^{2 \times 2}_{\sym})$ with $\div_{y}\chi(x,y) = 0$ (in the sense of distributions). 
In view of \Cref{duality lemma 1 - regime inf} we conclude that there exists $\mu\in \calXinf{\Omega}$ such that 
\begin{equation*}
    \lambda - E_{x'}v' \otimes \calL^{2}_{y} = E_{y}\mu.
\end{equation*}
\CCC This \EEE yields \BBB the claim. \BBB
\end{proof}

\subsection{Two-scale limits of scaled symmetrized gradients}
We are now ready to prove the \CCC main \BBB result of this section.

\begin{theorem} \label{two-scale weak limit of scaled strains}
\BBB Let $\{u^h\}_{h>0} \subset BD(\Omega)$ be a sequence such that there exists a constant $C>0$ for which $$\|u^h\|_{L^1(\Omega;\R^3)}+\|\Lambda_h Eu^h\|_{\mathcal{M}_b(\Omega; \mathbb M^{3 \times 3}_{sym}) } \leq C. $$ \BBB
Then there exist 
\begin{equation*}
    \bar{u} = (\bar{u}_1, \bar{u}_2) \in BD(\omega), \quad u_3 \in BH(\omega), \quad \widetilde{E} \in \Mb(\Omega \times \calY;\M^{3 \times 3}_{\sym}),
\end{equation*}
and a (not relabeled) subsequence of $\{u^h\}_{h>0}$ which satisfy
\begin{eqnarray*}
    \Lambda_h Eu^h 
    \weakstartwoscale 
    \begin{pmatrix} E\bar{u} - x_3 D^2u_3 & 0 \\ 0 & 0 \end{pmatrix} \otimes \calL^{2}_{y} 
    + 
    \Corrector 
    \quad \text{two-scale weakly* in $\Mb(\Omega \times \calY;\M^{3 \times 3}_{\sym})$}.
\end{eqnarray*}
\begin{enumerate}[label=(\alph*)]
    \item \label{two-scale weak limit of scaled strains - regime zero}
    If $\gamma = 0$, then there exist $\mu \in \calXzero{\omega}$, $\kappa \in \calYzero{\omega}$ and $\zeta \in \Mb(\Omega \times \calY;\R^3)$ such that
    \begin{eqnarray*}
        \Corrector
        \,=\, 
        \begin{pmatrix} \begin{matrix} E_{y}\mu(x',y) - x_3 D^2_{y}\kappa(x',y) \end{matrix} & \zeta'(x,y) \\ (\zeta'(x,y))^T & \zeta_3(x,y) \end{pmatrix}.
    \end{eqnarray*}
    \item \label{two-scale weak limit of scaled strains - regime inf}
    If $\gamma = +\infty$, then there exist $\mu \in \calXinf{\Omega}$, $\kappa \in \calYinf{\Omega}$ and $\zeta \in \Mb(\Omega;\R^3)$ such that
    \begin{eqnarray*}
        \Corrector
        \,=\, 
        \begin{pmatrix} \begin{matrix} E_{y}\mu(x,y) \end{matrix} & \zeta'(x) + D_{y}\kappa(x,y) \\ (\zeta'(x) + D_{y}\kappa(x,y))^T & \zeta_3(x) \end{pmatrix}.
    \end{eqnarray*}
\end{enumerate}
\end{theorem}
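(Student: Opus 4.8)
## Proof strategy for Theorem \ref{two-scale weak limit of scaled strains}

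The plan is to start from the a priori bounds and extract two-scale limits compartment by compartment, then glue them together using the auxiliary corrector results. First I would use the boundedness of $\|\Lambda_h E u^h\|_{\Mb}$ together with \Cref{two-scale weak limit of scaled strains - 2x2 submatrix} to obtain the Kirchhoff–Love components $\bar u \in BD(\omega)$, $u_3 \in BH(\omega)$: the $2\times2$ submatrix of $Eu^h$ (without scaling) converges weakly* to $E\bar u - x_3 D^2 u_3$, which pins down the non-oscillating part of the limit. By weak* compactness in $\Mb$ (first item of the Proposition following \Cref{transfertwoscale}) there is a subsequence with $\Lambda_h E u^h \weakstartwoscale \Lambda$ for some $\Lambda \in \Mb(\Omega\times\calY;\M^{3\times3}_{\sym})$, and I would set $\Corrector := \Lambda - \big(\begin{smallmatrix} E\bar u - x_3 D^2 u_3 & 0\\ 0 & 0\end{smallmatrix}\big)\otimes \calL^2_y$. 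The content of the theorem is then the structural description of $\Corrector$ in each regime, so the work splits into identifying the oscillatory corrections in the in-plane $2\times2$ block versus the mixed/normal components.

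For the $2\times2$ block I would treat the zero-th and first order moments separately. In the case $\gamma = 0$: the scaled gradient $\Lambda_h E u^h$ has $(1,1),(1,2),(2,1),(2,2)$ entries equal to those of $E u^h$, whose in-plane divergence structure (it is a symmetrized $x'$-gradient of $(u^h)'$, up to lower order terms vanishing because $h/\eps_h \to 0$) forces, via the duality characterization \Cref{duality lemma 1 - regime zero}, that the oscillatory part of the zero-th moment $\bar\mu^h$ is $E_y\mu$ for some $\mu \in \calXzero{\omega}$. The first order moment $\hat\mu^h$ of the $2\times2$ block is handled through \Cref{duality lemma 2 - regime zero} and its companion \Cref{auxiliary result - regime zero}: testing against $\chi$ with $\div_y\div_y\chi = 0$ shows the oscillatory correction of $\hat\mu^h$ is $-D^2_y\kappa$ for some $\kappa \in \calYzero{\omega}$ (the sign and the $x_3$-weight matching the $-x_3 D^2u_3$ term), while the perpendicular part $(\mu^h)^\perp$ two-scale converges to $0$ — this last fact coming from $\Lambda_h$ not scaling the in-plane entries combined with Proposition \ref{A_KL characherization}-type splitting and the compactness for $\{u^h\}$. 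Then \Cref{auxiliary result - regime zero} assembles exactly the claimed form $E_y\mu - x_3 D^2_y\kappa$ for the $2\times2$ block, and the mixed entries $\zeta'(x,y)$ and normal entry $\zeta_3(x,y)$ are simply defined as the remaining components of $\Corrector$, which a priori depend on $(x,y)$ with no further structure. In the case $\gamma = +\infty$ the same block is handled more directly by \Cref{auxiliary result - regime inf}, which gives $(Ev^h)'' \weakstartwoscale E_{x'}v'\otimes\calL^2_y + E_y\mu$ with $\mu\in\calXinf{\Omega}$; here $v$ is the unscaled limit and one checks $E_{x'}v'$ reduces to $E\bar u - x_3 D^2 u_3$ under the Kirchhoff–Love structure, so the oscillatory $2\times2$ correction is purely $E_y\mu$ with no $\kappa$-term.

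The delicate regime-$\infty$ point is the mixed column $\zeta'(x) + D_y\kappa(x,y)$: here the $(1,3),(2,3)$ entries of $\Lambda_h E u^h$ are $\tfrac1h(E u^h)_{i3}$, which blow up in scaling but whose $L^1$-bound is exactly what is assumed; I would test these against $\chi \in C_c^\infty$ with $\div_y\chi = 0$ and integrate by parts in $y$, using that $\tfrac{1}{\eps_h}$ and $\tfrac1h$ are comparable up to the divergent factor $h/\eps_h\to\infty$, to deduce via \Cref{duality lemma 2 - regime inf} that the oscillatory part is a $y$-gradient $D_y\kappa$ for $\kappa\in\calYinf{\Omega}$, the non-oscillating remainder being an $x$-dependent $\zeta'(x)$ (and similarly $\zeta_3(x)$ for the $(3,3)$ entry $\tfrac{1}{h^2}(Eu^h)_{33}$). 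I expect the \textbf{main obstacle} to be precisely this bookkeeping of scales in the mixed and normal entries when $\gamma=+\infty$: one must show that the parts of $\Lambda_h Eu^h$ that are multiplied by diverging powers of $1/h$ nonetheless converge two-scale to something with the stated $(x)$- versus $(x,y)$-dependence, which requires carefully distinguishing the "fast" derivative $\tfrac{1}{\eps_h}\partial_y$ from the "slow" one $\partial_{x'}$ inside the divergence and exploiting $h\ll\eps_h$ to kill the cross terms — the mirror image of the cancellation used in the proof of \Cref{auxiliary result - regime inf}. Once the structure of each block is in hand, collecting the pieces yields the decomposition of $\Corrector$ in both \ref{two-scale weak limit of scaled strains - regime zero} and \ref{two-scale weak limit of scaled strains - regime inf}.
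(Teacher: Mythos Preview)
Your high-level architecture is the same as the paper's, and the use of \Cref{auxiliary result - regime zero} and \Cref{auxiliary result - regime inf} for the $2\times 2$ block is exactly right. There are, however, two concrete gaps.

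First, in the $\gamma=+\infty$ discussion you write ``exploiting $h\ll\eps_h$ to kill the cross terms'', but this is the wrong inequality: $\gamma=+\infty$ means $h/\eps_h\to\infty$, i.e.\ $\eps_h\ll h$. More importantly, your plan for the third column---test $\tfrac{1}{h}(Eu^h)_{\alpha 3}$ and $\tfrac{1}{h^2}(Eu^h)_{33}$ against $\chi$ with $\div_y\chi=0$ and integrate by parts---does not work directly, because the components $\partial_{x_\alpha}u^h_3$ and $\partial_{x_3}u^h_\alpha$ are not individually controlled by the assumption; only their symmetric combinations are. The paper circumvents this by a Poisson-equation trick: for each component $\chi^{(2)}_{3i}$ with zero $y$-mean one solves $-\Delta_y G_i=\chi^{(2)}_{3i}$ and rewrites $\chi^{(2)}_{3i}(x'/\eps_h)$ as $-\sum_\beta\partial_{y_\beta y_\beta}G_i(x'/\eps_h)$, which after several integrations by parts in $x'$ and $x_3$ converts the uncontrolled terms into combinations of $\partial_{x_\beta}u^h_\alpha$, $\partial_{x_3}u^h_\beta+\partial_{x_\beta}u^h_3$, etc., each multiplied by a power of $\eps_h/h\to 0$. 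This is how one proves $\lambda_{33}$ is independent of $y$ and how the $D_y\kappa$ structure in $\lambda_{\alpha 3}$ emerges; a bare divergence-free test does not give enough algebraic leverage.

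Second, for $\gamma=0$ you need to verify hypotheses (ii) and (iii) of \Cref{auxiliary result - regime zero}, and your justification (``$\Lambda_h$ not scaling the in-plane entries combined with Proposition-type splitting'') is not an argument. The paper produces an explicit decomposition
\[
u^h_\alpha(x)=\overline{u}^h_\alpha(x')-x_3\partial_{x_\alpha}\overline{u}^h_3(x')+h^2\partial_{x_\alpha}\theta^h_3(x)+h\theta^h_\alpha(x),
\]
where $\theta^h_3,\theta^h_\alpha$ are bounded in $L^1$ (built via one-dimensional Poincar\'e in $x_3$ from the bounds $\|\partial_{x_3}u^h_3\|_{L^1}\leq Ch^2$ and $\|\partial_{x_3}u^h_\alpha+\partial_{x_\alpha}u^h_3\|_{L^1}\leq Ch$). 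This decomposition is what makes the perpendicular part vanish and what allows the first-moment limit (ii) to be computed explicitly against $\chi$ with $\div_y\div_y\chi=0$; the computation for (ii) is itself nontrivial and uses that the cross terms $\tfrac{1}{\eps_h}\partial_{y_\alpha x_\beta}\chi$ can be absorbed by pairing with $\partial_{x_\alpha}\overline{u}^h_3$, which one controls via the decomposition. Without this ansatz you do not have access to $D^2\overline{u}^h_3$ as a bounded object, and hypothesis (ii) remains unverified.
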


\begin{proof}
Owing to \cite[Chapter II, Remark 3.3]{Temam.1985}, we can assume without loss of generality that the maps $u^h$ are smooth functions for every $h > 0$.
Further, the uniform boundedness of the sequence $\{Ev^h\}$ implies that
\begin{align}
    \label{recall C h} &\int_{\Omega} |\partial_{x_\alpha}u^h_3+\partial_{x_3}u^h_\alpha| \,dx \leq C h, \quad \text{ for } \alpha = 1, 2,\\
    \label{recall C h^2} &\int_{\Omega} |\partial_{x_3}u^h_3| \,dx \leq C h^2.
\end{align}

In the following, we will consider $\lambda \in \Mb(\Omega \times \calY;\M^{3 \times 3}_{\sym})$ such that 
\begin{eqnarray*}
    \Lambda_h Eu^h \weakstartwoscale \lambda \quad \text{two-scale weakly* in $\Mb(\Omega \times \calY;\M^{3 \times 3}_{\sym})$}.
\end{eqnarray*}

\noindent{\bf Step 1.}
We consider the case $\gamma = 0$, i.e. $\frac{h}{\epsh} \to 0$.

By the Poincar\'{e} inequality in $L^1(I)$, there is a constant $C$ independent of $h$ such that
\begin{equation*}
    \int_{I} |u^h_3-\averageI{u}^h_3| \,dx_3 \leq C \int_{I} |\partial_{x_3}u^h_3| \,dx_3,
\end{equation*}
for a.e. $x' \in \omega$. 
Integrating over $\omega$ we obtain that
\begin{equation} \label{estimate 1 - regime zero}
    \int_{\Omega} |u^h_3-\averageI{u}^h_3| \,dx \leq C \int_{\Omega} |\partial_{x_3}u^h_3| \,dx \leq C h^2.
\end{equation}
\EEE Set \BBB
\begin{equation*} \label{theta^h_3 - regime zero}
    \vartheta^h_3(x) := \frac{u^h_3(x) - \averageI{u}^h_3(x')}{h^2}.
\end{equation*}
\EEE We have that $\{\vartheta^h_3\}_{h>0}$ \BBB is uniformly bounded in $L^1(\Omega)$. 
\EEE Correspondingly, we \BBB construct a sequence of antiderivatives $\{\theta^h_3\}_{h>0}$ 
by
\begin{equation*}
    \theta^h_3(x) := \int_{-\frac{1}{2}}^{x_3} \vartheta^h_3(x',z_3) \,dz_3 - C_{\vartheta^h_3},
\end{equation*}
where we choose $C_{\vartheta^h_3}$ such that $\averageI{\theta}^h_3 = 0$. 
Note that the constructed sequence is also uniformly bounded in $L^1(\Omega)$. 
Next, for $\alpha\in\{1,2\}$, we construct sequences $\{\theta^h_\alpha\}_{h>0}$ by
\begin{equation*} \label{theta^h_alpha - regime zero}
    \theta^h_\alpha(x) := \frac{u^h_\alpha(x) - \averageI{u}^h_\alpha(x') + x_3 \partial_{x_\alpha}\averageI{u}^h_3(x')}{h} + h \partial_{x_\alpha}\theta^h_3(x).
\end{equation*}
Then $\averageI{\theta}^h_\alpha = 0$ and  
\begin{align*}
    \partial_{x_3}\theta^h_\alpha = \frac{\partial_{x_3}u_\alpha^h + \partial_{x_\alpha}\averageI{u}^h_3}{h} + h \partial_{x_\alpha}\vartheta^h_3 = \frac{\partial_{x_3}u_\alpha^h + \partial_{x_\alpha}u^h_3}{h},
\end{align*}
since $\partial_{x_3}\theta^h_3 = \vartheta^h_3$.
Thus, by the Poincar\'{e} inequality in $L^1(I)$ and integrating over $\omega$, we obtain that 
\begin{equation} \label{estimate 2 - regime zero}
    \int_{\Omega} | \theta^h_\alpha | \,dx \leq C \int_{\Omega} |\partial_{x_3}\theta^h_\alpha| \,dx \leq C.
\end{equation}

From the above constructions, we infer
\begin{equation} \label{u^h_alpha structure - regime zero}
    u^h_\alpha(x) = \averageI{u}^h_\alpha(x') - x_3 \partial_{x_\alpha}\averageI{u}^h_3(x') + h^2 \partial_{x_\alpha}\theta^h_3(x) + h \theta^h_\alpha(x), \quad \alpha=1,2.
\end{equation}
For the $2 \times 2$ minors of the scaled symmetrized gradients, a direct calculation shows
\begin{align} \label{limit 0 - regime zero}
    &\nonumber \int_{\Omega \times \calY} \chi(x,y) : d\lambda^{\prime\prime}(x,y) \\
    &= \lim_{h \to 0} \int_{\Omega} \chi\hspace{-0.25em}\left(x,\tfrac{x'}{\epsh}\right) : \left( E(\averageI{u}^h)^{\prime}(x') - x_3 D^2\averageI{u}^h_3(x') + h^2 D^2_{x'}\theta^h_3(x) + h E_{x'}(\theta^h)^{\prime}(x) \right) \,dx,
\end{align}
for every $\chi \in C_c^{\infty}(\omega;C^{\infty}(I \times \calY;\M^{2 \times 2}_{\sym}))$. 
Notice that
the last two terms in \eqref{limit 0 - regime zero} are negligible in the limit. 
Indeed, we have
{\allowdisplaybreaks
\begin{align} \label{limit 3a - regime zero}
    \nonumber & \lim_{h \to 0} \int_{\Omega} \chi\hspace{-0.25em}\left(x,\tfrac{x'}{\epsh}\right) : h^2 D^2_{x'}\theta^h_3(x) \,dx\\
    \nonumber &= \lim_{h \to 0} h^2 \int_{\Omega} \theta^h_3(x)\,\div_{x'}\div_{x'}\left(\chi\hspace{-0.25em}\left(x,\tfrac{x'}{\epsh}\right)\right) \,dx\\ 
    \nonumber &= \lim_{h \to 0} h^2 \sum_{\alpha,\beta=1,2} \int_{\Omega} \theta^h_3(x)\,\partial_{x_\alpha}\left( \partial_{x_\beta}\chi_{\alpha \beta}\hspace{-0.25em}\left(x,\tfrac{x'}{\epsh}\right) + \frac{1}{\epsh}\partial_{y_\beta}\chi_{\alpha \beta}\hspace{-0.25em}\left(x,\tfrac{x'}{\epsh}\right) \right) \,dx\\ 
    \nonumber &= \lim_{h \to 0} \sum_{\alpha,\beta=1,2} \int_{\Omega} \theta^h_3(x)\,\bigg( h^2\partial_{x_\alpha x_\beta}\chi_{\alpha \beta}\hspace{-0.25em}\left(x,\tfrac{x'}{\epsh}\right) + \frac{h^2}{\epsh}\partial_{y_\alpha x_\beta}\chi_{\alpha \beta}\hspace{-0.25em}\left(x,\tfrac{x'}{\epsh}\right)\\\nonumber&\hspace{10em} + \frac{h^2}{\epsh}\partial_{x_\alpha y_\beta}\chi_{\alpha \beta}\hspace{-0.25em}\left(x,\tfrac{x'}{\epsh}\right) + \frac{h^2}{\epsh^2}\partial_{y_\alpha y_\beta}\chi_{\alpha \beta}\hspace{-0.25em}\left(x,\tfrac{x'}{\epsh}\right) \bigg) \,dx\\
    &= 0.
\end{align}
}
Similarly we compute
\begin{align} \label{limit 3b - regime zero}
    \nonumber & \lim_{h \to 0} \int_{\Omega} \chi\hspace{-0.25em}\left(x,\tfrac{x'}{\epsh}\right) : h E_{x'}(\theta^h)^{\prime}(x) \,dx\\
    \nonumber &= -\lim_{h \to 0} h \int_{\Omega} (\theta^h)^{\prime}(x) \cdot \div_{x'}\left(\chi\hspace{-0.25em}\left(x,\tfrac{x'}{\epsh}\right)\right) \,dx\\ 
    \nonumber &= -\lim_{h \to 0} \sum_{\alpha,\beta=1,2} \int_{\Omega} \theta^h_\alpha(x)\,\left( h\partial_{x_\beta}\chi_{\alpha \beta}\hspace{-0.25em}\left(x,\tfrac{x'}{\epsh}\right) + \frac{h}{\epsh}\partial_{y_\beta}\chi_{\alpha \beta}\hspace{-0.25em}\left(x,\tfrac{x'}{\epsh}\right) \right) \,dx\\ 
    &= 0.
\end{align}
Thus, considering an open set $\ext{I} \supset I$ which compactly contains $I$, we infer 
\begin{equation} \label{limit 3 - regime zero}
    \left(E_{\alpha \beta}(u^h)\right)^\perp \weakstartwoscale 0 \quad \text{two-scale weakly* in $\Mb(\omega \times \ext{I} \times \calY;\M^{2 \times 2}_{\sym})$}.
\end{equation}
Since $\{(\averageI{u}^h)^{\prime}\}$ is bounded in $BD(\omega)$ with $(\averageI{u}^h)^{\prime} \weakstar \bar{u} \;\text{ weakly* in } BD(\omega)$, by \cite[Proposition 4.10]{Francfort.Giacomini.2014} (\CCC the result follows by duality argument, using Proposition \ref{duality lemma 1 - regime zero}\BBB) there exists $\mu \in \calXzero{\omega}$ such that
\begin{equation} \label{limit 1 - regime zero}
    E(\averageI{u}^h)^{\prime} \weakstartwoscale E\bar{u} \otimes \calL^{2}_{y} + E_{y}\mu \quad \text{two-scale weakly* in $\Mb(\omega \times \calY;\M^{2 \times 2}_{\sym})$}.
\end{equation}
From  \CCC Proposition \BBB \ref{two-scale weak limit of scaled strains - 2x2 submatrix} there holds
\begin{align*}
    u^h_{\alpha} &\strong \bar{u}_{\alpha}-x_3 \partial_{x_\alpha}u_3, \quad \text{strongly in }L^1(\Omega), \quad \alpha=1,2,\\
    u^h_3 &\strong u_3, \quad \text{strongly in }L^1(\Omega).  
\end{align*}
thus we infer that
\begin{equation} \label{limit 2a - regime zero}
    \averageI{u}^h_3 \weakstartwoscale u_3(x') \,\calL^{2}_{x'} \otimes \calL^{2}_{y} \quad \text{two-scale weakly* in $\Mb(\omega \times \calY)$}
\end{equation}
Further, multiplying \eqref{u^h_alpha structure - regime zero} with $x_3$ and integrating over $\omega$, we obtain 
\begin{equation*} 
    \partial_{x_\alpha}\averageI{u}^h_3(x') = - \meanI{u}^h_\alpha(x') + h^2 \partial_{x_\alpha}\meanI{\theta}^h_3(x') + h \meanI{\theta}^h_\alpha(x'), \quad \alpha=1,2.
\end{equation*}
Using similar calculations as in \eqref{limit 3a - regime zero} and \eqref{limit 3b - regime zero}, we obtain that only the first term is not negligible in the limit, from which we conclude that, for any $\varphi \in C_c^{\infty}(\omega \times \calY)$
\begin{equation} \label{limit 2b - regime zero}
    \lim_{h \to 0} \int_{\omega} \partial_{x_\alpha}\averageI{u}^h_3(x')\,\varphi\hspace{-0.25em}\left(x',\tfrac{x'}{\epsh}\right) \,dx' = \int_{\omega \times \calY} \partial_{x_\alpha}u_3(x')\,\varphi\hspace{-0.25em}\left(x',y\right) \,dx' dy, \quad \alpha=1,2.
\end{equation}
Consider now $\chi \in C_c^{\infty}(\omega \times \calY;\M^{2 \times 2}_{\sym})$ such that $\div_{y}\div_{y}\chi(x',y) = 0$. 
Then
{\allowdisplaybreaks
\begin{align*}
    \nonumber & \lim_{h \to 0} \int_{\omega} \chi\hspace{-0.25em}\left(x',\tfrac{x'}{\epsh}\right) : D^2\averageI{u}^h_3(x') \,dx'\\
    \nonumber &= \lim_{h \to 0} \int_{\omega} \averageI{u}^h_3(x')\,\div_{x'}\div_{x'}\left(\chi\hspace{-0.25em}\left(x',\tfrac{x'}{\epsh}\right)\right) \,dx'\\ 
    \nonumber &= \lim_{h \to 0} \sum_{\alpha,\beta=1,2} \int_{\omega} \averageI{u}^h_3(x')\,\bigg( \partial_{x_\alpha x_\beta}\chi_{\alpha \beta}\hspace{-0.25em}\left(x',\tfrac{x'}{\epsh}\right) + \frac{1}{\epsh}\partial_{y_\alpha x_\beta}\chi_{\alpha \beta}\hspace{-0.25em}\left(x',\tfrac{x'}{\epsh}\right)\\\nonumber&\hspace{10em} + \frac{1}{\epsh}\partial_{x_\alpha y_\beta}\chi_{\alpha \beta}\hspace{-0.25em}\left(x',\tfrac{x'}{\epsh}\right) + \frac{1}{\epsh^2}\partial_{y_\alpha y_\beta}\chi_{\alpha \beta}\hspace{-0.25em}\left(x',\tfrac{x'}{\epsh}\right) \bigg) \,dx'\\
    \nonumber &= \lim_{h \to 0} \sum_{\alpha,\beta=1,2} \int_{\omega} \averageI{u}^h_3(x')\,\bigg( \partial_{x_\alpha x_\beta}\chi_{\alpha \beta}\hspace{-0.25em}\left(x',\tfrac{x'}{\epsh}\right) + \frac{2}{\epsh}\partial_{y_\alpha x_\beta}\chi_{\alpha \beta}\hspace{-0.25em}\left(x',\tfrac{x'}{\epsh}\right) \bigg) \,dx'\\
    \nonumber &= \lim_{h \to 0} \sum_{\alpha,\beta=1,2} \int_{\omega} \averageI{u}^h_3(x')\,\partial_{x_\alpha x_\beta}\chi_{\alpha \beta}\hspace{-0.25em}\left(x',\tfrac{x'}{\epsh}\right) \,dx' + 2 \int_{\omega} \bigg( \partial_{x_\alpha}\left( \averageI{u}^h_3(x')\,\partial_{x_\beta}\chi_{\alpha \beta}\hspace{-0.25em}\left(x',\tfrac{x'}{\epsh}\right) \right)\\\nonumber&\hspace{10em} - \partial_{x_\alpha}\averageI{u}^h_3(x')\,\partial_{x_\beta}\chi_{\alpha \beta}\hspace{-0.25em}\left(x',\tfrac{x'}{\epsh}\right) - \averageI{u}^h_3(x')\,\partial_{x_\alpha x_\beta}\chi_{\alpha \beta}\hspace{-0.25em}\left(x',\tfrac{x'}{\epsh}\right) \bigg) \,dx'\\
    \nonumber &= \lim_{h \to 0} \sum_{\alpha,\beta=1,2} \left( - \int_{\omega} \averageI{u}^h_3(x')\,\partial_{x_\alpha x_\beta}\chi_{\alpha \beta}\hspace{-0.25em}\left(x',\tfrac{x'}{\epsh}\right) \,dx' - 2 \int_{\omega} \partial_{x_\alpha}\averageI{u}^h_3(x')\,\partial_{x_\beta}\chi_{\alpha \beta}\hspace{-0.25em}\left(x',\tfrac{x'}{\epsh}\right) \,dx' \right),
\end{align*}
}
where in the last equality we used Green's theorem. 
Passing to the limit, by \eqref{limit 2a - regime zero} and \eqref{limit 2b - regime zero}, we have
{\allowdisplaybreaks
\begin{align} \label{limit 2 - regime zero}
    \nonumber & \lim_{h \to 0} \int_{\omega} \chi\hspace{-0.25em}\left(x',\tfrac{x'}{\epsh}\right) : D^2\averageI{u}^h_3(x') \,dx'\\
    \nonumber &= \sum_{\alpha,\beta=1,2} \left( - \int_{\omega \times \calY} u_3(x')\,\partial_{x_\alpha x_\beta}\chi_{\alpha \beta}\left(x',y\right) \,dx' dy - 2 \int_{\omega \times \calY} \partial_{x_\alpha}u_3(x')\,\partial_{x_\beta}\chi_{\alpha \beta}\left(x',y\right) \,dx' dy \right)\\
    \nonumber &= \sum_{\alpha,\beta=1,2} \bigg( - \int_{\omega \times \calY} u_3(x')\,\partial_{x_\alpha x_\beta}\chi_{\alpha \beta}\left(x',y\right) \,dx' dy \\\nonumber&\hspace{4em}- 2 \int_{\omega \times \calY} \Big( \partial_{x_\alpha}\left( u_3(x')\,\partial_{x_\beta}\chi_{\alpha \beta}\left(x',y\right) \right) -  u_3(x')\,\partial_{x_\alpha x_\beta}\chi_{\alpha \beta}\left(x',y\right) \Big) \,dx' dy \bigg)\\
    \nonumber &= \sum_{\alpha,\beta=1,2} \int_{\omega \times \calY} u_3(x')\,\partial_{x_\alpha x_\beta}\chi_{\alpha \beta}\left(x',y\right) \,dx' dy\\
    &= \int_{\omega \times \calY} \chi(x',y) : d\left(D^2u_3 \otimes \calL^{2}_{y} \right).
\end{align}
}
From \eqref{limit 3 - regime zero}, \eqref{limit 1 - regime zero}, \eqref{limit 2 - regime zero} and \Cref{auxiliary result - regime zero}, we conclude that
\begin{equation*}
    \lambda^{\prime\prime} = E\bar{u} \otimes \calL^{2}_{y} + E_{y}\mu - x_3 D^2u_3 \otimes \calL^{2}_{y} - x_3 D^2_{y}\kappa,
\end{equation*}
\CCC where $\mu \in \calXzero{\omega}$, $\kappa \in \calYzero{\omega}$. \BBB
Finally, consider the vector $\zeta^h(x)$ given by the third column of $\Lambda_h Eu^h$, for every $h > 0$. 
The boundedness of the sequence of functions $v^h \in BD(\Omega^h)$ implies that $\{\zeta^h\}_{h>0}$ is a uniformly bounded sequence in $L^1(\Omega;\R^3)$. 
Consequently, we can extract a subsequence which two-scale weakly* converges in $\Mb(\Omega \times \calY;\R^3)$ such that
\begin{align*}
    &\frac{1}{h} E_{\alpha 3}(u^h) \weakstartwoscale \zeta_\alpha \quad \text{two-scale weakly* in $\Mb(\Omega \times \calY)$}, \quad \alpha=1,2,\\
    &\frac{1}{h^2} E_{3 3}(u^h) \weakstartwoscale \zeta_3 \quad \text{two-scale weakly* in $\Mb(\Omega \times \calY)$},
\end{align*}
for a suitable $\zeta \in \Mb(\Omega \times \calY;\R^3)$.
\EEE This concludes the proof in the case $\gamma=0$. \BBB

\medskip
\noindent{\bf Step 2.}
Consider the case $\gamma = +\infty$, i.e. $\frac{\epsh}{h} \to 0$.


For the $2 \times 2$ minors of two-scale limit, by \Cref{auxiliary result - regime inf} and the proof \Cref{two-scale weak limit of scaled strains - 2x2 submatrix}, we have that there exists $\mu \in \calXinf{\Omega}$ such that 
\begin{equation*}
    \lambda''
    = \left( E\bar{u} - x_3 D^2u_3 \right) \otimes \calL^{2}_{y} + E_{y}\mu.
\end{equation*}

Let $\chi^{(1)} \in C_c^{\infty}(\Omega)$ and $\chi^{(2)} \in C^{\infty}(\calY;\CCC \M^{3 \times 3}_{\sym}\BBB)$ such that $\int_{\calY} \chi^{(2)} \,dy = 0$. 
We consider a test function $\chi(x,y) = \chi^{(1)}(x)\chi^{(2)}\hspace{-0.25em}\left(\tfrac{x'}{\epsh}\right)$, such that
\begin{equation*}
    \int_{\Omega \times \calY} \chi(x,y) : d\lambda(x,y)
    = \lim_{h \to 0} \int_{\Omega} \chi^{(1)}(x)\chi^{(2)}\hspace{-0.25em}\left(\tfrac{x'}{\epsh}\right) : d\left(\Lambda_h Eu^h(x)\right).
\end{equation*}
For each $i=1,2,3$, let $G_i$ denote the unique solution in $C^\infty(\calY)$ to the Poisson's equation
\begin{equation*}
    -\Laplace_y G_i = \chi^{(2)}_{3i}, \quad \int_{\calY} G_i \,dy = 0.
\end{equation*}
Then, observing that
\begin{equation*}
    \int_{\Omega \times \calY} \chi_{33}(x,y) : d\lambda_{33}(x,y)
    = \lim_{h \to 0} \frac{1}{h^2} \int_{\Omega} \partial_{x_3}u^h_3(x)\,\chi^{(1)}(x)\chi^{(2)}_{33}\hspace{-0.25em}\left(\tfrac{x'}{\epsh}\right) \,dx,
\end{equation*}
we find 
{\allowdisplaybreaks
\begin{align*} 
    & \int_{\Omega \times \calY} \chi_{33}(x,y) : d\lambda_{33}(x,y)\\ 
    &= -\lim_{h \to 0} \frac{1}{h^2} \sum_{\alpha=1,2} \int_{\Omega} \partial_{x_3}u^h_3(x)\,\chi^{(1)}(x)\partial_{y_\alpha y_\alpha}G_3\hspace{-0.25em}\left(\tfrac{x'}{\epsh}\right) \,dx\\
    &= \lim_{h \to 0} \frac{1}{h^2} \sum_{\alpha=1,2} \int_{\Omega} u^h_3(x)\,\partial_{x_3}\chi^{(1)}(x)\partial_{y_\alpha y_\alpha}G_3\hspace{-0.25em}\left(\tfrac{x'}{\epsh}\right) \,dx\\
    &= \lim_{h \to 0} \frac{\epsh}{h^2} \sum_{\alpha=1,2} \left( \int_{\Omega} u^h_3(x)\,\partial_{x_\alpha}\left(\partial_{x_3}\chi^{(1)}(x)\partial_{y_\alpha}G_3\hspace{-0.25em}\left(\tfrac{x'}{\epsh}\right)\right) \,dx - \int_{\Omega} u^h_3(x)\,\partial_{x_\alpha x_3}\chi^{(1)}(x)\partial_{y_\alpha}G_3\hspace{-0.25em}\left(\tfrac{x'}{\epsh}\right) \,dx \right)\\
    &= \lim_{h \to 0} \frac{\epsh}{h^2} \sum_{\alpha=1,2} \left( - \int_{\Omega} \partial_{x_\alpha}u^h_3(x)\,\partial_{x_3}\chi^{(1)}(x)\partial_{y_\alpha}G_3\hspace{-0.25em}\left(\tfrac{x'}{\epsh}\right) \,dx + \int_{\Omega} \partial_{x_3}u^h_3(x)\,\partial_{x_\alpha}\chi^{(1)}(x)\partial_{y_\alpha}G_3\hspace{-0.25em}\left(\tfrac{x'}{\epsh}\right) \,dx \right).
\end{align*}
}
Recalling \eqref{recall C h} and \eqref{recall C h^2}, we deduce
\begin{align} \label{limit 3,3 - regime inf}
    \nonumber& \int_{\Omega \times \calY} \chi_{33}(x,y) : d\lambda_{33}(x,y)\\ 
    \nonumber&= \lim_{h \to 0} \frac{\epsh}{h^2} \sum_{\alpha=1,2} \int_{\Omega} \partial_{x_3}u^h_\alpha(x)\,\partial_{x_3}\chi^{(1)}(x)\partial_{y_\alpha}G_3\hspace{-0.25em}\left(\tfrac{x'}{\epsh}\right) \,dx\\
    \nonumber&= -\lim_{h \to 0} \frac{\epsh}{h^2} \sum_{\alpha=1,2} \int_{\Omega} u^h_\alpha(x)\,\partial_{x_3 x_3}\chi^{(1)}(x)\partial_{y_\alpha}G_3\hspace{-0.25em}\left(\tfrac{x'}{\epsh}\right) \,dx\\
    \nonumber&= -\lim_{h \to 0} \frac{\epsh^2}{h^2} \sum_{\alpha=1,2} \left( \int_{\Omega} u^h_\alpha(x)\,\partial_{x_\alpha}\left(\partial_{x_3 x_3}\chi^{(1)}(x)G_3\hspace{-0.25em}\left(\tfrac{x'}{\epsh}\right)\right) \,dx - \int_{\Omega} u^h_3(x)\,\partial_{x_\alpha x_3 x_3}\chi^{(1)}(x)\partial_{y_\alpha}G_3\hspace{-0.25em}\left(\tfrac{x'}{\epsh}\right) \,dx \right)\\
    \nonumber&= \lim_{h \to 0} \frac{\epsh^2}{h^2} \sum_{\alpha=1,2} \int_{\Omega} \partial_{x_\alpha}u^h_\alpha(x)\,\partial_{x_3 x_3}\chi^{(1)}(x)G_3\hspace{-0.25em}\left(\tfrac{x'}{\epsh}\right) \,dx\\
    &= 0.
\end{align}
Thus, recalling that $\int_{\calY} \chi^{(2)}_{33} \,dy = 0$, and since for arbitrary test function we can subtract their mean value over $\calY$ to obtain a function with mean value zero, we infer that there exists $\zeta_3 \in \Mb(\Omega)$ such that
\begin{equation*}
    \lambda_{33} = \zeta_3 \otimes \calL^{2}_y.
\end{equation*}

Similarly, from the observation that
\begin{align*}
    &\int_{\Omega \times \calY} \chi_{13}(x,y) : d\lambda_{13}(x,y) + \int_{\Omega \times \calY} \chi_{23}(x,y) : d\lambda_{23}(x,y)\\
    &= \lim_{h \to 0} \frac{1}{2h} \sum_{\alpha=1,2} \int_{\Omega} \left(\partial_{x_\alpha}u^h_3(x)+\partial_{x_3}u^h_\alpha(x)\right)\,\chi^{(1)}(x)\chi^{(2)}_{3\alpha}\hspace{-0.25em}\left(\tfrac{x'}{\epsh}\right) \,dx,
\end{align*}
we deduce
\begin{align} \label{limit 3,alfa - regime inf}
    \nonumber&\int_{\Omega \times \calY} \chi_{13}(x,y) : d\lambda_{13}(x,y) + \int_{\Omega \times \calY} \chi_{23}(x,y) : d\lambda_{23}(x,y)\\
    &= \lim_{h \to 0} \frac{1}{2h} \sum_{\alpha,\beta=1,2} \left( \int_{\Omega} \partial_{x_\alpha}u^h_3(x)\,\chi^{(1)}(x)\partial_{y_\beta y_\beta}G_\alpha\hspace{-0.25em}\left(\tfrac{x'}{\epsh}\right) \,dx + \int_{\Omega} \partial_{x_3}u^h_\alpha(x)\,\chi^{(1)}(x)\partial_{y_\beta y_\beta}G_\alpha\hspace{-0.25em}\left(\tfrac{x'}{\epsh}\right) \,dx \right).
\end{align}
Suppose now that $\div_{y}\chi^{(2)}_{3\alpha} = 0$, i.e. $\sum_{\alpha,\beta=1,2} \partial_{y_\alpha y_\beta y_\beta}G_\alpha = 0$. 
Then we have
{\allowdisplaybreaks
\begin{align} \label{limit 3,alfa pt1 - regime inf}
    \nonumber&\lim_{h \to 0} \frac{1}{2h} \sum_{\alpha,\beta=1,2} \int_{\Omega} \partial_{x_\alpha}u^h_3(x)\,\chi^{(1)}(x)\partial_{y_\beta y_\beta}G_\alpha\hspace{-0.25em}\left(\tfrac{x'}{\epsh}\right) \,dx\\
    \nonumber&= \lim_{h \to 0} \frac{1}{2h} \sum_{\alpha,\beta=1,2} \left( - \int_{\Omega} u^h_3(x)\,\partial_{x_\alpha}\chi^{(1)}(x)\partial_{y_\beta y_\beta}G_\alpha\hspace{-0.25em}\left(\tfrac{x'}{\epsh}\right) \,dx - \frac{1}{\epsh} \int_{\Omega} u^h_3(x)\,\chi^{(1)}(x)\partial_{y_\alpha y_\beta y_\beta}G_\alpha\hspace{-0.25em}\left(\tfrac{x'}{\epsh}\right) \,dx \right)\\
    \nonumber&= - \lim_{h \to 0} \frac{1}{2h} \sum_{\alpha,\beta=1,2} \int_{\Omega} u^h_3(x)\,\partial_{x_\alpha}\chi^{(1)}(x)\partial_{y_\beta y_\beta}G_\alpha\hspace{-0.25em}\left(\tfrac{x'}{\epsh}\right) \,dx\\
    \nonumber&= \lim_{h \to 0} \frac{\epsh}{2h} \sum_{\alpha,\beta=1,2} \left( \int_{\Omega} \partial_{x_\beta}u^h_3(x)\,\partial_{x_\alpha}\chi^{(1)}(x)\partial_{y_\beta}G_\alpha\hspace{-0.25em}\left(\tfrac{x'}{\epsh}\right) \,dx + \int_{\Omega} u^h_3(x)\,\partial_{x_\alpha x_\beta}\chi^{(1)}(x)\partial_{y_\beta}G_\alpha\hspace{-0.25em}\left(\tfrac{x'}{\epsh}\right) \,dx \right)\\
    \nonumber&= \lim_{h \to 0} \frac{\epsh}{2h} \sum_{\alpha,\beta=1,2} \int_{\Omega} \partial_{x_\beta}u^h_3(x)\,\partial_{x_\alpha}\chi^{(1)}(x)\partial_{y_\beta}G_\alpha\hspace{-0.25em}\left(\tfrac{x'}{\epsh}\right) \,dx\\
    \nonumber& = - \lim_{h \to 0} \frac{\epsh}{2h} \sum_{\alpha,\beta=1,2} \int_{\Omega} \partial_{x_3}u^h_\beta(x)\,\partial_{x_\alpha}\chi^{(1)}(x)\partial_{y_\beta}G_\alpha\hspace{-0.25em}\left(\tfrac{x'}{\epsh}\right) \,dx\\
    \nonumber& = \lim_{h \to 0} \frac{\epsh}{2h} \sum_{\alpha,\beta=1,2} \int_{\Omega} u^h_\beta(x)\,\partial_{x_\alpha x_3}\chi^{(1)}(x)\partial_{y_\beta}G_\alpha\hspace{-0.25em}\left(\tfrac{x'}{\epsh}\right) \,dx\\
    & = 0.
\end{align}
}
Furthermore,
{\allowdisplaybreaks
\begin{align} \label{limit 3,alfa pt2 - regime inf} 
    \nonumber&\lim_{h \to 0} \frac{1}{2h} \sum_{\alpha,\beta=1,2} \int_{\Omega} \partial_{x_3}u^h_\alpha(x)\,\chi^{(1)}(x)\partial_{y_\beta y_\beta}G_\alpha\hspace{-0.25em}\left(\tfrac{x'}{\epsh}\right) \,dx\\
    \nonumber&= - \lim_{h \to 0} \frac{1}{2h} \sum_{\alpha,\beta=1,2} \int_{\Omega} u^h_\alpha(x)\,\partial_{x_3}\chi^{(1)}(x)\partial_{y_\beta y_\beta}G_\alpha\hspace{-0.25em}\left(\tfrac{x'}{\epsh}\right) \,dx\\
    \nonumber&= \lim_{h \to 0} \frac{\epsh}{2h} \sum_{\alpha,\beta=1,2} \left( \int_{\Omega} \partial_{x_\beta}u^h_\alpha(x)\,\partial_{x_3}\chi^{(1)}(x)\partial_{y_\beta}G_\alpha\hspace{-0.25em}\left(\tfrac{x'}{\epsh}\right) \,dx + \int_{\Omega} u^h_\alpha(x)\,\partial_{x_\beta x_3}\chi^{(1)}(x)\partial_{y_\beta}G_\alpha\hspace{-0.25em}\left(\tfrac{x'}{\epsh}\right) \,dx \right)\\ 
    \nonumber&= \lim_{h \to 0} \frac{\epsh}{2h} \sum_{\alpha,\beta=1,2} \int_{\Omega} \partial_{x_\beta}u^h_\alpha(x)\,\partial_{x_3}\chi^{(1)}(x)\partial_{y_\beta}G_\alpha\hspace{-0.25em}\left(\tfrac{x'}{\epsh}\right) \,dx\\
    \nonumber&= - \lim_{h \to 0} \frac{\epsh}{2h} \sum_{\alpha,\beta=1,2} \int_{\Omega} \partial_{x_\alpha}u^h_\beta(x)\,\partial_{x_3}\chi^{(1)}(x)\partial_{y_\beta}G_\alpha\hspace{-0.25em}\left(\tfrac{x'}{\epsh}\right) \,dx\\
    \nonumber&= \lim_{h \to 0} \frac{\epsh}{2h} \sum_{\alpha,\beta=1,2} \left( \int_{\Omega} u^h_\beta(x)\,\partial_{x_\alpha x_3}\chi^{(1)}(x)\partial_{y_\beta}G_\alpha\hspace{-0.25em}\left(\tfrac{x'}{\epsh}\right) + \frac{1}{\epsh} \int_{\Omega} u^h_\beta(x)\,\partial_{x_3}\chi^{(1)}(x)\partial_{y_\alpha y_\beta}G_\alpha\hspace{-0.25em}\left(\tfrac{x'}{\epsh}\right) \,dx \right)\\
    \nonumber&= \lim_{h \to 0} \frac{1}{2h} \sum_{\alpha,\beta=1,2}  \int_{\Omega} u^h_\beta(x)\,\partial_{x_3}\chi^{(1)}(x)\partial_{y_\alpha y_\beta}G_\alpha\hspace{-0.25em}\left(\tfrac{x'}{\epsh}\right) \,dx\\
    \nonumber&= - \lim_{h \to 0} \frac{\epsh}{2h} \sum_{\alpha,\beta=1,2} \left( \int_{\Omega} \partial_{x_\beta}u^h_\beta(x)\,\partial_{x_3}\chi^{(1)}(x)\partial_{y_\alpha}G_\alpha\hspace{-0.25em}\left(\tfrac{x'}{\epsh}\right) \,dx + \int_{\Omega} u^h_\beta(x)\,\partial_{x_\beta x_3}\chi^{(1)}(x)\partial_{y_\alpha}G_\alpha\hspace{-0.25em}\left(\tfrac{x'}{\epsh}\right) \,dx \right)\\
    &= 0.
\end{align}
}
From \eqref{limit 3,alfa - regime inf}, \eqref{limit 3,alfa pt1 - regime inf} and \eqref{limit 3,alfa pt2 - regime inf}, and \Cref{duality lemma 2 - regime inf}, and recalling that $\int_{\calY} \chi^{(2)}_{13} \,dy = 0$ and $\int_{\calY} \chi^{(2)}_{23} \,dy = 0$, we conclude that there exist $\kappa \in \calYinf{\Omega}$ and $\zeta' \in \Mb(\Omega;\R^2)$ such that
\begin{equation*}
    \begin{pmatrix} \lambda_{13} \\ \lambda_{23} \end{pmatrix} = \zeta' \otimes \calL^{2}_y + D_{y}\kappa.
\end{equation*}
\EEE This concludes the proof of the theorem. \BBB
\end{proof}

\section{Two-scale statics and duality}
\label{statics}
In this section we define a notion of stress-strain duality and analyze the two-scale behavior of our functionals.
\CCC The main goal is to prove the principle of maximum plastic work in \Cref{principleofmximumpl}, which we will use in \Cref{dynamics} to prove the global stability of the limiting model. 
In \Cref{torusstresstrain} we \EEE characterize \BBB the duality between stress and strain on the torus $\calY$, the admissible two-scale configurations are discussed in \Cref{subs:dis}, while the admissible two-scale stresses are \EEE the subject of
\Cref{admissiblestress}.
\BBB

\subsection{Stress-plastic strain duality on the cell}\label{torusstresstrain}

\subsubsection{Case \texorpdfstring{$\gamma = 0$}{γ = 0}}
\begin{definition} \label{definition K_0}
The set $\calK_{0}$ of admissible stresses is defined as the set of all elements $\Sigma \in L^2(I \times \calY;\M^{3 \times 3}_{\sym})$ satisfying:
\begin{enumerate}[label=(\roman*)]
    \item \label{definition K_0 (i)} $\Sigma_{i3}(x_3,y) = 0 \,\text{ for } i=1,2,3$,
    \item \label{definition K_0 (ii)} $\Sigma_{\dev}(x_3,y) \in K(y) \,\text{ for } \calL^{1}_{x_3} \otimes \calL^{2}_{y}\text{-a.e. } (x_3,y) \in I \times \calY$,
    \item \label{definition K_0 (iii)} $\div_{y}\bar{\Sigma} = 0 \text{ in } \calY$,
    \item \label{definition K_0 (iv)} $\div_{y}\div_{y}\hat{\Sigma} = 0 \text{ in } \calY$,
\end{enumerate}
where $\bar{\Sigma},\, \hat{\Sigma} \in L^2(\calY;\M^{2 \times 2}_{\sym})$ are the \CCC zero-th \BBB and \CCC the \BBB first order moments of the $2 \times 2$ minor of $\Sigma$.
\end{definition}

Recalling \eqref{K_r characherization}, by conditions \ref{definition K_0 (i)} and \ref{definition K_0 (ii)} we may identify $\Sigma \in \calK_{0}$ with an element of $L^{\infty}(I \times \calY;\M^{2 \times 2}_{\sym})$ such that $\Sigma(x_3,y) \in K_r(y) \,\text{ for } \calL^{1}_{x_3} \otimes \calL^{2}_{y}\text{-a.e. } (x_3,y) \in I \times \calY$.
Thus, in this regime it will be natural to define the family of admissible configurations \EEE by means of conditions formulated on \BBB $\M^{2 \times 2}_{\sym}$ .

\begin{definition} \label{definition A_0}
The family $\calA_{0}$ of admissible configurations is given by the set of quadruplets
\begin{equation*}
    \bar{u} \in BD(\calY), \qquad u_3 \in BH(\calY), \qquad E \in L^2(I \times \calY;\M^{2 \times 2}_{\sym}), \qquad P \in \Mb(I \times \calY;\M^{2 \times 2}_{\sym}),
\end{equation*}
such that
\begin{equation} \label{disss1}
    E_{y}\bar{u} - x_3 D^2_{y}u_3 = E \,\calL^{1}_{x_3} \otimes \calL^{2}_{y} + P \quad \textit{ in } I \times \calY.
\end{equation}
\end{definition}

Recalling the definitions of \CCC zero-th \BBB and first order moments of functions and measures (see \Cref{moments of functions} and \Cref{moments of measures}), we introduce the following analogue of the duality between moments of stresses and plastic strains.

\begin{definition}
Let $\Sigma \in \calK_{0}$ and let 
$(\bar{u}, u_3, E, P) \in \calA_{0}$.
We define the distributions $[ \bar{\Sigma} : \bar{P} ]$ and $[ \hat{\Sigma} : \hat{P} ]$ on $\calY$ by
\begin{gather}
\begin{split} \label{cell stress-strain duality zero moment - regime zero}
    [ \bar{\Sigma} : \bar{P} ](\varphi) :=
    - \int_{\calY} \varphi\,\bar{\Sigma} : \bar{E} \,dy 
    - \int_{\calY} \bar{\Sigma} : \big( \bar{u} \odot \nabla_{y}\varphi \big) \,dy,
\end{split}\\
\begin{split} \label{cell stress-strain duality first moment - regime zero}
    [ \hat{\Sigma} : \hat{P} ](\varphi) :=
    - \int_{\calY} \varphi\,\hat{\Sigma} : \hat{E} \,dy 
    + 2 \int_{\calY} \hat{\Sigma} : \big( \nabla_{y}u_3 \odot \nabla_{y}\varphi \big) \,dy 
    + \int_{\calY} u_3\,\hat{\Sigma} : \nabla^2_{y}\varphi \,dy,
\end{split}
\end{gather}
for every $\varphi \in C^{\infty}(\calY)$.
\end{definition}

\begin{remark}\label{remadd1} 
Note that the second integral in \eqref{cell stress-strain duality zero moment - regime zero} is well defined since $BD(\calY)$ is embedded into $L^2(\calY;\R^2)$. 
Similarly, the second and third integrals in \eqref{cell stress-strain duality first moment - regime zero} are well defined since $BH(\calY)$ is embedded into $H^1(\calY)$. 
Moreover, the definitions are independent of the choice of $(u, E)$, so \eqref{cell stress-strain duality zero moment - regime zero} and \eqref{cell stress-strain duality first moment - regime zero} define a meaningful distributions on $\calY$ \CCC (this is valid for arbitrary $\bar{\Sigma}, \hat{\Sigma} \in L^\infty(\calY;\mathbb{M}_{\sym}^{2 \times 2})$ that satisfy the properties (iii) and (iv) of Definition \ref{definition K_0}) \BBB.
\CCC
Arguing as in \cite[Section 7]{Davoli.Mora.2013}, one can prove that $[ \bar{\Sigma} : \bar{P} ]$ and $[ \hat{\Sigma} : \hat{P} ]$ are bounded Radon measures on $\calY$. 
For $\bar{\Sigma}$ of class $C^1$ and $\hat{\Sigma}$ of class $C^2$ it can be shown by integration by parts (see e.g. \cite{Francfort.Giacomini.2012} and \cite[Remark 7.1, Remark 7.4]{Davoli.Mora2015} that 
\begin{equation} \label{dualityadded1} 
\int_{\calY} \varphi d[\bar{\Sigma}:\bar P]=\int_{\calY} \varphi \bar{\Sigma} d\bar{P}, \quad \int_{\calY} \varphi d[\hat{\Sigma}:\hat P]=\int_{\calY} \varphi \hat{\Sigma} d\hat{P}. 
\end{equation} 
From this it follows that for $\bar{\Sigma}$ of class $C^1$ and $\hat{\Sigma}$ of class $C^2$ we have 
\begin{equation} \label{dualityadded2} 
 \left| [\bar{\Sigma}:\bar{P}]\right| \leq \|\bar{\Sigma}\|_{L^\infty}|\bar{P}|, \quad   \left| [\hat{\Sigma}:\hat{P}]\right| \leq \|\hat{\Sigma}\|_{L^\infty}|\hat{P}|, \quad \varphi \in C(\calY).  
 \end{equation} 
Through the approximation by convolution \eqref{dualityadded1} then extends to arbitrary continuous $\bar{\Sigma}$, $\hat{\Sigma}$ and  \eqref{dualityadded2} applies to arbitrary $\bar{\Sigma}, \hat{\Sigma} \in L^\infty(\calY;\mathbb{M}_{\sym}^{2 \times 2})$ satisfying the properties (iii) and (iv) of Definition \ref{definition K_0})
\BBB
\end{remark}
\CCC
\begin{remark} \label{remadd2} 
If $\alpha$ is a simple $C^2$ curve in $\calY$, then  
\begin{equation} \label{dualityadded3} 
[\bar{\Sigma}:\bar{P}]= \Sigma \nu^1_{\alpha}\cdot (\bar{u}_1-\bar{u}_2)\calH^1,
\end{equation} 
where $\nu^1_{\alpha}$ is a unit normal on the curve $\alpha$ while $\bar{u}_1$ and $\bar{u}_2$ are the traces on $\alpha$ of $\bar{u}$ ($\bar{u}_1$ is from the side toward which normal is pointing, $\bar{u}_2$ is from the opposite side). This can be obtained from \eqref{dualityadded1} and approximation by convolution, see e.g. \cite[Lemma 3.8]{Francfort.Giacomini.2012}. 

From \eqref{traces of the stress} it follows that if $U$ is an open set in $\calY$ whose boundary is of class $C^2$ and $\bar{\Sigma}_n \in  L^\infty(U;\mathbb{M}_{\sym}^{2 \times 2})$ a bounded sequence such that $\bar{\Sigma}_n \to \bar{\Sigma}$ almost everywhere (and thus in $L^p(U)$, for every $p<\infty$) and $\div_y \bar{\Sigma}_n\to 0$ strongly in $L^2(U)$, then $\bar{\Sigma}_n \nu^1_{\alpha} \weakstar \bar{\Sigma}\nu_{\alpha}^1$, weakly* in $L^{\infty}(K \cap \alpha)$ for any compact set $K \subset U$.
\end{remark} 
\begin{remark} 
It can be shown that if $\alpha \subset \calY$ is simple $C^2$ closed or non-closed   $C^2$ curve with endpoints $\{a,b\}$  that there exists $b_1(\hat{\Sigma}) \in \III L^{\infty}_{\rm loc} \BBB (\alpha) $ such that
\begin{equation} \label{curve1} 
[\hat{\Sigma}: \hat P]=b_1(\hat{\Sigma})\partial_{ \nu_{\alpha} }u_3^{1,2}\calH^1, \quad \textrm{ on } \alpha,   
\end{equation} 
where $\nu_{\alpha}$ is a unit normal of $\alpha$ and $\partial_{\nu_{\alpha} }u_3^{1,2}$ is a jump in the normal derivative of $u_3$ (from the side in the opposite direction of the normal), \III which is an $L^1_{\rm loc} (\alpha)$ function. \BBB This is a direct consequence of \eqref{cell stress-strain duality first moment - regime zero} and \cite[Th\'{e}oreme 2]{Demengel.1984}, see also \cite[Remark 7.4]{Davoli.Mora2015} and the fact that
$\left|[\hat{\Sigma}: \hat{P}]\right| \{a,b\}=0$ (see \ref{dualityadded2}).

From \cite[Th\'{e}oreme 2 and Appendice, Th\'{e}oreme 1]{Demengel.1984} it follows that if $U$ is an open set in $\calY$ whose boundary is of class $C^2$ and $\hat{\Sigma}_n \in  L^\infty(U;\mathbb{M}_{\sym}^{2 \times 2})$ a bounded sequence such that $\hat{\Sigma}_n \to \hat{\Sigma}$ almost everywhere (and thus in $L^p(U)$, for every $p<\infty$) and $\div_y \div_y \hat{\Sigma}_n\to 0$ strongly in $L^2(U)$, then $b_1(\hat{\Sigma}_n) \weakstar b_1(\hat{\Sigma})$, weakly* in $L^{\infty}(K \cap \alpha)$ for any compact set $K \subset U$. 
\end{remark}
\BBB
We are now in a position to introduce a duality pairing between admissible stresses and plastic strains. 

\begin{definition}
Let $\Sigma \in \calK_{0}$ and let 
$(\bar{u}, u_3, E, P) \in \calA_{0}$. 
Then we can define a bounded Radon measure $[ \Sigma : P ]$ on $I \times \calY$ by setting
\begin{equation*}
    [ \Sigma : P ] := 
    [ \bar{\Sigma} : \bar{P} ] \otimes \calL^{1}_{x_3} 
    + \frac{1}{12} [ \hat{\Sigma} : \hat{P} ] \otimes \calL^{1}_{x_3} 
    - \Sigma^\perp : E^\perp,
\end{equation*}
so that
\begin{align} \label{cell stress-strain duality - regime zero}
\begin{split}
    \int_{I \times \calY} \varphi \,d[ \Sigma : P ] 
    &= - \int_{I \times \calY} \varphi\,\Sigma : E \,dx_3 dy 
    - \int_{\calY} \bar{\Sigma} : \big( \bar{u} \odot \nabla_{y}\varphi \big) \,dy\\
    &\,\quad + \frac{1}{6} \int_{\calY} \hat{\Sigma} : \big( \nabla_{y}u_3 \odot \nabla_{y}\varphi \big) \,dy 
    + \frac{1}{12} \int_{\calY} u_3\,\hat{\Sigma} : \nabla^2_{y}\varphi \,dy,
\end{split}
\end{align}
for every $\varphi \in C^2(\calY)$.
\end{definition}
\CCC 
\begin{remark} \label{remmarin1} 
Notice that 
$$  \overline{[ \Sigma : P ]} := 
    [ \bar{\Sigma} : \bar{P} ] 
    + \frac{1}{12} [ \hat{\Sigma} : \hat{P} ] 
    - \overline{\Sigma^\perp : E^\perp}. $$
\end{remark}

The following proposition will be used in \Cref{principleofmximumpl} to prove the main result of this section.   \BBB
\begin{proposition} \label{cell Hill's principle - regime zero}
Let $\Sigma \in \calK_{0}$ and $(\bar{u}, u_3, E, P) \in \calA_{0}$. 
\CCC If $\calY$ is a geometrically admissible multi-phase torus, under the assumption on the ordering of the phases we have \BBB 
\CCC
\begin{equation} \label{inequality1}
    \overline{\,H_r\left(y, \frac{dP}{d|P|}\right) |P|} \geq \overline{ [\Sigma : P ]}.
\end{equation}
\BBB
\end{proposition}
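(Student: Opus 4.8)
The plan is to localize the measure inequality \eqref{inequality1} over the phases $\calY_i$ and over the interface $\Gamma$. Write $H_r^{(i)}$ for the support function of the reduced set $K_r^{(i)}$ obtained from $K_i$ through \eqref{K_r characherization}, so that $H_r(y,\cdot)=H_r^{(i)}$ for $y\in\calY_i$; on the interface of $\calY_i$ and $\calY_j$ the assumption on the ordering of the phases (say $K_i\subset K_j$) gives $K(y)=K_i$, hence $H_r(y,\cdot)=H_r^{(i)}$ there too. By geometric admissibility, up to the $\calH^1$-null set $\calS$ and the finitely many endpoints of the curves, $\Gamma$ is a disjoint union of $C^2$ curves $\alpha$, each lying in the interface of exactly two phases (Remark \ref{remnak1}). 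Since the $BD$-measure $E_y\bar u$ and the $BH$-measure $D^2_y u_3$ have no atoms and their Cantor parts vanish on sets of finite $\calH^1$-measure, both $|P|$ and the total variation of $[\Sigma:P]$ --- the latter controlled through $\|\bar\Sigma\|_{L^\infty}|\bar P|$, $\|\hat\Sigma\|_{L^\infty}|\hat P|$ and the $L^1(\calL^3)$-function $\Sigma^\perp:E^\perp$, cf.\ Remark \ref{remadd1} --- assign no mass to $I\times(\calS\cup\{\text{endpoints}\})$. It therefore suffices to prove $\overline{[\Sigma:P]}\le\overline{\,H_r(y,\tfrac{dP}{d|P|})|P|}$ separately on each open phase $\calY_i$ and on each curve $\alpha$; adding these estimates yields \eqref{inequality1}.

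\emph{Interior of a phase.} On $\calY_i$ I mollify $\Sigma$ in the variable $y$ over compactly contained subsets, obtaining smooth fields $\Sigma_\delta$ with $(\Sigma_\delta)_{i3}=0$ ($i=1,2,3$), $(\Sigma_\delta)_{\dev}\in K_i$ everywhere (by convexity of $K_i$, whence $\Sigma_\delta\in K_r^{(i)}$ pointwise), $\div_y\bar\Sigma_\delta=0$, $\div_y\div_y\hat\Sigma_\delta=0$, and $\Sigma_\delta\to\Sigma$ in $L^2_{\loc}$ together with the relevant divergences. By Remark \ref{remadd1} the pairings $[\bar\Sigma_\delta:\bar P]$, $[\hat\Sigma_\delta:\hat P]$ reduce to the elementary products of a continuous field with a measure, so, taking zero-th moments in the definition of $[\Sigma_\delta:P]$ and matching the cross-moment terms (which cancel because $\int_I x_3\,dx_3=0$ and $\overline{E^\perp}=\widehat{E^\perp}=0$), one gets $\overline{[\Sigma_\delta:P]}=\overline{\,\Sigma_\delta:P}$. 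Since $\Sigma_\delta(x_3,y)\in K_r^{(i)}$ everywhere, $\Sigma_\delta:P\le H_r^{(i)}(y,\tfrac{dP}{d|P|})|P|$ as measures; taking zero-th moments (a linear, order-preserving operation) and letting $\delta\to0$, with the stability of the duality pairing under the above convergences (Remark \ref{remadd1}), yields $\overline{[\Sigma:P]}\le\overline{\,H_r(y,\tfrac{dP}{d|P|})|P|}$ on $\calY_i$.

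\emph{Interface.} Let $\alpha$ be a $C^2$ curve in the interface of $\calY_i$ and $\calY_j$ with $K_i\subset K_j$. On $I\times\alpha$ the measure $P$ is given by $dP(x_3,y)=\big(A(y)-x_3M(y)\big)\,dx_3\,d\calH^1(y)$, where $A=(\bar u^+-\bar u^-)\odot\nu$ is the $\calH^1$-density of $E_y\bar u$ on $\alpha$ and $M=\partial_\nu u_3^{+,-}\,\nu\otimes\nu$ that of $D^2_y u_3$ on $\alpha$, the $\perp$-part of $P$ being absolutely continuous and contributing nothing on $I\times\alpha$. By positive $1$-homogeneity, the zero-th moment of $H_r(y,\tfrac{dP}{d|P|})|P|$ restricted to $I\times\alpha$ has $\calH^1$-density $\int_I H_r^{(i)}(A(y)-x_3M(y))\,dx_3$ on $\alpha$. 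On the other hand, by Remark \ref{remadd2} and the subsequent remark, the restrictions of $[\bar\Sigma:\bar P]$ and $[\hat\Sigma:\hat P]$ to $\alpha$ are expressed through the traces $\Sigma\nu$ and $b_1(\hat\Sigma)$, which are \emph{two-sided} because $\div_y\bar\Sigma=0$ and $\div_y\div_y\hat\Sigma=0$ hold on all of $\calY$ and $\alpha$ is of class $C^2$; approximating $\Sigma$ inside $\calY_i$ as above and using the stability of these traces (same remarks), one identifies the zero-th moment of $[\Sigma:P]$ restricted to $I\times\alpha$ with the measure of $\calH^1$-density $\bar\Sigma^{(i)}(y):A(y)-\tfrac1{12}\hat\Sigma^{(i)}(y):M(y)$ on $\alpha$, where $\Sigma^{(i)}(x_3,y):=\bar\Sigma^{(i)}(y)+x_3\hat\Sigma^{(i)}(y)$ is the $L^\infty$-trace on $\alpha$ of $\Sigma$ restricted to $I\times\calY_i$. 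In particular $\Sigma^{(i)}(x_3,y)\in K_r^{(i)}$ for a.e.\ $(x_3,y)\in I\times\alpha$, so that
\[
  \Sigma^{(i)}(x_3,y):\big(A(y)-x_3M(y)\big)\ \le\ H_r^{(i)}\big(A(y)-x_3M(y)\big);
\]
integrating over $x_3\in I$ (the term proportional to $\int_I x_3\,dx_3$ vanishing) gives $\bar\Sigma^{(i)}(y):A(y)-\tfrac1{12}\hat\Sigma^{(i)}(y):M(y)\le\int_I H_r^{(i)}(A(y)-x_3M(y))\,dx_3$ for $\calH^1$-a.e.\ $y\in\alpha$, i.e.\ $\overline{[\Sigma:P]}\le\overline{\,H_r(y,\tfrac{dP}{d|P|})|P|}$ on $\alpha$.

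\emph{Main obstacle.} The hard part is the interface step: making rigorous sense of the normal trace of $\bar\Sigma$ and of the coefficient $b_1(\hat\Sigma)$ on the $C^2$ curves of $\Gamma$, showing that they are independent of the side, and evaluating them from the \emph{smaller} phase so that they inherit the constraint $K_r^{(i)}$. This is precisely where the $C^2$-regularity of $\Gamma\setminus\calS$ and the global (not merely phase-wise) divergence conditions on $\bar\Sigma,\hat\Sigma$ are needed, through the stability statements in Remark \ref{remadd2} and the remark on $b_1(\hat\Sigma)$. Equally essential is the ordering assumption $K_i\subset K_j$: it forces $H_r(y,\cdot)$ on the interface to coincide with the support function of the smaller reduced set $K_r^{(i)}$, matching the side from which the trace is naturally taken; without it the two one-sided traces would lie only in $K_r^{(i)}$, resp.\ $K_r^{(j)}$, while $\min\{H_r^{(i)},H_r^{(j)}\}$ is the support function of neither, and the pointwise estimate would fail --- exactly the difficulty highlighted in Remark \ref{Reshetnyak remark 1}. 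The localization, the atom-free and $\calH^1$-null arguments, and the interior smoothing are routine adaptations of \cite{Davoli.Mora.2013, Francfort.Giacomini.2012, Francfort.Giacomini.2014}.
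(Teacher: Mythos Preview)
Your overall strategy coincides with the paper's: localize into phases and interface curves, mollify on each phase, and on the interface approximate from the smaller phase via dilation-plus-convolution; the interior step is essentially the paper's Step~1.

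The interface step, however, has a gap as written. You assert the existence of an $L^\infty$-trace $\Sigma^{(i)}=\bar\Sigma^{(i)}+x_3\hat\Sigma^{(i)}$ of $\Sigma$ on $\alpha$ from the side of $\calY_i$, that $\Sigma^{(i)}(x_3,y)\in K_r^{(i)}$ for a.e.\ $(x_3,y)\in I\times\alpha$, and you apply the pointwise bound $\Sigma^{(i)}:(A-x_3M)\le H_r^{(i)}(A-x_3M)$ to this trace. But $\bar\Sigma,\hat\Sigma$ are merely $L^\infty$ functions on $\calY$; no full $L^\infty$-trace on a curve exists in general. What the conditions $\div_y\bar\Sigma=0$ and $\div_y\div_y\hat\Sigma=0$ provide are only the normal trace $\bar\Sigma\nu_\alpha\in L^\infty(\alpha)$ and the scalar coefficient $b_1(\hat\Sigma)\in L^\infty_{\loc}(\alpha)$ (Remarks~\ref{remadd2} and the one following it); these are a vector and a scalar, not a matrix, and they do not let you reconstruct an element of $K_r^{(i)}$ on $\alpha$ to which the support-function inequality could be applied.

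The paper repairs this by reversing the order of limit and inequality, which is exactly the step your write-up elides. The dilation-plus-convolution approximants $\Sigma^{(i)}_{n,k}$ are smooth on a neighbourhood of $\alpha$ and genuinely satisfy $\Sigma^{(i)}_{n,k}\in (K_i)_r$ pointwise up to $\alpha$; hence the pointwise inequality $\Sigma^{(i)}_{n,k}:\tfrac{dP}{d|\Pi|}\le H_r\big(y,\tfrac{dP}{d|\Pi|}\big)$ is valid on $I\times\alpha$. One integrates this against $\psi_k^{(i)}\varphi$ and only \emph{then} passes to the limit, using the weak* convergences $\bar\Sigma^{(i)}_{n,k}\nu_\alpha^i\weakstar\bar\Sigma\nu_\alpha^i$ and $b_1(\hat\Sigma^{(i)}_{n,k})\weakstar b_1(\hat\Sigma)$ in $L^\infty_{\loc}(\alpha)$, together with the identification \eqref{dualcurve} of $\overline{[\Sigma:P]}$ on $\alpha$. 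This is the content of the paper's Step~2 and is the missing link in your argument.
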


\begin{proof}
\CCC
The proof is divided into two steps.

\noindent{\bf Step 1.} 
In this step we consider a phase $\calY_i$ for arbitrary $i$. 
 
Regularizing $\Sigma$ just by convolution with respect to $y$, we obtain a sequence $\{\Sigma_n\}$ satisfying
\begin{gather*}
    \Sigma_n \strong \Sigma \quad \text{strongly in } L^2(I \times \calY;\M^{2 \times 2}_{\sym}),\\
    \div_{y}\bar{\Sigma}_n = 0,\\
    \div_{y}\div_{y}\hat{\Sigma}_n = 0.
\end{gather*}
We also have that for every $\varepsilon>0$ there exists $n(\varepsilon)$ large enough such that $(\Sigma_n(x_3,y))_{\dev} \in K_i$ for a.e. $x_3 \in I$ and every $y \in \calY_i$ that are distanced from $\partial\calY_i$ more than $\varepsilon$, for every $n\geq n(\eps)$. 
Consider the the orthogonal decomposition
\begin{equation*}
    P = \bar{P} \otimes \calL^{1}_{x_3} + \hat{P} \otimes x_3 \calL^{1}_{x_3} + P^\perp,
\end{equation*}
where $\bar{P}, \hat{P} \in \Mb(\calY;\M^{2 \times 2}_{\sym})$ and $P^\perp \in L^2(I \times \calY;\M^{2 \times 2}_{\sym})$. 
We infer that 
$|P|$ is absolutely continuous with respect to the measure
\begin{equation*}
    \Pi := |\bar{P}| \otimes \calL^{1}_{x_3} + |\hat{P}| \otimes \calL^{1}_{x_3} + \calL^{3}_{x_3,y}.
\end{equation*}
As a consequence, for $|\Pi|$-a.e. $(x_3,y) \in I \times \calY_i$ such that $\dist(y,\partial \calY_i) >\eps$ we have
\begin{equation*}
    H_r\left(y, \frac{dP}{d|\Pi|}\right) \geq \Sigma_n : \frac{dP}{d|\Pi|},
\end{equation*}
for every $n \geq n(\varepsilon)$.
Thus for every $\varphi \in C_c(\calY_i)$, such that $\varphi \geq 0$, we obtain
\begin{align*}
    &\int_{I \times \calY_i} \varphi(y)\,H_r\left(y, \frac{dP}{d|P|}\right) \,d|P| 
    = \int_{I \times \calY_i} \varphi\,H_r\left(y, \frac{dP}{d|\Pi|}\right) \,d|\Pi|\\
    &\geq \int_{I \times \calY_i} \varphi\,\Sigma_n : \frac{dP}{d|\Pi|} \,d|\Pi|
    = \int_{I \times \calY_i} \varphi\,\Sigma_n : \frac{dP}{d|P|} \,d|P|
    = \int_{I \times \calY_i} \varphi \,d[ \Sigma_n : P ],
\end{align*}
for $n$ large enough. 
Since $\bar{\Sigma}_n$, $\hat{\Sigma}_n$ and $(\Sigma_n)^\perp$ are smooth with respect to  $y$, from \eqref{cell stress-strain duality zero moment - regime zero}, \eqref{cell stress-strain duality first moment - regime zero} and \eqref{dualityadded2} we conclude that
\begin{gather*}
    [\bar{\Sigma}_n : \bar{P}] \weakstar [\bar{\Sigma} : \bar{P}] \quad \text{weakly* in $\Mb(\calY)$},\\
    [\hat{\Sigma}_n : \hat{P}] \weakstar [\hat{\Sigma} : \hat{P}] \quad \text{weakly* in $\Mb(\calY)$},\\
    \int_{I \times \calY_i} \varphi\, (\Sigma_n)^\perp : P^\perp \,dx_3 dy \to \int_{I \times \calY_i} \varphi \, (\Sigma)^\perp : P^\perp \,dx_3 dy.
\end{gather*}
Passing to the limit, we have
\begin{equation*}
    \int_{I \times \calY_i} \varphi(y)\,H_r\left(y, \frac{dP}{d|P|}\right) \,d|P| \geq \int_{I \times \calY_i} \varphi \,d[ \Sigma : P ].
\end{equation*}
This proves \eqref{inequality1} on every phase. 
\\
\noindent{\bf Step 2.}
In this step we consider a curve $\alpha$ that is of class $C^2$ (together with its possible endpoints) and that is the connected component of $\Gamma \backslash S$. The points on $\alpha$
(with the exception of the possible endpoints) belong to the intersection of the boundary of exactly two phases $\partial \calY_i \cap \partial \calY_j$.
From the assumption on the ordering of the phases, without loss of generality we can assume that $K_i \subset K_j$. By  \eqref{disss1} (cf. \Cref{A_KL characherization}) as well as \EEE by the continuity of $u_3$, we find \CCC  
\begin{eqnarray} \label{disss2} & &\bar{P}=(\bar{u}_j-\bar{u}_i)\odot \nu^i_{\alpha}\mathcal{H}^1, \quad  \hat{P}=(\nabla u_3^i-\nabla u_3^j) \odot \nu_{\alpha}^i\mathcal{H}^1=\partial_{\nu^i_\alpha}u_3^{i,j} \nu^i_{\alpha} \odot \nu^i_{\alpha} \calH^1 \quad \textrm{ on }  \alpha, \\ & &
\hspace{+4ex} P= \bar{P}+x_3 \hat{P}, \quad \textrm{ on } \alpha, 
\end{eqnarray} 
where $\bar{u}_i$, $\bar{u}_j$ are traces of $\bar{u}$ on $\alpha$ from $\calY_i$ and $\calY_j$ respectively and $\partial_{\nu^i_\alpha}u_3^{i,j}$ is a jump in the normal derivative of $u_3$. 
From \eqref{dualityadded3} and \eqref{curve1} (cf. \Cref{remmarin1}) we deduce 
\begin{equation} 
\label{dualcurve} 
\overline{[\Sigma:P]}= \left( \Sigma\nu^i_{\alpha}\cdot (\bar{u}_j-\bar{u}_i)+b_1(\hat{\Sigma}) \partial_{\nu^i_\alpha}u_3^{i,j} \right)\calH^1, \quad \textrm{ on } \alpha. 
\end{equation}
Since, for each $i$, $\calY_i$ is a bounded open set with piecewise $C^2$ boundary (in particular, with Lipschitz boundary) by \cite[Proposition 2.5.4]{Carbone.DeArcangelis.2002} there exists a finite open covering $\{\calU^{(i)}_k\}$ of $\closure{\calY}_i$ such that $\calY_i \cap \calU^{(i)}_k$ is (strongly) star-shaped with Lipschitz boundary (the construction is simple and those $\calU^{(i)}_k$ that intersect the  boundary have cylindrical form up to rotation). We take only those members of the covering that have non-empty intersection with $\alpha$. We can easily modify these cylindrical sets $\calY_i \cap \calU^{(i)}_k$ to be of class $C^2$. 
 Let $\{\psi^{(i)}_k\}$ be a  partition of unity of $\alpha$ subordinate to the covering $\{\calU^{(i)}_k\}$, i.e. $\psi^{(i)}_k \in C(\alpha)$, with $0 \leq \psi^{(i)}_k\leq 1$, such that $\supp(\psi^{(i)}_k) \subset \calU^{(i)}_k$ and $\sum_{k} \psi^{(i)}_k = 1$ on $\alpha$ and let \III $\varphi \in C_c(\alpha)$ \BBB be an arbitrary non-negative function. For each $k$ we define an approximation of the stress $\Sigma$ on $\calY_i \cap \calU^{(i)}_k$ 
by
\begin{equation} \label{prop4}
   \Sigma^{(i)}_{n,k}(x_3,y) := \big(\left( \Sigma \circ d_{n,k}^{(i)} \right)(x_3,\cdot) \ast \rho_{\frac{1}{n+1}}\big)(y),
\end{equation}
where $d^{(i)}_{n,k}(x_3,y) = \left( x_3,\tfrac{n}{n+1}(y-y_k^{(i)})+y_k^{(i)}\right)$ and $y_k^{(i)}$ is the point with respect to which $\calY_i \cap \calU^{(i)}_k$ is star shaped. Obviously one has for every $k$ 
\begin{enumerate}[label=(\roman*)]
\item $\Sigma_{n,k}^{(i)} \in (K_i)_r$ for $|\Pi|$-a.e. $(x_3,y) \in I \times (\overline{\calY}_i\cap \calU^{(i)}_k)$,  
\item $\|\Sigma^{(i)}_{n,k}\|_{L^\infty} \leq \|\Sigma\|_{L^{\infty}(\calY_i \cap \calU^{(i)}_k)} $, 
\item $\Sigma^{(i)}_{n,k} \to \Sigma$, $\bar{\Sigma}^{(i)}_{n,k} \to \bar{\Sigma}$, $\hat{\Sigma}^{(i)}_{n,k} \to \hat{\Sigma}$ strongly in $L^2(\overline{\calY}_i\cap \calU^{(i)}_k;\mathbb{M}_{\sym}^{2 \times 2})$,
\item $\div_y \bar{\Sigma}_{n,k}^{(i)}=0$, $\div_y \div_y\hat{\Sigma}_{n,k}^{(i)}=0$.  
\end{enumerate}
From (i)-(iv) and by using Remark \ref{remadd1}, Remark \ref{remadd2} and \eqref{dualcurve} we conclude  for every $k$
\begin{align*}
    \int_{I \times \alpha} \psi_k^{i}(y)\varphi(y)\,H_r\left(y, \frac{dP}{d|P|}\right) \,d|P| 
    &= \int_{I \times \alpha} \psi_k^{i}(y) \varphi(y)\, H_r\left(y, \frac{dP}{d|\Pi|}\right) \,d|\Pi|\\
    &\geq \int_{I \times \alpha} \psi_k^{i} \varphi\,\Sigma^{(i)}_{n,k} : \frac{dP}{d|\Pi|} \,d|\Pi|\\
    &= \int_{\alpha} \psi_k^{i} \varphi\,\left( \Sigma^{(i)}_{n,k}\nu^i_{\alpha}\cdot (\bar{u}_j-\bar{u}_i)+b_1(\hat{\Sigma}^{(i)}_{n,k}) \partial_{\nu^i_\alpha}u_3^{i,j} \right)d \mathcal{H}^1 \\
    &\to\int_{\alpha} \psi_k^{i} \varphi\,\left( \Sigma\nu^i_{\alpha}\cdot (\bar{u}_j-\bar{u}_i)+b_1(\hat{\Sigma}) \partial_{\nu^i_\alpha}u_3^{i,j} \right)d \mathcal{H}^1.
\end{align*}
By summing over $k$ we infer \eqref{inequality1} on $\alpha$.

The final claim goes by combining Step 1 and Step 2 and using the fact that both measures in \eqref{inequality1} are zero on $\mathcal{S}$ as a consequence of \eqref{disss1} and \eqref{dualityadded2}. 
\BBB
\end{proof}

\subsubsection{Case \texorpdfstring{$\gamma = +\infty$}{γ = +∞}}
\CCC
  We first define the set of admissible stresses and configurations on the torus.
 \BBB
\begin{definition} \label{definition K_inf}
The set $\calK_{\infty}$ of admissible stresses is defined as the set of all elements $\Sigma \in L^2(\calY;\M^{3 \times 3}_{\sym})$ satisfying:
\begin{enumerate}[label=(\roman*)]
    \item \label{definition K_inf (i)} $\div_{y}\Sigma = 0 \text{ in } \calY$,
    \item \label{definition K_inf (ii)} $\Sigma_{\dev}(y) \in K(y) \,\text{ for } \calL^{2}_{y}\text{-a.e. } y \in \calY$.
\end{enumerate}
\end{definition}
\CCC Notice that in (i) we neglect the third column of $\Sigma$. \BBB
\begin{definition} \label{definition A_inf}
The family $\calA_{\infty}$ of admissible configurations is given by the set of quintuplets
\begin{equation*}
    \bar{u} \in BD(\calY), \qquad u_3 \in BV(\calY), \qquad v \in \R^3, \qquad E \in L^2(\calY;\M^{3 \times 3}_{\sym}), \qquad P \in \Mb(\calY;\M^{3 \times 3}_{\dev}),
\end{equation*}
such that
\begin{equation} \label{nakexp1} 
    \begin{pmatrix} \begin{matrix} E_{y}\bar{u} \end{matrix} & v' + D_{y}u_3 \\ (v' + D_{y}u_3)^T & v_3 \end{pmatrix} = E \,\calL^{2}_{y} + P \quad \textit{ in } \calY.
\end{equation}
\end{definition}
\CCC We also define a notion of stress-strain duality on the torus. \BBB
\begin{definition} 
Let $\Sigma \in \calK_{\infty}$ and let 
$(\bar{u}, u_3, v, E, P) \in \calA_{\infty}$.
We define the distribution $[ \Sigma_{\dev} : P ]$ on $\calY$ by
\begin{align} \label{cell stress-strain duality - regime inf}
\begin{split}
    [ \Sigma_{\dev} : P ](\varphi) 
    :=& - \int_{\calY} \varphi\,\Sigma : E \,dy - \int_{\calY} \Sigma^{\prime\prime} : \big( \bar{u} \odot \nabla_{y}\varphi \big) \,dy\\
    &- 2 \int_{\calY} u_3\,\begin{pmatrix} \Sigma_{13} \\ \Sigma_{23} \end{pmatrix} \cdot \nabla_{y}\varphi \,dy \\
    &+ 2\,v' \cdot\,\int_{\calY} \varphi\,\begin{pmatrix} \Sigma_{13} \\ \Sigma_{23} \end{pmatrix} \,dy + v_3\,\int_{\calY} \varphi\,\Sigma_{33} \,dy,
\end{split}
\end{align}
for every $\varphi \in C^{\infty}(\calY)$.
\end{definition}
\begin{remark}
Note that the integrals in \eqref{cell stress-strain duality - regime inf} are well defined since $BD(\calY)$ and $BV(\calY)$ are both embedded into $L^2(\calY;\R^2)$. 
Moreover, the definition is independent of the choice of $(\bar{u}, u_3, v, E)$, so \eqref{cell stress-strain duality - regime inf} defines a meaningful distribution on $\calY$.
\end{remark}
\CCC The following proposition provides an estimate on the total variation of $[\Sigma_{dev}:P]$. As a consequence, we find that $[\Sigma_{dev}:P]$ depends indeed only on the deviatoric part of $\Sigma$. \BBB
\begin{proposition} \label{duality distribution is actually a measure - regime inf}
Let 
$\Sigma \in \calK_{\infty}$ and $(\bar{u}, u_3, v, E, P) \in \calA_{\infty}$.
Then $[ \Sigma_{\dev} : P ]$ can be extended to a bounded Radon measure on $\calY$, whose variation satisfies
\begin{equation*}
    | [ \Sigma_{\dev} : P ] | \leq \| \Sigma_{\dev} \|_{L^{\infty}(\calY;\M^{3 \times 3}_{\sym})} |P| 
    \quad \text{ in } \Mb(\calY).
\end{equation*}
\end{proposition}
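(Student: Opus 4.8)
The plan is to reduce the statement to the case of a stress that is smooth with respect to $y$, for which the distribution $[\Sigma_{\dev}:P]$ can be identified explicitly with the measure $\Sigma:P$ — equivalently $\Sigma_{\dev}:P$, since $P$ takes values in $\M^{3\times 3}_{\dev}$ and hence $\tr P = 0$. Concretely, I would first show that for $\Sigma \in \calK_{\infty}$ which is additionally of class $C^{\infty}$ in $y$ one has
\begin{equation*}
    [ \Sigma_{\dev} : P ](\varphi) = \int_{\calY} \varphi\, \Sigma : dP \qquad \text{for every } \varphi \in C^{\infty}(\calY),
\end{equation*}
by integrating by parts term by term in \eqref{cell stress-strain duality - regime inf}: the term $-\int_{\calY}\Sigma'':(\bar u\odot\nabla_{y}\varphi)\,dy$ becomes $\int_{\calY}\varphi\,\Sigma'':dE_{y}\bar u$ after using $\div_{y}\Sigma''=0$ and the definition of $E_{y}\bar u$ for $\bar u\in BD(\calY)$; the term $-2\int_{\calY}u_3\,(\Sigma_{13},\Sigma_{23})\cdot\nabla_{y}\varphi\,dy$ becomes $2\int_{\calY}\varphi\,(\Sigma_{13},\Sigma_{23})\cdot dD_{y}u_3$ after using the third row of $\div_{y}\Sigma=0$ and the definition of $D_{y}u_3$ for $u_3\in BV(\calY)$; the remaining three terms are already in the correct form, and collecting everything according to the matrix identity \eqref{nakexp1} yields $\int_{\calY}\varphi\,\Sigma:dP$. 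Since $\tr P = 0$ one may freely replace $\Sigma$ by $\Sigma_{\dev}$ here, so the bound $|[\Sigma_{\dev}:P](\varphi)| \le \|\Sigma_{\dev}\|_{L^{\infty}(\calY;\M^{3\times 3}_{\sym})}\int_{\calY}|\varphi|\,d|P|$ is then immediate.

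\textbf{Passage to general $\Sigma$.} For arbitrary $\Sigma\in\calK_{\infty}$ I would mollify in $y$ on the torus, setting $\Sigma_n := \Sigma\ast\rho_{1/n}$. Then $\Sigma_n$ is smooth, still satisfies $\div_{y}\Sigma_n=0$, converges to $\Sigma$ strongly in $L^2(\calY;\M^{3\times 3}_{\sym})$, and — since the deviatoric projection is linear and commutes with convolution — satisfies $(\Sigma_n)_{\dev}=(\Sigma_{\dev})\ast\rho_{1/n}$, whence $\|(\Sigma_n)_{\dev}\|_{L^{\infty}}\le\|\Sigma_{\dev}\|_{L^{\infty}}$. (Note that $(\Sigma_n)_{\dev}$ need not lie in $K(y)$, but only this $L^{\infty}$ bound is used in the previous step.) Every integral in \eqref{cell stress-strain duality - regime inf} is continuous under $L^2$-convergence of the stress, because $\bar u, u_3\in L^2(\calY)$, $E\in L^2(\calY;\M^{3\times 3}_{\sym})$, and $\varphi,\nabla_{y}\varphi$ are bounded; hence $[(\Sigma_n)_{\dev}:P](\varphi)\to[\Sigma_{\dev}:P](\varphi)$ for every fixed $\varphi\in C^{\infty}(\calY)$, while the smooth case gives $|[(\Sigma_n)_{\dev}:P](\varphi)|\le\|\Sigma_{\dev}\|_{L^{\infty}}\int_{\calY}|\varphi|\,d|P|$ uniformly in $n$. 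Letting $n\to\infty$ transfers the same bound to $[\Sigma_{\dev}:P]$ on $C^{\infty}(\calY)$; since $C^{\infty}(\calY)$ is dense in $C(\calY)$ and $\int_{\calY}|\varphi|\,d|P|\le\|\varphi\|_{C(\calY)}|P|(\calY)$, the Riesz representation theorem provides the desired extension to a bounded Radon measure, and the inequality passes to the extension to give $|[\Sigma_{\dev}:P]|\le\|\Sigma_{\dev}\|_{L^{\infty}(\calY;\M^{3\times 3}_{\sym})}\,|P|$ in $\Mb(\calY)$. In particular, since the right-hand side of \eqref{cell stress-strain duality - regime inf} is linear in $\Sigma$, applying the bound to a difference of two admissible stresses with the same deviatoric part shows that $[\Sigma_{\dev}:P]$ depends only on $\Sigma_{\dev}$.

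\textbf{Main difficulty.} There is no deep obstruction: the statement is essentially a regularization-plus-duality argument in the spirit of \cite[Lemma 3.8]{Francfort.Giacomini.2012} and of the regime-$\gamma=0$ computations behind \eqref{dualityadded2}. The single point requiring care is the justification of the integration by parts against the merely $BD$/$BV$ objects $\bar u$ and $u_3$ — this is exactly why the stress is mollified first, so that $\varphi\,\Sigma_n$ is an admissible smooth test field for the distributional definitions of $E_{y}\bar u$ and $D_{y}u_3$, and why the identity only has to be proved in the smooth case before passing to the limit.
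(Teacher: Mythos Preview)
Your proposal is correct and follows essentially the same route as the paper: mollify $\Sigma$ on the torus to obtain smooth $\Sigma_n$ with $\div_y\Sigma_n=0$ and $\|(\Sigma_n)_{\dev}\|_{L^\infty}\le\|\Sigma_{\dev}\|_{L^\infty}$, use the integration-by-parts formulas for $BD(\calY)$ and $BV(\calY)$ to identify $[(\Sigma_n)_{\dev}:P](\varphi)=\int_{\calY}\varphi\,(\Sigma_n)_{\dev}:dP$, and then pass to the limit via the $L^2$-continuity of the integrals in \eqref{cell stress-strain duality - regime inf}. The only organizational difference is that you first establish the smooth case as a standalone identity and then apply it to $\Sigma_n$, whereas the paper writes the whole computation as a single limit; your parenthetical remark that $\Sigma_n$ need not satisfy the pointwise constraint $\Sigma_{\dev}(y)\in K(y)$---only $\div_y\Sigma_n=0$ and the $L^\infty$ deviatoric bound matter---is exactly right and matches what the paper actually uses.
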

\begin{proof}
Using a convolution argument we construct a sequence $\{\Sigma_n\} \subset C^{\infty}(\calY;\M^{3 \times 3}_{\sym})$ such that 
\begin{align*}
    &\Sigma_n \strong \Sigma \quad \text{strongly in } L^2(\calY;\M^{3 \times 3}_{\sym}),\\
    &\div_{y}\Sigma_n = 0 \text{ in } \calY,\\
    &\| (\Sigma_n)_{\dev} \|_{L^{\infty}(\calY;\M^{3 \times 3}_{\dev})} \leq \| \Sigma_{\dev} \|_{L^{\infty}(\calY;\M^{3 \times 3}_{\dev})}.
\end{align*}
According to the integration by parts formulas for $BD(\calY)$ and $BV(\calY)$, we have for every $\varphi \in C^1(\calY)$
\begin{eqnarray*}
    \int_{\calY} \varphi\,\div_{y}(\Sigma_n)^{\prime\prime} \cdot \bar{u} \,dy + \int_{\calY} \varphi\,(\Sigma_n)^{\prime\prime} : dE_{y}\bar{u} + \int_{\calY} (\Sigma_n)^{\prime\prime} : \big( \bar{u} \odot \nabla_{y}\varphi \big) \,dy &=& 0,\\
    \int_{\calY} \varphi\,u_3\,\div_{y}\begin{pmatrix} (\Sigma_n)_{13} \\ (\Sigma_n)_{23} \end{pmatrix} \,dy + \int_{\calY} \varphi\,\begin{pmatrix} (\Sigma_n)_{13} \\ (\Sigma_n)_{23} \end{pmatrix} \cdot dD_{y}u_3 + \int_{\calY} u_3\,\begin{pmatrix} (\Sigma_n)_{13} \\ (\Sigma_n)_{23} \end{pmatrix} \cdot \nabla_{y}\varphi \,dy &=& 0.
\end{eqnarray*}
From these two equalities, together with the above convergence and the expression in \Cref{cell stress-strain duality - regime inf}, we compute
\begin{align*}
    &[ \Sigma_{\dev} : P ](\varphi) \\
    &= \lim_n \,\Big[ - \int_{\calY} \varphi\,\Sigma_n : E \,dy - \int_{\calY} (\Sigma_n)^{\prime\prime} : \big( \bar{u} \odot \nabla_{y}\varphi \big) \,dy\\
    &\,\quad\,\quad\,\quad - 2 \int_{\calY} u_3\,\begin{pmatrix} (\Sigma_n)_{13} \\ (\Sigma_n)_{23} \end{pmatrix} \cdot \nabla_{y}\varphi \,dy + 2\,v' \cdot\,\int_{\calY} \varphi\,\begin{pmatrix} (\Sigma_n)_{13} \\ (\Sigma_n)_{23} \end{pmatrix} \,dy + v_3\,\int_{\calY} \varphi\,(\Sigma_n)_{33} \,dy \Big]\\
    &= \lim_n \,\Big[ - \int_{\calY} \varphi\,\Sigma_n : E \,dy + \int_{\calY} \varphi\,\div_{y}(\Sigma_n)^{\prime\prime} \cdot \bar{u} \,dy + \int_{\calY} \varphi\,(\Sigma_n)^{\prime\prime} : dE_{y}\bar{u}\\
    &\,\quad\,\quad\,\quad + 2 \int_{\calY} \varphi\,u_3\,\div_{y}\begin{pmatrix} (\Sigma_n)_{13} \\ (\Sigma_n)_{23} \end{pmatrix} \,dy + 2 \int_{\calY} \varphi\,\begin{pmatrix} (\Sigma_n)_{13} \\ (\Sigma_n)_{23} \end{pmatrix} \cdot dD_{y}u_3\\
    &\,\quad\,\quad\,\quad + 2\,v' \cdot\,\int_{\calY} \varphi\,\begin{pmatrix} (\Sigma_n)_{13} \\ (\Sigma_n)_{23} \end{pmatrix} \,dy + v_3\,\int_{\calY} \varphi\,(\Sigma_n)_{33} \,dy \Big]\\
    &= \lim_n \,\Big[ \int_{\calY} \varphi\,\div_{y}(\Sigma_n) \cdot \begin{pmatrix} \bar{u} \\ u_3 \end{pmatrix} \,dy + \int_{\calY} \varphi\,\Sigma_n : dP \Big] \\
    &= \lim_n \, \int_{\calY} \varphi\,(\Sigma_n)_{\dev} : dP.
\end{align*}
In view of the $L^{\infty}$-bound on $\{(\Sigma_n)_{\dev}\}$, passing to the limit yields
\begin{equation*}
    | [ \Sigma_{\dev} : P ] |(\varphi) \leq \| \Sigma_{\dev} \|_{L^{\infty}(\calY;\M^{3 \times 3}_{\sym})} \int_{\calY} |\varphi|\,d|P|,
\end{equation*}
from which the claims follow.
\end{proof}
\CCC The following proposition characterizes $[\Sigma_{\dev}:P]$ on the interface. Before the statement we recall \Cref{remnak1}\BBB
\begin{proposition} \label{duality on the interface - regime inf} 
Let $\Sigma \in \calK_{\infty}$.
\CCC Assume that $\calY$ is a geometrically admissible multi-phase torus. \BBB 
Then, for \CCC $\calH^{1}$-a.e. $y \in \partial \calY_i \cap \partial \calY_j$, \BBB
\CCC
\begin{equation} \label{duality on the interface (1) - regime inf}
    [\Sigma\iota(\nu^i)]\tangentiallll(y) \in \left((K_i\cap K_j)\iota(\nu^i)\right) \tangentiallll.
 \end{equation}
\BBB
Furthermore, if $(\bar{u}, u_3, v, E, P) \in \calA_{\infty}$, then for every $i \neq j$,
\begin{equation} \label{duality on the interface (2) - regime inf}
    [ \Sigma_{\dev} : P ]\mres{\Gamma_{ij}} = \left( 
    [\Sigma^{\prime\prime}\nu^i]\tangentiali \cdot (\bar{u}^i - \bar{u}^j) 
    + 2 \left(\begin{pmatrix} \Sigma_{13} \\ \Sigma_{23} \end{pmatrix}\cdot\CCC\nu^i \BBB\right) (u_3^i - u_3^j)
    \right)\,\calH^{1}\mres{\Gamma_{ij}},
\end{equation}
where $\bar{u}^i,\,u_3^i$ and $\bar{u}^j,\,u_3^j$ are the traces on $\Gamma_{ij}$ of the restrictions of $\bar{u},\,u_3$ to $\calY_i$ and $\calY_j$ respectively, assuming that $\nu^i$ points from $\calY_j$ to $\calY_i$.
\end{proposition}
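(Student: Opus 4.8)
The two assertions are handled separately, but both rest on the same observation: by geometric admissibility each connected component $\alpha$ of $\Gamma\setminus\calS$ is a $C^2$ curve and, for $\calH^1$-a.e.\ $y$ (cf.\ \Cref{remnak1}), it lies on $\partial\calY_i\cap\partial\calY_j$ for exactly one pair $i\neq j$. Although $\Sigma\in L^2$ has only $\Sigma_{\dev}\in L^\infty$ and $\div_y\Sigma=0$, this regularity makes the tangential trace $[\Sigma\iota(\nu^i)]\tangentiallll$ a well-defined element of $L^\infty(\alpha;\R^3)$ — independent of the side from which $\alpha$ is approached (because $\div_y\Sigma=0$ \emph{across} $\alpha$) and of the approximating sequence — by the results recalled in \Cref{sub:traces} and \cite[Lemma 2.4]{Kohn.Temam.1983}; its first two components are $[\Sigma^{\prime\prime}\nu^i]\tangentiali$ and its third is $(\Sigma_{13},\Sigma_{23})\cdot\nu^i$, both in $L^\infty(\alpha)$, since $(\Sigma^{\prime\prime})_{\dev}$ and $(\Sigma_{13},\Sigma_{23})$ are controlled by $\Sigma_{\dev}$.

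\textbf{The inclusion \eqref{duality on the interface (1) - regime inf}.} Fix $\alpha\subset\partial\calY_i\cap\partial\calY_j$ and approximate $\Sigma|_{\calY_i}$ as in Step~2 of the proof of \Cref{cell Hill's principle - regime zero}: cover $\closure{\calY}_i$ by finitely many open sets on which $\calY_i$ is star-shaped (cf.\ \cite[Proposition 2.5.4]{Carbone.DeArcangelis.2002}) and, on each, replace $\Sigma$ by its rescaling towards the star-centre followed by mollification. This yields smooth $\Sigma^{(i)}_n$ with $\div_y\Sigma^{(i)}_n=0$, with $(\Sigma^{(i)}_n)_{\dev}(y)\in K_i$ for every $y$ (by convexity of $K_i$ and since $K(y)=K_i$ on $\calY_i$), converging to $\Sigma$ strongly in $L^2$ with a uniform deviatoric $L^\infty$-bound. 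For smooth fields $[\Sigma^{(i)}_n\iota(\nu^i)]\tangentiallll=[(\Sigma^{(i)}_n)_{\dev}\iota(\nu^i)]\tangentiallll\in(K_i\iota(\nu^i))\tangentiallll$, the spherical part of $\Sigma^{(i)}_n$ contributing only a vector parallel to $\iota(\nu^i)$; by the weak$^*$ continuity of the tangential trace, $[\Sigma^{(i)}_n\iota(\nu^i)]\tangentiallll\weakstar[\Sigma\iota(\nu^i)]\tangentiallll$ in $L^\infty(\alpha;\R^3)$, and since $(K_i\iota(\nu^i))\tangentiallll$ is closed and convex (hence weak$^*$ closed) we get $[\Sigma\iota(\nu^i)]\tangentiallll\in(K_i\iota(\nu^i))\tangentiallll$; the same argument from the $\calY_j$-side gives $[\Sigma\iota(\nu^i)]\tangentiallll\in(K_j\iota(\nu^i))\tangentiallll$. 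Finally, the assumption on the ordering of the phases forces one of $K_i,K_j$ to contain the other, so $(K_i\iota(\nu^i))\tangentiallll\cap(K_j\iota(\nu^i))\tangentiallll=((K_i\cap K_j)\iota(\nu^i))\tangentiallll$, which proves \eqref{duality on the interface (1) - regime inf}. (This is exactly where ordering is used: without it the intersection of the two tangential projections may be strictly larger than the tangential projection of $K_i\cap K_j$.)

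\textbf{The identity \eqref{duality on the interface (2) - regime inf}.} First I would read off the singular part of $P$ on $\Gamma_{ij}$ from \eqref{nakexp1}: since $E\in L^2$ and $v\in\R^3$ carry no singular part, the jumps of $E_y\bar u$ and $D_y u_3$ across $\Gamma_{ij}$ are carried by $P$, so the $2\times2$ minor of $P\mres\Gamma_{ij}$ is $(\bar u^i-\bar u^j)\odot\nu^i\,\calH^1\mres\Gamma_{ij}$, the $(1,3),(2,3)$ entries are $(u_3^i-u_3^j)\,\nu^i\,\calH^1\mres\Gamma_{ij}$, and the $(3,3)$ entry vanishes; in particular, $P$ being deviatoric forces $(\bar u^i-\bar u^j)\cdot\nu^i=0$ $\calH^1$-a.e.\ on $\Gamma_{ij}$. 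Hence the right-hand side of \eqref{duality on the interface (2) - regime inf} is precisely the formal contraction $\Sigma:(P\mres\Gamma_{ij})$, with $[\Sigma^{\prime\prime}\nu^i]\tangentiali$ in place of $\Sigma^{\prime\prime}\nu^i$ (legitimate because $\bar u^i-\bar u^j\perp\nu^i$, so only the $L^\infty$ tangential part is seen), and it is insensitive to the spherical part of $\Sigma$, as it must be. To establish the equality I would localize: for $\varphi\in C^1(\calY)$ supported in a neighbourhood $\calU$ with $\calU\cap\calS=\emptyset$ and $\calU\cap\Gamma$ inside a single $C^2$ component $\alpha\subset\Gamma_{ij}$, insert into \eqref{cell stress-strain duality - regime inf} the two-sided smooth approximations $\Sigma^{(i)}_n,\Sigma^{(j)}_n$ built as above on $\calU\cap\calY_i$ and $\calU\cap\calY_j$; for these the integration-by-parts formulas for $BD(\calY)$ and $BV(\calY)$ used in the proof of \Cref{duality distribution is actually a measure - regime inf} apply on each phase and produce, besides the limit term $\lim_n\int_{\calU\cap(\calY_i\cup\calY_j)}\varphi\,(\Sigma_n)_{\dev}:dP$ (which is $[\Sigma_{\dev}:P]\mres(\calY_i\cup\calY_j)$ tested against $\varphi$), boundary integrals over $\alpha$ from the two sides; since the outward normals to $\calY_i$ and $\calY_j$ along $\alpha$ are opposite, these combine, with the $\calY_i$- and $\calY_j$-traces of $\bar u$ and $u_3$, into $\int_\alpha\varphi\,([\Sigma^{\prime\prime}\nu^i]\tangentiali\cdot(\bar u^i-\bar u^j)+2((\Sigma_{13},\Sigma_{23})\cdot\nu^i)(u_3^i-u_3^j))\,d\calH^1$. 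Letting $n\to\infty$, using strong $L^2$ convergence in the interior, weak$^*$ $L^\infty$ convergence of the tangential traces and of $(\Sigma_{13},\Sigma_{23})\cdot\nu^i$, and $L^1(\alpha)$-integrability of the trace jumps of $\bar u$ and $u_3$, gives the localized identity; since both measures in \eqref{duality on the interface (2) - regime inf} vanish on $\calS$ — where $|P|$, hence $|[\Sigma_{\dev}:P]|$, has no mass, the jump parts of $\bar u,u_3$ being $\calH^1$-absolutely continuous and their diffuse parts vanishing on the $\calH^1$-null set $\calS$ — this yields \eqref{duality on the interface (2) - regime inf} on all of $\Gamma_{ij}$.

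\textbf{Main obstacle.} The technical heart is the trace analysis: attaching a side- and sequence-independent meaning to $[\Sigma\iota(\nu^i)]\tangentiallll$ when only $\Sigma_{\dev}\in L^\infty$, proving its weak$^*$ stability under the star-shaped rescaling-plus-mollification approximation, and recognizing that the particular combination in \eqref{duality on the interface (2) - regime inf} is exactly the one insensitive to the spherical part of $\Sigma$ and to the (possibly unbounded) normal part of the trace. Once these approximations are in place, the integration by parts is routine and parallels \Cref{duality distribution is actually a measure - regime inf} and Step~2 of \Cref{cell Hill's principle - regime zero}.
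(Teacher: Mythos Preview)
Your argument for \eqref{duality on the interface (1) - regime inf} is correct and is essentially the paper's: one-sided dilation-plus-mollification on each phase, weak$^*$ stability of the tangential trace, and convexity of $K_i$ (respectively $K_j$). You are also right that the ordering assumption is what turns membership in $(K_i\iota(\nu^i))\tangentiallll\cap(K_j\iota(\nu^i))\tangentiallll$ into membership in $((K_i\cap K_j)\iota(\nu^i))\tangentiallll$.

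For \eqref{duality on the interface (2) - regime inf} your plan has a genuine gap. By approximating $\Sigma$ on $\calY_i$ and $\calY_j$ by \emph{two different} sequences $\Sigma^{(i)}_n,\Sigma^{(j)}_n$, the boundary contributions from the two sides do \emph{not} combine into a term containing only the jump $\bar u^i-\bar u^j$: after integration by parts on each phase you obtain
\[
\int_{\Gamma_{ij}}\varphi\,(\Sigma^{(i)}_n)^{\prime\prime}\nu^i\cdot\bar u^i\,d\calH^1
-\int_{\Gamma_{ij}}\varphi\,(\Sigma^{(j)}_n)^{\prime\prime}\nu^i\cdot\bar u^j\,d\calH^1,
\]
and splitting into tangential and normal parts leaves the residual
\[
\int_{\Gamma_{ij}}\varphi\,\big[(\Sigma^{(i)}_n)^{\prime\prime}\nu^i\cdot\nu^i-(\Sigma^{(j)}_n)^{\prime\prime}\nu^i\cdot\nu^i\big]\,(\bar u^i\cdot\nu^i)\,d\calH^1.
\]
Here $\bar u^i\cdot\nu^i=\bar u^j\cdot\nu^i$ is only in $L^1(\Gamma_{ij})$, while the bracket is a difference of \emph{normal} traces, controlled merely in $H^{-1/2}$ (the deviatoric $L^\infty$ bound gives nothing here, since $\Sigma^{\prime\prime}\nu\cdot\nu$ sees the full trace $\tr\Sigma\in L^2$). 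There is no reason this term vanishes in the limit; the perpendicularity $(\bar u^i-\bar u^j)\perp\nu^i$ is precisely the mechanism that should kill the normal trace, but it only acts once the \emph{same} stress appears on both sides.

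The paper avoids this entirely by using a \emph{single} convolution approximation $\Sigma_n\in C^\infty(\calU;\M^{3\times3}_{\sym})$ with $\div_y\Sigma_n=0$ and $\|(\Sigma_n)_{\dev}\|_{L^\infty}\le\|\Sigma_{\dev}\|_{L^\infty}$ (no dilation, no phase-wise constraint $\Sigma_n\in K_i$ is needed here). Integrating by parts on $\calY_i$ and $\calY_j$ with the \emph{same} $\Sigma_n$ produces directly $\int_{\Gamma_{ij}}\varphi\,(\Sigma_n)^{\prime\prime}\nu^i\cdot(\bar u^i-\bar u^j)\,d\calH^1$, so $(\bar u^i-\bar u^j)\perp\nu^i$ eliminates the normal part \emph{before} passing to the limit, and only the weak$^*$-$L^\infty$-convergent tangential trace survives. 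The remaining interior term $\int_{\calY_i\cup\calY_j}\varphi\,(\Sigma_n)_{\dev}:dP$ is handled exactly as you indicate: its total variation is dominated by $|P|\mres(\calY_i\cup\calY_j)$, hence its weak$^*$ limit puts no mass on $\Gamma_{ij}$. Your reading of $P\mres\Gamma_{ij}$ from \eqref{nakexp1} and the role of $\tr P=0$ are correct; only the choice of approximation needs to change.
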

\begin{proof}
To prove \eqref{duality on the interface (2) - regime inf}, let $\varphi \in C^1(\calY)$ be such that its support is contained in $\calY_i \cup \calY_j \cup \Gamma_{ij}$.
Let $\calU \subset\subset \calY$ be a compact set containing $\supp(\varphi)$, and consider any smooth approximating sequence $\{\Sigma_n\} \subset C^{\infty}(\calU;\M^{3 \times 3}_{\sym})$ such that 
\begin{align} \label{prop1}
    &\Sigma_n \strong \Sigma \quad \text{strongly in } L^2(\calU;\M^{3 \times 3}_{\sym}),\\ \label{prop2}
    &\div_{y}\Sigma_n = 0 \text{ in } \calU,\\ \label{prop3}
    &\| (\Sigma_n)_{\dev} \|_{L^{\infty}(\calU;\M^{3 \times 3}_{\dev})} \leq \| \Sigma_{\dev} \|_{L^{\infty}(\calU;\M^{3 \times 3}_{\dev})}.
\end{align}
Note that 
\CCC$\left((\Sigma_n)^{\prime\prime}\nu^i\right)\tangentiali = \left((\Sigma_n)_{\dev}^{\prime\prime}\nu^i\right)\tangentiali$ \BBB and 
\CCC
\begin{equation*}
    \left((\Sigma_n)_{\dev}^{\prime\prime}\nu^i\right)\tangentiali \weakstar [\Sigma_{\dev}^{\prime\prime}\nu^i]\tangentiali \quad \text{weakly* in $L^\infty(\Gamma_{ij};\R^2)$}.
\end{equation*}\BBB
Since $\varphi\,\bar{u} \in BD(\calY)$ and $\varphi\,u_3 \in BD(\calY)$, with
\begin{gather*}
    E_{y}\left(\varphi\,\bar{u}\right) = \varphi\,E_{y}\bar{u} + \bar{u} \odot \nabla_{y}\varphi,\\
    D_{y}\left(\varphi\,u_3\right) = \varphi\,D_{y}u_3 + u_3\,\nabla_{y}\varphi,
\end{gather*}
we compute \CCC using \eqref{nakexp1} \BBB 
\begin{align*}
    &[ \Sigma_{\dev} : P ](\varphi) \\
    &= \lim_n \,\Big[ - \int_{\calY_i \cup \calY_j} \varphi\,\Sigma_n : E \,dy - \int_{\calY_i \cup \calY_j} (\Sigma_n)^{\prime\prime} : \big( \bar{u} \odot \nabla_{y}\varphi \big) \,dy\\
    &\,\quad\,\quad\,\quad - 2 \int_{\calY_i \cup \calY_j} u_3\,\begin{pmatrix} (\Sigma_n)_{13} \\ (\Sigma_n)_{23} \end{pmatrix} \cdot \nabla_{y}\varphi \,dy + 2\,v' \cdot\,\int_{\calY_i \cup \calY_j} \varphi\,\begin{pmatrix} (\Sigma_n)_{13} \\ (\Sigma_n)_{23} \end{pmatrix} \,dy + v_3\,\int_{\calY_i \cup \calY_j} \varphi\,(\Sigma_n)_{33} \,dy \Big]\\
    &= \lim_n \,\Big[ - \int_{\calY_i \cup \calY_j} \varphi\,\Sigma_n : E \,dy
    - \int_{\calY_i \cup \calY_j} (\Sigma_n)^{\prime\prime} : dE_{y}\left(\varphi\,\bar{u}\right)
    + \int_{\calY_i \cup \calY_j} \varphi\,(\Sigma_n)^{\prime\prime} : E_{y}\bar{u}\\
    &\,\quad\,\quad\,\quad - 2\int_{\calY_i \cup \calY_j} \begin{pmatrix} (\Sigma_n)_{13} \\ (\Sigma_n)_{23} \end{pmatrix} \cdot  dD_{y}\left(\varphi\,u_3\right) + 2\int_{\calY_i \cup \calY_j} \varphi\,\begin{pmatrix} (\Sigma_n)_{13} \\ (\Sigma_n)_{23} \end{pmatrix} \cdot dD_{y}u_3\\
    &\,\quad\,\quad\,\quad + 2\,v' \cdot\,\int_{\calY_i \cup \calY_j} \varphi\,\begin{pmatrix} (\Sigma_n)_{13} \\ (\Sigma_n)_{23} \end{pmatrix} \,dy + v_3\,\int_{\calY_i \cup \calY_j} \varphi\,(\Sigma_n)_{33} \,dy \Big]\\
    &= \lim_n \,\Big[ - \int_{\calY_i \cup \calY_j} (\Sigma_n)^{\prime\prime} : dE_{y}\left(\varphi\,\bar{u}\right) - 2 \int_{\calY_i \cup \calY_j} \begin{pmatrix} (\Sigma_n)_{13} \\ (\Sigma_n)_{23} \end{pmatrix} \cdot dD_{y}\left(\varphi\,u_3\right) + \int_{\calY_i \cup \calY_j} \varphi\,\Sigma_n : dP \Big]. \\
 \end{align*}
Owing to the assumption on $\supp(\varphi)$, we have that the only relevant part of the boundary of $\calY_i \cup \calY_j$ is $\Gamma_{ij}$. Thus, an integration by parts yields
\begin{align*}
    &[ \Sigma_{\dev} : P ](\varphi) \\
        &= \lim_n \,\Big[ \int_{\Gamma_{ij}} \varphi\,\left((\Sigma_n)^{\prime\prime}\CCC\nu^{i}\BBB\right) \cdot (\bar{u}^i - \bar{u}^j) \,d\calH^{1} \CCC+\BBB 2 \int_{\Gamma_{ij}} \varphi\,\left(\begin{pmatrix} (\Sigma_n)_{13} \\ (\Sigma_n)_{23} \end{pmatrix}\cdot\CCC\nu^{i}\BBB\right) (u_3^i - u_3^j) \,d\calH^{1}\\
    &\,\quad\,\quad\,\quad + \int_{\calY_i \cup \calY_j} \varphi\,(\Sigma_n)_{\dev} : dP \Big].
\end{align*}
Now
\begin{equation*}
    P\mres{\Gamma_{ij}} 
    = \begin{pmatrix} \begin{matrix} E_{y}\bar{u} \end{matrix} & D_{y}u_3 \\ (D_{y}u_3)^T & 0 \end{pmatrix} \mres{\Gamma_{ij}}
    = \CCC\begin{pmatrix} \begin{matrix} (\bar{u}^i - \bar{u}^j) \odot \nu^i \end{matrix} & (u_3^i - u_3^j)\,\nu^i \\ (u_3^i - u_3^j)\,(\nu^i)^T & 0 \end{pmatrix} \,\calH^{1} \BBB
\end{equation*}
and $\tr{P} = 0$ imply that $\bar{u}^i(y) - \bar{u}^j(y) \perp \CCC \nu^i\BBB(y)$ for $\calH^{1}$-a.e. $y \in \Gamma_{ij}$.
The above computation then yields
\begin{align} \label{duality limit - regime inf}
\begin{split}
    [ \Sigma_{\dev} : P ](\varphi) 
    =& \int_{\Gamma_{ij}} \varphi\,[\Sigma^{\prime\prime}\CCC\nu^i]\tangentiali \BBB\cdot (\bar{u}^i - \bar{u}^j) \,d\calH^{1} \CCC +\BBB 2 \int_{\Gamma_{ij}} \varphi\,\left(\begin{pmatrix} \Sigma_{13} \\ \Sigma_{23} \end{pmatrix}\cdot\CCC \nu^i \BBB\right) (u_3^i - u_3^j) \,d\calH^{1}\\
    &+ \lim_n \,\int_{\calY_i \cup \calY_j} \varphi\,(\Sigma_n)_{\dev} : dP.
\end{split}
\end{align}
Defining $\lambda_n \in \Mb(\calY_i \cup \calY_j \cup \Gamma_{ij})$ as
\begin{equation*}
    \lambda_n(\varphi) := \int_{\calY_i \cup \calY_j} \varphi\,(\Sigma_n)_{\dev} : dP,
\end{equation*}
then the $L^{\infty}$-bound on $\{(\Sigma_n)_{\dev}\}$ ensures that it satisfies 
\begin{equation*}
    |\lambda_n| \leq C\,|P|\mres{(\calY_i \cup \calY_j)},
\end{equation*}
and we infer from \eqref{duality limit - regime inf} that 
\begin{equation*}
    \lambda_n \weakstar \lambda \quad \text{weakly* in $\Mb(\calY_i \cup \calY_j \cup \Gamma_{ij})$}
\end{equation*}
for a suitable $\lambda \in \Mb(\calY_i \cup \calY_j \cup \Gamma_{ij})$ with 
\begin{equation} \label{total variation bound - regime inf}
    |\lambda| \leq C\,|P|\mres{(\calY_i \cup \calY_j)},
\end{equation}
and
\CCC
\begin{align*}
    [ \Sigma_{\dev} : P ](\varphi) 
    =& \int_{\Gamma_{ij}} \varphi\,[\Sigma^{\prime\prime}\nu^i]\tangentiallll \cdot (\bar{u}^i - \bar{u}^j) \,d\calH^{1} +2 \int_{\Gamma_{ij}} \varphi\,\left(\begin{pmatrix} \Sigma_{13} \\ \Sigma_{23} \end{pmatrix}\cdot\nu^i\right) (u_3^i - u_3^j) \,d\calH^{1}\\
    &+ \lambda(\varphi).
\end{align*}
\BBB
Since \eqref{total variation bound - regime inf} implies $\lambda\mres{\Gamma_{ij}} = 0$, the result directly follows. \CCC To prove \eqref{duality on the interface (1) - regime inf}
we first notice that as a consequence of \cite[Section 1.2]{Francfort.Giacomini.2012} there holds $[\Sigma\iota(\nu^i)]\tangentiallll \in L^{\infty}(\Gamma)$. We locally approximate $\Sigma$ at every point $y \in \partial \calY_i$  by dilation and convolution as in the proof of \Cref{cell Hill's principle - regime zero}, see \eqref{prop4}, so that \EEE the approximating sequence $\{\Sigma_n\}$ consequently satisfies \eqref{prop1}-\eqref{prop3} and also $\Sigma_n \in K_i$.  Since we  have that  $[\Sigma_n\iota(\nu^i)]\tangentiallll \weakstar
[\Sigma\iota(\nu^i)]\tangentiallll$ the claim follows from the convexity of $K_i$. 
\BBB
\end{proof}
\CCC The following proposition is analogous to \Cref{cell Hill's principle - regime zero} and will also  be used in \Cref{principleofmximumpl} to prove the main result of this section. \BBB
\begin{proposition} \label{cell Hill's principle - regime inf}
Let $\Sigma \in \calK_{\infty}$ and $(\bar{u}, u_3, v, E, P) \in \calA_{\infty}$. 
If $\calY$ is a geometrically admissible multi-phase torus and the \CCC assumption on the ordering of the phases is satisfied we have \BBB
\begin{equation*}
    H\left(y, \frac{dP}{d|P|}\right)\,|P| \geq [ \Sigma_{\dev} : P ] 
    \quad \text{ in } \Mb(\calY).
\end{equation*}
\end{proposition}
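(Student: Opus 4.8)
The plan is to follow closely the proof of \Cref{cell Hill's principle - regime zero}: I would reduce the measure inequality to a local statement on $\calY\setminus\calS$, treat separately the interior of each phase $\calY_i$ and the $C^2$ interface curves forming $\Gamma\setminus\calS$, and then glue. The gluing at the end is easy: since $\calH^1(\calS)=0$, both $|P|$ and, by \Cref{duality distribution is actually a measure - regime inf}, $|[\Sigma_{\dev}:P]|$ vanish on $\calS$ (the first because of \eqref{nakexp1}, since $E_{y}\bar u$ and $D_{y}u_3$ do not charge $\calH^1$-null sets), so it suffices to establish the inequality on each phase and on each connected component of $\Gamma\setminus\calS$.

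\emph{Interior of the phases.} Fix $i$ and mollify $\Sigma$ with respect to $y$ only, obtaining $\Sigma_n\strong\Sigma$ strongly in $L^2(\calY;\M^{3 \times 3}_{\sym})$ with $\div_{y}\Sigma_n=0$ and, for every $\varepsilon>0$, $(\Sigma_n)_{\dev}(y)\in K_i$ for a.e.\ $y\in\calY_i$ with $\dist(y,\partial\calY_i)>\varepsilon$ once $n$ is large (by convexity of $K_i$). Since $P\in\Mb(\calY;\M^{3 \times 3}_{\dev})$, the density $\tfrac{dP}{d|P|}$ is deviatoric, so $H$ being the support function of $K_i$ on $\calY_i$ gives the pointwise bound $H\big(y,\tfrac{dP}{d|P|}\big)=H_i\big(\tfrac{dP}{d|P|}\big)\geq(\Sigma_n)_{\dev}:\tfrac{dP}{d|P|}=\Sigma_n:\tfrac{dP}{d|P|}$ on that set. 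I would test this against a nonnegative $\varphi\in C_c(\calY_i)$, integrate against $|P|$, let $\varepsilon\to0$ so that the good sets exhaust $\calY_i$, and then let $n\to\infty$, recognizing the right-hand side in the limit as $[\Sigma_{\dev}:P](\varphi)$ via the integration-by-parts computation in the proof of \Cref{duality distribution is actually a measure - regime inf} (now run with the local approximation $\Sigma_n$; this is legitimate because $\varphi\bar u\in BD(\calY)$ and $\varphi u_3\in BV(\calY)$ are supported in $\calY_i$). This yields the inequality on $\calY_i$.

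\emph{Interface curves.} Let $\alpha$ be a connected component of $\Gamma\setminus\calS$, which away from its endpoints is a $C^2$ curve at which exactly two phases $\calY_i,\calY_j$ meet; by the ordering assumption we may take $K_i\subset K_j$, so $H(y,\cdot)=\min\{H_i,H_j\}=H_i$ on $\alpha$ and $K_i\cap K_j=K_i$. I would invoke \Cref{duality on the interface - regime inf} to represent $[\Sigma_{\dev}:P]\mres\Gamma_{ij}$ by the trace pairing \eqref{duality on the interface (2) - regime inf} and to get $[\Sigma\iota(\nu^i)]\tangentiallll(y)\in(K_i\iota(\nu^i))\tangentiallll$ for $\calH^1$-a.e.\ $y\in\alpha$. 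Then, exactly as in Step~2 of the proof of \Cref{cell Hill's principle - regime zero}, I would cover $\closure{\calY}_i$ by finitely many sets $\calU^{(i)}_k$ with $\calY_i\cap\calU^{(i)}_k$ star-shaped of class $C^2$, keep those meeting $\alpha$, fix a subordinate partition of unity $\{\psi^{(i)}_k\}$ of $\alpha$, and approximate $\Sigma$ on $\calY_i\cap\calU^{(i)}_k$ by dilating towards the star-center and convolving as in \eqref{prop4}, producing $\Sigma^{(i)}_{n,k}$ with $(\Sigma^{(i)}_{n,k})_{\dev}\in K_i$ up to the boundary, a uniform $L^\infty$ bound on its deviatoric part, strong $L^2$ convergence, and $\div_{y}\Sigma^{(i)}_{n,k}=0$. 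Using once more that $\tfrac{dP}{d|P|}$ is deviatoric to get $H_i\big(\tfrac{dP}{d|P|}\big)\geq(\Sigma^{(i)}_{n,k})_{\dev}:\tfrac{dP}{d|P|}$ on $\alpha\cap\calU^{(i)}_k$, testing against $\psi^{(i)}_k\varphi$ with $\varphi\in C_c(\alpha)$ nonnegative, rewriting $\int\psi^{(i)}_k\varphi\,(\Sigma^{(i)}_{n,k})_{\dev}:dP$ as the corresponding trace integral over $\alpha$ (the computation of the proof of \Cref{duality on the interface - regime inf}), and passing to the limit $n\to\infty$ using the weak$^*$ convergence of the tangential parts of the normal traces (\Cref{remadd2} and its analogue for the third row of the stress) would give the inequality localized by $\psi^{(i)}_k$; summing over $k$ proves it on $\alpha$.

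I expect the interface step to be the main obstacle: one must arrange the dilation--convolution approximation so that the constraint $(\Sigma^{(i)}_{n,k})_{\dev}\in K_i$ is preserved up to the curve $\alpha$ while simultaneously controlling the convergence of the tangential part of the normal trace $[\Sigma^{(i)}_{n,k}\iota(\nu^i)]\tangentiallll$ along $\alpha$, and one must verify that $[\Sigma_{\dev}:P]$ genuinely localizes near $\alpha$ — that cutting off by a $\varphi$ supported near $\alpha$ and replacing the global approximation by the local $\Sigma^{(i)}_{n,k}$ is compatible with the $BD(\calY)$ and $BV(\calY)$ integration-by-parts identities underpinning the definition of the pairing.
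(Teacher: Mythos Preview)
Your plan is correct and would succeed, but on the interface you do considerably more than the paper. On the phases your argument is essentially the paper's: mollify $\Sigma$ in $y$, use the pointwise support-function bound $H_i\big(\tfrac{dP}{d|P|}\big)\geq(\Sigma_n)_{\dev}:\tfrac{dP}{d|P|}$, integrate against a nonnegative $\varphi\in C_c(\calY_i)$, and pass to the limit via the integration-by-parts identity underlying the definition of the pairing (the paper exhausts $\calY_i$ by compactly contained open sets rather than shrinking an $\varepsilon$-buffer, but this is cosmetic).

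On $\Gamma_{ij}$ the paper does \emph{not} rerun a dilation--convolution approximation with a subordinate partition of unity. It simply invokes \Cref{duality on the interface - regime inf}: formula \eqref{duality on the interface (2) - regime inf} expresses the $\calH^1$-density of $[\Sigma_{\dev}:P]\mres\Gamma_{ij}$ as the pairing of the tangential trace $[\Sigma\iota(\nu^i)]\tangentiallll$ with the jump data, while \eqref{duality on the interface (1) - regime inf} places that trace in $\big((K_i\cap K_j)\iota(\nu^i)\big)\tangentiallll$. Since $(\bar u^i-\bar u^j)\perp\nu^i$ on $\Gamma_{ij}$, this density is therefore of the form $\tau:\tfrac{dP}{d\calH^1}$ for some $\tau\in K_i\cap K_j$, hence bounded above by $\min\{H_i,H_j\}\big(\tfrac{dP}{d\calH^1}\big)=H\big(y,\tfrac{dP}{d\calH^1}\big)$; the paper's proof closes with ``the claim then directly follows in view of \Cref{duality on the interface - regime inf}''. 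The Step~2 machinery you propose was genuinely needed in the $\gamma=0$ regime (\Cref{cell Hill's principle - regime zero}) because there the interface pairing involves the boundary operator $b_1(\hat\Sigma)$ from \eqref{curve1}, for which no pointwise constraint analogous to \eqref{duality on the interface (1) - regime inf} is proved separately; in the $\gamma=+\infty$ regime that work is already absorbed into \Cref{duality on the interface - regime inf}, so you can cite it and finish in one line.
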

\begin{proof}
To establish the stated inequality, we consider the behavior of the measures on each phase $\calY_i$ and inteface $\Gamma_{ij}$ respectively.
First, consider an opet set $\calU$ such that $\closure{\calU} \subset \calY_i$ for some $i$. 
Regularizing by convolution, we obtain a sequence $\Sigma_n \in C^\infty(\calU;\M^{3 \times 3}_{\sym})$ such that
\begin{gather*}
    \Sigma_n \strong \Sigma \quad \text{strongly in } L^2(\calU;\M^{3 \times 3}_{\sym}),\\
    \div_{y}\Sigma_n = 0 \text{ in } \calU.
\end{gather*}
Furthermore, $(\Sigma_n(y))_{\dev} \in K_i$ for every $y \in \calU$. 
As a consequence, for $|P|$-a.e. $y \in \calU$ we have
\begin{equation*}
    H\left(y, \frac{dP}{d|P|}\right) 
    = H_i\left(\frac{dP}{d|P|}\right) 
    \geq \Sigma_n : \frac{dP}{d|P|}.
\end{equation*}
Thus for every $\varphi \in C(\calU)$, such that $\varphi \geq 0$, we obtain
\begin{align*}
    \int_{\calU} \varphi\,H\left(y, \frac{dP}{d|P|}\right) \,d|P| 
    \geq \int_{\calU} \varphi\,\Sigma_n : \frac{dP}{d|P|} \,d|P|
    = \int_{\calU} \varphi \,d[ \Sigma_n : P ].
\end{align*}
Since $\Sigma_n$ is smooth, we conclude that
\begin{gather*}
    [\Sigma_n : \bar{P}] \weakstar [\Sigma : \bar{P}] \quad \text{weakly* in $\Mb(\calU)$}.
\end{gather*}
Passing to the limit we have
\begin{equation*}
    \int_{\calU} \varphi\,H\left(y, \frac{dP}{d|P|}\right) \,d|P| \geq \int_{\calU} \varphi \,d[ \Sigma : P ].
\end{equation*}
The inequality on the phase $\calY_i$ now follows by considering a collection of open subsets that increases to $\calY_i$.
Next, 
for every $i \neq j$,
\begin{equation*}
    H\left(y, \frac{dP}{d|P|}\right)\,|P|\mres{\Gamma_{ij}} 
    = \CCC\min\{H_i,H_j\}\BBB\left(\begin{pmatrix} \begin{matrix} (\bar{u}^j - \bar{u}^i) \odot \nu \end{matrix} & (u_3^j - u_3^i)\,\nu \\ (u_3^j - u_3^i)\,\nu^T & 0 \end{pmatrix}\right) \,\calH^{1}\mres{\Gamma_{ij}}.
\end{equation*}
where $\bar{u}^i,\,u_3^i$ and $\bar{u}^j,\,u_3^j$ are the traces on $\Gamma_{ij}$ of the restrictions of $\bar{u},\,u_3$ to $\calY_i$ and $\calY_j$ respectively, assuming that $\nu$ points from $\calY_j$ to $\calY_i$.
The claim then directly follows in view of Proposition \ref{duality on the interface - regime inf}.
\end{proof}
\subsection{Disintegration of admissible configurations} \label{subs:dis} 
Let $\ext{\omega} \subseteq \R^2$ be an open and bounded set such that $\omega \subset \ext{\omega}$ and $\ext{\omega} \cap \partial{\omega} = \gamma_\Dir$. 
We also denote by $\ext{\Omega} = \ext{\omega} \times I$ the associated reference domain.
In order to make sense of the duality between the two-scale limits of stresses and plastic strains, we will need to disintegrate the two-scale limits of the
kinematically admissible fields in such a way to obtain elements 
\RED
of $\calA_{0}$ and $\calA_{\infty}$, respectively. 
\BLACK
\subsubsection{Case \texorpdfstring{$\gamma = 0$}{γ = 0}}
\begin{definition} \label{definition A^hom_0}
Let $w \in H^1(\ext{\Omega};\R^3) \cap KL(\ext{\Omega})$. 
We define the class $\calA^{hom}_{0}(w)$ of admissible two-scale configurations relative to the boundary datum $w$ as the set of triplets $(u,E,P)$ with
\begin{equation*}
    u \in KL(\ext{\Omega}), \qquad E \in L^2(\ext{\Omega} \times \calY;\M^{2 \times 2}_{\sym}), \qquad P \in \Mb(\ext{\Omega} \times \calY;\M^{2 \times 2}_{\sym}),
\end{equation*}
such that
\begin{equation*}
    u = w, \qquad E = Ew, \qquad P = 0 \qquad \text{ on } (\ext{\Omega} \setminus \closure{\Omega}) \times \calY,
\end{equation*}
and also such that there exist $\mu \in \calXzero{\ext{\omega}}$, $\kappa \in \calYzero{\ext{\omega}}$ with
\begin{equation} \label{admissible two-scale configurations - regime zero}
    Eu \otimes \calL^{2}_{y} + E_{y}\mu - x_3 D^2_{y}\kappa = E \,\calL^{3}_{x} \otimes \calL^{2}_{y} + P \qquad \text{ in } \ext{\Omega} \times \calY.
\end{equation}
\end{definition}
\CCC The following lemma gives the disintegration result that will be used in the proof of \Cref{two-scale stress-strain duality - regime zero}. \BBB
\begin{lemma} \label{disintegration result - regime zero}
Let $(u,E,P) \in \calA^{hom}_{0}(w)$ with the associated $\mu \in \calXzero{\ext{\omega}}$, $\kappa \in \calYzero{\ext{\omega}}$, and let $\bar{u} \in BD(\ext{\omega})$ and $u_3 \in BH(\ext{\omega})$ be the Kirchhoff-Love components of $u$. 
Then \CCC there exists $\eta \in \Mb^+(\ext{\omega})$ such that the following disintegrations hold true:\BBB
\begin{align}
    \label{disintegration result 1 - regime zero} Eu \otimes \calL^{2}_{y} &= \left( A_1(x') + x_3 A_2(x') \right) \eta \otimes \calL^{1}_{x_3} \otimes \calL^{2}_{y},\\
    \label{disintegration result 2 - regime zero} E \,\calL^{3}_{x} \otimes \calL^{2}_{y} &= C(x') E(x,y) \,\eta \otimes \calL^{1}_{x_3} \otimes \calL^{2}_{y}\\
    \label{disintegration result 3 - regime zero} P &= \eta \genprod P_{x'}.
\end{align}
Above, $A_1, A_2 : \ext{\omega} \to \M^{2 \times 2}_{\sym}$ and $C : \ext{\omega} \to [0, +\infty]$ are respective Radon-Nikodym derivatives of $E\bar{u}$, $-D^2u_3$ and $\calL^{2}_{x'}$ with respect to $\eta$, $E(x,y)$ is a Borel representative of $E$, and $P_{x'} \in \Mb(I \times \calY;\M^{2 \times 2}_{\sym})$ for $\eta$-a.e. $x' \in \ext{\omega}$.
Furthermore, we can choose Borel maps $(x',y) \in \ext{\omega} \times \calY \mapsto \mu_{x'}(y) \in \R^2$ and $(x',y) \in \ext{\omega} \times \calY \mapsto \kappa_{x'}(y) \in \R$ such that, for $\eta$-a.e. $x' \in \ext{\omega}$,
\begin{equation} \label{disintegration result 4a - regime zero} 
    \mu = \mu_{x'}(y) \,\eta \otimes \calL^{2}_{y}, \quad E_{y}\mu = \eta \genprod E_{y}\mu_{x'},
\end{equation}
\begin{equation} \label{disintegration result 4b - regime zero} 
    \kappa = \kappa_{x'}(y) \,\eta \otimes \calL^{2}_{y}, \quad D^2_{y}\kappa = \eta \genprod D^2_{y}\kappa_{x'},
\end{equation}
where $\mu_{x'} \in BD(\calY)$, $\int_{\calY} \mu_{x'}(y) \,dy = 0$ and $\kappa_{x'} \in BH(\calY)$, $\int_{\calY} \kappa_{x'}(y) \,dy = 0$.
\end{lemma}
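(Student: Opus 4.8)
The plan is to produce a single positive measure $\eta$ on $\ext{\omega}$ with respect to which \emph{all} objects in the statement disintegrate, by superposing the measure coming from the corrector structure result with the obvious candidates attached to $\bar u$, $u_3$, $P$ and the Lebesgue measure; once $\eta$ is fixed, each identity \eqref{disintegration result 1 - regime zero}--\eqref{disintegration result 4b - regime zero} follows by reading off Radon--Nikodym densities and rescaling fibers.

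First I would apply \Cref{corrector main property - regime zero} to $\mu\in\calXzero{\ext{\omega}}$ and $\kappa\in\calYzero{\ext{\omega}}$ (the proposition, stated for $\omega$, carries over verbatim to $\ext{\omega}$): this yields $\eta_0\in\Mb^+(\ext{\omega})$ and Borel maps $(x',y)\mapsto\nu_{x'}(y)\in\R^2$, $(x',y)\mapsto\chi_{x'}(y)\in\R$ such that, for $\eta_0$-a.e.\ $x'$, $\nu_{x'}\in BD(\calY)$ and $\chi_{x'}\in BH(\calY)$ have vanishing mean and
\[
  \mu=\nu_{x'}\,\eta_0\otimes\calL^2_y,\quad E_y\mu=\eta_0\genprod E_y\nu_{x'},\quad \kappa=\chi_{x'}\,\eta_0\otimes\calL^2_y,\quad D^2_y\kappa=\eta_0\genprod D^2_y\chi_{x'}.
\]
Then I set $\pi:\ext{\Omega}\times\calY\to\ext{\omega}$, $\pi(x',x_3,y):=x'$, and
\[
  \eta:=\eta_0+|E\bar u|+|D^2 u_3|+\calL^2_{x'}\mres\ext{\omega}+\pi_\#|P|\in\Mb^+(\ext{\omega}),
\]
which is finite (each summand is, $\ext{\omega}$ being bounded) and dominates each of its five summands.

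Next I would introduce the $\eta$-integrable Borel densities $A_1:=\tfrac{dE\bar u}{d\eta}$, $A_2:=-\tfrac{dD^2 u_3}{d\eta}$, $C:=\tfrac{d(\calL^2_{x'}\mres\ext{\omega})}{d\eta}$ (finite $\eta$-a.e., as $\calL^2_{x'}\mres\ext{\omega}\ll\eta$), $g_0:=\tfrac{d\eta_0}{d\eta}$ and $g_P:=\tfrac{d(\pi_\#|P|)}{d\eta}$. Since $u\in KL(\ext{\Omega})$ has Kirchhoff--Love components $\bar u,u_3$, identity \eqref{symmetric gradient of KL functions} reads, as an $\M^{2\times2}_{\sym}$-valued measure on $\ext{\Omega}$, $Eu=E\bar u\otimes\calL^1_{x_3}-D^2 u_3\otimes x_3\calL^1_{x_3}$; tensoring with $\calL^2_y$ and substituting $E\bar u=A_1\,\eta$, $D^2 u_3=-A_2\,\eta$ gives \eqref{disintegration result 1 - regime zero}. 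For \eqref{disintegration result 2 - regime zero} I would fix a Borel representative $E(x,y)$ of $E\in L^2$ and use $\calL^3_x=\calL^2_{x'}\otimes\calL^1_{x_3}$ together with $\calL^2_{x'}\mres\ext{\omega}=C\,\eta$. For \eqref{disintegration result 3 - regime zero} I would disintegrate $P$ over $\pi_\#|P|$ via the classical disintegration theorem (see \cite[Theorem 2.28 and Corollary 2.29]{ambrosio2000functions}), writing $P=(\pi_\#|P|)\genprod\widetilde P_{x'}$ with $\widetilde P_{x'}\in\Mb(I\times\calY;\M^{2\times2}_{\sym})$, and then set $P_{x'}:=g_P(x')\,\widetilde P_{x'}$ (interpreted as $0$ where $g_P$ vanishes), so that $P=\eta\genprod P_{x'}$ and $\int_{\ext{\omega}}|P_{x'}|(I\times\calY)\,d\eta=|P|(\ext{\Omega}\times\calY)<\infty$. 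Finally I would set $\mu_{x'}:=g_0(x')\,\nu_{x'}$ and $\kappa_{x'}:=g_0(x')\,\chi_{x'}$ (both $\equiv0$ where $g_0=0$); since the factor $g_0(x')$ is constant in $y$, these remain in $BD(\calY)$, resp.\ $BH(\calY)$, with vanishing mean for $\eta$-a.e.\ $x'$, and $\eta_0=g_0\,\eta$ transforms the four identities above into \eqref{disintegration result 4a - regime zero}--\eqref{disintegration result 4b - regime zero}, with $\int|E_y\mu_{x'}|(\calY)\,d\eta<\infty$ and $\int|D^2_y\kappa_{x'}|(\calY)\,d\eta<\infty$ inherited from the corresponding bounds against $\eta_0$.

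Apart from these identifications the argument is entirely measure-theoretic. The one point requiring genuine care is the bookkeeping of generalized products under a change of reference measure — that $\nu=g\,\eta$ forces $\nu\genprod\Theta_{x'}=\eta\genprod(g\,\Theta_{x'})$ and that the attendant integrability conditions transfer — together with the Borel measurability of all the fiber-valued maps; both are handled by \Cref{corrector main property - regime zero}, the disintegration theorem, and the standard calculus of Radon--Nikodym derivatives. The sole structural ingredient is the Kirchhoff--Love splitting of $Eu$ recalled above.
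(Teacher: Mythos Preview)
Your proposal is correct and follows essentially the same approach the paper invokes: the paper's own proof is just the one-line remark that the result ``is a consequence of \Cref{corrector main property - regime zero} and follows along the lines of \cite[Lemma 5.8]{BDV}'', and your sketch is precisely the standard implementation of that strategy --- apply the corrector structure result to obtain a preliminary $\eta_0$, enlarge it to a single dominating measure $\eta$ by adding $|E\bar u|+|D^2u_3|+\calL^2_{x'}\mres\ext{\omega}+\pi_\#|P|$, and then read off all five disintegrations via Radon--Nikodym densities and fiber rescaling. The only bookkeeping you flag (measurability of fiber maps, transfer of generalized products under $\eta_0=g_0\eta$) is exactly what needs to be checked and is handled as you indicate.
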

\CCC
\begin{proof}
The proof is a consequence of \Cref{corrector main property - regime zero} and follows along the lines of \cite[Lemma 5.8]{BDV}. 
\end{proof}
\BBB
\begin{remark} \label{admissible configurations and disintegration - regime zero}
From the above disintegration, we have that, for $\eta$-a.e. $x' \in \ext{\omega}$, 
\begin{equation*}
    E_{y}\mu_{x'} - x_3 D^2_{y}\kappa_{x'} = \left[ C(x') E(x,y) - \left( A_1(x') + x_3 A_2(x') \right) \right] \calL^{1}_{x_3} \otimes \calL^{2}_{y} + P_{x'} \quad \textit{ in } I \times \calY.
\end{equation*}
Thus, the quadruplet
\begin{equation*}
    \left( \mu_{x'}, \kappa_{x'}, \left[ C(x') E(x,y) - \left( A_1(x') + x_3 A_2(x') \right) \right], P_{x'} \right)
\end{equation*}
is an element of $\calA_{0}$.
\end{remark}
\subsubsection{Case \texorpdfstring{$\gamma = +\infty$}{γ = +∞}}
\begin{definition} \label{definition A^hom_inf}
Let $w \in H^1(\ext{\Omega};\R^3) \cap KL(\ext{\Omega})$. 
We define the class $\calA^{hom}_{\infty}(w)$ of admissible two-scale configurations relative to the boundary datum $w$ as the set of triplets $(u,E,P)$ with
\begin{equation*}
    u \in KL(\ext{\Omega}), \qquad E \in L^2(\ext{\Omega} \times \calY;\M^{3 \times 3}_{\sym}), \qquad P \in \Mb(\ext{\Omega} \times \calY;\M^{3 \times 3}_{\dev}),
\end{equation*}
such that
\begin{equation*}
    u = w, \qquad E = Ew, \qquad P = 0 \qquad \text{ on } (\ext{\Omega} \setminus \closure{\Omega}) \times \calY,
\end{equation*}
and also such that there exist $\mu \in \calXinf{\ext{\Omega}}$, $\kappa \in \calXinf{\ext{\Omega}}$, $\zeta \in \Mb(\Omega;\R^3)$ with
\begin{equation} \label{admissible two-scale configurations - regime inf}
    Eu \otimes \calL^{2}_{y} + \begin{pmatrix} \begin{matrix} E_{y}\mu \end{matrix} & \zeta' + D_{y}\kappa \\ (\zeta' + D_{y}\kappa)^T & \zeta_3 \end{pmatrix} = E \,\calL^{3}_{x} \otimes \calL^{2}_{y} + P \qquad \text{ in } \ext{\Omega} \times \calY.
\end{equation}
\end{definition}
\CCC The following lemma provides a disintegration result in this regime and will be instrumental for \CCC \Cref{two-scale stress-strain duality - regime inf}. \BBB
\begin{lemma} \label{disintegration result - regime inf}
Let $(u,E,P) \in \calA^{hom}_{\infty}(w)$ with the associated $\mu \in \calXinf{\ext{\Omega}}$, $\kappa \in \calXinf{\ext{\Omega}}$, $\zeta \in \Mb(\Omega;\R^3)$ and let $\bar{u} \in BD(\ext{\omega})$ and $u_3 \in BH(\ext{\omega})$ be the Kirchhoff-Love components of $u$. 
Then \CCC there exists $\eta \in  \Mb^+(\ext{\Omega})$ such that the  following disintegrations hold true: \BBB
\begin{align}
    \label{disintegration result 1 - regime inf} Eu \otimes \calL^{2}_{y} &= \left( A_1(x') + x_3 A_2(x') \right) \eta \otimes \calL^{2}_{y},\\
    \label{disintegration result 4c - regime inf} \zeta \otimes \calL^{2}_{y} &= z(x) \,\eta \otimes \calL^{2}_{y},\\
    \label{disintegration result 2 - regime inf} E \,\calL^{3}_{x} \otimes \calL^{2}_{y} &= C(x) E(x,y) \,\eta \otimes \calL^{2}_{y}\\
    \label{disintegration result 3 - regime inf} P &= \eta \genprod P_{x}.
\end{align}
Above, $A_1, A_2 : \ext{\omega} \to \M^{2 \times 2}_{\sym}$, $z : \ext{\omega} \to \R^3$ and $C : \ext{\Omega} \to [0, +\infty]$ are the respective Radon-Nikodym derivatives of $E\bar{u}$, $-D^2u_3$, $\zeta$ and $\calL^{3}_{x}$ with respect to $\eta$, $E(x,y)$ is a Borel representative of $E$, and $P_{x} \in \Mb(\calY;\M^{3 \times 3}_{\dev})$ for $\eta$-a.e. $x \in \ext{\Omega}$.

Furthermore, we can choose Borel maps $(x,y) \in \ext{\Omega} \times \calY \mapsto \mu_{x}(y) \in \R^2$ and $(x,y) \in \ext{\Omega} \times \calY \mapsto \kappa_{x}(y) \in \R$ such that, for $\eta$-a.e. $x \in \ext{\Omega}$,
\begin{equation} \label{disintegration result 4a - regime inf} 
    \mu = \mu_{x}(y) \,\eta \otimes \calL^{2}_{y}, \quad E_{y}\mu = \eta \genprod E_{y}\mu_{x},
\end{equation}
\begin{equation} \label{disintegration result 4b - regime inf} 
    \kappa = \kappa_{x}(y) \,\eta \otimes \calL^{2}_{y}, \quad D^2_{y}\kappa = \eta \genprod D^2_{y}\kappa_{x},
\end{equation}
where $\mu_{x} \in BD(\calY)$, $\int_{\calY} \mu_{x}(y) \,dy = 0$ and $\kappa_{x} \in BV(\calY)$, $\int_{\calY} \kappa_{x}(y) \,dy = 0$.
\end{lemma}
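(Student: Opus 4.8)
The plan is to mirror the proof of \Cref{disintegration result - regime zero} — which itself adapts \cite[Lemma 5.8]{BDV} — now disintegrating over the full cylinder $\ext{\Omega}$ rather than over $\ext{\omega}$, and invoking \Cref{corrector main property - regime inf} in place of \Cref{corrector main property - regime zero}. First I would extend $\zeta$ to a measure on $\ext{\Omega}$ by setting it to zero on $\ext{\Omega}\setminus\closure{\Omega}$, and apply \Cref{corrector main property - regime inf} \emph{separately} to $\mu\in\calXinf{\ext{\Omega}}$ and to $\kappa\in\calYinf{\ext{\Omega}}$. This produces measures $\eta_\mu,\eta_\kappa\in\Mb^+(\ext{\Omega})$ and Borel maps $(x,y)\mapsto\mu_x(y)\in\R^2$, $(x,y)\mapsto\kappa_x(y)\in\R$ with $\mu=\mu_x(y)\,\eta_\mu\otimes\calL^2_y$, $E_y\mu=\eta_\mu\genprod E_y\mu_x$, $\kappa=\kappa_x(y)\,\eta_\kappa\otimes\calL^2_y$, $D_y\kappa=\eta_\kappa\genprod D_y\kappa_x$, where $\mu_x\in BD(\calY)$ and $\kappa_x\in BV(\calY)$ have vanishing mean.

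Next I would introduce one nonnegative measure $\eta\in\Mb^+(\ext{\Omega})$ dominating all the relevant ingredients simultaneously: essentially $\eta_\mu$, $\eta_\kappa$, $|\zeta|$, $\calL^3_x$, $\proj_{\#}|P|$ (with $\proj:\ext{\Omega}\times\calY\to\ext{\Omega}$ the projection on the first factor), together with the product measure $\big(|E\bar u|+|D^2u_3|\big)\otimes\calL^1_{x_3}$, which carries the Kirchhoff--Love part $Eu=E\bar u\otimes\calL^1_{x_3}-D^2u_3\otimes x_3\calL^1_{x_3}$. Every measure occurring in \eqref{admissible two-scale configurations - regime inf}, as well as $\calL^3_x$, is then absolutely continuous with respect to $\eta$, so the Radon--Nikodym theorem furnishes Borel densities $z=d\zeta/d\eta$ (giving \eqref{disintegration result 4c - regime inf}), $C=d\calL^3_x/d\eta$ together with a Borel representative $E(x,y)$ of $E$ (giving \eqref{disintegration result 2 - regime inf}), and the density of $Eu$ with respect to $\eta$, which — as verified below — equals $A_1(x')+x_3A_2(x')$ with $A_1:=dE\bar u/d(|E\bar u|+|D^2u_3|)$ and $A_2:=-dD^2u_3/d(|E\bar u|+|D^2u_3|)$, giving \eqref{disintegration result 1 - regime inf}. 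For $P$ I would use a standard disintegration of measures (cf. \cite{ambrosio2000functions}), writing $P=\big(\proj_{\#}|P|\big)\genprod P_x$, and then multiply $P_x$ by $d(\proj_{\#}|P|)/d\eta$; likewise I would multiply $\mu_x$, $\kappa_x$ by $d\eta_\mu/d\eta$, $d\eta_\kappa/d\eta$ so that all disintegrations refer to the common $\eta$. This rescaling preserves $\int_\calY\mu_x\,dy=0$, $\int_\calY\kappa_x\,dy=0$ and the generalized-product identities for $E_y\mu$, $D_y\kappa$, and yields \eqref{disintegration result 3 - regime inf}, \eqref{disintegration result 4a - regime inf}, \eqref{disintegration result 4b - regime inf}.

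The main obstacle is to choose $\eta$ so that the density of $Eu$ genuinely takes the affine form $A_1(x')+x_3A_2(x')$ with $A_1,A_2$ depending on $x'$ only: this requires handling the three-dimensional singular parts of the remaining ingredients (chiefly $|\zeta|$ and $\proj_{\#}|P|$) with care, isolating the product factor $\big(|E\bar u|+|D^2u_3|\big)\otimes\calL^1_{x_3}$ and carrying the two-dimensional densities $A_1$, $A_2$ through an $x_3$-independent Radon--Nikodym density — precisely the point at which one follows \cite[Lemma 5.8]{BDV}. Once the disintegrations are in place, dividing \eqref{admissible two-scale configurations - regime inf} by $\eta$ produces, for $\eta$-a.e. $x\in\ext{\Omega}$, an identity in $\Mb(\calY;\M^{3\times3}_{\dev})$ whose disintegrated data form an admissible configuration in the sense of \Cref{definition A_inf}, which is what will be used for the stress--strain duality in the sequel.
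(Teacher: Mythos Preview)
Your proposal is correct and follows the same approach as the paper: the paper's own proof simply states that the argument builds upon \Cref{corrector main property - regime inf} and follows along the lines of \cite[Lemma 5.8]{BDV}, which is precisely what you outline. Your sketch supplies the missing detail---introducing a single dominating measure $\eta$, rescaling the fiber maps $\mu_x,\kappa_x,P_x$ accordingly, and isolating the product factor $(|E\bar u|+|D^2u_3|)\otimes\calL^1_{x_3}$ to preserve the affine-in-$x_3$ structure of the density of $Eu$---and correctly identifies this last point as the place where the argument of \cite[Lemma 5.8]{BDV} is genuinely needed.
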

\CCC
\begin{proof} 
The proof builds upon \Cref{corrector main property - regime inf} and follows along \cite[Lemma 5.8]{BDV}. 
\end{proof} 
\BBB

\begin{remark} \label{admissible configurations and disintegration - regime inf}
From the above disintegration, we have that, for $\eta$-a.e. $x \in \ext{\Omega}$, 
\begin{equation*}
    \begin{pmatrix} \begin{matrix} E_{y}\mu_{x} \end{matrix} & z' + D_{y}\kappa_{x} \\ (z' + D_{y}\kappa_{x})^T & z_3 \end{pmatrix} = \left[ C(x) E(x,y) - \begin{pmatrix} A_1(x') + x_3 A_2(x') & 0 \\ 0 & 0 \end{pmatrix} \right] \calL^{2}_{y} + P_{x} \quad \textit{ in } \calY.
\end{equation*}
Thus, the quintuplet
\begin{equation*}
    \left( \mu_{x}, \kappa_{x}, z, \left[ C(x) E(x,y) - \begin{pmatrix} A_1(x') + x_3 A_2(x') & 0 \\ 0 & 0 \end{pmatrix} \right], P_{x} \right)
\end{equation*}
is an element of $\calA_{\infty}$.
\end{remark}

\subsection{Admissible stress configurations and approximations}\label{admissiblestress} 
For every $e^h \in L^2(\Omega;\M^{3 \times 3}_{\sym})$ we define $\sigma^h(x) := \C\left(\frac{x'}{\epsh}\right) \Lambda_h e^h(x)$. \CCC We \BBB introduce the set \RED of stresses for the rescaled $h$ problems: \BLACK
\begin{align*}
    \calK_h = \bigg\{\sigma^h &\in L^2(\Omega;\M^{3 \times 3}_{\sym}) : \div_{h}\sigma^h = 0 \text{ in } \Omega,\ \sigma^h\,\nu = 0 \text{ in } \partial\Omega \setminus {\closure{\Gamma}_\Dir},\\
    &\sigma^h_{\dev}(x',x_3) \in K\left(\frac{x'}{\epsh}\right) \,\text{ for a.e. } x' \in \omega,\, x_3 \in I\bigg\}.
\end{align*}
\CCC We recall some properties of the limiting stress that can be found in \cite{Davoli.Mora.2013}. \BBB\\


If we consider the weak limit $\sigma \in L^2(\Omega;\M^{3 \times 3}_{\sym})$ of the sequence $\sigma^h \in \calK_h$ as $h \to 0$, then 
$\sigma_{i3} = 0$ for $i=1,2,3$. 
Furthermore, since the uniform boundedness of the sets $K(y)$ implies that the deviatoric part of the weak limit, i.e. $\sigma_{\dev} = \sigma - \frac{1}{3} \tr{\sigma} I_{3 \times 3}$, is bounded in $L^\infty(\Omega;\M^{3 \times 3}_{\sym})$, we have that
the components $\sigma_{\alpha \beta}$ are all bounded in $L^\infty(\Omega)$.

Lastly, 
\begin{equation*}
    \div_{x'}\bar{\sigma} = 0 \text{ in } \omega,  \;\text{ and }\;  \div_{x'}\div_{x'}\hat{\sigma} = 0 \text{ in } \omega.
\end{equation*}

\RED
In the following, we further characterize the sets of two-scale limits of sequences of elastic stresses $\{\sigma^h\}$, depending on the regime.
\BLACK

\subsubsection{Case \texorpdfstring{$\gamma = 0$}{γ = 0}}
\EEE We first introduce the set of limiting two-scale stress. \BBB
\begin{definition} \label{definition K^hom_0}
The set $\calK^{hom}_{0}$ is the set of all elements $\Sigma \in L^{\infty}(\Omega \times \calY;\M^{3 \times 3}_{\sym})$ satisfying:
\begin{enumerate}[label=(\roman*)]
    \item $\Sigma_{i3}(x,y) = 0 \,\text{ for } i=1,2,3$,
    \item $\Sigma_{\dev}(x,y) \in K(y) \,\text{ for } \calL^{3}_{x} \otimes \calL^{2}_{y}\text{-a.e. } (x,y) \in \Omega \times \calY$,
    \item $\div_{y}\bar{\Sigma}(x',\cdot) = 0 \text{ in } \calY \,\text{ for a.e. } x' \in \omega$,
    \item $\div_{y}\div_{y}\hat{\Sigma}(x',\cdot) = 0 \text{ in } \calY \,\text{ for a.e. } x' \in \omega$,
    \item $\div_{x'}\bar{\sigma} = 0 \text{ in } \omega$,
    \item $\div_{x'}\div_{x'}\hat{\sigma} = 0 \text{ in } \omega$,
\end{enumerate}
where $\bar{\Sigma},\, \hat{\Sigma} \in L^{\infty}(\omega \times \calY;\M^{2 \times 2}_{\sym})$ are the zero-th and first order moments of the $2 \times 2$ minor of $\Sigma$, \
$\sigma := \int_{\calY} \Sigma(\cdot,y) \,dy$, and $\bar{\sigma},\, \hat{\sigma} \in L^{\infty}(\omega;\M^{2 \times 2}_{\sym})$ are the zero-th and first order moments of the $2 \times 2$ minor of $\sigma$.
\end{definition}
\CCC The following proposition \EEE motivates \BBB the above definition. \BBB 
\begin{proposition} \label{two-scale weak limit of admissible stress - regime zero}
Let $\{\sigma^h\}$ be a bounded family in $L^2(\Omega;\M^{3 \times 3}_{\sym})$ such that $\sigma^h \in \calK_h$ and
\begin{equation*}
    \sigma^h \weaktwoscale \Sigma \quad \text{two-scale weakly in $L^2(\Omega \times \calY;\M^{3 \times 3}_{\sym})$}.
\end{equation*}
Then $\Sigma \in \calK^{hom}_{0}$.
\end{proposition}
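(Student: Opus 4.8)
The plan is to verify the six defining properties of $\calK^{hom}_{0}$ one at a time, exploiting the structure of $\calK_h$ and passing to the two-scale limit. First I would record the basic consequences of $\sigma^h \in \calK_h$ that survive the limit. Since $\{\sigma^h\}$ is bounded in $L^2$ and $\sigma^h_{\dev} \in K(x'/\epsh)$ a.e.\ with $K(y)$ contained in a fixed ball of radius $R_K$, the deviatoric parts $\{\sigma^h_{\dev}\}$ are bounded in $L^\infty(\Omega;\M^{3\times 3}_{\dev})$; hence, up to a further subsequence, $\sigma^h_{\dev} \weakstartwoscale \Sigma_{\dev}$ two-scale weakly* in $L^\infty$, and in particular the components $\Sigma_{\alpha\beta}$ lie in $L^\infty(\Omega\times\calY)$. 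For property (i), I would test the relation $\frac1h E_{i3}(\Lambda_h^{-1}\cdot)$-type scaling: actually (i) is cleaner to see directly — since $\sigma^h = \C(x'/\epsh)\Lambda_h e^h$ gives no information on $\sigma^h_{i3}$ per se, but the statement in the text preceding the definition already asserts that the plain weak limit $\sigma$ of $\sigma^h$ satisfies $\sigma_{i3}=0$; the same argument (using that $\div_h\sigma^h=0$ forces $\frac1h\partial_{x_3}\sigma^h_{33}$ and $\frac1h\partial_{x_3}\sigma^h_{i3}$ to be controlled, so these components are of lower order) applies verbatim to the two-scale limit, yielding $\Sigma_{i3}(x,y)=0$. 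For property (ii), I would use the standard fact that the constraint set $\{\Sigma : \Sigma_{\dev}(x,y)\in K(y)\}$ is closed under two-scale weak* convergence: this follows because $K(\cdot)$ is, after the periodic-unfolding identification (Remark \ref{transfertwoscale}), an ordinary pointwise convex constraint on $L^\infty(\Omega\times\calY)$, and such constraints are weak*-sequentially closed (one can test against $\charfun{A}(x)\psi(y)$ with $\psi$ supported in a single phase $\calY_i$, use that $K_i$ is convex and closed, and invoke lower semicontinuity / Mazur).

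Next I would treat the divergence constraints (iii)–(vi), which are the heart of the matter. The equation $\div_h \sigma^h = 0$ in $\Omega$ reads, in components, $\partial_{x_1}\sigma^h_{1j}+\partial_{x_2}\sigma^h_{2j}+\frac1h\partial_{x_3}\sigma^h_{3j}=0$ for $j=1,2,3$, together with the boundary condition $\sigma^h\nu=0$ on $\partial\Omega\setminus\closure\Gamma_\Dir$ (i.e.\ on $\omega\times\partial I$). To extract (iii), I would test the relation against oscillating test functions of the form $\varphi(x)\,\psi(x'/\epsh)$ with $\psi\in C^\infty(\calY;\R^3)$ and $\varphi\in C_c^\infty(\omega)$ independent of $x_3$, integrate by parts, and isolate the $\frac1\epsh$-scaled term carrying $\div_y\psi$. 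Multiplying through by $\epsh$ and letting $h\to 0$ kills all the $O(1)$ and $O(h/\epsh^2)=o(1)$ (since $h/\epsh\to 0$) contributions, leaving $\int_{\omega\times\calY}\varphi(x')\,\psi(y)\cdot d(\div_y$-pairing$)=0$; since the $x_3$-average picks out $\bar\Sigma$, this gives $\div_y\bar\Sigma(x',\cdot)=0$ in $\calY$ for a.e.\ $x'$. Property (iv) is obtained by the same mechanism but testing against $x_3\psi(x'/\epsh)$-type functions to bring out $\hat\Sigma$, and using second-order test functions to produce $\div_y\div_y$; this is exactly the computation already carried out in the proof of Theorem \ref{two-scale weak limit of scaled strains}, Step 1 (equations \eqref{limit 3a - regime zero}--\eqref{limit 2 - regime zero}), transplanted to the stress side. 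For (v) and (vi), I would instead take $\psi$ constant in $y$ (i.e.\ genuine macroscopic test functions $\varphi(x')$), pass to the two-scale limit, and use that $\int_\calY \Sigma(\cdot,y)\,dy=\sigma$ while $\sigma$ inherits $\div_{x'}\bar\sigma=0$ and $\div_{x'}\div_{x'}\hat\sigma=0$ from the analogous identities satisfied by $\sigma^h$ after integrating the $x_3$-variable away and using the zero traction on $\omega\times\partial I$ — precisely the three displayed identities quoted from \cite{Davoli.Mora.2013} in the text just above Definition \ref{definition K^hom_0}.

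Finally I would assemble the pieces: the two-scale limit $\Sigma$ of the bounded sequence $\{\sigma^h\}$ exists along a subsequence in $L^2(\Omega\times\calY;\M^{3\times3}_{\sym})$ by compactness, the $L^\infty$-bound on the deviatoric part plus property (ii) guarantee $\Sigma_{\dev}\in L^\infty$, and by Remark \ref{transfertwoscale} the two-scale weak limit coincides with the ordinary weak limit of the unfolded sequence, so no issue arises in identifying limits of products. The main obstacle I anticipate is the careful bookkeeping of the scalings in the divergence identities (iii)–(iv): one must choose the right class of oscillating test functions and track which terms are $O(1/\epsh)$, $O(1)$, $O(h/\epsh)$, $O(h/\epsh^2)$, and verify that in the regime $h/\epsh\to 0$ exactly the microscopic divergence terms survive after multiplying by the appropriate power of $\epsh$ — the same delicate cancellations that appear in \eqref{limit 3a - regime zero}--\eqref{limit 3b - regime zero}. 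A secondary subtlety is justifying that the pointwise convex constraint $\Sigma_{\dev}(x,y)\in K(y)$ (with $K$ only Borel, but phasewise constant and convex-valued) survives two-scale weak* passage; this is handled by restricting test functions to a single phase $(\calY_i)_\epsh\times I$, using part (ii) of the Proposition on two-scale convergence together with the convexity and closedness of each $K_i$.
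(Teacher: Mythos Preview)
Your overall strategy matches the paper's: oscillating test functions for the divergence constraints, periodic unfolding plus convexity for the pointwise constraint, and the Davoli--Mora results for the macroscopic conditions. Properties (ii), (iii), (v), (vi) go through essentially as you describe.

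The gap is in property (iv). Your proposed mechanism --- ``testing against $x_3\psi(x'/\epsh)$-type functions'' after multiplying by $\epsh$ --- does not work in the regime $\gamma=0$. Testing $\div_h\sigma^h=0$ against $\epsh\,x_3\,\varphi(x')\psi(x'/\epsh)$ produces, via the $\tfrac{1}{h}\partial_{x_3}$ part of $\div_h$, a term of order $\epsh/h$, and since $h/\epsh\to 0$ this coefficient blows up. No simple rescaling fixes this: $\epsh^2/h$ is an indeterminate $0\cdot\infty$, and $h$ kills the term you actually want. The computations from Step~1 of Theorem~\ref{two-scale weak limit of scaled strains} do not transplant directly either, since those are integration-by-parts manipulations on the strain side that exploit the $BD/BH$ structure of $u^h$, not a choice of admissible test function against $\sigma^h$.

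What the paper does instead is construct a Kirchhoff--Love recovery sequence at the \emph{microscale}: set
\[
\varphi_\alpha = -\epsh\,x_3\,\partial_{y_\alpha}\phi_3\Big(x',\tfrac{x'}{\epsh}\Big)+\text{l.o.t.},\qquad
\varphi_3 = \frac{\epsh^2}{h}\,\phi_3\Big(x',\tfrac{x'}{\epsh}\Big),
\]
so that the $(3,\alpha)$-entries of $E_h\varphi$ cancel \emph{exactly} (the $-\epsh/h$ from $\tfrac{1}{h}\partial_{x_3}\varphi_\alpha$ against the $+\epsh/h$ from $\partial_{x_\alpha}\varphi_3$), and $E_h\varphi$ converges strongly to the block-diagonal field with $2\times 2$ block $E_y\phi'-x_3 D_y^2\phi_3$. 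This single test function then delivers (iii) and (iv) simultaneously. The point is that the second-order condition $\div_y\div_y\hat\Sigma=0$ is the microscopic dual of the $BH$-corrector $\kappa$, and to access it from $\div_h\sigma^h=0$ one must build in the KL structure with the anomalously scaled third component $\varphi_3\sim\epsh^2/h$.

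A smaller issue: for (i), the Davoli--Mora argument you cite only gives $\sigma_{i3}=0$ for the weak limit $\sigma=\int_\calY\Sigma\,dy$, not the pointwise-in-$y$ statement $\Sigma_{i3}(x,y)=0$. The paper obtains the latter by testing with $h\,\psi(x,x'/\epsh)$, whose scaled gradient $\nabla_h(h\psi)=[\,h\nabla_{x'}\psi+(h/\epsh)\nabla_y\psi\mid\partial_{x_3}\psi\,]$ converges strongly (here $h/\epsh\to 0$ is used), leaving only the third-column pairing in the limit.
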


\begin{proof}
\CCC
Properties (v) and (vi) follow from \Cref{admissiblestress}. 

To prove (i) let $\psi \in C_c^{\infty}(\omega;C^{\infty}(\closure{I} \times \calY;\R^3))$ and consider the test function $h\,\psi\left(x,\frac{x'}{\epsh}\right)$.  
We find that
\begin{equation*}
    \nabla_{h}\left(h\,\psi\left(x,\frac{x'}{\epsh}\right)\right) = \left[\; h\,\nabla_{x'}\psi\left(x,\frac{x'}{\epsh}\right) + \frac{h}{\epsh}\,\nabla_{y}\psi\left(x,\frac{x'}{\epsh}\right) \;\right|\left.\; \partial_{x_3}\psi\left(x,\frac{x'}{\epsh}\right) \;\right]
\end{equation*}
converges strongly in $L^2(\Omega \times \calY;\M^{3 \times 3})$.
Hence, taking such a test function in \III $\div_h \sigma^h=0$ \BBB and passing to the limit, we get
\begin{equation*}
    \int_{\Omega \times \calY} \Sigma(x,y) : \left(\begin{array}{ccc} 0 & 0 & \partial_{x_3}\psi_1(x,y) \\ 0 & 0 & \partial_{x_3}\psi_2(x,y) \\ \partial_{x_3}\psi_1(x,y) & \partial_{x_3}\psi_2(x,y) & \partial_{x_3}\psi_3(x,y) \end{array} \right) \,dx dy = 0,
\end{equation*}
which is sufficient to conclude that $\Sigma_{i3}(x,y) = 0 \,\text{ for } i=1,2,3$.

To prove (ii) we define
\begin{equation} \label{approximating sequence for K^hom}
    \Sigma^h(x,y) = \sum_{i \in I_\epsh(\ext{\omega})} \charfun{Q_\epsh^i}(x')\,\sigma^h(\epsh i + \epsh\calI(y),x_3),
\end{equation}
and consider the set
\begin{equation*}
    S = \{ \Xi \in L^2(\Omega \times \calY;\M^{3 \times 3}_{\sym}) : \Xi_{\dev}(x,y) \in K(y) \,\text{ for } \calL^{3}_{x} \otimes \calL^{2}_{y}\text{-a.e. } (x,y) \in \Omega \times \calY \}.
\end{equation*}
The construction of $\Sigma^h$ from $\sigma^h \in \calK_h$ ensures that $\Sigma^h \in S$ and that $\Sigma^h \weak \Sigma \;\text{ weakly in } L^2(\Omega \times \calY;\M^{3 \times 3}_{\sym})$. 
\III (i) and (ii) imply that $\Sigma \in L^{\infty}$.\BBB

Since compactness of $K(y)$ implies that $S$ is convex and weakly closed in $L^2(\Omega \times \calY;\M^{3 \times 3}_{\sym})$, we have that $\Sigma \in S$, which concludes the proof.

Finally to prove (iii) and (iv) let $\phi \in C_c^{\infty}(\omega;C^{\infty}(\calY;\R^3))$ and consider the test function 
\begin{equation*}
    \varphi(x) 
    = \epsh \left(\begin{array}{c} \phi_1(x',\frac{x'}{\epsh}) \\ \phi_2(x',\frac{x'}{\epsh}) \\ 0 \end{array}\right) 
    + \epsh^2 \left(\begin{array}{c} - x_3\,\partial_{x_1}\phi_3(x',\frac{x'}{\epsh}) - \frac{x_3}{\epsh}\partial_{y_1}\phi_3(x',\frac{x'}{\epsh}) \\ - x_3\,\partial_{x_2}\phi_3(x',\frac{x'}{\epsh}) - \frac{x_3}{\epsh}\partial_{y_2}\phi_3(x',\frac{x'}{\epsh}) \\ \frac{1}{h}\,\phi_3(x',\frac{x'}{\epsh}) \end{array}\right).
\end{equation*}
By a direct computation we infer
\begin{equation*}
    E_{h}\varphi(x) \strong \begin{pmatrix} \begin{matrix} E_{y}\phi'(x',y) - x_3 D^2_{y}\phi_3(x',y) \end{matrix} & \begin{matrix} 0 \\ 0\end{matrix} \\ \begin{matrix} 0 & 0 \end{matrix} & 0 \end{pmatrix} \quad \text{strongly in } L^2(\Omega \times \calY;\M^{3 \times 3}_{\sym}).
\end{equation*}
Hence, taking such a test function in  \III $\div_h \sigma^h=0$ \BBB and passing to the limit, we get
\begin{equation*}
    \int_{\Omega \times \calY} \Sigma(x,y) : \begin{pmatrix} E_{y}\phi' - x_3 D^2_{y}\phi_3 & 0 \\ 0 & 0 \end{pmatrix} \,dx dy = 0.
\end{equation*}
Suppose now that $\phi\left(x',y\right) = \psi^{(1)}(x')\,\psi^{(2)}(y)$ for $\psi^{(1)} \in C_c^{\infty}(\omega)$ and $\psi^{(2)} \in C^{\infty}(\calY;\R^3)$.
Then
\begin{equation*}
    \int_{\omega} \psi^{(1)}(x') \left(\int_{I \times \calY} \Sigma(x,y) : \begin{pmatrix} E_{y}(\psi^{(2)})'(y) - x_3 D^2_{y}\psi^{(2)}_3(y) & 0 \\ 0 & 0 \end{pmatrix} \,dx_3 dy\right) \,dx' = 0,
\end{equation*}
from which we deduce that, for a.e. $x' \in \omega$,
\begin{align*}
    0 &= \int_{I \times \calY} \Sigma(x,y) : \begin{pmatrix} E_{y}(\psi^{(2)})'(y) - x_3 D^2_{y}\psi^{(2)}_3(y) & 0 \\ 0 & 0 \end{pmatrix} \,dx_3 dy\\
    &= \int_{\calY} \bar{\Sigma}(x',y) : E_{y}(\psi^{(2)})'(y) \,dy - \frac{1}{12} \int_{\calY} \hat{\Sigma}(x',y) : D^2_{y}\psi^{(2)}_3(y) \,dy\\
    &= - \int_{\calY} \div_{y}\bar{\Sigma}(x',y) \cdot (\psi^{(2)})'(y) \,dy - \frac{1}{12} \int_{\calY} \div_{y}\div_{y}\hat{\Sigma}(x',y) \cdot \psi^{(2)}_3(y) \,dy.
\end{align*}
Thus, $\div_{y}\bar{\Sigma}(x',\cdot) = 0 \text{ in } \calY$ and $\div_{y}\div_{y}\hat{\Sigma}(x',\cdot) = 0 \text{ in } \calY$.

\end{proof}
The following lemma approximates the limiting stresses with respect to the macroscopic variable and will be used in \Cref{two-scale stress-strain duality - regime zero}. It is proved under the assumption that the domain is star-shaped. 
\begin{lemma} \label{approximation of stresses - regime zero} 
Let $\omega \subset \R^2$ be an open bounded set that is star-shaped with respect to one of its points and let $\Sigma \in \calK^{hom}_{0}$. Then, there exists a sequence $\Sigma_n \in L^{\infty}(\R^2 \times I \times \calY;\M^{3 \times 3}_{\sym})$ such that the following holds:
\begin{enumerate}[label=(\alph*)]
    \item \label{approximation of stresses (a) - regime zero} $\Sigma_n \in C^\infty(\R^2;L^{\infty}(I \times \calY;\M^{3 \times 3}_{\sym}))$ and $\Sigma_n \strong \Sigma$ strongly in $L^p(\omega \times I \times \calY;\M^{3 \times 3}_{\sym})$, for $1 \leq p < +\infty$.
    \item \label{approximation of stresses (b) - regime zero} $(\Sigma_n)_{i3}(x,y) = 0 \,\text{ for } i=1,2,3$,
    \item \label{approximation of stresses (c) - regime zero} $(\Sigma_n(x,y))_{\dev} \in K(y) \,\text{ for every } x' \in \R^2 \text{ and } \calL^{1}_{x_3} \otimes \calL^{2}_{y}\text{-a.e. } (x_3,y) \in I \times \calY$,
    \item \label{approximation of stresses (d) - regime zero} $\div_{y}\bar{\Sigma}_n(x',\cdot) = 0 \text{ in } \calY \,\text{ for  every } x' \in \omega$,
    \item \label{approximation of stresses (e) - regime zero} $\div_{y}\div_{y}\hat{\Sigma}_n(x',\cdot) = 0 \text{ in } \calY \,\text{ for  every } x' \in \omega$,
\end{enumerate}
    where $\bar{\Sigma}_n,\, \hat{\Sigma}_n \in L^{\infty}(\omega \times \calY;\M^{2 \times 2}_{\sym})$ are the zero-th and first order moments of the $2 \times 2$ minor of $\Sigma_n$.
    Further, if we set $\sigma_n(x) := \int_{\calY} \Sigma_n(x,y) \,dy$, and $\bar{\sigma}_n,\, \hat{\sigma}_n \in L^{\infty}(\omega;\M^{2 \times 2}_{\sym})$ are the zero-th and first order moments of the $2 \times 2$ minor of $\sigma_n$, then:
\begin{enumerate}[label=(\alph*), resume]
    \item \label{approximation of stresses (f) - regime zero} $\sigma_n \in C^\infty(\R^2 \times I;\M^{3 \times 3}_{\sym})$ and $\sigma_n \strong \sigma$ strongly in $L^p(\omega \times I;\M^{3 \times 3}_{\sym})$, for $1 \leq p < +\infty$.
    \item \label{approximation of stresses (g) - regime zero} $\div_{x'}\bar{\sigma}_n = 0 \text{ in } \omega$,
    \item \label{approximation of stresses (h) - regime zero} $\div_{x'}\div_{x'}\hat{\sigma}_n = 0 \text{ in } \omega$.
\end{enumerate}
\end{lemma}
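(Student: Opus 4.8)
The plan is to obtain $\Sigma_n$ by combining a \emph{dilation} of $\Sigma$ in the in-plane variable with a \emph{mollification in $x'$ only}, and then extracting a diagonal sequence — in the spirit of the approximation arguments of \cite{Francfort.Giacomini.2014,BDV}. I would first stress why one must regularize with respect to $x'$ alone: conditions \ref{approximation of stresses (d) - regime zero}--\ref{approximation of stresses (e) - regime zero} and \ref{approximation of stresses (g) - regime zero}--\ref{approximation of stresses (h) - regime zero} are prescriptions on $\div_y$ and on $\div_{x'}$ of the $x_3$-moments of $\Sigma$, so they are stable under any operation commuting with those two differential operators and with integration over $x_3$ — which convolution in $x'$ does — whereas a mollification in $x_3$ or in $y$ would clash both with the $L^\infty(I\times\calY)$-valued nature of \ref{approximation of stresses (a) - regime zero} and with the pointwise-in-$y$ constraint \ref{approximation of stresses (c) - regime zero}.

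For the dilation step I would, up to a translation, assume $\omega$ star-shaped with respect to the origin, set $\omega_\delta:=(1+\delta)\omega$ and $\Sigma^\delta(x',x_3,y):=\Sigma\big(\tfrac{x'}{1+\delta},x_3,y\big)$ on $\omega_\delta\times I\times\calY$. By star-shapedness $\closure{\omega}$ is compactly contained in $\omega_\delta$, so $d_\delta:=\dist(\closure{\omega},\R^2\setminus\omega_\delta)>0$. Since the dilation acts only on $x'$, the function $\Sigma^\delta$ still has vanishing third column, its deviatoric part still takes values in $K(y)$ (the constraint being pointwise in $y$ and merely relocated), and the $x_3$-moments of $\Sigma^\delta$ and of $\sigma^\delta:=\int_\calY\Sigma^\delta(\cdot,y)\,dy$ are the corresponding dilated moments of $\Sigma$ and $\sigma$; hence $\div_y\bar\Sigma^\delta(x',\cdot)=0$, $\div_y\div_y\hat\Sigma^\delta(x',\cdot)=0$ on $\omega_\delta$ and $\div_{x'}\bar\sigma^\delta=0$, $\div_{x'}\div_{x'}\hat\sigma^\delta=0$ on $\omega_\delta$. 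Moreover $\Sigma^\delta\strong\Sigma$ in $L^p(\omega\times I\times\calY;\M^{3 \times 3}_{\sym})$ for every $p<\infty$ as $\delta\to0$, with $\|\Sigma^\delta\|_{L^\infty}\le\|\Sigma\|_{L^\infty}$ uniformly.

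Next I would extend $\Sigma^\delta$ by zero in $x'$ to $\R^2\times I\times\calY$ and, for $0<\eps<d_\delta$, set $\Sigma^{\delta,\eps}:=\rho_\eps*_{x'}\Sigma^\delta$, convolution in the $x'$ variable with the standard mollifier $\rho_\eps$ on $\R^2$. Then $\Sigma^{\delta,\eps}\in C^\infty(\R^2;L^\infty(I\times\calY;\M^{3 \times 3}_{\sym}))$ with $\|\Sigma^{\delta,\eps}\|_{L^\infty}\le\|\Sigma\|_{L^\infty}$, its third column vanishes identically (so does that of the zero extension), and for every $x'\in\R^2$ and a.e.\ $(x_3,y)$ the matrix $(\Sigma^{\delta,\eps}(x',x_3,y))_{\dev}$ is an average against the probability density $\rho_\eps(x'-\cdot)$ of matrices lying in $K(y)$ — using $0\in K(y)$ for the zero extension together with convexity and closedness of $K(y)$ — hence it lies in $K(y)$. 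Because $\eps<d_\delta$, for every $x'\in\omega$ the convolution involves only values of $\Sigma^\delta$ on $\omega_\delta$, so commuting $\rho_\eps*_{x'}$ with $\div_y$, with $\int_I(\cdot)\,dx_3$ and with $\div_{x'}$ yields \ref{approximation of stresses (d) - regime zero}, \ref{approximation of stresses (e) - regime zero}, \ref{approximation of stresses (g) - regime zero}, \ref{approximation of stresses (h) - regime zero} on $\omega$ for $\sigma^{\delta,\eps}:=\int_\calY\Sigma^{\delta,\eps}(\cdot,y)\,dy$ and its moments; and for fixed $\delta$, $\Sigma^{\delta,\eps}\strong\Sigma^\delta$ in every $L^p(\omega\times I\times\calY)$, $p<\infty$, as $\eps\to0$. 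Finally I would pick $\delta_n\downarrow0$ and then $\eps_n\in(0,d_{\delta_n})$ small enough that $\|\Sigma^{\delta_n,\eps_n}-\Sigma^{\delta_n}\|_{L^1(\omega\times I\times\calY)}\to0$, so that $\Sigma_n:=\Sigma^{\delta_n,\eps_n}\strong\Sigma$ in $L^1$; since $\sup_n\|\Sigma_n\|_{L^\infty}\le\|\Sigma\|_{L^\infty}$, this upgrades to every $L^p$, $p<\infty$, by interpolation, and the statements for $\sigma_n$ follow by integrating in $y$, a continuous operation $L^p(\omega\times I\times\calY)\to L^p(\omega\times I)$ commuting with the in-plane divergences. (That such a $\Sigma$ indeed exists as a two-scale limit of elastic stresses is the content of \Cref{two-scale weak limit of admissible stress - regime zero}, and the membership requirements to check are exactly those listed in \Cref{definition K^hom_0}.)

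The individual verifications are routine; the one genuinely delicate point is the interplay between the regularization and the two "$\div$-of-moments" conditions, which forces regularizing in $x'$ only and thereby makes the dilation step indispensable: it is precisely the collar $\omega_\delta\setminus\closure{\omega}$ supplied by the star-shapedness that keeps the $x'$-mollification from feeling the exterior of $\omega$ near $\partial\omega$, so that conditions \ref{approximation of stresses (d) - regime zero}--\ref{approximation of stresses (h) - regime zero} persist on $\omega$; the deviatoric constraint \ref{approximation of stresses (c) - regime zero}, by contrast, survives for free from the convexity of $K(y)$ and $0\in K(y)$.
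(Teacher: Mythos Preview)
Your argument is correct and is precisely the dilation--convolution construction the paper has in mind (the paper only writes ``The approximation is done by dilation and convolution and is analogous to \cite[Lemma~5.13]{BDV}''). Your emphasis on mollifying in $x'$ alone, and on the role of the collar $\omega_\delta\setminus\closure{\omega}$ provided by star-shapedness, captures exactly the mechanism; the only cosmetic mismatch is item~\ref{approximation of stresses (f) - regime zero}, which your construction yields as $C^\infty(\R^2;L^\infty(I))$ rather than $C^\infty(\R^2\times I)$, but only the moments $\bar\sigma_n,\hat\sigma_n\in C^\infty(\R^2)$ are ever used downstream.
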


\begin{proof}
\CCC
The approximation is done by dilation and convolution and is analogous to \cite[Lemma 5.13]{BDV}.  
\BBB
\end{proof}

\subsubsection{Case \texorpdfstring{$\gamma = +\infty$}{γ = +∞}}
\EEE In this regime, the set of limiting two-scale stresses is defined as follows. \BBB
\begin{definition} \label{definition K^hom_inf}
The set $\calK^{hom}_{\infty}$ is the set of all elements $\Sigma \in L^2(\Omega \times \calY;\M^{3 \times 3}_{\sym})$ satisfying:
\begin{enumerate}[label=(\roman*)]
    \item $\div_{y}\Sigma(x,\cdot) = 0 \text{ in } \calY \,\text{ for  a.e. } x \in \Omega$,
    \item $\Sigma_{\dev}(x,y) \in K(y) \,\text{ for } \calL^{3}_{x} \otimes \calL^{2}_{y}\text{-a.e. } (x,y) \in \Omega \times \calY$,
    \item $\sigma_{i3}(x) = 0 \,\text{ for } i=1,2,3$,
    \item $\div_{x'}\bar{\sigma} = 0 \text{ in } \omega$,
    \item $\div_{x'}\div_{x'}\hat{\sigma} = 0 \text{ in } \omega$,
\end{enumerate}
where $\sigma := \int_{\calY} \Sigma(\cdot,y) \,dy$, and $\bar{\sigma},\, \hat{\sigma} \in L^2(\omega;\M^{2 \times 2}_{\sym})$ are the zero-th and first order moments of the $2 \times 2$ minor of $\sigma$.
\end{definition}
\EEE The previous definition is motivated by the following. \BBB
\begin{proposition} \label{two-scale weak limit of admissible stress - regime inf}
Let $\{\sigma^h\}$ be a bounded family in $L^2(\Omega;\M^{3 \times 3}_{\sym})$ such that $\sigma^h \in \calK_h$ and
\begin{equation*}
    \sigma^h \weaktwoscale \Sigma \quad \text{two-scale weakly in $L^2(\Omega \times \calY;\M^{3 \times 3}_{\sym})$}.
\end{equation*}
Then $\Sigma \in \calK^{hom}_{\infty}$.
\end{proposition}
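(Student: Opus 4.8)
The plan is to follow closely the argument used for the regime $\gamma=0$ in \Cref{two-scale weak limit of admissible stress - regime zero}, splitting the verification of the five defining properties of $\calK^{hom}_{\infty}$ into an ``averaged'' part (properties (iii)--(v)) and two ``cell'' parts (properties (i)--(ii)). First one notes that testing the two-scale convergence $\sigma^h \weaktwoscale \Sigma$ against functions $\psi=\psi(x)$ independent of $y$ shows that $\sigma^h \weak \sigma$ weakly in $L^2(\Omega;\M^{3 \times 3}_{\sym})$, where $\sigma = \int_{\calY}\Sigma(\cdot,y)\,dy$. Hence $\sigma$ is exactly the weak $L^2$-limit of the sequence $\{\sigma^h\}\subset\calK_h$ considered in \Cref{admissiblestress}, and the properties recalled there (which are insensitive to the relation between $h$ and $\epsh$: they follow from testing $\div_h\sigma^h=0$ against $h\,\psi(x)$, against $(\psi_1(x'),\psi_2(x'),0)$, and against $(-x_3\partial_1\psi_3,-x_3\partial_2\psi_3,\tfrac1h\psi_3)$, using the natural boundary condition $\sigma^h\nu=0$ on $\omega\times\{\pm\tfrac12\}$, and passing to the limit) immediately yield $\sigma_{i3}=0$, $\div_{x'}\bar\sigma=0$ and $\div_{x'}\div_{x'}\hat\sigma=0$, i.e.\ (iii), (iv) and (v).

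Property (ii) is obtained verbatim as in the case $\gamma=0$: one sets $\Sigma^h(x,y):=\sum_{i}\charfun{Q_\epsh^i}(x')\,\sigma^h(\epsh i+\epsh\calI(y),x_3)$ (the periodic unfolding of $\sigma^h$, which involves only the in-plane variable and is therefore unaffected by the regime), observes that $\Sigma^h\weak\Sigma$ weakly in $L^2(\Omega\times\calY;\M^{3 \times 3}_{\sym})$ and that $\Sigma^h$ belongs to the set $S=\{\Xi\in L^2(\Omega\times\calY;\M^{3\times 3}_{\sym}):\Xi_{\dev}(x,y)\in K(y)\ \text{for a.e.\ }(x,y)\}$, which is convex and strongly (hence weakly) closed because each $K(y)$ is convex and compact; therefore $\Sigma\in S$.

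The genuinely new ingredient is property (i), and it is here that the hypothesis $\epsh/h\to0$ enters. I would test $\div_h\sigma^h=0$ against $\varphi^h(x):=\epsh\,\phi(x)\,\Psi(x'/\epsh)$ with $\phi\in C_c^{\infty}(\Omega)$ and $\Psi\in C^{\infty}(\calY;\R^3)$. A direct computation gives $\nabla_h\varphi^h(x)=\phi(x)\,[\nabla_y\Psi(x'/\epsh)\,|\,0]+O(\epsh)+O(\epsh/h)$ in $L^{\infty}$, so that, since $\epsh\to0$ and $\epsh/h\to0$, $E_h\varphi^h$ converges strongly (in the two-scale sense) in $L^2(\Omega\times\calY;\M^{3\times 3}_{\sym})$ to $\phi(x)\,\sym[\nabla_y\Psi(y)\,|\,0]$. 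Since $\varphi^h$ has compact support in $\Omega$, there are no boundary terms and $\int_\Omega \sigma^h:E_h\varphi^h\,dx=0$ for every $h$; passing to the limit via $\sigma^h\weaktwoscale\Sigma$ yields $\int_{\Omega\times\calY}\phi(x)\,\Sigma(x,y):\sym[\nabla_y\Psi(y)\,|\,0]\,dx\,dy=0$. Using the symmetry of $\Sigma$ and expanding $\Sigma:\sym[\nabla_y\Psi\,|\,0]=\Sigma^{\prime\prime}:E_y\Psi'+\left(\Sigma_{13},\Sigma_{23}\right)^T\cdot\nabla_y\Psi_3$, the arbitrariness of $\phi$ (together with a density argument over a countable dense family of $\Psi$'s) gives, for a.e.\ $x\in\Omega$, $\int_{\calY}\bigl[\Sigma^{\prime\prime}(x,\cdot):E_y\Psi'+(\Sigma_{13},\Sigma_{23})^T\cdot\nabla_y\Psi_3\bigr]\,dy=0$ for all $\Psi$; integrating by parts on the torus gives $\div_y\Sigma^{\prime\prime}(x,\cdot)=0$ and $\div_y(\Sigma_{13},\Sigma_{23})^T(x,\cdot)=0$, which is precisely $\div_y\Sigma(x,\cdot)=0$ in the sense of \Cref{definition K_inf} (recall that there the third column of $\Sigma$ is disregarded). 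This completes the argument. I expect the only delicate point to be the strong two-scale convergence of $E_h\varphi^h$ in the regime $\epsh\ll h$ and the attendant measurable-selection step when localizing in $x$; everything else is a routine adaptation of the $\gamma=0$ proof.
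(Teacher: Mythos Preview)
Your proposal is correct and follows essentially the same approach as the paper: properties (iii)--(v) are inherited from the weak limit $\sigma$ via \Cref{admissiblestress}, property (ii) is proved by unfolding exactly as in the $\gamma=0$ case, and property (i) is obtained by testing $\div_h\sigma^h=0$ against $\epsh\,\phi(x)\Psi(x'/\epsh)$ and using $\epsh/h\to 0$---the paper does the same, only with the slightly more general test function $\epsh\,\phi(x,x'/\epsh)$ before specializing to a product. Your concern about a measurable-selection step is unwarranted: the arbitrariness of $\phi\in C_c^\infty(\Omega)$ combined with a countable dense family of $\Psi$'s gives the a.e.\ statement directly, just as in the paper.
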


\begin{proof}
\CCC Properties (iii), (iv) and (v) follow in view of \Cref{admissiblestress}. \BBB
\CCC To prove (i) \BBB we consider the test function $\epsh\,\phi\left(x,\frac{x'}{\epsh}\right)$, for $\phi \in C_c^{\infty}(\omega;C^{\infty}(\closure{I} \times \calY;\R^3))$.
We see that
\begin{equation*}
    \nabla_{h}\left(\epsh\,\phi\left(x,\frac{x'}{\epsh}\right)\right) = \left[\; \epsh\,\nabla_{x'}\phi\left(x,\frac{x'}{\epsh}\right) + \nabla_{y}\phi\left(x,\frac{x'}{\epsh}\right) \;\right|\left.\; \frac{\epsh}{h}\,\partial_{x_3}\phi\left(x,\frac{x'}{\epsh}\right) \;\right]
\end{equation*}
converges strongly in $L^2(\Omega \times \calY;\M^{3 \times 3})$.
Hence, taking such a test function in \III $\div_h \sigma^h=0 $ \BBB  and passing to the limit, we get
\begin{equation*}
    \int_{\Omega \times \calY} \Sigma(x,y) : E_{y}\phi\left(x,y\right) \,dx dy = 0.
\end{equation*}
Suppose now that $\phi\left(x,y\right) = \psi^{(1)}(x)\,\psi^{(2)}(y)$ for $\psi^{(1)} \in C_c^{\infty}(\omega;C^{\infty}(\closure{I}))$ and $\psi^{(2)} \in C^{\infty}(\calY;\R^3)$.
Then
\begin{equation*}
    \int_{\Omega} \psi^{(1)}(x) \left(\int_{\calY} \Sigma(x,y) : E_{y}\psi^{(2)}(y) \,dy\right) \,dx = 0,
\end{equation*}
from which we can deduce that $\div_{y}\Sigma(x,\cdot) = 0 \text{ in } \calY$ for a.e. $x \in \Omega$.

To conclude the proof, it remains to show the stress constraint $\Sigma_{\dev}(x,y) \in K(y) \,\text{ for } \calL^{3}_{x} \otimes \calL^{2}_{y}\text{-a.e. } (x,y) \in \Omega \times \calY$. 
To do this we can define the approximating sequence \eqref{approximating sequence for K^hom} and argue as in the proof of Proposition \ref{two-scale weak limit of admissible stress - regime zero}.
\end{proof}
\CCC The following lemma is analogous to \Cref{approximation of stresses - regime inf}. \BBB
\begin{lemma} \label{approximation of stresses - regime inf} 
Let $\omega \subset \R^2$ be an open bounded set that is star-shaped with respect to one of its points and let $\Sigma \in \calK^{hom}_{\infty}$. Then, there exists a sequence $\Sigma_n \in L^2(\R^2 \times I \times \calY;\M^{3 \times 3}_{\sym})$ such that the following holds:
\begin{enumerate}[label=(\alph*)]
    \item \label{approximation of stresses (a) - regime inf} $\Sigma_n \in C^\infty(\R^3;L^2(\calY;\M^{3 \times 3}_{\sym}))$ and $\Sigma_n \strong \Sigma$ strongly in $L^2(\omega \times I \times \calY;\M^{3 \times 3}_{\sym})$,
    \item \label{approximation of stresses (b) - regime inf} $\div_{y}\Sigma_n(x,\cdot) = 0 \text{ on } \calY$ for every $x \in \R^3$,
    \item \label{approximation of stresses (c) - regime inf} $(\Sigma_n(x,y))_{\dev} \in K(y) \,\text{ for every } x \in \R^3 \text{ and } \calL^{2}_{y}\text{-a.e. } y \in \calY$.
    \end{enumerate}
    Further, if we set $\sigma_n(x) := \int_{\calY} \Sigma_n(x,y) \,dy$, and $\bar{\sigma}_n,\, \hat{\sigma}_n \in L^2(\omega;\M^{2 \times 2}_{\sym})$ are the zero-th and first order moments of the $2 \times 2$ minor of $\sigma_n$, then:
\begin{enumerate}[label=(\alph*), resume]
    \item \label{approximation of stresses (d) - regime inf} $\sigma_n \in C^\infty(\R^2 \times I;\M^{3 \times 3}_{\sym})$ and $\sigma_n \strong \sigma$ strongly in $L^2(\omega \times I;\M^{3 \times 3}_{\sym})$,
    \item \label{approximation of stresses (e) - regime inf} $\div_{x'}\bar{\sigma}_n = 0 \text{ in } \omega$,
    \item \label{approximation of stresses (f) - regime inf} $\div_{x'}\div_{x'}\hat{\sigma}_n = 0 \text{ in } \omega$.
\end{enumerate}
\end{lemma}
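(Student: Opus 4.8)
The plan is to regularize $\Sigma$ by an in-plane dilation combined with a convolution in the macroscopic variable $x=(x',x_3)$, while deliberately never touching the microscopic variable $y$: since the constraint $(\,\cdot\,)_{\dev}(x,y)\in K(y)$ involves the $y$-dependent sets $K(y)$, any mollification in $y$ would destroy it, so all smoothing must be performed in $x$. Assuming without loss of generality that $\omega$ is star-shaped with respect to the origin, I would first introduce, for $t\in(0,1)$ close to $1$, the dilated field $\Sigma^{(t)}(x',x_3,y):=\Sigma(tx',x_3,y)$. By the elementary property of open star-shaped sets that $t\,\closure{\omega}\subset\omega$, this is well defined on the neighbourhood $(t^{-1}\omega)\times I\times\calY$ of $\closure{\omega}\times I\times\calY$; extending it by $0$ for $x'\notin t^{-1}\omega$ one has $\Sigma^{(t)}\to\Sigma$ in $L^2(\omega\times I\times\calY;\M^{3\times3}_{\sym})$ as $t\to1^-$. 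Because the dilation only rescales $x'$, all conditions of Definition \ref{definition K^hom_inf} pass to $\Sigma^{(t)}$: $\div_y\Sigma^{(t)}(x,\cdot)=0$ and $(\Sigma^{(t)}(x,y))_{\dev}\in K(y)$ hold pointwise a.e.\ (the zero-extension region being harmless since $0\in K(y)$ by $\{|\xi|\le r_K\}\subseteq K_i$), while $\sigma^{(t)}(x)=\int_\calY\Sigma^{(t)}(x,y)\,dy=\sigma(tx',x_3)$ still satisfies $\div_{x'}\bar{\sigma}^{(t)}=0$ and $\div_{x'}\div_{x'}\hat{\sigma}^{(t)}=0$ on $t^{-1}\omega$.

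Next I would extend $x_3\mapsto\Sigma^{(t)}(x',x_3,y)$ from $I$ to $\R$ by even reflection across $x_3=\pm\tfrac12$ (that is, to the even, $2$-periodic extension), and convolve in $x$ with a mollifier of product form $\rho_\delta(x)=\rho'_\delta(x')\,\rho''_\delta(x_3)$ with $\rho''_\delta$ even, setting $\Sigma_{t,\delta}:=\rho_\delta\ast_x\Sigma^{(t)}$ and taking $\delta<\dist(\closure{\omega},\partial(t^{-1}\omega))$. Then $\Sigma_{t,\delta}\in C^\infty(\R^3;L^2(\calY;\M^{3\times3}_{\sym}))$; since $\ast_x$ commutes with $\div_y$ one gets $\div_y\Sigma_{t,\delta}(x,\cdot)=0$ on $\calY$ for every $x$; and since $K(y)$ is convex and closed while $\rho_\delta\ge0$, $\int\rho_\delta=1$, the deviatoric part $(\Sigma_{t,\delta}(x,y))_{\dev}=\int\rho_\delta(x-z)(\Sigma^{(t)}(z,y))_{\dev}\,dz$ is, for a.e.\ $y$ and every $x$, an average of elements of $K(y)$, hence lies in $K(y)$. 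Likewise $\sigma_{t,\delta}:=\int_\calY\Sigma_{t,\delta}(\cdot,y)\,dy=\rho_\delta\ast_x\sigma^{(t)}$ is smooth, and a diagonal choice $t=t_n\to1$, $\delta=\delta_n\to0$ produces $\Sigma_n:=\Sigma_{t_n,\delta_n}\to\Sigma$ in $L^2(\omega\times I\times\calY)$ and $\sigma_n\to\sigma$ in $L^2(\omega\times I)$. This delivers \ref{approximation of stresses (a) - regime inf}--\ref{approximation of stresses (d) - regime inf} (conditions \ref{approximation of stresses (b) - regime inf} and \ref{approximation of stresses (c) - regime inf} being the divergence-free and stress-constraint statements just verified).

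The crux — and the reason for insisting on the even reflection — is the exact preservation of the moment conditions \ref{approximation of stresses (e) - regime inf} and \ref{approximation of stresses (f) - regime inf}, which is the only genuinely delicate point. Writing $g(x_3):=\int_\calY\Sigma^{(t)}(x',x_3,y)\,dy$, i.e.\ the even, $2$-periodic extension of $\sigma^{(t)}(x',\cdot)$, I would verify — via Fubini, the substitution $w=x_3-s$, periodicity of $g$, its evenness about $x_3=\pm\tfrac12$, and the evenness of $\rho''_\delta$ (which renders the $s$-dependent boundary corrections odd in $s$, hence of zero $\rho''_\delta$-integral) — the identities
\begin{equation*}
\int_I(\rho''_\delta\ast_{x_3}g)(x_3)\,dx_3=\int_I g(x_3)\,dx_3,\qquad \int_I x_3\,(\rho''_\delta\ast_{x_3}g)(x_3)\,dx_3=\int_I x_3\,g(x_3)\,dx_3,
\end{equation*}
which say that the $x_3$-regularization leaves the zeroth and first $x_3$-moments untouched. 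Consequently $\bar{\sigma}_n=\rho'_{\delta_n}\ast_{x'}\bar{\sigma}^{(t_n)}$ and $\hat{\sigma}_n=\rho'_{\delta_n}\ast_{x'}\hat{\sigma}^{(t_n)}$ on $\omega$, and since convolution in $x'$ commutes with $\div_{x'}$ while $\div_{x'}\bar{\sigma}^{(t_n)}=0$ and $\div_{x'}\div_{x'}\hat{\sigma}^{(t_n)}=0$, conditions \ref{approximation of stresses (e) - regime inf} and \ref{approximation of stresses (f) - regime inf} follow. The remaining points — the precise regularity and $L^2$-convergence statements, and the persistence of the exact in-plane divergence conditions under the dilation — are routine and parallel the corresponding steps of \cite[Lemma 5.13]{BDV}, where the analogous statement for $\gamma\in(0,+\infty)$ is proved; I expect no difficulty there. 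The main obstacle is precisely the compatibility of the $x_3$-mollification with the moment constraints, which is what dictates the use of the even reflection in $x_3$ together with a mollifier that is even in that variable.
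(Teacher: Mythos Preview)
Your overall strategy—dilation in $x'$ to gain room, convolution in $x$ with a product mollifier, and no smoothing in $y$ so as to preserve both $\div_y\Sigma=0$ and the convex constraint $(\cdot)_{\dev}\in K(y)$—is the right one and coincides with the spirit of the paper's indication (which simply refers to the analogous construction in \cite[Lemma 5.13]{BDV}, now carried out in all three macroscopic variables). Your verification of \ref{approximation of stresses (a) - regime inf}--\ref{approximation of stresses (d) - regime inf} is fine, and your argument for the \emph{zeroth} moment identity $\int_I(\rho''_\delta\ast_{x_3}g)\,dx_3=\int_I g\,dx_3$ is correct: with even reflection the boundary contribution $G(a)-G(0)$ is indeed odd in $a$, so an even $\rho''_\delta$ kills it.

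However, the claimed \emph{first} moment identity $\int_I x_3\,(\rho''_\delta\ast_{x_3}g)\,dx_3=\int_I x_3\,g\,dx_3$ is \emph{false} in general for the even $2$-periodic extension, and this is precisely the point you flagged as delicate. Setting $H(a):=\int_I x_3\,g(x_3-a)\,dx_3$, you would need $H(a)-H(0)$ to be odd in $a$; but a direct computation shows that for the even-reflected extension the second-order term survives. Concretely, for $g(x_3)=x_3$ on $I$ (so $\int_I x_3 g=1/12$) one finds, after even reflection, $H(s)=\tfrac{1}{12}-\tfrac{s^2}{2}+O(s^3)$, whence $\int\rho''_\delta(s)H(s)\,ds=\tfrac{1}{12}-\tfrac12\int s^2\rho''_\delta(s)\,ds+O(\delta^3)\neq\tfrac{1}{12}$ for any nontrivial even mollifier. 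More structurally, $H''(0)=g(-\tfrac12)-g(\tfrac12)$ (the endpoint-derivative contributions vanish by the reflection symmetry), which is generically nonzero. Consequently $\hat\sigma_n\neq\rho'_{\delta_n}\ast_{x'}\hat\sigma^{(t_n)}$ and the conclusion $\div_{x'}\div_{x'}\hat\sigma_n=0$ in \ref{approximation of stresses (f) - regime inf} does not follow from your construction.

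This is a genuine gap, not a cosmetic one: in the proof of \Cref{two-scale stress-strain duality - regime inf} the exact identity $\div_{x'}\div_{x'}\hat\sigma_n=0$ is used to invoke the integration-by-parts formula of \cite[Proposition 7.6]{Davoli.Mora.2013}. Any fix must reconcile two competing requirements: the $x_3$-smoothing must be a convex averaging (so that the pointwise constraint $(\Sigma_n)_{\dev}(x,y)\in K(y)$ is preserved), yet must leave both the zeroth and first $x_3$-moments over the \emph{fixed} interval $I$ unchanged. Even reflection only handles the zeroth moment; neither a naive $\R^3$-dilation nor an odd reflection does better. One way to close the gap is to first contract $\Sigma$ by a factor $\lambda\in(0,1)$ (so that $(\lambda\Sigma)_{\dev}$ lies in $K(y)$ with margin $(1-\lambda)r_K$), perform your construction, and then add a small $y$-independent corrector restoring the exact moment identities; the corrector is $O(\delta^2)$ in $L^\infty$, hence absorbable in the margin, and does not affect $\div_y$. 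A diagonal extraction in $(\lambda,\delta,t)$ then yields the statement. You should either carry this out or find an $x_3$-regularization that is simultaneously convex-averaging and moment-preserving.
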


\begin{proof}
\CCC The proof is again analogous to \cite[Lemma 5.13]{BDV}. \BBB
The only difference is that the convolution and dilation used to define $\Sigma_n$ are taken in $\R^3$ instead of $\R^2$.
\end{proof}

\subsection{The principle of maximum plastic work} \label{principleofmximumpl} 
\CCC We introduce the following functionals: Let $\gamma \in \{0,+\infty\}$. 
For $(u,E,P) \in \calA^{hom}_{\gamma}(w)$ we define
\begin{equation} \label{definition Q^hom} 
    \calQ_0^{hom}(E) := \int_{\Omega \times \calY} Q_r\left(y, E\right) \,dx dy, \quad   \calQ_{+\infty}^{hom}(E) := \int_{\Omega \times \calY} Q \left(y, E\right) \,dx dy
\end{equation}
and
\begin{equation} \label{definition H^hom}
    \calH_0^{hom}(P) := \int_{\closure{\Omega} \times \calY} H_r\left(y, \frac{dP}{d|P|} \right) \,d|P|, \quad  \calH_{+\infty}^{hom}(P) := \int_{\closure{\Omega} \times \calY} H\left(y, \frac{dP}{d|P|} \right) \,d|P|.
\end{equation}

\BBB

The aim of this subsection is to prove the following inequality between two-scale dissipation and plastic work, which in turn will be essential to prove the global stability condition of two-scale quasistatic evolutions.
\CCC Its proof is a direct consequence of  \Cref{two-scale Hill's principle - regime zero} for the case $\gamma=0$, and of  \Cref{two-scale Hill's principle - regime inf} for the case $\gamma=+\infty$. \BBB
\begin{corollary} \label{two-scale dissipation and plastic work inequality}
Let \RED $\gamma \in \{0, +\infty\}$ \BLACK.
Then
\begin{equation*}
    \CCC \calH^{hom}_{\gamma}\BBB(P) \geq -\int_{\Omega \times \calY} \Sigma : E \,dx dy + \int_{\omega} \bar{\sigma} : E\bar{w} \,dx' - \frac{1}{12} \int_{\omega} \hat{\sigma} : D^2w_3 \,dx',
\end{equation*}
for every $\Sigma \in \calK^{hom}_{\gamma}$ and $(u,E,P) \in \calA^{hom}_{\gamma}(w)$.
\end{corollary}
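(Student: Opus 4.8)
The plan is to establish the inequality separately in the two regimes, in each case reducing it to the cell (fiberwise) version of Hill's inequality already available. The $\gamma=0$ and $\gamma=+\infty$ arguments are structurally identical, so I describe $\gamma=0$ in detail and only indicate the modifications for $\gamma=+\infty$; this is, of course, precisely the content of the two regime--specific ``two-scale Hill's principle'' statements, and the corollary then follows by the obvious case distinction.

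Fix $\Sigma\in\calK^{hom}_0$ and $(u,E,P)\in\calA^{hom}_0(w)$, with Kirchhoff--Love components $\bar u\in BD(\ext\omega)$, $u_3\in BH(\ext\omega)$. First I apply \Cref{disintegration result - regime zero}: it produces $\eta\in\Mb^+(\ext\omega)$, densities $A_1=\tfrac{dE\bar u}{d\eta}$, $A_2=-\tfrac{dD^2u_3}{d\eta}$, $C=\tfrac{d\calL^2_{x'}}{d\eta}$, and $\eta$-measurable families $\mu_{x'}\in BD(\calY)$, $\kappa_{x'}\in BH(\calY)$, $P_{x'}\in\Mb(I\times\calY;\M^{2\times2}_{\sym})$ so that, by \Cref{admissible configurations and disintegration - regime zero}, for $\eta$-a.e.\ $x'$ the quadruplet $(\mu_{x'},\kappa_{x'},E_{x'},P_{x'})$ with $E_{x'}(x_3,y):=C(x')E(x,y)-A_1(x')-x_3A_2(x')$ belongs to $\calA_0$. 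Next I regularize the stress in the macroscopic variable: after localizing so that $\omega$ may be assumed star-shaped (a partition-of-unity argument reduces the general $C^2$ case to this one, exactly as in the proof of \Cref{cell Hill's principle - regime zero}), \Cref{approximation of stresses - regime zero} yields $\Sigma_n\in C^\infty(\R^2;L^\infty(I\times\calY;\M^{3\times3}_{\sym}))$ with $(\Sigma_n)_{i3}=0$, $(\Sigma_n)_{\dev}\in K(y)$, $\div_y\bar\Sigma_n=\div_y\div_y\hat\Sigma_n=0$, $\div_{x'}\bar\sigma_n=\div_{x'}\div_{x'}\hat\sigma_n=0$ and $\Sigma_n\to\Sigma$ in every $L^p$; in particular $\Sigma_n(x',\cdot)\in\calK_0$ for all $x'$.

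Now for $\eta$-a.e.\ $x'$ I invoke \Cref{cell Hill's principle - regime zero} with $\Sigma_n(x',\cdot)\in\calK_0$ and $(\mu_{x'},\kappa_{x'},E_{x'},P_{x'})\in\calA_0$ --- this is the step that uses geometric admissibility of $\calY$ and the ordering of the phases --- obtaining $\overline{H_r(y,\tfrac{dP_{x'}}{d|P_{x'}|})|P_{x'}|}\ge\overline{[\Sigma_n(x',\cdot):P_{x'}]}$ in $\Mb(\calY)$, hence, evaluating on $\calY$, $\int_{I\times\calY}H_r(y,\tfrac{dP_{x'}}{d|P_{x'}|})\,d|P_{x'}|\ge[\Sigma_n(x',\cdot):P_{x'}](I\times\calY)$. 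Integrating against $\eta$ and using $\tfrac{dP}{d|P|}(x',\cdot)=\tfrac{dP_{x'}}{d|P_{x'}|}$ together with $|P|=\eta\genprod|P_{x'}|$, the left-hand side becomes exactly $\calH^{hom}_0(P)$. For the right-hand side I use \eqref{cell stress-strain duality - regime zero} with the constant test function $\varphi\equiv1$: every corrector term carries a factor $\nabla_y\varphi$ or $\nabla^2_y\varphi$ and drops, leaving $[\Sigma_n(x',\cdot):P_{x'}](I\times\calY)=-\int_{I\times\calY}\Sigma_n(x',\cdot):E_{x'}\,dx_3dy$; expanding $E_{x'}$ and using $\int_I dx_3=1$, $\int_I x_3\,dx_3=0$ and that $\Sigma_n$ coincides with its $2\times2$ minor, this equals $-C(x')\int_{I\times\calY}\Sigma_n:E\,dx_3dy+\bar\sigma_n(x'):A_1(x')+\tfrac1{12}\hat\sigma_n(x'):A_2(x')$. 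Integrating in $x'$ against $\eta$ and recalling $C\eta=\calL^2_{x'}$, $A_1\eta=E\bar u$, $A_2\eta=-D^2u_3$, I obtain
\begin{equation*}
\calH^{hom}_0(P)\ \ge\ -\int_{\Omega\times\calY}\Sigma_n:E\,dx\,dy+\int_\omega\bar\sigma_n:dE\bar u-\tfrac1{12}\int_\omega\hat\sigma_n:dD^2u_3
\end{equation*}
up to contributions supported on $\gamma_\Dir$ and on $\ext\omega\setminus\closure\omega$. Since $\bar\sigma_n,\hat\sigma_n$ are smooth with $\div_{x'}\bar\sigma_n=0$, $\div_{x'}\div_{x'}\hat\sigma_n=0$ in $\omega$, while $\bar u-\bar w$, $u_3-w_3$ have vanishing trace, resp.\ vanishing trace of the normal derivative, on $\gamma_\Dir$ (because $u=w$ on $\ext\Omega\setminus\closure\Omega$), integrating by parts in $x'$ --- once for the $\bar\sigma_n$ term, twice for the $\hat\sigma_n$ term --- turns these into $\int_\omega\bar\sigma_n:E\bar w\,dx'$ and $-\tfrac1{12}\int_\omega\hat\sigma_n:D^2w_3\,dx'$; letting $n\to\infty$ (with $\Sigma_n\to\Sigma$, hence $\bar\sigma_n\to\bar\sigma$, $\hat\sigma_n\to\hat\sigma$ in $L^p$, against the fixed data $E$, $E\bar w$, $D^2w_3$) gives the claim for $\gamma=0$. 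The case $\gamma=+\infty$ is identical with \Cref{disintegration result - regime inf}, \Cref{approximation of stresses - regime inf}, \Cref{cell Hill's principle - regime inf} and \eqref{cell stress-strain duality - regime inf} in place of their $\gamma=0$ analogues; the only additional point is that, besides the corrector terms, the terms carrying $v'$ and $v_3$ in \eqref{cell stress-strain duality - regime inf} also vanish at $\varphi\equiv1$ because $\sigma_{i3}=0$ for $\Sigma\in\calK^{hom}_\infty$.

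I expect the real difficulty to lie in the reassembly step: checking that integrating the fiberwise duality measures $[\Sigma_n(x',\cdot):P_{x'}]$ against $\eta$ faithfully reconstructs the global two-scale pairing, which uses the full strength of \Cref{disintegration result - regime zero}/\Cref{disintegration result - regime inf} (compatibility of the Radon--Nikodym densities with the corrector decompositions, and the identity $\tfrac{dP}{d|P|}(x',\cdot)=\tfrac{dP_{x'}}{d|P_{x'}|}$), and in bookkeeping the Dirichlet-boundary and extension contributions precisely enough that the final integration by parts closes. The macroscopic regularization via \Cref{approximation of stresses - regime zero}/\Cref{approximation of stresses - regime inf} is exactly what makes that last integration by parts legitimate, and one has to verify that the passage $n\to\infty$ is compatible with every constraint defining $\calK^{hom}_\gamma$.
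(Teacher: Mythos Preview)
Your approach is essentially the paper's, just inlined. The paper states that the corollary is a direct consequence of \Cref{two-scale Hill's principle - regime zero} and \Cref{two-scale Hill's principle - regime inf}: those theorems give the measure-level inequality $\overline{H_r(\cdot)|P|}\ge\bar\lambda$ (resp.\ $H(\cdot)|P|\ge\lambda$), and the mass formulas \eqref{mass of lambda - regime zero}/\eqref{mass of lambda - regime inf} from \Cref{two-scale stress-strain duality - regime zero}/\Cref{two-scale stress-strain duality - regime inf} identify $\lambda(\ext\Omega\times\calY)$ with the right-hand side of the corollary. The proofs of those results use precisely the ingredients you list (disintegration, macroscopic stress approximation via \Cref{approximation of stresses - regime zero}/\Cref{approximation of stresses - regime inf}, fiberwise cell Hill's principle), so you are reproving them in condensed form rather than citing them --- which you acknowledge.

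There is one genuine slip in your integration-by-parts step. The claim that ``$\bar u-\bar w$ has vanishing trace on $\gamma_\Dir$'' is false in general: $\bar u\in BD(\ext\omega)$ may jump across $\gamma_\Dir$, and that jump is exactly what $\bar p=(\bar w-\bar u)\odot\nu_{\partial\omega}\,\calH^1$ records (\Cref{A_KL characherization}(i)). Likewise the normal derivative of $u_3-w_3$ need not vanish on $\gamma_\Dir$ (\Cref{A_KL characherization}(ii)). So a naive boundary-term argument on $\omega$ does not close. The paper handles this by working with the duality measures $[\bar\sigma_n:\bar p]$ and $[\hat\sigma_n:\hat p]$ on $\omega\cup\gamma_\Dir$ and invoking the BD/BH integration-by-parts identities of \cite{Davoli.Mora.2013} (Propositions~7.2 and~7.6), which is what converts $\int_{\ext\omega}\varphi\,\bar\sigma_n:dE\bar u$ into the desired $\int_\omega\varphi\,\bar\sigma_n:E\bar w\,dx'$ plus corrector terms in $\nabla\varphi$, and then lets $\varphi\nearrow\charfun{\ext\omega}$. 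Once you replace your trace claim by this duality argument the rest of your sketch goes through.
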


\RED 
The proof relies on the approximation argument given in \Cref{approximation of stresses - regime zero} and \Cref{approximation of stresses - regime inf} and on two-scale duality, which can be established only for smooth stresses by disintegration and duality pairings between admissible stresses and plastic strains (given by \eqref{cell stress-strain duality - regime zero} and \eqref{cell stress-strain duality - regime inf}). 
The problem is that the measure $\eta$ defined in \Cref{disintegration result - regime zero} and \Cref{disintegration result - regime inf} can concentrate on the points where the stress (which is only in $L^2$) is not well-defined. 
The difference with respect to \cite[Proposition 5.11]{Francfort.Giacomini.2014} is that one can rely only on the approximation given by \Cref{approximation of stresses - regime zero} and \Cref{approximation of stresses - regime inf}, which are given for star-shaped domains. 
To prove \EEE the corresponding result for general domains we rely on \BBB the localization argument (see the proof of Step 2 of \Cref{two-scale stress-strain duality - regime zero} and the proof of \Cref{two-scale Hill's principle - regime zero}, as well as a \Cref{two-scale stress-strain duality - regime inf} and \Cref{two-scale Hill's principle - regime inf}).
\BLACK

\subsubsection{Case \texorpdfstring{$\gamma = 0$}{γ = 0}}
\CCC The following proposition defines the measure $\lambda$ through  two-scale stress-strain duality based on the approximation argument. \BBB
\begin{proposition} \label{two-scale stress-strain duality - regime zero}
Let $\Sigma \in \calK^{hom}_{0}$ and $(u,E,P) \in \calA^{hom}_{0}(w)$ with the associated $\mu \in \calXzero{\ext{\omega}}$, $\kappa \in \calYzero{\ext{\omega}}$. 
There exists an element $\lambda \in \Mb(\ext{\Omega} \times \calY)$ such that
for every $\varphi \in C_c^2(\ext{\omega})$
\begin{align*}
    \langle \lambda, \varphi \rangle =& - \int_{\Omega \times \calY} \varphi(x')\,\Sigma : E \,dx dy + \int_{\omega} \varphi\,\bar{\sigma} : E\bar{w} \,dx' - \frac{1}{12}\int_{\omega} \varphi\,\hat{\sigma} : D^2w_3 \,dx' \\
    & - \int_{\omega} \bar{\sigma} : \left( (\bar{u}-\bar{w}) \odot \nabla\varphi \right) \,dx' - \frac{1}{6} \int_{\omega} \hat{\sigma} : \big( \nabla (u_3 - w_3) \odot \nabla \varphi \big) \,dx' \\
    & - \frac{1}{12} \int_{\omega} (u_3 - w_3)\,\hat{\sigma} : \nabla^2 \varphi \,dx'.
\end{align*}
Furthermore, the mass of $\lambda$ is given by
\begin{equation} \label{mass of lambda - regime zero}
    \lambda(\ext{\Omega} \times \calY) = -\int_{\Omega \times \calY} \Sigma : E \,dx dy + \int_{\omega} \bar{\sigma} : E\bar{w} \,dx' - \frac{1}{12} \int_{\omega} \hat{\sigma} : D^2w_3 \,dx'.
\end{equation}
\end{proposition}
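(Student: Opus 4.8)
## Proof proposal for Proposition \ref{two-scale stress-strain duality - regime zero}

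The plan is to construct $\lambda$ as the weak* limit of the measures obtained by pairing the smooth approximants $\Sigma_n$ of $\Sigma$ (from \Cref{approximation of stresses - regime zero}) with the plastic strain $P$, first on a star-shaped subdomain and then globally by a localization argument. I would proceed as follows. First, by \Cref{approximation of stresses - regime zero}, applied on a star-shaped open set (or on each member of a finite star-shaped cover of $\ext{\omega}$, glued by a partition of unity), obtain a sequence $\Sigma_n \in C^\infty(\R^2;L^\infty(I\times\calY;\M^{3\times3}_{\sym}))$ with $\Sigma_n \to \Sigma$ strongly in every $L^p(\omega\times I\times\calY;\M^{3\times3}_{\sym})$, $p<\infty$, still satisfying all the differential constraints (b)--(h). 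For each such smooth $\Sigma_n$ and for $\varphi \in C_c^2(\ext\omega)$, define the scalar $\langle\lambda_n,\varphi\rangle$ by exactly the right-hand side of the claimed formula with $\Sigma$ replaced by $\Sigma_n$ and $\sigma$ by $\sigma_n := \int_\calY \Sigma_n(\cdot,y)\,dy$. The point of smoothness of $\Sigma_n$ in $x'$ is that, combining \Cref{disintegration result - regime zero} and \Cref{remark admissible configurations and disintegration - regime zero}, for $\eta$-a.e.\ $x'$ the slice $(\mu_{x'},\kappa_{x'},[\,C(x')E(x,y)-(A_1+x_3A_2)\,],P_{x'})$ lies in $\calA_0$ while $\Sigma_n(x',\cdot)$ lies in $\calK_0$, so the cell duality measure $[\Sigma_n(x',\cdot):P_{x'}]$ of \eqref{cell stress-strain duality - regime zero} is available and its total variation is controlled by $\|\Sigma_n(x',\cdot)\|_{L^\infty}\,|P_{x'}|$ via \eqref{dualityadded2}. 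Integrating the disintegrated identity against $\varphi\,\eta$ and using \eqref{disintegration result 1 - regime zero}--\eqref{disintegration result 3 - regime zero} together with the constraints (d),(e) on $\Sigma_n$ yields that $\langle\lambda_n,\varphi\rangle$ is represented by integration of $\varphi$ against the bounded Radon measure $\eta\genprod[\Sigma_n(x',\cdot):P_{x'}]$ on $\ext\Omega\times\calY$; in particular $\lambda_n \in \Mb(\ext\Omega\times\calY)$ with $|\lambda_n| \le \|\Sigma_n\|_{L^\infty}\,|P|$ (bounded uniformly in $n$ by (c)).

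Next, pass to the limit. Strong $L^p$-convergence $\Sigma_n\to\Sigma$ and $\sigma_n\to\sigma$ handle the terms $\int\varphi\,\Sigma_n:E$, $\int\varphi\,\bar\sigma_n:E\bar w$, $\int\varphi\,\hat\sigma_n:D^2w_3$, $\int\bar\sigma_n:((\bar u-\bar w)\odot\nabla\varphi)$ and the two $\hat\sigma_n$-terms, since $E\in L^2$, $\bar u\in BD(\omega)\hookrightarrow L^2$, $u_3\in BH(\omega)\hookrightarrow H^1$, and $w\in H^1$; thus $\langle\lambda_n,\varphi\rangle$ converges for every fixed $\varphi\in C_c^2(\ext\omega)$ to the right-hand side of the asserted identity. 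Combined with the uniform bound $|\lambda_n|\le C|P|$, a standard weak* compactness and density argument ($C_c^2$ dense in $C_0$) gives a (unique) $\lambda\in\Mb(\ext\Omega\times\calY)$ with $\lambda_n\weakstar\lambda$ and $\langle\lambda,\varphi\rangle$ equal to the stated expression for all $\varphi\in C_c^2(\ext\omega)$. Finally, to get the mass formula \eqref{mass of lambda - regime zero}, take $\varphi\equiv 1$: this is not compactly supported in $\ext\omega$, but since $(u,E,P)$ is extended by $(w,Ew,0)$ on $(\ext\Omega\setminus\closure\Omega)\times\calY$ and $P$ is supported in $\closure\Omega\times\calY$, one can either approximate the constant $1$ by cut-offs $\varphi_k\in C_c^2(\ext\omega)$ with $\varphi_k=1$ on a neighborhood of $\closure\omega$ (all $\nabla\varphi_k,\nabla^2\varphi_k$ terms then vanish because the integrands are supported where $\varphi_k$ is locally constant, and the remaining $\Sigma:E$, $\bar\sigma:E\bar w$, $\hat\sigma:D^2w_3$ integrals are over $\omega$ only), or invoke that $\lambda(\ext\Omega\times\calY)=\lim_n\lambda_n(\ext\Omega\times\calY)=\lim_n\int_{\ext\Omega\times\calY}d(\eta\genprod[\Sigma_n(x',\cdot):P_{x'}])$ and evaluate the cell duality with test function $1$ using \eqref{cell stress-strain duality - regime zero}, in which the gradient terms integrate to zero by periodicity.

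The main obstacle, as the authors flag in the red remark preceding the statement, is precisely the mismatch of regularity: $\Sigma$ is only $L^\infty$ (indeed only $L^2$ a priori for general stresses), whereas the disintegration measure $\eta$ from \Cref{disintegration result - regime zero} may concentrate on $\calL^2$-null sets of $\ext\omega$ on which the precise representative of $\Sigma$ is ill-defined, so one cannot directly pair $\Sigma$ with $P$ slice-by-slice. The resolution is to do the duality pairing only at the level of the \emph{smooth} approximants $\Sigma_n$ (for which $\Sigma_n(x',\cdot)$ makes pointwise sense in $x'$) and then pass to the limit using only the strong $L^p$ convergence in $(x',x_3,y)$ together with the uniform total-variation bound $|\lambda_n|\le\|\Sigma_n\|_{L^\infty}|P|$; the limit $\lambda$ then records the pairing in a weak (integrated) sense without ever needing a pointwise trace of $\Sigma$. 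A secondary technical point is that \Cref{approximation of stresses - regime zero} is stated for star-shaped $\omega$, so for a general $C^2$ domain one must cover $\ext\omega$ by finitely many star-shaped pieces, apply the approximation on each, and patch with a subordinate partition of unity — keeping track that the constraints (d),(e) (which are local in $x'$) survive the patching, as in the localization step used in \Cref{cell Hill's principle - regime zero}.
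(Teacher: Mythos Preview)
Your proposal follows the paper's approach essentially step for step: define $\lambda_n := \eta \genprod [\Sigma_n(x',\cdot):P_{x'}]$ via the approximants of \Cref{approximation of stresses - regime zero} and the disintegration of \Cref{disintegration result - regime zero}, obtain the uniform bound $|\lambda_n|\le C|P|$, pass to the weak* limit, and localize via a finite star-shaped cover with a partition of unity.

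There is one computation you gloss over, and you attribute it to the wrong constraints. Testing the cell duality \eqref{cell stress-strain duality - regime zero} with a function $\varphi(x')$ that is constant in $(x_3,y)$ yields only
\[
\langle\lambda_n,\varphi\rangle = -\int_{\ext\Omega\times\calY}\varphi\,\Sigma_n:E\,dx\,dy + \int_{\ext\Omega}\varphi\,\sigma_n:dEu,
\]
since the $\nabla_y\psi$ terms drop out. To convert the second integral into the claimed expression with $E\bar w$, $D^2w_3$, $(\bar u-\bar w)\odot\nabla\varphi$, $\nabla(u_3-w_3)\odot\nabla\varphi$, and $(u_3-w_3)\nabla^2\varphi$, the paper splits $Eu$ via the Kirchhoff--Love structure and \Cref{A_KL characherization}, identifies the pairings with the \emph{macroscopic} duality measures $[\bar\sigma_n:\bar p]$, $[\hat\sigma_n:\hat p]$ (here $p:=\int_\calY P\,dy$), and then invokes the macroscopic integration-by-parts identities of \cite[Propositions~7.2 and 7.6]{Davoli.Mora.2013}. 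These identities rely on the \emph{macroscopic} conditions $\div_{x'}\bar\sigma_n=0$ and $\div_{x'}\div_{x'}\hat\sigma_n=0$, i.e.\ items \ref{approximation of stresses (g) - regime zero} and \ref{approximation of stresses (h) - regime zero} of \Cref{approximation of stresses - regime zero}, not the microscopic constraints \ref{approximation of stresses (d) - regime zero}, \ref{approximation of stresses (e) - regime zero} you cite (the latter are what make $\Sigma_n(x',\cdot)\in\calK_0$, so that the cell duality is defined at all). Once this is filled in, your argument coincides with the paper's.
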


\begin{proof}
The proof is divided into two steps.

\noindent{\bf Step 1.}
Suppose that $\omega$ is star-shaped with respect to one of its points.

Let $\{ \Sigma_n \} \subset C^\infty(\R^2;L^2(I \times \calY;\M^{3 \times 3}_{\sym}))$ be sequence given by Lemma \ref{approximation of stresses - regime zero}. 
We define the sequence
\begin{equation*}
    \lambda_n := \eta \genprod [ \Sigma_n(x',\cdot) : P_{x'} ] \in \Mb(\ext{\Omega} \times \calY),
\end{equation*}
where \CCC $\eta$ is given by \cref{disintegration result - regime zero} and \BBB the duality $[ \Sigma_n(x',\cdot) : P_{x'} ]$ is a well defined bounded measure on $I \times \calY$ for $\eta$-a.e. $x' \in \ext{\omega}$. 
Further, in view of \Cref{admissible configurations and disintegration - regime zero}, \eqref{cell stress-strain duality - regime zero} gives
\begin{align*}
    &\int_{I \times \calY} \psi\,d[ \Sigma_n(x',\cdot) : P_{x'} ]  \\
    &= - \int_{I \times \calY} \psi(y)\,\Sigma_n(x,y) : \left[ C(x') E(x,y) - \left( A_1(x') + x_3 A_2(x') \right) \right] \,dx_3 dy\\
    &\,\quad - \int_{\calY} \bar{\Sigma}_n(x',y) : \big( \mu_{x'}(y) \odot \nabla_{y}\psi(y) \big) \,dy\\
    &\,\quad + \frac{1}{6} \int_{\calY} \hat{\Sigma}_n(x',y) : \big( \nabla_{y}\kappa_{x'}(y) \odot \nabla_{y}\psi(y) \big) \,dy + \frac{1}{12} \int_{\calY} \kappa_{x'}(y)\,\hat{\Sigma}_n(x',y) : \nabla^2_{y}\psi(y) \,dy,
\end{align*}
for every $\psi \in C^2(\calY)$, and
\begin{equation*}
    | [ \Sigma_n(x',\cdot) : P_{x'} ] | \leq \| \Sigma_n(x',\cdot) \|_{L^{\infty}(I \times \calY;\M^{2 \times 2}_{\sym})} |P_{x'}| \leq C\,|P_{x'}|,
\end{equation*}
where the last inequality stems from item \ref{approximation of stresses (c) - regime zero} in Lemma \ref{approximation of stresses - regime zero}.
This in turn implies that
\begin{equation*}
    |\lambda_n | = \eta \genprod | [ \Sigma_n(x',\cdot) : P_{x'} ] | \leq C \,\eta \genprod |P_{x'}| = C\,|P|,
\end{equation*}
from which we conclude that is $\{ \lambda_n \}$ is a bounded sequence.

Let now $\ext{I} \supset I$ be an open set which compactly contains $I$. \CCC We extend these measures by zero on $\ext{\omega} \times \ext{I} \times \calY$.  \BBB
Let $\xi$ be a smooth cut-off function with $\xi \equiv 1$ on $I$, with support contained in $\ext{I}$. 
Finally, we consider a test function $\phi(x,y) := \varphi(x')\xi(x_3)$, for $\varphi \in C_c^{\infty}(\ext{\omega})$. 
Then, since $\nabla_{y}\phi(x,y) = 0$ and $\nabla^2_{y}\phi(x,y) = 0$, we have
\begin{align*}
    \langle \lambda_n, \phi \rangle 
    &= \int_{\ext{\omega}} \left( \int_{I \times \calY} \phi(x,y) \,d[ \Sigma_n(x',\cdot) : P_{x'} ] \right) \,d\eta(x')\\
    &= - \int_{\ext{\Omega} \times \calY} \varphi(x')\,\Sigma_n(x,y) : \left[ C(x') E(x,y) - \left( A_1(x') + x_3 A_2(x') \right) \right] \,d\left(\eta \otimes \calL^{1}_{x_3} \otimes \calL^{2}_{y}\right)\\
    &= - \int_{\ext{\Omega} \times \calY} \varphi(x')\,\Sigma_n(x,y) : E(x,y) \,dx dy + \int_{\ext{\Omega}} \varphi(x')\,\sigma_n(x) : \left( A_1(x') + x_3 A_2(x') \right) \,d\left(\eta \otimes \calL^{1}_{x_3}\right)\\
    &= - \int_{\ext{\Omega} \times \calY} \varphi(x')\,\Sigma_n(x,y) : E(x,y) \,dx dy + \int_{\ext{\Omega}} \varphi(x')\,\sigma_n(x) : \,dEu(x)
\end{align*}
Since $u \in KL(\ext{\Omega})$, we have
\begin{align*}
    \int_{\ext{\Omega}} \varphi(x')\,\sigma_n(x) : \,dEu(x) 
    &= \int_{\ext{\omega}} \varphi(x')\,\bar{\sigma}_n(x') : \,dE\bar{u}(x') - \frac{1}{12}\int_{\ext{\omega}} \varphi(x')\,\hat{\sigma}_n(x') : \,dD^2u_3(x'),
\end{align*}
where $\bar{u} \in BD(\ext{\omega})$ and $u_3 \in BH(\ext{\omega})$ are the Kirchhoff-Love components of $u$. 
From the characterization given in \Cref{A_KL characherization}, we can thus conclude that
\begin{align*}
    \int_{\ext{\Omega}} \varphi(x')\,\sigma_n(x) : \,dEu(x) 
    =& \int_{\ext{\omega}} \varphi(x')\,\bar{\sigma}_n(x') : \bar{e}(x') \,dx' + \int_{\ext{\omega}} \varphi(x')\,\bar{\sigma}_n(x') : \,d\bar{p}(x')\\
    & + \frac{1}{12}\int_{\ext{\omega}} \varphi(x')\,\hat{\sigma}_n(x') : \hat{e}(x') \,dx' + \frac{1}{12}\int_{\ext{\omega}} \varphi(x')\,\hat{\sigma}_n(x') : \,d\hat{p}(x')\\
    =& \int_{\ext{\omega}} \varphi(x')\,\bar{\sigma}_n(x') : \bar{e}(x') \,dx' + \int_{\ext{\omega}} \varphi(x') \,d[\bar{\sigma}_n : \bar{p}](x')\\
    & + \frac{1}{12}\int_{\ext{\omega}} \varphi(x')\,\hat{\sigma}_n(x') : \hat{e}(x') \,dx' + \frac{1}{12}\int_{\ext{\omega}} \varphi(x') \,d[\hat{\sigma}_n : \hat{p}](x'),
\end{align*}
where in the last equality we used that $\bar{\sigma}_n$ and $\hat{\sigma}_n$ are smooth functions. 
Notice that, since $\bar{p} \equiv 0$ and $\hat{p} \equiv 0$ outside of $\omega \cup \gamma_\Dir$, we have
\begin{equation*}
    \int_{\ext{\omega}} \varphi \,d[\bar{\sigma}_n : \bar{p}] = \int_{\omega \cup \gamma_\Dir} \varphi \,d[\bar{\sigma}_n : \bar{p}], \quad 
    \int_{\ext{\omega}} \varphi \,d[\hat{\sigma}_n : \hat{p}] = \int_{\omega \cup \gamma_\Dir} \varphi \,d[\hat{\sigma}_n : \hat{p}].
\end{equation*}
Furthermore, since $e = E = E\bar{w} - x_3 D^2w_3$ on $\ext{\Omega} \setminus \Omega$, we can conclude that
\begin{align*}
    \langle \lambda_n, \phi \rangle 
    &= - \int_{\ext{\Omega} \times \calY} \varphi(x')\,\Sigma_n : E \,dx dy + \int_{\ext{\omega}} \varphi\,\bar{\sigma}_n : \bar{e} \,dx' + \frac{1}{12}\int_{\ext{\omega}} \varphi\,\hat{\sigma}_n : \hat{e} \,dx'\\
    &\,\quad + \int_{\omega \cup \gamma_\Dir} \varphi \,d[\bar{\sigma}_n : \bar{p}] + \frac{1}{12}\int_{\omega \cup \gamma_\Dir} \varphi \,d[\hat{\sigma}_n : \hat{p}]\\
    &= - \int_{\Omega \times \calY} \varphi(x')\,\Sigma_n : E \,dx dy + \int_{\omega} \varphi\,\bar{\sigma}_n : \bar{e} \,dx' + \frac{1}{12}\int_{\omega} \varphi\,\hat{\sigma}_n : \hat{e} \,dx'\\
    &\,\quad + \int_{\omega \cup \gamma_\Dir} \varphi \,d[\bar{\sigma}_n : \bar{p}] + \frac{1}{12}\int_{\omega \cup \gamma_\Dir} \varphi \,d[\hat{\sigma}_n : \hat{p}].
\end{align*}
\CCC Taking into account that $\div_{x'}\bar{\sigma}_n = 0 \text{ in } \omega$, by integration by parts (see also \cite[Proposition 7.2]{Davoli.Mora.2013}) \BBB we have for every $\varphi \in C^1(\closure{\omega})$
\begin{align*}
    \int_{\omega \cup \gamma_\Dir} \varphi \,d[\bar{\sigma}_n : \bar{p}] + \int_{\omega} \varphi\,\bar{\sigma}_n : (\bar{e} - E\bar{w}) \,dx' +  \int_{\omega} \bar{\sigma}_n : \left( (\bar{u} - \bar{w}) \odot \nabla\varphi \right) \,dx' = 0.
\end{align*}
Likewise \CCC taking into account $\div_{x'}\div_{x'}\hat{\sigma}_n = 0 \text{ in } \omega$ 
and $u_3 = w_3 \text{ on } \gamma_\Dir$, by integration by parts (see also \cite[Proposition 7.6]{Davoli.Mora.2013}), \BBB we have for every $\varphi \in C^2(\closure{\omega})$
\begin{align*}
    &\int_{\omega \cup \gamma_\Dir} \varphi \,d[\hat{\sigma}_n : \hat{p}] + \int_{\omega} \varphi\,\hat{\sigma}_n : (\hat{e} + D^2w_3) \,dx' \\
    &+ 2 \int_{\omega} \hat{\sigma}_n : \big( \nabla (u_3 - w_3) \odot \nabla \varphi \big) \,dx' + \int_{\omega} (u_3 - w_3)\,\hat{\sigma}_n : \nabla^2 \varphi \,dx'
    = 0.
\end{align*}

Let now $\lambda \in \Mb(\ext{\Omega} \times \calY)$ be such that (up to a subsequence)
\begin{equation*}
    \lambda_n \weakstar \lambda \quad \text{weakly* in $\Mb(\ext{\Omega} \times \calY)$}.
\end{equation*}
By items \ref{approximation of stresses (a) - regime zero} and \ref{approximation of stresses (f) - regime zero} in Lemma \ref{approximation of stresses - regime zero}, we have in the limit
\begin{align*}
    \langle \lambda, \phi \rangle &= \lim_n \,\langle \lambda_n, \phi \rangle\\
    &= \lim_n \,\Big[ - \int_{\Omega \times \calY} \varphi(x')\,\Sigma_n : E \,dx dy + \int_{\omega} \varphi\,\bar{\sigma}_n : E\bar{w} \,dx' - \frac{1}{12}\int_{\omega} \varphi\,\hat{\sigma}_n : D^2w_3 \,dx' \\
    &\,\quad\,\quad\,\quad - \int_{\omega} \bar{\sigma}_n : \left( (\bar{u}-\bar{w}) \odot \nabla\varphi \right) \,dx' - \frac{1}{6} \int_{\omega} \hat{\sigma}_n : \big( \nabla (u_3 - w_3) \odot \nabla \varphi \big) \,dx' \\
    &\,\quad\,\quad\,\quad - \frac{1}{12} \int_{\omega} (u_3 - w_3)\,\hat{\sigma}_n : \nabla^2 \varphi \,dx' \Big]\\
    &= - \int_{\Omega \times \calY} \varphi(x')\,\Sigma : E \,dx dy + \int_{\omega} \varphi\,\bar{\sigma} : E\bar{w} \,dx' - \frac{1}{12}\int_{\omega} \varphi\,\hat{\sigma} : D^2w_3 \,dx' \\
    &\,\quad - \int_{\omega} \bar{\sigma} : \left( (\bar{u}-\bar{w}) \odot \nabla\varphi \right) \,dx' - \frac{1}{6} \int_{\omega} \hat{\sigma} : \big( \nabla (u_3 - w_3) \odot \nabla \varphi \big) \,dx' \\
    &\,\quad - \frac{1}{12} \int_{\omega} (u_3 - w_3)\,\hat{\sigma} : \nabla^2 \varphi \,dx'.
\end{align*}
Taking $\varphi \nearrow \charfun{\ext{\omega}}$, we deduce \eqref{mass of lambda - regime zero}.

\noindent{\bf Step 2.}
If $\omega$ is not star-shaped, then since $\omega$ is a bounded $C^2$ domain (in particular, with Lipschitz boundary) by \cite[Proposition 2.5.4]{Carbone.DeArcangelis.2002} there exists a finite open covering $\{U_i\}$ of $\closure{\omega}$ such that $\omega \cap U_i$ is (strongly) star-shaped with Lipschitz boundary. \CCC Again, since the sets which are intersecting $\partial \omega$  are cylindrical up to a rotation, we can slightly change them such that they become $C^2$. \BBB

Let $\{\psi_i\}$ be a smooth partition of unity subordinate to the covering $\{U_i\}$, i.e. $\psi_i \in C^{\infty}(\closure{\omega})$, with $0 \leq \psi_i \leq 1$, such that $\supp(\psi_i) \subset U_i$ and $\sum_{i} \psi_i = 1$ on $\closure{\omega}$. 

For each $i$, let
\begin{equation*}
    \Sigma^i(x,y)
    :=
    \begin{cases}
    \Sigma(x,y) & \ \text{ if } x' \in \omega \cap U_i,\\
    0 & \ \text{ otherwise}.
    \end{cases}
\end{equation*}
Since $\Sigma^i \in \RED \calK^{hom}_{0} \BLACK$, the construction in Step 1 yields that there exist sequences $\{ \Sigma^i_n \} \subset C^\infty(\R^2;L^2(I \times \calY;\M^{3 \times 3}_{\sym}))$ and 
\begin{equation*}
    \lambda^i_n := \eta \genprod [ (\Sigma^i_n)_{\dev}(x',\cdot) : P_{x'} ] \in \Mb((\omega \cap U_i) \times I \times \calY),
\end{equation*}
such that
\begin{equation*}
    \lambda^i_n \weakstar \lambda^i \quad \text{weakly* in $\Mb((\omega \cap U_i) \times I \times \calY)$},
\end{equation*}
with
\begin{align*}
    \langle \lambda^i, \varphi \rangle 
    &= - \int_{(\omega \cap U_i) \times I \times \calY} \varphi(x')\,\Sigma : E \,dx dy + \int_{\omega \cap U_i} \varphi\,\bar{\sigma} : E\bar{w} \,dx' - \frac{1}{12}\int_{\omega \cap U_i} \varphi\,\hat{\sigma} : D^2w_3 \,dx' \\
    &\,\quad - \int_{\omega \cap U_i} \bar{\sigma} : \left( (\bar{u}-\bar{w}) \odot \nabla\varphi \right) \,dx' - \frac{1}{6} \int_{\omega \cap U_i} \hat{\sigma} : \big( \nabla (u_3 - w_3) \odot \nabla \varphi \big) \,dx' \\
    &\,\quad - \frac{1}{12} \int_{\omega \cap U_i} (u_3 - w_3)\,\hat{\sigma} : \nabla^2 \varphi \,dx'.
\end{align*}
for every $\varphi \in C_c^2(\closure{\omega} \cap U_i)$.
This allows us to define measures on $\ext{\Omega} \times \calY$ by letting, for every $\phi \in C_0(\ext{\Omega} \times \calY)$,
\begin{equation*}
    \langle \lambda_n, \phi \rangle := \sum_{i} \langle \lambda^i_n, \psi_i(x')\,\phi \rangle,
\end{equation*}
and
\begin{equation*}
    \langle \lambda, \phi \rangle := \sum_{i} \langle \lambda^i, \psi_i(x')\,\phi \rangle.
\end{equation*}
Then we can see that $\lambda_n \weakstar \lambda$ weakly* in $\Mb(\ext{\Omega} \times \calY)$, and $\lambda$ satisfies all the required properties.
\end{proof}
\CCC The following theorem provides a two-scale Hill's principle (cf. \cite[Theorem 5.12]{Francfort.Giacomini.2014}). \BBB
\begin{theorem} \label{two-scale Hill's principle - regime zero}
Let $\Sigma \in \calK^{hom}_{0}$ and $(u,E,P) \in \calA^{hom}_{0}(w)$ with the associated $\mu \in \calXzero{\ext{\omega}}$, $\kappa \in \calYzero{\ext{\omega}}$. 
\CCC If $\calY$ is a geometrically admissible multi-phase torus, under the assumption on the ordering of phases we have \BBB
\CCC
\begin{equation*}
   \overline{H_r\left(y, \frac{dP}{d|P|}\right)\,|P|} \geq \bar{\lambda}, 
\end{equation*}
\BBB
where $\lambda \in \Mb(\ext{\Omega} \times \calY)$ is given by \Cref{two-scale stress-strain duality - regime zero}.
\end{theorem}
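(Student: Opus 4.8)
The plan is to combine the cell-level inequality \Cref{cell Hill's principle - regime zero} with the disintegration of \Cref{disintegration result - regime zero}, localizing the stress so as to bypass the fact that a general $\Sigma\in\calK^{hom}_0$ is only $L^\infty$ and that the disintegration measure $\eta$ may concentrate. First I would reduce to the star-shaped case exactly as in Step 2 of the proof of \Cref{two-scale stress-strain duality - regime zero}: pick the finite covering $\{U_i\}$ of $\closure\omega$ by (modified $C^2$) strongly star-shaped sets with a subordinate partition of unity $\{\psi_i\}$, set $\Sigma^i:=\Sigma\charfun{(\omega\cap U_i)\times I\times\calY}$, and note $\Sigma^i\in\calK^{hom}_0$. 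Since both sides of the claimed inequality localize additively under $\sum_i\psi_i(x')=1$ and the measures $\lambda^i$ glue to $\lambda$, it suffices to prove the inequality on each star-shaped piece.

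So assume $\omega$ star-shaped and let $\{\Sigma_n\}\subset C^\infty(\R^2;L^\infty(I\times\calY;\M^{3\times3}_{\sym}))$ be the approximating sequence from \Cref{approximation of stresses - regime zero}, with $(\Sigma_n(x',\cdot))_{\dev}\in K(y)$ pointwise in $x'$, $\div_y\bar\Sigma_n(x',\cdot)=0$, $\div_y\div_y\hat\Sigma_n(x',\cdot)=0$, and $\Sigma_n\to\Sigma$ strongly in every $L^p$, $p<\infty$. By \Cref{disintegration result - regime zero} and \Cref{admissible configurations and disintegration - regime zero}, for $\eta$-a.e.\ $x'$ the quadruplet $(\mu_{x'},\kappa_{x'},E_{x'},P_{x'})$ — where $E_{x'}(x_3,y):=C(x')E(x,y)-(A_1(x')+x_3A_2(x'))$ — lies in $\calA_0$, and for each fixed $x'$ the frozen stress $\Sigma_n(x',\cdot)$ lies in $\calK_0$. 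Hence \Cref{cell Hill's principle - regime zero} applies slice-by-slice:
\begin{equation*}
  \overline{H_r\Big(y,\tfrac{dP_{x'}}{d|P_{x'}|}\Big)|P_{x'}|}\ \geq\ \overline{[\Sigma_n(x',\cdot):P_{x'}]}\qquad\text{in }\Mb(\calY),
\end{equation*}
for $\eta$-a.e.\ $x'$. Integrating against a nonnegative $\varphi\in C_c^2(\ext\omega)$ in $\eta\genprod(\cdot)$ (using that the left side is exactly the $x'$-disintegration of $H_r(y,\tfrac{dP}{d|P|})|P|$ and \Cref{remmarin1} identifies the right side with $\overline{[\Sigma_n:P]}$), one gets $\langle \overline{H_r(\cdot)|P|},\varphi\rangle\ge\langle\lambda_n,\varphi\rangle$ with $\lambda_n:=\eta\genprod[\Sigma_n(x',\cdot):P_{x'}]$ as in \Cref{two-scale stress-strain duality - regime zero}. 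Passing $n\to\infty$: the left side is independent of $n$, while $\lambda_n\weakstar\lambda$ by that same proposition, so $\langle \overline{H_r(\cdot)|P|},\varphi\rangle\ge\langle\lambda,\varphi\rangle$; letting $\varphi\nearrow\charfun{\ext\omega}$ componentwise in the usual way and invoking the arbitrariness of nonnegative test functions yields $\overline{H_r(y,\tfrac{dP}{d|P|})|P|}\ge\bar\lambda$ as measures. Finally one checks, using \eqref{disss1}, \eqref{dualityadded2} and the bound $|\lambda|\le C|P|$, that neither measure charges $\mathcal S$, so the gluing across the pieces $U_i$ is consistent.

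The main obstacle is the slice-wise application of \Cref{cell Hill's principle - regime zero}: one must verify that the Borel selections $x'\mapsto(\mu_{x'},\kappa_{x'},P_{x'})$ from \Cref{disintegration result - regime zero} genuinely produce, for $\eta$-a.e.\ $x'$, an element of $\calA_0$ together with a frozen stress in $\calK_0$ whose moment constraints $\div_y\bar\Sigma_n(x',\cdot)=0$, $\div_y\div_y\hat\Sigma_n(x',\cdot)=0$ hold for \emph{every} $x'$ (this is why the pointwise-in-$x'$ regularity in \Cref{approximation of stresses - regime zero} is essential, rather than merely a.e.), and that the resulting family of measure inequalities can be integrated in $\eta$ — i.e.\ that $x'\mapsto\overline{[\Sigma_n(x',\cdot):P_{x'}]}$ and $x'\mapsto\overline{H_r(y,\tfrac{dP_{x'}}{d|P_{x'}|})|P_{x'}|}$ are $\eta$-measurable with the expected disintegrations. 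The uniform bound $|[\Sigma_n(x',\cdot):P_{x'}]|\le C|P_{x'}|$ from \Cref{duality distribution is actually a measure - regime inf}'s analogue (here \eqref{dualityadded2}) is what makes the limit $n\to\infty$ and the final weak\* passage legitimate, so I would be careful to track that constant through the localization.
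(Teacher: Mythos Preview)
Your proposal is correct and follows essentially the same approach as the paper: localize to star-shaped pieces via the covering $\{U_i\}$ and partition of unity $\{\psi_i\}$ from Step~2 of \Cref{two-scale stress-strain duality - regime zero}, approximate $\Sigma^i$ by the $\Sigma^i_n$ of \Cref{approximation of stresses - regime zero}, apply \Cref{cell Hill's principle - regime zero} slice-by-slice to $(\mu_{x'},\kappa_{x'},E_{x'},P_{x'})$ for $\eta$-a.e.\ $x'$, integrate in $\eta\genprod(\cdot)$ using the disintegration $P=\eta\genprod P_{x'}$ together with the identification $\tfrac{dP}{d|P|}(x,y)=\tfrac{dP_{x'}}{d|P_{x'}|}(x_3,y)$, and pass to the limit via $\lambda_n\weakstar\lambda$. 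One small point: to obtain the inequality \emph{as measures} on $\ext\omega\times\calY$ you should test against nonnegative $\varphi\in C_c(\ext\omega\times\calY)$ rather than $\varphi\in C_c^2(\ext\omega)$ (this is what the paper does, and it is exactly what the cell-level measure inequality on $\calY$ affords); also, the $\mathcal S$-null-set check you append at the end is unnecessary here, since that issue is already absorbed into \Cref{cell Hill's principle - regime zero} and the gluing is purely through $\sum_i\psi_i=1$.
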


\begin{proof}
Take $\varphi \in C_{c}(\ext{\omega} \times \calY)$ non-negative. 
Let $\{\Sigma^i_n\}$, $\{\lambda^i_n\}$ and $\lambda^i$ be defined as in Step 2 of the proof of \Cref{two-scale stress-strain duality - regime zero}. 
Item \ref{approximation of stresses (c) - regime zero} in \Cref{approximation of stresses - regime zero} implies that
\begin{equation*}
    (\Sigma^i_n)_{\dev}(x,y) \in K(y) \,\text{ for every } x' \in \omega \text{ and } \calL^{1}_{x_3} \otimes \calL^{2}_{y}\text{-a.e. } (x_3,y) \in I \times \calY.   
\end{equation*}
By \Cref{cell Hill's principle - regime zero}, we have for $\eta$-a.e. $x' \in \ext{\omega}$
\begin{equation*}
    \int_{I \times \calY} \CCC \varphi(x',y)\BBB\,H_r\left(y, \frac{dP_{x'}}{d|P_{x'}|}\right) \,d|P_{x'}| \geq \int_{I \times \calY} \CCC \varphi(x',y)\BBB \,d[ \Sigma^i_n : P_{x'} ], \ \text{ for every } \varphi \in C(\calY), \varphi \geq 0.
\end{equation*}
Since $\frac{dP}{d|P|}(x,y) = \frac{dP_{x'}}{d|P_{x'}|}(x_3,y)$ for $|P_{x'}|$-a.e. $(x_3,y) \in I \times \calY$ by \cite[Proposition 2.2]{BDV}, we can conclude that
\begin{align*}
    H_r\left(y, \frac{dP}{d|P|}\right)\,|P| &= \eta \genprod H_r\left(y, \frac{dP}{d|P|}\right)\,|P_{x'}| = \eta \genprod H_r\left(y, \frac{dP_{x'}}{d|P_{x'}|}\right)\,|P_{x'}|\\
    &= \sum_{i} \psi_i \eta \genprod H_r\left(y, \frac{dP_{x'}}{d|P_{x'}|}\right)\,|P_{x'}|.
\end{align*}
Consequently,
\begin{align*}
   \int_{\ext{\Omega} \times \calY} \CCC \varphi(\cdot,y)\BBB\,H_r\left(y, \frac{dP}{d|P|}\right)\,d|P| &= \sum_{i} \int_{\ext{\omega}} \psi_i(x') \left( \int_{I \times \calY} \CCC\varphi(x',y)\BBB\,H_r\left(y, \frac{dP_{x'}}{d|P_{x'}|}\right)\,|P_{x'}| \right) \,d\eta(x') \\
    &\geq \sum_{i} \int_{\ext{\omega}} \psi_i(x') \left( \int_{I \times \calY} \CCC\varphi(x',y)\BBB \,d[ \Sigma^i_n : P_{x'} ] \right) \,d\eta(x') \\
    &= \sum_{i} \int_{\ext{\Omega} \times \calY} \psi_i(x') \CCC \varphi(x',y)\BBB \,d\lambda^i_n(x,y) = \int_{\ext{\Omega} \times \calY} \CCC \varphi\BBB \,d\lambda_n.
\end{align*}
By passing to the limit, we infer the desired inequality.
\end{proof}

\subsubsection{Case \texorpdfstring{$\gamma = +\infty$}{γ = +∞}}
\CCC The following proposition is the analogue of \Cref{two-scale stress-strain duality - regime zero}. \BBB
\begin{proposition} \label{two-scale stress-strain duality - regime inf}
Let $\Sigma \in \calK^{hom}_{\infty}$ and $(u,E,P) \in \calA^{hom}_{\infty}(w)$ with the associated $\mu \in \calXinf{\ext{\Omega}}$, $\kappa \in \calXinf{\ext{\Omega}}$, $\zeta \in \Mb(\Omega;\R^3)$.
There exists an element $\lambda \in \Mb(\ext{\Omega} \times \calY)$ such that
for every $\varphi \in C_c^2(\ext{\omega})$
\begin{align*}
    \langle \lambda, \varphi \rangle =& - \int_{\Omega \times \calY} \varphi(x')\,\Sigma : E \,dx dy + \int_{\omega} \varphi\,\bar{\sigma} : E\bar{w} \,dx' - \frac{1}{12}\int_{\omega} \varphi\,\hat{\sigma} : D^2w_3 \,dx' \\
    & - \int_{\omega} \bar{\sigma} : \left( (\bar{u}-\bar{w}) \odot \nabla\varphi \right) \,dx' - \frac{1}{6} \int_{\omega} \hat{\sigma} : \big( \nabla (u_3 - w_3) \odot \nabla \varphi \big) \,dx' \\
    & - \frac{1}{12} \int_{\omega} (u_3 - w_3)\,\hat{\sigma} : \nabla^2 \varphi \,dx'.
\end{align*}
Furthermore, the mass of $\lambda$ is given by
\begin{equation} \label{mass of lambda - regime inf}
    \lambda(\ext{\Omega} \times \calY) = -\int_{\Omega \times \calY} \Sigma : E \,dx dy + \int_{\omega} \bar{\sigma} : E\bar{w} \,dx' - \frac{1}{12} \int_{\omega} \hat{\sigma} : D^2w_3 \,dx'.
\end{equation}
\end{proposition}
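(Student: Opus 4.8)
The plan is to mirror, \emph{mutatis mutandis}, the two-step argument used for \Cref{two-scale stress-strain duality - regime zero}, replacing each ingredient by its $\gamma=+\infty$ counterpart: the disintegration of \Cref{disintegration result - regime zero} by \Cref{disintegration result - regime inf}, the cell duality pairing \eqref{cell stress-strain duality - regime zero} by \eqref{cell stress-strain duality - regime inf}, the measure bound of \Cref{remadd1} by \Cref{duality distribution is actually a measure - regime inf}, and the $x$-smooth approximation of \Cref{approximation of stresses - regime zero} by \Cref{approximation of stresses - regime inf}. The crucial point is again that a fiberwise duality pairing makes sense only for stresses that are regular in the macroscopic variable, so one must first localize to star-shaped pieces.

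First I would treat the case in which $\omega$ is star-shaped with respect to one of its points. Let $\eta\in\Mb^+(\ext{\Omega})$ together with the Borel families $P_x$, $\mu_x$, $\kappa_x$, $z$, $A_1$, $A_2$, $C$ and the Borel representative $E(x,y)$ be given by \Cref{disintegration result - regime inf}, so that by \Cref{admissible configurations and disintegration - regime inf} the quintuplet $\big(\mu_x,\kappa_x,z,[C(x)E(x,y)-\diag(A_1(x')+x_3A_2(x'))],P_x\big)$ lies in $\calA_{\infty}$ for $\eta$-a.e.\ $x\in\ext{\Omega}$. For a sequence $\{\Sigma_n\}\subset C^\infty(\R^3;L^2(\calY;\M^{3\times3}_{\sym}))$ as in \Cref{approximation of stresses - regime inf}, \Cref{duality distribution is actually a measure - regime inf} shows that $[(\Sigma_n(x,\cdot))_{\dev}:P_x]$ is a well-defined bounded measure on $\calY$ with $|[(\Sigma_n(x,\cdot))_{\dev}:P_x]|\le\|(\Sigma_n(x,\cdot))_{\dev}\|_{L^\infty}|P_x|\le C|P_x|$, the last bound coming from item \ref{approximation of stresses (c) - regime inf}; hence $\lambda_n:=\eta\genprod[(\Sigma_n(x,\cdot))_{\dev}:P_x]$ is bounded in $\Mb(\ext{\Omega}\times\calY)$ with $|\lambda_n|\le C|P|$, and (after extension by zero to $\ext{\omega}\times\ext{I}\times\calY$) admits a weak* limit $\lambda$. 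Testing $\lambda_n$ against $\phi(x,y)=\varphi(x')\xi(x_3)$ with $\varphi\in C_c^\infty(\ext{\omega})$ and $\xi$ a cutoff identically $1$ on $I$, all $\nabla_y$-terms in \eqref{cell stress-strain duality - regime inf} drop, and using \eqref{disintegration result 1 - regime inf}--\eqref{disintegration result 3 - regime inf}, the identity $u\in KL(\ext{\Omega})$, and the structure of $\sigma_n:=\int_\calY\Sigma_n(\cdot,y)\,dy$ imposed by \Cref{definition K^hom_inf} (so that the $z$-terms collapse in the limit), one gets
\begin{equation*}
\langle\lambda_n,\phi\rangle=-\int_{\ext{\Omega}\times\calY}\varphi(x')\,\Sigma_n:E\,dx\,dy+\int_{\ext{\Omega}}\varphi(x')\,\sigma_n:dEu .
\end{equation*}
Then I would split $Eu$ into its zero-th and first order moments via \Cref{A_KL characherization} and integrate by parts in $x'$ using $\div_{x'}\bar{\sigma}_n=0$ and $\div_{x'}\div_{x'}\hat{\sigma}_n=0$, exactly as in \cite[Propositions 7.2 and 7.6]{Davoli.Mora.2013}, producing the boundary terms containing $\bar{w}$, $w_3$, $\nabla\varphi$ and $\nabla^2\varphi$. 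Passing to the limit by items \ref{approximation of stresses (a) - regime inf} and \ref{approximation of stresses (d) - regime inf} of \Cref{approximation of stresses - regime inf} yields the asserted formula for $\langle\lambda,\varphi\rangle$, and letting $\varphi\nearrow\charfun{\ext{\omega}}$ gives \eqref{mass of lambda - regime inf}.

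For general $\omega$ I would argue as in Step 2 of \Cref{two-scale stress-strain duality - regime zero}: by \cite[Proposition 2.5.4]{Carbone.DeArcangelis.2002} (after smoothing the cylindrical pieces to be of class $C^2$) pick a finite open cover $\{U_i\}$ of $\closure{\omega}$ with each $\omega\cap U_i$ strongly star-shaped and $C^2$, take a subordinate smooth partition of unity $\{\psi_i\}$, and set $\Sigma^i:=\Sigma$ on $(\omega\cap U_i)\times I\times\calY$ and $0$ elsewhere. Each $\Sigma^i\in\calK^{hom}_{\infty}$, so Step 1 produces $\lambda^i_n\weakstar\lambda^i$ on $(\omega\cap U_i)\times I\times\calY$, and the measures glue to $\langle\lambda,\phi\rangle:=\sum_i\langle\lambda^i,\psi_i(x')\,\phi\rangle$, which satisfies all the required properties (with $\lambda_n:=\sum_i\langle\lambda^i_n,\psi_i(x')\,\cdot\,\rangle\weakstar\lambda$).

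The main obstacle is the same as in the $\gamma=0$ case: the disintegration measure $\eta$ furnished by \Cref{disintegration result - regime inf} may concentrate on sets on which the merely $L^2$ stress $\Sigma$ is not pointwise defined, so the fiberwise pairing $[(\Sigma(x,\cdot))_{\dev}:P_x]$ cannot be formed directly. This is circumvented precisely by working with the $x$-smooth approximants $\Sigma_n$ of \Cref{approximation of stresses - regime inf}; since that approximation (dilation plus convolution in $\R^3$) is available only on star-shaped domains, the star-shaped reduction of Step 1 followed by the localization of Step 2 is essential, and is the reason the result cannot be obtained by a single direct application of the abstract duality of \cite[Proposition 5.11]{Francfort.Giacomini.2014}.
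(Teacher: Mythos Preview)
Your proposal is correct and follows exactly the same two-step strategy as the paper's proof: approximate $\Sigma$ on star-shaped pieces by the $x$-smooth sequence of \Cref{approximation of stresses - regime inf}, form $\lambda_n:=\eta\genprod[(\Sigma_n)_{\dev}(x,\cdot):P_x]$ via the disintegration of \Cref{disintegration result - regime inf}, test against $\varphi(x')\xi(x_3)$, and then localize through a star-shaped cover and partition of unity. The only minor imprecision is your phrase ``the $z$-terms collapse in the limit'': in fact the approximation by dilation and convolution in $x$ preserves $\int_{\calY}(\Sigma_n)_{i3}(x,y)\,dy=(\sigma_n)_{i3}(x)=0$ for every $n$, so those terms vanish identically before passing to the limit---which is precisely what the paper uses.
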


\begin{proof}
Suppose that $\omega$ is star-shaped with respect to one of its points.

Let $\{ \Sigma_n \} \subset C^\infty(\R^3;L^2(\calY;\M^{3 \times 3}_{\sym}))$ be sequence given by Lemma \ref{approximation of stresses - regime inf}. 
We define the sequence
\begin{equation*}
    \lambda_n := \eta \genprod [ (\Sigma_n)_{\dev}(x,\cdot) : P_{x} ] \in \Mb(\ext{\Omega} \times \calY),
\end{equation*}
where \CCC $\eta$ is given by \cref{disintegration result - regime inf} and \BBB the duality $[ (\Sigma_n)_{\dev}(x,\cdot) : P_{x} ]$ is a well defined bounded measure on $\calY$ for $\eta$-a.e. $x \in \ext{\Omega}$. 
Further, in view of \Cref{admissible configurations and disintegration - regime inf},  \eqref{cell stress-strain duality - regime inf} gives
\begin{align*}
    &\int_{\calY} \psi\,d[ (\Sigma_n)_{\dev}(x,\cdot) : P_{x} ]  \\
    &= - \int_{\calY} \psi\,\Sigma_n : \left[ C(x) E(x,y) - \begin{pmatrix} A_1(x') + x_3 A_2(x') & 0 \\ 0 & 0 \end{pmatrix} \right] \,dy\\
    &\,\quad - \int_{\calY} (\Sigma_n)''(x,y) : \big( \mu_{x}(y) \odot \nabla_{y}\psi(y) \big) \,dy\\
    &\,\quad - \sum_{\alpha=1,2} \int_{\calY} \kappa_{x}(y)\,(\Sigma_n)_{\alpha 3}(x,y)\,\partial_{y_\alpha}\psi(y) \,dy + \sum_{i=1,2,3} z_i\,\int_{\calY} \psi(y)\,(\Sigma_n)_{i3}(x,y) \,dy,
\end{align*}
for every $\psi \in C^1(\calY)$, and
\begin{equation*}
    | [ (\Sigma_n)_{\dev}(x,\cdot) : P_{x} ] | \leq \| (\Sigma_n)_{\dev}(x,\cdot) \|_{L^{\infty}(\calY;\M^{3 \times 3}_{\sym})} |P_{x}| \leq C\,|P_{x}|,
\end{equation*}
where the last inequality stems from item \ref{approximation of stresses (c) - regime inf} in Lemma \ref{approximation of stresses - regime inf}.
This in turn implies that
\begin{equation*}
    |\lambda_n | = \eta \genprod | [ (\Sigma_n)_{\dev}(x,\cdot) : P_{x} ] | \leq C \,\eta \genprod |P_{x}| = C\,|P|,
\end{equation*}
from which we conclude that is $\{ \lambda_n \}$ is a bounded sequence.

Let now $\ext{I} \supset I$ be an open set which compactly contains $I$ \CCC and extend the above measures by zero on $\ext{\omega} \times \ext{I} \times \calY$. \BBB 
Let $\xi$ be a smooth cut-off function with $\xi \equiv 1$ on $I$, with support contained in $\ext{I}$. 
Finally, we consider a test function \CCC $\phi(x) := \varphi(x')\xi(x_3)$ \BBB, for $\varphi \in C_c^{\infty}(\ext{\omega})$. 
Then, since \CCC$\nabla_{y}\phi(x) = 0$ \BBB, \CCC $\partial_{y_\alpha}\phi(x) = 0$ \BBB and $\int_{\calY} (\Sigma_n)_{i3}(x,y) \,dy = 0$, we have
\begin{align*}
    \langle \lambda_n, \phi \rangle 
    &= \int_{\ext{\Omega}} \left( \int_{\calY} \phi(x,y) \,d[  (\Sigma_n)_{\dev}(x,\cdot) : P_{x} ] \right) \,d\eta(x)\\
    &= - \int_{\ext{\Omega} \times \calY} \varphi(x')\,\Sigma_n(x,y) : \left[ C(x) E(x,y) - \begin{pmatrix} A_1(x') + x_3 A_2(x') & 0\\ 0 & 0 \end{pmatrix} \right] \,d\left(\eta \otimes \calL^{2}_{y}\right)\\
    &= - \int_{\ext{\Omega} \times \calY} \varphi(x')\,\Sigma_n(x,y) : E(x,y) \,dx dy + \int_{\ext{\Omega}} \varphi(x')\,\sigma_n(x) : \left( A_1(x') + x_3 A_2(x') \right) \,d\eta\\
    &= - \int_{\ext{\Omega} \times \calY} \varphi(x')\,\Sigma_n(x,y) : E(x,y) \,dx dy + \int_{\ext{\Omega}} \varphi(x')\,\sigma_n(x) : \,dEu(x)
\end{align*}
From this point on, the proof is exactly the same as the proof of \Cref{two-scale stress-strain duality - regime zero} by defining in the analogous way $\Sigma_n^i$, $\lambda_n^i$, i.e. $\Sigma_i$, $\lambda_i$. 
\end{proof}
\CCC The following theorem is analogous to \Cref{two-scale Hill's principle - regime zero}. \BBB
\begin{theorem} \label{two-scale Hill's principle - regime inf}
Let $\Sigma \in \calK^{hom}_{\infty}$ and $(u,E,P) \in \calA^{hom}_{\infty}(w)$ with the associated $\mu \in \calXinf{\ext{\Omega}}$, $\kappa \in \calXinf{\ext{\Omega}}$, $\zeta \in \Mb(\Omega;\R^3)$. \CCC If $\calY$ is a geometrically admissible multi-phase torus,  under the assumption on the ordering of phases we have \BBB  
\begin{equation*}
    H\left(y, \frac{dP}{d|P|}\right)\,|P| \geq \lambda,
\end{equation*}
where $\lambda \in \Mb(\ext{\Omega} \times \calY)$ is given by \Cref{two-scale stress-strain duality - regime inf}.
\end{theorem}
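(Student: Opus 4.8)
The plan is to mirror the two-step structure of the proof of \Cref{two-scale Hill's principle - regime zero}, but now working directly on $\M^{3\times 3}_{\dev}$ rather than on the reduced $2\times 2$ configurations. First I would fix a non-negative $\varphi \in C_c(\ext{\Omega}\times\calY)$ and recall, from the proof of \Cref{two-scale stress-strain duality - regime inf}, the partition of unity $\{\psi_i\}$ subordinate to the star-shaped covering $\{U_i\}$ of $\closure{\omega}$, together with the truncated stresses $\Sigma^i$, their smooth approximations $\{\Sigma^i_n\}$ given by \Cref{approximation of stresses - regime inf}, and the measures $\lambda^i_n := \eta \genprod [ (\Sigma^i_n)_{\dev}(x,\cdot) : P_x ]$ with $\lambda_n \weakstar \lambda$ weakly* in $\Mb(\ext{\Omega}\times\calY)$. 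The disintegration of \Cref{disintegration result - regime inf} gives, for $\eta$-a.e. $x \in \ext{\Omega}$, that the quintuplet $( \mu_x, \kappa_x, z, [\,C(x)E(x,y) - \text{(skew-free $2\times2$ block)}\,], P_x )$ lies in $\calA_\infty$ (cf. \Cref{admissible configurations and disintegration - regime inf}).

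The core of the argument is then a pointwise (in $x$) application of \Cref{cell Hill's principle - regime inf}. Since item \ref{approximation of stresses (c) - regime inf} of \Cref{approximation of stresses - regime inf} guarantees $(\Sigma^i_n(x,y))_{\dev} \in K(y)$ for every $x$ and a.e.\ $y$, and since $\Sigma^i_n$ satisfies $\div_y \Sigma^i_n(x,\cdot)=0$, each $\Sigma^i_n(x,\cdot)$ is an element of $\calK_\infty$ for $\eta$-a.e.\ $x$. Hence \Cref{cell Hill's principle - regime inf}, applied to $\Sigma^i_n(x,\cdot)$ and the admissible cell configuration above, yields
\begin{equation*}
    \int_{\calY} \varphi(x,y)\,H\!\left(y, \frac{dP_x}{d|P_x|}\right) d|P_x| \;\geq\; \int_{\calY} \varphi(x,y)\,d[\, (\Sigma^i_n)_{\dev}(x,\cdot) : P_x \,]
\end{equation*}
for $\eta$-a.e.\ $x \in \ext{\omega}$. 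Next I would use \cite[Proposition 2.2]{BDV} to identify $\frac{dP}{d|P|}(x,y) = \frac{dP_x}{d|P_x|}(y)$ for $|P_x|$-a.e.\ $y$, so that $H\!\left(y, \frac{dP}{d|P|}\right)|P| = \eta \genprod H\!\left(y, \frac{dP_x}{d|P_x|}\right)|P_x| = \sum_i \psi_i\,\eta \genprod H\!\left(y,\frac{dP_x}{d|P_x|}\right)|P_x|$. Multiplying the pointwise inequality by $\psi_i(x')$, integrating against $\eta$, and summing over $i$ gives
\begin{equation*}
    \int_{\ext{\Omega}\times\calY} \varphi\,H\!\left(y,\frac{dP}{d|P|}\right) d|P| \;\geq\; \sum_i \int_{\ext{\Omega}\times\calY} \psi_i(x')\,\varphi(x,y)\,d\lambda^i_n(x,y) = \int_{\ext{\Omega}\times\calY} \varphi\,d\lambda_n,
\end{equation*}
and passing to the limit $n \to \infty$, using $\lambda_n \weakstar \lambda$ and $\varphi \geq 0$, delivers the claimed inequality $H\!\left(y,\frac{dP}{d|P|}\right)|P| \geq \lambda$ in $\Mb(\ext{\Omega}\times\calY)$.

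The main obstacle I anticipate is the measurability and well-posedness of the cell-duality pairing $x \mapsto [\,(\Sigma^i_n)_{\dev}(x,\cdot) : P_x\,]$ as an $\eta$-measurable family of measures on $\calY$, so that $\lambda^i_n$ is a legitimate element of $\Mb(\ext{\Omega}\times\calY)$ — this is exactly where the star-shapedness hypothesis of \Cref{approximation of stresses - regime inf} and the smoothness of $\Sigma^i_n$ in $x$ (item \ref{approximation of stresses (a) - regime inf}) are essential, and where one must be careful that $\eta$ may concentrate on sets of zero Lebesgue measure where only the smooth approximants, not $\Sigma$ itself, are defined pointwise; this is already handled in the proof of \Cref{two-scale stress-strain duality - regime inf}, so I would simply invoke that construction. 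A secondary technical point is verifying that the interface contribution to $[\Sigma_{\dev}:P]$ is correctly captured: here \Cref{duality on the interface - regime inf} together with the assumption on the ordering of the phases ensures $[\Sigma^i_n\iota(\nu^i)]\tangentiallll \in ((K_i\cap K_j)\iota(\nu^i))\tangentiallll$, which is what makes the pointwise inequality from \Cref{cell Hill's principle - regime inf} valid up to and including the interfaces $\Gamma_{ij}$; the fact that both measures vanish on the negligible set $\calS$ follows as in \Cref{cell Hill's principle - regime zero} from \eqref{nakexp1} and \Cref{duality distribution is actually a measure - regime inf}.
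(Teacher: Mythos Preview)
Your proposal is correct and follows essentially the same route as the paper: invoke the star-shaped covering and approximating stresses $\{\Sigma^i_n\}$ from the proof of \Cref{two-scale stress-strain duality - regime inf}, apply \Cref{cell Hill's principle - regime inf} at $\eta$-a.e.\ $x$ (using item \ref{approximation of stresses (c) - regime inf} of \Cref{approximation of stresses - regime inf} and $\div_y\Sigma^i_n(x,\cdot)=0$), use \cite[Proposition 2.2]{BDV} to rewrite $H(y,\tfrac{dP}{d|P|})|P|$ via the disintegration, sum against the partition of unity to obtain $\lambda_n$ on the right, and pass to the limit. One small slip: in the regime $\gamma=+\infty$ the disintegration measure $\eta$ lives on $\ext{\Omega}$, not $\ext{\omega}$, so the phrase ``for $\eta$-a.e.\ $x\in\ext{\omega}$'' should read ``for $\eta$-a.e.\ $x\in\ext{\Omega}$''.
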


\begin{proof}
Let $\{\Sigma^i_n\}$, $\{\lambda^i_n\}$ and $\lambda^i$ be defined as in the proof of \Cref{two-scale stress-strain duality - regime inf}. 
Item \ref{approximation of stresses (c) - regime inf} in \Cref{approximation of stresses - regime inf} implies that
\begin{equation*}
    (\Sigma^i_n)_{\dev}(x,y) \in K(y) \,\text{ for every } x \in \Omega \text{ and } \calL^{2}_{y}\text{-a.e. } y \in \calY.   
\end{equation*}
By \Cref{cell Hill's principle - regime inf}, we have for $\eta$-a.e. $x \in \ext{\Omega}$
\begin{equation*}
    H\left(y, \frac{dP_{x}}{d|P_{x}|}\right)\,|P_{x}| \geq [ (\Sigma^i_n)_{\dev}(x,\cdot) : P_{x} ] \ \text{ as measures on } \calY.
\end{equation*}
Since $\frac{dP}{d|P|}(x,y) = \frac{dP_{x}}{d|P_{x}|}(y)$ for $|P_{x}|$-a.e. $y \in \calY$ by \cite[Proposition 2.2]{BDV}, we can conclude that
\begin{align*}
    H\left(y, \frac{dP}{d|P|}\right)\,|P| &= \eta \genprod H\left(y, \frac{dP}{d|P|}\right)\,|P_{x}| = \eta \genprod H\left(y, \frac{dP_{x}}{d|P_{x}|}\right)\,|P_{x}|\\
    &= \sum_{i} \psi_i(x') \eta \genprod H\left(y, \frac{dP_{x}}{d|P_{x}|}\right)\,|P_{x}| \\
    &\geq \sum_{i} \psi_i(x') \eta \genprod [ (\Sigma^i_n)_{\dev}(x,\cdot) : P_{x} ] \\
    &= \sum_{i} \psi_i(x') \lambda^i_n = \lambda_n.
\end{align*}
By passing to the limit, we have the desired inequality.
\end{proof}

\section{Two-scale quasistatic evolutions}
\label{dynamics}


The associated $\calH^{hom}$-variation of a function $P : [0,T] \to \Mb(\ext{\Omega} \times \calY;\M^{3 \times 3}_{\dev})$ on $[a,b]$ is then defined as
\begin{equation*}
    \calD_{\CCC\calH^{hom}_{\gamma} \BBB}(P; a, b) := \sup\left\{ \sum_{i = 1}^{n-1} \CCC \calH^{hom}_{\gamma} \BBB\left(P(t_{i+1}) - P(t_i)\right) : a = t_1 < t_2 < \ldots < t_n = b,\ n \in \N \right\}.
\end{equation*}
In this section we prescribe for every $t \in [0, T]$ a boundary datum $w(t) \in H^1(\ext{\Omega};\R^3) \cap KL(\ext{\Omega})$ and we assume the map $t\mapsto w(t)$ to be absolutely continuous from $[0, T]$ into $H^1(\ext{\Omega};\R^3)$.

We now give the notion of the limiting quasistatic elasto-plastic evolution.

\begin{definition}
A \emph{two-scale quasistatic evolution} for the boundary datum $w(t)$ is a function $t \mapsto (u(t), E(t), P(t))$ from $[0,T]$ into $KL(\ext{\Omega}) \times L^2(\ext{\Omega} \times \calY;\M^{3 \times 3}_{\sym}) \times \Mb(\ext{\Omega} \times \calY;\M^{3 \times 3}_{\dev})$ which satisfies the following conditions:
\begin{enumerate}[label=(qs\arabic*)$^{hom}_{\gamma}$]
    \item \label{hom-qs S} for every $t \in [0,T]$ we have $(u(t), E(t), P(t)) \in \calA^{hom}_{\gamma}(w(t))$ and
    \begin{equation*}
        \calQ^{hom}_{\CCC \gamma \BBB} (E(t)) \leq \calQ^{hom}_{\CCC \gamma \BBB}(H) + \calH^{hom}_{\CCC \gamma \BBB}(\Pi-P(t)),
    \end{equation*}
    for every $(\upsilon,H,\Pi) \in \calA^{hom}_{\gamma}(w(t))$.
    \item \label{hom-qs E} the function $t \mapsto P(t)$ from $[0, T]$ into $\Mb(\ext{\Omega} \times \calY;\M^{3 \times 3}_{\dev})$ has bounded variation and for every $t \in [0, T]$
    \begin{equation*}
        \calQ^{hom}_{\CCC 0 \BBB}(E(t)) + \calD_{\calH^{hom}_{\CCC 0 \BBB}}(P; 0, t) = \calQ^{hom}_{\CCC 0 \BBB}(E(0)) 
        + \int_0^t \int_{\Omega \times \calY} \C_r(y) E(s) : E\dot{w}(s) \,dx dy ds,
    \end{equation*}
    \CCC for $\gamma=0$ and 
    \begin{equation*}
        \calQ^{hom}_{\CCC +\infty \BBB}(E(t)) + \calD_{\calH^{hom}_{\CCC +\infty \BBB}}(P; 0, t) = \calQ^{hom}_{\CCC +\infty \BBB}(E(0)) 
        + \int_0^t \int_{\Omega \times \calY} \C(y) E(s) : E\dot{w}(s) \,dx dy ds,
    \end{equation*}
for $\gamma=+\infty$. \BBB
\end{enumerate}
\end{definition}

Recalling the definition of a $h$-quasistatic evolution for the boundary datum $w(t)$ given in Definition \ref{h-quasistatic evolution}, we are in a position to formulate the main result of the \CCC paper\BBB.

\begin{theorem} \label{main result}
Let $t \mapsto w(t)$ be absolutely continuous from $[0,T]$ into $H^1(\ext{\Omega};\R^3) \cap KL(\ext{\Omega})$. 
\CCC Let $\calY$ be a geometrically admissible multi-phase torus and let the assumption on the ordering of phases be satisfied. 
\BBB \CCC Assume also \eqref{tensorassumption}, \eqref{tensorassumption2} \BBB and \eqref{coercivity of H_i} and \BBB
 that there exists a sequence of triples $(u^h_0, e^h_0, p^h_0) \in \calA_h(w(0))$ such that
\begin{align}
    &u^h_0 \weakstar u_0 \quad \text{weakly* in $BD(\ext{\Omega})$}, \label{main result u^h_0 condition}\\
    &\Lambda_h e^h_0 \strongtwoscale E_0 \quad \text{two-scale strongly in $L^2(\ext{\Omega} \times \calY;\M^{3 \times 3}_{\sym})$}, \label{main result e^h_0 condition}\\
    &\Lambda_h p^h_0 \weakstartwoscale P_0 \quad \text{two-scale weakly* in $\Mb(\ext{\Omega} \times \calY;\M^{3 \times 3}_{\dev})$}, \label{main result p^h_0 condition}
\end{align}
for \RED $(u_0,E_0,P_0) \in \calA^{hom}_{\infty}(w(0))$ if $\gamma = +\infty$, \BLACK and $(u_0,E_0^{\prime\prime},P_0^{\prime\prime}) \in \calA^{hom}_{0}(w(0))$ with \CCC $E_0=\A_y E_0''$ \BBB if $\gamma = 0$.\\ 
For every $h > 0$, let 
\begin{equation*}
    t \mapsto (u^h(t), e^h(t), p^h(t))
\end{equation*}
be a $h$-quasistatic evolution \BBB in the sense of Definition \ref{h-quasistatic evolution} \BBB for the boundary datum $w$ such that $u^h(0) = u^h_0$, $e^h(0) = e^h_0$, and $p^h(0) = p^h_0$. 
Then, there exists a two-scale quasistatic evolution
\begin{equation*}
    t \mapsto (u(t), E(t), P(t)) 
\end{equation*}
for the boundary datum $w(t)$ such that $u(0) = u_0$,\, $E(0) = E_0$, and $P(0) = P_0$, and such that (up to subsequence)
for every $t \in [0,T]$
\begin{align}
    &u^h(t) \weakstar u(t) \quad \text{weakly* in $BD(\ext{\Omega})$}, \label{main result u^h(t)}\\
    &\Lambda_h e^h(t) \weaktwoscale E(t) \quad \text{two-scale weakly in $L^2(\ext{\Omega} \times \calY;\M^{3 \times 3}_{\sym})$}, \label{main result e^h(t)}\\
    &\Lambda_h p^h(t) \weakstartwoscale P(t) \quad \text{two-scale weakly* in $\Mb(\ext{\Omega} \times \calY;\M^{3 \times 3}_{\dev})$}, \label{main result p^h(t)}
\end{align}
in case \RED $\gamma = +\infty$ \BLACK, and
\begin{align}
    &u^h(t) \weakstar u(t) \quad \text{weakly* in $BD(\ext{\Omega})$}, \label{main result 0 u^h(t) - regime zero}\\
    &\CCC \Lambda_h e^h(t) \BBB \weaktwoscale \A_y E(t) \quad \text{two-scale weakly in $L^2(\ext{\Omega} \times \calY;\M^{3 \times 3}_{\sym})$}, \label{main result e^h(t) - regime zero}\\
    &p^h(t) \weakstartwoscale \begin{pmatrix} P(t) & 0 \\ 0 & 0 \end{pmatrix} \quad \text{two-scale weakly* in $\Mb(\ext{\Omega} \times \calY;\M^{3 \times 3}_{\sym})$}, \label{main result p^h(t) - regime zero}
\end{align}
in case $\gamma = 0$.
\end{theorem}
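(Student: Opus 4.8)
The plan is to follow the classical scheme for evolutionary convergence of rate-independent systems (as in \cite{Francfort.Giacomini.2014,Davoli.Mora.2013,BDV}), adapted to the two-scale dimension-reduction setting: a priori estimates and compactness, the lower energy--dissipation inequality, and then the verification of global stability together with the energy balance. \textbf{Step 1 (a priori bounds and compactness).} First I would use the discrete energy balance \ref{h-qs E}, the discrete stability \ref{h-qs S} and a Gr\"onwall argument to bound, uniformly in $h$ and $t\in[0,T]$, the quantities $\calQ_h(\Lambda_h e^h(t))$, hence $\|\Lambda_h e^h(t)\|_{L^2}$, and $\calD_{\calH_h}(\Lambda_h p^h;0,t)$, hence by \eqref{equivalence of variations} the total variation in time and the mass of $\Lambda_h p^h(t)$; the boundary condition \eqref{boundcondp*} and a Poincar\'e--Korn inequality in $BD$ then bound $\|u^h(t)\|_{BD(\ext\Omega)}$. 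A generalized Helly selection theorem for $\Mb(\ext\Omega\times\calY;\M^{3\times3}_{\dev})$-valued functions of bounded variation (with respect to two-scale weak$^\ast$ convergence, using weak$^\ast$ metrizability on bounded sets) yields a $t$-independent subsequence and a limit $t\mapsto P(t)$ of bounded variation with bounded plastic dissipation. For each fixed $t$, \Cref{two-scale weak limit of scaled strains} applied to $u^h(t)$ produces the Kirchhoff--Love limit $u(t)$ and the corrector structure of the two-scale limit of $\Lambda_h Eu^h(t)$; passing to the limit in $Eu^h=e^h+p^h$ and invoking \Cref{A_KL characherization} together with Definitions \ref{definition A^hom_0}--\ref{definition A^hom_inf}, one identifies $(u(t),E(t),P(t))\in\calA^{hom}_\gamma(w(t))$. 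In the regime $\gamma=0$, weak lower semicontinuity of $\calQ_h$ and the minimality defining $\A_y$ force the two-scale limit of $\Lambda_h e^h(t)$ to be $\A_y E(t)$ with $E(t)\in L^2(\Omega\times\calY;\M^{2\times2}_{\sym})$, which is the source of \eqref{main result e^h(t) - regime zero} and \eqref{main result p^h(t) - regime zero}; the convergences \eqref{main result u^h(t)}--\eqref{main result p^h(t)} are then read off directly.

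\textbf{Step 2 (lower energy--dissipation inequality).} Next I would take the liminf in \ref{h-qs E}. Convexity of $Q(y,\cdot)$ and two-scale lower semicontinuity give $\liminf_h \calQ_h(\Lambda_h e^h(t))\ge \calQ^{hom}_\gamma(E(t))$, while $\calQ_h(\Lambda_h e^h_0)\to\calQ^{hom}_\gamma(E_0)$ by the \emph{strong} two-scale convergence \eqref{main result e^h_0 condition}. The delicate point is $\liminf_h \calD_{\calH_h}(\Lambda_h p^h;0,t)\ge \calD_{\calH^{hom}_\gamma}(P;0,t)$: on each subinterval of a partition one needs lower semicontinuity of the two-scale plastic-work functional $\mu\mapsto\int_{\ext\Omega\times\calY} H(y,\tfrac{d\mu}{d|\mu|})\,d|\mu|$ (respectively with $H_r$, after reducing via \eqref{K_r characherization} in the regime $\gamma=0$) with respect to two-scale weak$^\ast$ convergence of the unfolded measures of \Cref{transfertwoscale}, keeping also the boundary term on $\Gamma_\Dir$. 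Since $H(y,\cdot)$ is only Borel and lower semicontinuous in $y$ (it drops on the interfaces), this is exactly where the assumption on the ordering of the phases enters: via Reshetnyak's lower semicontinuity theorem applied to an increasing approximation of $H$ by continuous, convex, positively $1$-homogeneous integrands (cf. \Cref{Reshetnyak remark 1,Reshetnyak remark 2}), the ordering rules out a spurious loss of mass of the limiting plastic strain along $\Gamma$.

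\textbf{Step 3 (global stability, energy balance, initial data).} To obtain \ref{hom-qs S} for the limit I would pass to the two-scale limit in the equilibrium relation $\div_h\sigma^h(t)=0$ with $\sigma^h(t)=\C(\tfrac{x'}{\epsh})\Lambda_h e^h(t)$, so that \Cref{two-scale weak limit of admissible stress - regime zero} (resp.\ \Cref{two-scale weak limit of admissible stress - regime inf}) gives $\Sigma(t)\in\calK^{hom}_\gamma$, where $\Sigma(t)$ is $\C_r(y)E(t)$ for $\gamma=0$ and $\C(y)E(t)$ for $\gamma=+\infty$; then, for any competitor $(\upsilon,H,\Pi)\in\calA^{hom}_\gamma(w(t))$, combining the convexity inequality for $Q(y,\cdot)$ with \Cref{two-scale dissipation and plastic work inequality} (the principle of maximum plastic work, whose boundary contributions cancel because all competitors share the datum $w(t)$) yields $\calQ^{hom}_\gamma(E(t))\le\calQ^{hom}_\gamma(H)+\calH^{hom}_\gamma(\Pi-P(t))$. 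For \ref{hom-qs E}, the inequality ``$\le$'' follows from the stability just proven together with a Riemann-sum estimate of the power term appearing in \ref{hom-qs E} (standard for rate-independent systems and the absolutely continuous datum $w$), while ``$\ge$'' is Step 2 at time $t$ combined with convergence of that power term via the two-scale convergence of $\Lambda_h e^h(s)$ from Step 1; together they give equality. The prescribed initial conditions $u(0)=u_0$, $E(0)=E_0$, $P(0)=P_0$ then follow from \eqref{main result u^h_0 condition}--\eqref{main result p^h_0 condition} and the identifications above.

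\textbf{Main obstacle.} I expect the principal difficulty to be the lower semicontinuity of the dissipation in Step 2: because the dissipation density is genuinely discontinuous across the phase interfaces, a naive two-scale Reshetnyak argument fails, and one must use the ordering of the phases to prevent the limiting plastic strain from concentrating on $\Gamma$ with the ``too small'' density --- this is exactly the hypothesis that is absent in \cite{BDV}. A secondary but technically heavy point is the two-scale stress--strain duality underlying \Cref{two-scale dissipation and plastic work inequality}, whose proof requires the localization argument of \Cref{two-scale stress-strain duality - regime zero} (and its $\gamma=+\infty$ analogue) to remove the star-shapedness restriction coming from the approximation Lemmas \ref{approximation of stresses - regime zero} and \ref{approximation of stresses - regime inf}.
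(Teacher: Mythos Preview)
Your plan matches the paper's proof almost exactly: compactness via the energy balance and a generalized Helly theorem, global stability from the admissibility of the limiting stress together with \Cref{two-scale dissipation and plastic work inequality}, and the energy inequality from two-scale/Reshetnyak lower semicontinuity (this last point being precisely where the ordering-of-phases assumption is used). The organization into ``lower inequality first, then stability'' versus the paper's ``stability first, then only the $\le$ half of the energy balance'' is cosmetic.

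There is, however, one genuine slip in your Step~1 for $\gamma=0$. You write that ``weak lower semicontinuity of $\calQ_h$ and the minimality defining $\A_y$ force the two-scale limit of $\Lambda_h e^h(t)$ to be $\A_y E(t)$''. This is not correct: lower semicontinuity gives you an inequality, and the pointwise minimality of $\A_y$ only tells you that $Q(y,\A_y\xi)\le Q(y,\zeta)$ whenever $\zeta''=\xi$; neither statement forces the actual two-scale limit $\widetilde E(t)$ of $\Lambda_h e^h(t)$ to satisfy $\widetilde E(t)=\A_y\widetilde E''(t)$. The paper obtains this structure differently (and you in fact have the right tool in your Step~3): from the $h$-stability one has $\sigma^h(t)\in\calK_h$, so by \Cref{two-scale weak limit of admissible stress - regime zero} the two-scale limit $\Sigma(t)=\C(y)\widetilde E(t)$ lies in $\calK^{hom}_0$, in particular $\Sigma_{i3}(t)=0$. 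By the very definition of $\A_y$ (the unique minimizer of \eqref{Q_r minimum problem} is characterized by $\C(y)\A_y\xi : (\text{third row/column})=0$), this is equivalent to $\widetilde E(t)=\A_y\widetilde E''(t)$, and only then does $\Sigma(t)=\C_r(y)E''(t)$ hold. In other words, the identification \eqref{main result e^h(t) - regime zero} is a consequence of the stress admissibility, not of an energy-minimality argument; you should move the passage to the limit in $\div_h\sigma^h(t)=0$ from Step~3 into Step~1, as the paper does.
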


\begin{proof}
The proof is divided into several steps, in the spirit of evolutionary $\Gamma$-convergence \CCC and it  follows the lines of \cite[Theorem 6.2]{BDV}. \BBB
\RED
We present the proof in the case $\gamma = 0$, while the argument for the case $\gamma = +\infty$ is identical upon replacing the appropriate structures in the statement of \Cref{two-scale weak limit of scaled strains} and definition of $\calA^{hom}_{\gamma}(w)$.
\BLACK

\noindent{\bf Step 1: \em Compactness.}

First, we prove that that there exists a constant $C$, depending only on the initial and boundary data, such that
\begin{equation} \label{boundness in time 1}
    \sup_{t \in [0,T]} \left\|\Lambda_h e^h(t)\right\|_{L^2(\ext{\Omega} \times \calY;\M^{3 \times 3}_{\sym})} \leq C \;\text{ and }\; \calD_{\calH_h}(\Lambda_h p^h; 0, T) \leq C,
\end{equation}
for every $h>0$. 
Indeed, the energy balance of the $h$-quasistatic evolution \ref{h-qs E} and \eqref{coercivity of Q} imply
\begin{align*}
    &r_c \left\|\Lambda_h e^h(t)\right\|_{L^2(\ext{\Omega} \times \calY;\M^{3 \times 3}_{\sym})} + \calD_{\calH_h}(\Lambda_h p^h; 0, t) \\
    &\leq R_c \left\|\Lambda_h e^h(0)\right\|_{L^2(\ext{\Omega} \times \calY;\M^{3 \times 3}_{\sym})} + 2 R_c \sup_{t \in [0,T]} \left\|\Lambda_h e^h(t)\right\|_{L^2(\ext{\Omega} \times \calY;\M^{3 \times 3}_{\sym})} \int_0^T \left\|E\dot{w}(s)\right\|_{L^2(\ext{\Omega};\M^{3 \times 3}_{\sym})} \,ds,
\end{align*}
where the last integral is well defined as $t \mapsto E\dot{w}(t)$ belongs to $L^1([0,T];L^2(\ext{\Omega};\M^{3 \times 3}_{\sym}))$. 
In view of the boundedness of $\Lambda_h e^h_0$ that is implied by \eqref{main result e^h_0 condition}, property \eqref{boundness in time 1} now follows by the Cauchy-Schwarz inequality.

Second, from the latter inequality in \eqref{boundness in time 1} and \eqref{coercivity of H_i}, we infer that
\begin{equation*}
    r_k \left\|\Lambda_h p^h(t) - \Lambda_h p^h_0\right\|_{\Mb(\ext{\Omega} \times \calY;\M^{3 \times 3}_{\dev})} \leq \calH_h\left(\Lambda_h p^h(t) - \Lambda_h p^h_0\right) \leq \calD_{\calH_h}(\Lambda_h p^h; 0, t) \leq C,
\end{equation*}
for every $t \in [0,T]$, which together with \eqref{main result p^h_0 condition} implies 
\begin{equation} \label{boundness in time 2}
    \sup_{t \in [0,T]} \left\|\Lambda_h p^h(t)\right\|_{\Mb(\ext{\Omega} \times \calY;\M^{3 \times 3}_{\dev})} \leq C.
\end{equation}

Next, we note that $\left\|\cdot\right\|_{L^1(\ext{\Omega} \setminus \closure{\Omega};\M^{3 \times 3}_{\sym})}$ is a continuous seminorm on $BD(\ext{\Omega})$ which is also a norm on the set of rigid motions. 
Then, using a variant of Poincar\'{e}-Korn's inequality (see \cite[Chapter II, Proposition 2.4]{Temam.1985}) and the fact $(u^h(t), e^h(t), p^h(t)) \in \calA_h(w(t))$, we conclude that, for every $h > 0$ and $t \in [0,T]$,
\begin{align*}
    \left\|u^h(t)\right\|_{BD(\ext{\Omega})} &\leq C \left(\left\|u^h(t)\right\|_{L^1(\ext{\Omega} \setminus \closure{\Omega};\R^3)} + \left\|Eu^h(t)\right\|_{\Mb(\ext{\Omega};\M^{3 \times 3}_{\sym})}\right)\\
    &\leq C \left(\left\|w(t)\right\|_{L^1(\ext{\Omega} \setminus \closure{\Omega};\R^3)} + \left\|e^h(t)\right\|_{L^2(\ext{\Omega};\M^{3 \times 3}_{\sym})} + \left\|p^h(t)\right\|_{\Mb(\ext{\Omega};\M^{3 \times 3}_{\dev})}\right)\\
    &\leq C \left(\left\|w(t)\right\|_{L^2(\ext{\Omega};\R^3)} + \left\|\Lambda_h e^h(t)\right\|_{L^2(\ext{\Omega};\M^{3 \times 3}_{\sym})} + \left\|\Lambda_h p^h(t)\right\|_{\Mb(\ext{\Omega};\M^{3 \times 3}_{\dev})}\right).
\end{align*}
In view of the assumption $w \in H^1(\ext{\Omega};\R^3)$, from \eqref{boundness in time 2} and the former inequality in \eqref{boundness in time 1} it follows that the sequences $\{u^h(t)\}$ are bounded in $BD(\ext{\Omega})$ uniformly with respect to $t$.

Owing to \eqref{equivalence of variations}, we infer that $\calD_{\calH_h}$ and $\calV$ are equivalent norms, which immediately implies
\begin{equation} \label{boundness in time 3}
    \calV(\Lambda_h p^h; 0, T) \leq C,
\end{equation}
for every $h>0$. 
Hence, by a generalized version of Helly's selection theorem (see \cite[Lemma 7.2]{DalMaso.DeSimone.Mora.2006}), there exists a (not relabeled) subsequence, independent of $t$, and $P \in BV(0,T;\Mb(\ext{\Omega} \times \calY;\M^{3 \times 3}_{\dev}))$ such that
\begin{equation*}
    \Lambda_h p^h(t) \weakstartwoscale P(t) \quad \text{two-scale weakly* in $\Mb(\ext{\Omega} \times \calY;\M^{3 \times 3}_{\dev})$},
\end{equation*}
for every $t \in [0,T]$, and 
$\calV(P; 0, T) \leq C$.
We extract a further subsequence (possibly depending on $t$),
\begin{align*}
    &u^{h_t}(t) \weakstar u(t) \quad \text{weakly* in $BD(\ext{\Omega})$},\\
    &\Lambda_{h_t} e^{h_t}(t) \weaktwoscale E(t) \quad \text{two-scale weakly in $L^2(\ext{\Omega} \times \calY;\M^{3 \times 3}_{\sym})$},
\end{align*}
for every $t \in [0,T]$. 
From \CCC \Cref{two-scale weak limit of scaled strains - 2x2 submatrix} \BBB, we can conclude for every $t \in [0,T]$ that $u(t) \in KL(\ext{\Omega})$. 
Furthermore, according to \Cref{two-scale weak limit of scaled strains}, one can choose the above subsequence in a way such that there \RED exist $\mu(t) \in \calXzero{\ext{\omega}}$, $\kappa(t) \in \calYzero{\ext{\omega}}$ and $\zeta(t) \in \Mb(\ext{\Omega} \times \calY;\R^3)$ \BLACK such that
\begin{equation*}
    \Lambda_h Eu^{h_t}(t) \weakstartwoscale Eu(t) \otimes \calL^{2}_{y} + \RED \begin{pmatrix} \begin{matrix} E_{y}\mu(t) - x_3 D^2_{y}\kappa(t) \end{matrix} & \zeta'(t) \\ (\zeta'(t))^T & \zeta_3(t) \end{pmatrix} \BLACK.
\end{equation*}

Since, $\Lambda_{h_t} Eu^{h_t}(t) = \Lambda_{h_t} e^{h_t}(t) + \Lambda_{h_t} p^{h_t}(t)$ in $\ext{\Omega}$ for every $h > 0$ and $t \in [0,T]$, we deduce that \RED  $(u(t),E^{\prime\prime}(t),P^{\prime\prime}(t)) \in \calA^{hom}_{0}(w(t))$ \BLACK.

Lastly, we consider for every $t \in [0,T]$
\begin{equation*}
    \sigma^{h_t}(t) := \C\left(\tfrac{x'}{\epsh_t}\right) \Lambda_{h_t} e^{h_t}(t).
\end{equation*}
Then we can choose a (not relabeled) subsequence, such that
\begin{equation} \label{main result sigma^h(t)}
   \sigma^{h_t}(t) \weaktwoscale \Sigma(t) \quad \text{two-scale weakly in $L^2(\ext{\Omega} \times \calY;\M^{3 \times 3}_{\sym})$},
\end{equation}
where $\Sigma(t) := \C(y) E(t)$. 
Since $\sigma^{h_t}(t) \in \calK_{h_t}$ for every $t \in [0,T]$, by \Cref{two-scale weak limit of admissible stress - regime zero} we can conclude $\Sigma(t) \in \RED \calK^{hom}_{0} \BLACK$. \CCC From this it follows that $E(t)=\A_y E''(t)$. \BBB

\noindent{\bf Step 2: \em Global stability.}

Since from Step 1 we have \CCC$(u(t),E''(t),P''(t)) \in  \calA^{hom}_{0}(w(t)) $ \BBB with the associated \RED $\mu(t) \in \calXzero{\ext{\omega}}$, $\kappa(t) \in \calYzero{\ext{\omega}}$  \BLACK, then for every $(\upsilon,H,\Pi) \in \RED \calA^{hom}_{0}(w(t)) \BLACK$ with the associated \RED $\nu(t) \in \calXzero{\ext{\omega}}$, $\lambda(t) \in \calYzero{\ext{\omega}}$  \BLACK we have
\CCC
\begin{equation*}
    (\upsilon-u(t),H-E''(t),\Pi-P''(t)) \in \RED \calA^{hom}_{0}(0).
\end{equation*}
\BBB
Furthermore, since from the first step of the proof \CCC $\C_r(y) E''(t) \BBB \in \RED \calK^{hom}_{0} \BLACK$, by \Cref{two-scale dissipation and plastic work inequality} we have 
\CCC
\begin{align*}
    \calH^{hom}_{\CCC 0 \BBB} (\Pi-P''(t)) &\geq -\int_{\omega \times I \times \calY} \C_r(y) E''(t) : (H-E''(t)) \,dx dy\\
    &= \calQ^{hom}_{\CCC 0 \BBB}(E''(t)) + \calQ^{hom}_{\CCC 0 \BBB}(H-E''(t)) - \calQ^{hom}_{\CCC 0 \BBB}(H),
\end{align*}
\BBB
where the last equality is a straightforward computation. 
From the above, we immediately deduce
\CCC
\begin{equation*}
    \calH^{hom}_0(\Pi-P''(t)) + \calQ^{hom}_0(H) \geq \calQ^{hom}(E''(t)) + \calQ^{hom}_0(H-E''(t)) \geq \calQ^{hom}_0(E''(t)),
\end{equation*}
\BBB
hence the global stability of the two-scale quasistatic evolution \ref{hom-qs S}.

We proceed by proving that the limit functions $u(t)$ and $E(t)$ do not depend on the subsequence. 
\CCC Since $E(t)=\A_y E''(t)$, it is enough to conclude that $E''(t)$ is unique. \BBB
Assume $(\upsilon(t),H(t),P(t)) \in \RED \calA^{hom}_{0}(w(t)) \BLACK$ with the associated \RED $\nu(t) \in \calXzero{\ext{\omega}}$, $\lambda(t) \in \calYzero{\ext{\omega}}$  \BLACK also satisfy the global stability of the two-scale quasistatic evolution. 
By the strict convexity of $\calQ^{hom}_{\CCC 0 \BBB}$, we immediately obtain that
\begin{equation*}
    H(t) = \CCC E''\BBB(t).
\end{equation*}
Identifing $Eu(t), E\upsilon(t)$ with elements of $\Mb(\ext{\Omega};\M^{2 \times 2}_{\sym})$ and using \eqref{admissible two-scale configurations - regime zero}, we have that
\begin{align*}
    E\upsilon(t) \otimes \calL^{2}_{y} + \RED E_{y}\nu(t) - x_3 D^2_{y}\kappa(t) \BLACK &= H(t) \,\calL^{3}_{x} \otimes \calL^{2}_{y} + P(t)\\
    &= E(t) \,\calL^{3}_{x} \otimes \calL^{2}_{y} + P(t)\\
    &= Eu(t) \otimes \calL^{2}_{y} + \RED E_{y}\mu(t) - x_3 D^2_{y}\kappa(t) \BLACK.
\end{align*}
Integrating over $\calY$, 
we obtain
\begin{equation*}
    E\upsilon(t) = Eu(t).
\end{equation*}
Using the variant of Poincar\'{e}-Korn's inequality as in Step 1, we can infer that $\upsilon(t) = u(t)$ on $\ext{\Omega}$.

This implies that the whole sequences converge without depening on $t$, i.e.
\begin{align*}
    &u^{h}(t) \weakstar u(t) \quad \text{weakly* in $BD(\ext{\Omega})$},\\
    &\Lambda_h e^h(t) \weaktwoscale E(t)\CCC=\A_y E''(t) \BBB \quad \text{two-scale weakly in $L^2(\ext{\Omega} \times \calY;\M^{3 \times 3}_{\sym})$}.
\end{align*}

\noindent{\bf Step 3: \em Energy balance.}

In order to prove energy balance of the two-scale quasistatic evolution \ref{hom-qs E}, it is enough (by arguing as in, e.g. \cite[Theorem 4.7]{DalMaso.DeSimone.Mora.2006} and \cite[Theorem 2.7]{Francfort.Giacomini.2012}) to prove the energy inequality
\begin{align} \label{step 3 inequality}
\begin{split}
    &\calQ^{hom}_{\CCC 0\BBB} (E''(t)) + \calD_{\calH^{hom}_{\CCC 0\BBB}}(P''; 0, t) \\
    &\leq \calQ^{hom}_{\CCC 0\BBB}(\CCC E'' \BBB(0)) 
    + \int_0^t \int_{\Omega \times \calY} \C_{\CCC r \BBB} (y) E''(s) : E\dot{w}(s) \,dx dy ds.
\end{split}
\end{align}

For a fixed $t \in [0,T]$, let us consider a subdivision $0 = t_1 < t_2 < \ldots < t_n = t$ of $[0,t]$. 
In view of the lower semicontinuity of $\calQ^{hom}$ and $\calH^{hom}$ \CCC as a consequence of the convexity of $Q$ and Reshetnyak lower-semicontinuity (see \cite[Theorem 2.38]{ambrosio2000functions} and \Cref{transfertwoscale} , see also \cite[Lemma 4.6]{Francfort.Giacomini.2014}) \BBB from \ref{h-qs E} we have 
\begin{align*}
    &\calQ^{hom}_{\CCC 0\BBB}(E(t)) + \sum_{i = 1}^{n} \calH^{hom}_{\CCC 0\BBB}\left(P(t_{i+1}) - P(t_i)\right)\\
    &\leq \liminf\limits_{h}\left( \calQ_h(\Lambda_h e^h(t)) + \sum_{i = 1}^{n} \calH_h\left(\Lambda_h p^h(t_{i+1}) - \Lambda_h p^h(t_i)\right) \right)\\
    &\leq \liminf\limits_{h}\left( \calQ_h(\Lambda_h e^h(t)) + \calD_{\calH_h}(\Lambda_h p^h; 0, t) \right)\\
    &= \liminf\limits_{h}\left( \calQ_h(\Lambda_h e^h(0)) 
    + \int_0^t \int_{\Omega} \C\left(\tfrac{x'}{\epsh}\right) \Lambda_h e^h(s) : E\dot{w}(s) \,dx ds \right).
\end{align*}
In view of the strong convergence assumed in \eqref{main result e^h_0 condition} and \eqref{main result sigma^h(t)}, by the Lebesgue's dominated convergence theorem we infer
\begin{align*}
    &\lim\limits_{h}\left( \calQ_h(\Lambda_h e^h(0)) 
    + \int_0^t \int_{\Omega} \C\left(\tfrac{x'}{\epsh}\right) \Lambda_h e^h(s) : E\dot{w}(s) \,dx ds \right)\\
    &= \calQ^{hom}_{\CCC 0\BBB}(E(0)) 
    + \int_0^t \int_{\Omega \times \calY} \C_{\CCC r \BBB} (y)  E''(s) : E\dot{w}(s) \,dx dy ds.
\end{align*}
Hence, we have
\CCC
\begin{align*}
    &\calQ^{hom}_0(E(t)) + \sum_{i = 1}^{n} \calH^{hom}_0\left(P''(t_{i+1}) - P''(t_i)\right) \\
    &\leq \calQ^{hom}_0(E''(0)) 
    + \int_0^t \int_{\Omega \times \calY} \C_r(y)  E''(s) : E\dot{w}(s) \,dx dy ds.
\end{align*}
\BBB
Taking the supremum over all partitions of $[0,t]$ yields \eqref{step 3 inequality}, which concludes the proof, \CCC after replacement of $E$ with $E''$ and $P$ with $P''$. \BBB
\end{proof}

\section*{Acknowledgements}
M.~Bu\v{z}an\v{c}i\'{c} and I.~Vel\v{c}i\'{c} were supported by the Croatian Science Foundation under
Grant Agreement no. IP-2018-01-8904 (Homdirestroptcm). 
The research of E.Davoli was supported by the Austrian Science Fund (FWF) projects F65, V 662, Y1292, and I 4052. All authors are thankful for the support from the OeAD-WTZ project HR 08/2020.
\printbibliography

\end{document}